\DeclareMathAlphabet{\mathpzc}{OT1}{pzc}{m}{it}
\newcommand{\longcomment}[2]{#2}
\newcommand\zz{\mathbb{Z}}
\newcommand\qq{\mathbb{Q}}
\newcommand\rr{\mathbb{R}}
\newcommand\cc{\mathbb{C}}
\newcommand\nn{\mathbb{N}}
\newcommand\bm{{\bf m}}
\newcommand\bn{{\bf n}}
\newcommand\bk{{\bf k}}
\newcommand\bI{{\bf I}}
\newcommand\bp{{\bf p}}
\newcommand\bo{{\bf o}}
\newcommand\s{{\mathfrak s}}
\newcommand\st{{\mathfrak t}}
\newcommand\DW{{\mathcal D_W}}
\newcommand\DWP{{\mathcal D'_W}}
\newcommand\mc{{\mathfrak C(R)}}
\newcommand\mctL{{\mathfrak C(\tL)}}
\newcommand\cgf{\mathfrak C_{\textsc {gf}}(R)}
\newcommand\cgfl{\mathfrak C_{\textsc {gf}}(\Lambda)}
\newcommand\meGb{M_\eta(\bW)}
\newcommand\meGbr{M_\eta(\bW|_{G'})}
\newcommand\meGbe{M_\eta(\bW;e)}
\newcommand\meGab{M_\eta(\bW,\alpha,\beta)}
\newcommand\meGabe{M_\eta(\bW,\alpha,\beta;e)}
\newcommand\tmeGab{\widetilde M_\eta(\bW,\alpha,\beta)}
\newcommand\tmeGabe{\widetilde M_\eta(\bW,\alpha,\beta;e)}
\newcommand\feW{f_\bW^\eta}
\newcommand\tfeW{\tilde f_\bW^\eta}
\newcommand\tfeWZ{\tilde f_{\bW,\bZ}^\eta}
\newcommand\gmni{g_{\bm(i)}^{\bn(i)}}
\newcommand\gmnim{g_{\bm(i-1)}^{\bn(i-1)}}
\newcommand\nmni{n_{\bm(i)}^{\bn(i)}}
\newcommand\nmnim{n_{\bm(i-1)}^{\bn(i-1)}}
\newcommand\kmni{k_{\bm(i)}^{\bn(i)}}
\newcommand\hkmni{\bar k_{\bm(i)}^{\bn(i)}}
\newcommand\tkmni{\tilde k_{\bm(i)}^{\bn(i)}}
\newcommand\hqmni{\bar q_{\bm(i)}^{\bn(i)}}
\newcommand\Gmn{G_{\bf m}^{\bf n}}
\newcommand\Nmn{N_{\bf m}^{\bf n}}
\newcommand\Kmn{K_{\bf m}^{\bf n}}
\newcommand\Jmn{J_{\bf m}^{\bf n}}
\newcommand\bQmn{\bar Q_{\bm}^{\bn}}
\newcommand\gmnidm{g_{{\bf m}(i-2)}^{{\bf n}(i-2)}}
\newcommand\Wmn{{W_{\bf m}^{\bf n}}}
\newcommand\cWmn{{\widecheck W_{\bf m}^{\bf n}}}
\newcommand\emn{{\eta_{\bf m}^{\bf n}}}
\newcommand\feWmn{\tilde f^{\eta_{\bf m}^{\bf n}}_{\bW_{\bf m}^{\bf n},\bZ_{\bf m}^{\bf n}}}
\newcommand\fGmn{\tilde f_{G_{\bf m}^{\bf n}}}
\newcommand\fGmk{\tilde f_{G_{\bf m}^{\bf k}}}
\newcommand\fGkn{\tilde f_{G_{\bf k}^{\bf n}}}
\newcommand\fNmn{\tilde f_{N_{\bf m}^{\bf n}}}
\newcommand\fKmn{\tilde f_{K_{\bf m}^{\bf n}}}
\newcommand\Zmn{{Z_{\bf m}^{\bf n}}}
\newcommand\cZmn{{\widecheck Z_{\bf m}^{\bf n}}}
\newcommand\bWmn{{\bW_{\bf m}^{\bf n}}}
\newcommand\bZmn{{\bZ_{\bf m}^{\bf n}}}
\newcommand\bWmk{{\bW_{\bf m}^{\bf k}}}
\newcommand\bZmk{{\bZ_{\bf m}^{\bf k}}}
\newcommand\bWkn{{\bW_{\bf k}^{\bf n}}}
\newcommand\bZkn{{\bZ_{\bf k}^{\bf n}}}
\newcommand\Smn{S_{\bf m}^{\bf n}}
\newcommand\cSmn{\widecheck S_{\bf m}^{\bf n}}
\newcommand\ym{Y_{\bf m}}
\newcommand\zm{{\zeta_{\bf m}}}
\newcommand\yn{Y_{\bf n}}
\newcommand\zn{{\zeta_{\bf n}}}
\newcommand\sk{\Sigma(K)}
\newcommand\Cmn{C_{\bf m}^{\bf n}}
\newcommand\Cmni{C_{{\bf m}(i)}^{{\bf n}(i)}}
\newcommand\Cmnim{C_{{\bf m}(i-1)}^{{\bf n}(i-1)}}
\newcommand\Cmnimm{C_{{\bf m}(i-2)}^{{\bf n}(i-2)}}
\newcommand\Cmnimmm{C_{{\bf m}(i-3)}^{{\bf n}(i-3)}}
\newcommand\dmn{d_{\bf m}^{\bf n}}
\newcommand\dmni{d_{{\bf m}(i)}^{{\bf n}(i)}}
\newcommand\dmnim{d_{{\bf m}(i-1)}^{{\bf n}(i-1)}}
\newcommand\dmnimm{d_{{\bf m}(i-2)}^{{\bf n}(i-2)}}
\newcommand\dmnimmm{d_{{\bf m}(i-3)}^{{\bf n}(i-3)}}
\newcommand\jmn{j_{\bf m}^{\bf n}}
\newcommand\spinc{\text{spin}^{c}}
\newcommand\bW{\mathfrak W}
\newcommand\bZ{\mathfrak Z}
\newcommand\bU{\mathfrak U}
\newcommand\tQmn{\widetilde Q_{{\bf m}}^{{\bf n}}}
\newcommand\qmni{q_{{\bf m}(i)}^{{\bf n}(i)}}
\newcommand\ind{{\rm ind}}
\newcommand\sD{{\mathcal D}}
\newcommand\ad{\text{ad}}
\newcommand\ext{\text{ext}}
\newcommand\coker{\text{coker}}
\newcommand\image{{\rm image}}
\newcommand\Cone{{\rm Cone}}
\newcommand\PFH{{\rm PFH}}
\newcommand\tPFH{{\widetilde{\rm PFH}}}
\newcommand\tPFC{{\widetilde{\rm PFC}}}
\newcommand\PKC{{\rm PKC}}
\newcommand\PKE{{\rm PKE}}
\newcommand\tPKE{\widetilde {\rm PKE}}
\newcommand\td{{\tilde d}}
\newcommand\PFC{{\rm PFC}}
\newcommand\tLz{{\tilde \Lambda_0}}	
\newcommand\tL{{\tilde \Lambda}}
\newcommand\cU{{\mathcal K}}
\newcommand\cS{{\mathcal K}}
\newcommand\GZ{{\Gamma_{Z}}}
\newcommand\Gz{{\Gamma_{\zeta}}}
\newcommand\Gzz{{\Gamma_{\zeta_0}}}
\newcommand\Gzo{{\Gamma_{\zeta_1}}}
\newcommand\sgn{{\rm sgn}}
\newcommand\mU{{\mathcal U}}
\newcommand*{\Scale}[2][4]{\scalebox{#1}{$#2$}}%
\theoremstyle{plain}
\newtheorem{theorem}{Theorem}[section]
\newtheorem{theorem-intro}{Theorem}
\newtheorem{remark-intro}{Remark}
\newtheorem{corollary-intro}{Corollary}
\newtheorem{lemma}[theorem]{Lemma}
\newtheorem{corollary}[theorem]{Corollary}
\newtheorem{proposition}[theorem]{Proposition}
\theoremstyle{definition}
\newtheorem{definition}[theorem]{Definition}
\newtheorem{remark}[theorem]{Remark}
\newtheorem{example}[theorem]{Example}
\newtheorem{hypothesis}{Hypothesis}
\newtheorem{prethm}{{\bf Theorem}}[section]
\newtheorem{prepro}{{\bf Proposition}}[section]
\newtheorem{precor}{{\bf Corollary}}[section]
\newtheorem{preconj}{{\bf Conjecture}}
\newtheorem{preremark}{{\bf Remark}}
\newenvironment{remark}{\begin{preremark}\rm{\hspace{-0.5
               em}{\bf.}}}{\end{preremark}}
\newtheorem{predef}{{\bf Definition}}[section]
\newtheorem{preex}{{\bf Example}}[section]
\newenvironment{example}{\begin{preex}\rm{\hspace{-0.5
               em}{\bf.}}}{\end{preex}}
\newtheorem{prelem}{{\bf Lemma}}[section]
\newenvironment{proof}[1]{\noent{\bf Proof.}{\rm #1}\hfill{$\Box$}}{}
\title{\large \bf Abelian Gauge Theory, Knots and Odd Khovanov Homology}
\begin{document}
\maketitle

\begin{abstract}
	A homological invariant of 3-manifolds is defined using abelian Yang-Mills gauge theory. It is shown that the construction, in an appropriate sense, is functorial
	with respect to the families of 4-dimensional cobordisms. This construction and its functoriality are used to define several link invariants.
	The strongest version of these invariants has the form of a filtered chain complex that can recover Khovanov homology of the mirror image as a bi-graded group.
\end{abstract}

Gauge theoretical methods are proven to be useful tools to study low dimensional manifolds. As one of the first instances of this approach, Donaldson constructed a family of  invariants, usually known as {\it Donaldson invariants}, for smooth closed 4-manifolds that satisfy a mild topological assumption \cite{Don:Polynomial}. To construct these invariants, one fixes a Hermitian vector bundle of rank 2 on a Riemannian 4-manifold and considers the space of connections that satisfy a geometrical PDE (aka {\it anti-self-duality} equation) modulo the action of bundle automorphisms. Although the anti-self-duality equation depends on the Riemannian metric, a careful study of the moduli spaces of anti-self-dual connections leads to numerical invariants that depend only on the diffeomorphism type of the 4-manifold. More recently, Kronheimer applied a modification of the original construction to Hermitian vector bundles of rank $n$, with $n\geq 3$, and defined similar 4-manifold invariants. In the case $n=1$, the relevant moduli spaces can be characterized by the classical topological invariants (i.e. $\hspace{-10pt}$ cohomology groups of the underlying 4-manifolds) and hence they do not give rise to any interesting numerical invariant that can be used to study the topology of 4-manifolds.

Let $Y$ be a 3-manifold which has the same integral homology groups as the 3-dimensional sphere. In \cite{F:IF}, Floer used the moduli space of anti-self-dual connections on $\rr \times Y$ to produce topological invariants of $Y$. In parallel to the case of 4-manifolds, one expects that anti-self-dual connections on vector bundles of higher rank can be also utilized to define  3-manifold invariants. Kronheimer and Mrowka used this approach in a greater generality, and among other topological invariants, they constructed invariants of 3-manifolds using higher rank bundles. 

The purpose of this article is to show that the moduli space of anti-self-dual connections on a Hermitian line bundle can be used to extract non-trivial data in 3-manifold topology. We will use a slight variation of these spaces to define an invariant that is called {\it plane Floer homology}. Plane Floer homology of a 3-manifold $Y$ is a graded vector space over a field $\Lambda$ and is denoted by $\PFH(Y)$. As any other Floer homology theory, this vector space is the homology of a chain complex which in this case is denoted by $\PFC(Y)$. An important feature of plane Floer homology is functoriality with respect to cobordisms. That is to say, if $W:Y_0 \to Y_1$ is a 4-dimensional cobordism from $Y_0$ to $Y_1$, then there is a chain map $\PFC(W):\PFC(Y_0) \to \PFC(Y_1)$. This cobordism map is defined with the aid of an appropriate metric on $W$. More generally, for a family of metrics on the cobordism $W$, we can define cobordism maps from $\PFC(Y_0)$ to $\PFC(Y_1)$. The following theorem is the heart of this article:

\begin{theorem-intro} \label{chain-com-spec}
	If $L$ is a framed link with $n$ connected components in a 3-manifold $Y$, then there is a chain complex $\PFC_L(Y)$ that has the same chain homotopy type as 
	$\PFC(Y)$. The underlying chain group of $\PFC_L(Y)$ is equal to:
	$$\bigoplus_{\bm \in \{0,1\}^n} \PFC(Y_\bm)$$
	For $\bm=(m_1,\dots,m_n) \in \{0,1\}^n$, the 3-manifold $Y_\bm$ is given by performing the $m_i$-surgery along the $i^{th}$ connected component of $L$ for each $i$. 
	The differential of this complex is also defined using cobordism maps associated with certain families of metrics.
\end{theorem-intro}

For a detailed version of this theorem see Theorem \ref{PFH-tri-1} and Corollary \ref{new-complex-1}. To have a closer look at Theorem \ref{chain-com-spec}, let $L$ have one connected component and $Y_{(1)}$ (respectively, $Y_{(0)}$) be the result of 1-surgery (respectively, 0-surgery) along $L$. Let also $W: Y_{(1)} \to Y_{(0)}$ be the standard cobordism that is given by attaching a 2-handle to $[0,1]\times Y_{(1)}$ along $L$. Therefore, there is a chain map $\PFC(W):\PFC(Y_{(1)}) \to \PFC(Y_{(0)})$. In this special case, $\PFC_L(Y)$ is the mapping cone of the chain map $\PFC(W)$. As a consequence, there is an exact triangle of the following form:

\begin{equation} \label{add-cross-iso}
	\xymatrix{
		&\PFH(Y)\ar[dr]&\\
		\PFH(Y_{(0)})\ar[ur]&&\PFH(Y_{(1)})\ar[ll]
	}
\end{equation}

In the more general case that $L$ has more than one connected component, there is again a filtration on the chain complex $\PFC_L(Y)$. This filtration determines a spectral sequence that abuts to $\PFH(Y)$. This spectral sequence should be thought as the counterpart of the {\it surgery spectral sequence} of \cite{OzSz:HF-branch} for plane Floer homology. Similar spectral sequences for various other Floer homologies are established in \cite{KM:Kh-unknot,Bl:HM-branch,CS:iCube}. Note that in all these cases, the pages of the spectral sequence depend on the framed link $L$ and are not 3-manifold invariants.

Let $K$ be a link in $S^3$ and $\Sigma(K)$ be the branched double cover of $S^3$ branched along $K$. The chain complexes given by Theorem \ref{chain-com-spec} can be used to construct additional structures on $\PFH(\Sigma(K))$ which are not otherwise apparent in the original chain complex. Let $D$ be a link diagram for $K$. One can use this diagram to produce a framed link in $\Sigma(K)$ (cf. section \ref{classical-links}). Therefore, it can be used as an input for Theorem \ref{chain-com-spec} to construct a chain complex that is denoted by $\PKC(D)$. We shall define two gradings on $\PKC(D)$ which are called {\it homological} and $\delta$-gradings. The differential of the chain complex $\PKC(D)$ does not decrease the homological grading and increases the $\delta$-grading by 1. We call a chain complex with a bi-grading that satisfies the above properties a {\it filtered $\zz$-graded chain complex}:

\begin{theorem-intro} \label{PKC-thm}
	The homotopy type of $\PKC(D)$, as a filtered $\zz$-graded chain complex, depends only on $K$ and hence is a link invariant. 
\end{theorem-intro}

For the definition of the homotopy type of a filtered $\zz$-graded chain complex see subsection \ref{PKH}. The homological filtration and the $\delta$-grading of $\PKC(D)$ induces a spectral sequence $\{\PKE_r(K)\}$ where $\PKE_r(K)$ is a bi-graded vector space. As an immediate corollary of Theorem \ref{PKC-thm}, we have:

\begin{corollary-intro} \label{spec-seq}
	For each $r\geq 2$, there is a bi-graded vector space $\PKE_r(K)$ that is an invariant of $K$.	
\end{corollary-intro}

\begin{remark-intro}
	The spectral sequence in Corollary \ref{spec-seq} can be utilized to define a series of concordance homomorphisms. In fact, an equivariant version of 
	$\PKC(D)$ gives rise to other spectral sequences similar to that of Corollary \ref{spec-seq}. These spectral sequences are the starting point to define other 
	concordance invariants. This circle of ideas will be explored in a forthcoming work. 
\end{remark-intro}

Corollary \ref{spec-seq} provides us with a series of homological link invariants which begs for further study. In the last part of this article, we take up the task of understanding the first invariant in this list and show that it is related to {\it Khovanov homology}. Khovanov homology is a categorification of Jones polynomial defined in \cite{Kh:Jones-cat}. In \cite{OzRaSz:Odd-Kh}, an elaborate modification of the definition of Khovanov homology was used to define another categorification of Jones polynomial that is known as odd Khovanov homology. Subsequently, Khovanov's original link invariant is also known as even Khovanov homology. Even and odd Khovanov homology agree when one uses a characteristic two ring to define these homological link invariants. However, if a characteristic zero coefficient ring is used, then these invariants diverge significantly from each other. 

\begin{theorem-intro} \label{2nd-page}
	The invariant $\PKE_2(K)$ is isomorphic to the Khovanov homology of the mirror image of $K$ with coefficients in the field $\Lambda$. 
\end{theorem-intro}

The field $\Lambda$ has characteristic two and is the quotient of a ring $\tL$ with characteristic zero (cf. subsection \ref{ori-loc}). The vector space $\PFH(Y)$ can be lifted to an invariant $\tPFH(Y)$ which is a $\tL$-module and is called the {\it oriented plane Floer homology} of $Y$. There is an analogue of the spectral sequence $\{\PKE_r(K)\}$ in the context of oriented plane Floer homology:

\begin{theorem-intro} \label{oriented-2nd-page}
	There is a spectral sequence $\{\tPKE_r(K)\}$ with coefficients in $\tL$ that abuts to $\tPFH(\Sigma(K))$. The second page of this spectral sequence, $\tPKE_2(K)$, is 
	isomorphic to odd Khovanov homology of the mirror image of $K$ with coefficients in the ring $\tL$.
\end{theorem-intro}

This article is organized as follows. In section \ref{Ab-ASD-con}, we focus on the definition of plane Floer homology of 3-manifolds and the cobordism maps. Section \ref{ex-tri} is devoted to providing the proof of Theorem \ref{chain-com-spec}. In section \ref{classical-links}, we use the results of section \ref{ex-tri} in the special case of the branched double covers and upgrade plane Floer homology of branched double covers to a stronger invariant. In particular, Theorem \ref{PKC-thm} is proved in this section. In Section \ref{odd-kh}, we present a slight reformulation of odd Khovanov homology and prove Theorems \ref{2nd-page} and \ref{oriented-2nd-page}.

{\it Acknowledgement.} The author would like to thank his advisor, Peter Kronheimer, for many illuminating discussions and support. This work was motivated by the thesis problem suggested to the author by Peter Kronheimer. I am also grateful to Clifford Taubes, Tomasz Mrowka, Sucharit Sarkar, Robert Lipshitz, Steven Sivek, Katrin Wehrheim, and Jonathan Bloom for interesting conversations. I thank Steven Sivek for his helpful comments on an early draft of this paper. 

\section{Abelian Anti-Self-Dual Connections} \label{Ab-ASD-con}
Plane Floer homology is defined with the aid of the moduli spaces of $\spinc$ connections that satisfy a version of anti-self-duality equation. The local behavior of these spaces is governed by the anti-self-duality operator. In subsection \ref{abelian-asd-op}, we recall the definition of appropriate versions of this operator that serve better for our purposes. Then we review the basic properties of this operator. These are fairly standard facts whose proves are scattered in the literature. The moduli space of anti-self-dual $\spinc$ connections are constructed in subsection \ref{ab-asd-eq}. We use these spaces to define a primary version of plane Floer homology and cobordism maps in subsection \ref{cob-maps}. In the last subsection of this section, we discuss orientations of the moduli spaces and  certain local coefficient systems on these spaces. These structures are the main ingredients in the definition of cobordism maps for oriented plane Floer homology in subsection \ref{ori-loc}. 

\subsection{Abelian ASD Operator} \label{abelian-asd-op}

Suppose $W^\circ :Y_0 \to Y_1$ is a compact, connected cobordism between Riemannian 3-manifolds $Y_0$ and $Y_1$. Although $Y_1$ can have several (possibly zero) connected components, we require that $Y_0$ to be connected. We equip $W^\circ$ with a Riemannian metric such that the metric in a collar neighborhood of $Y_0$ and $Y_1$ is isometric to the product metric corresponding to the metrics on $Y_0$ and $Y_1$. One can glue cylindrical ends $\rr^{\leq0} \times Y_0$ and  $\rr^{\geq 0}\times Y_1$ with the product metric to $W^\circ$ in order to produce a non-compact and complete Riemannian manifold denoted by $W$. For a real number $\delta$, let $\psi_\delta$ be a smooth positive function on $W$ such that its value for $(t,y)\in \rr^{\leq0} \times Y_0  \coprod \rr^{\geq 0}\times Y_1$ is equal to $e^{\delta|t|}$. The weighted Sobolev space $L^2_{k,\delta}(W)$ is defined to be the space of functions $f$ such that $\psi_\delta f \in  L_k^2(W)$. Roughly speaking, if $\delta$ is positive, then the elements of this space have exponential decay over the ends of $W$. Otherwise, they are allowed to have controlled exponential growth. This Banach spase is independent of the choice of $\psi_\delta$. If different weights $\delta_0$ and $\delta_1$ are used on the ends $\rr^{\leq0} \times Y_0 $ and $\rr^{\geq 0}\times Y_1$, then the resulting Banach space is denoted by $L^2_{k,\delta_0,\delta_1}(W)$. Given a vector bundle $E$ with an inner product, the definition of $L^2_{k,\delta}(W)$ can be extended to $L^2_{k,\delta}$ sections of $E$. The dual of the Banach space of $L^2_{k,\delta}$ sections of $E$ is the space of $L^2_{k,-\delta}$ sections of the same vector bundle. The $L^2_k$-inner product defines the pairing between these Banach spaces. 

The exterior derivative gives rise to a map $d:L^2_{k,\delta}(W,\Lambda^l) \to L^2_{k-1,\delta}(W,\Lambda^{l+1})$. As usual the formal adjoint of $d$ is denoted by $d^*:L^2_{k,\delta}(W,\Lambda^{l+1})\to L^2_{k-1,\delta}(W,\Lambda^{l})$. Since $W$ is a Riemannian 4-manifold, a 2-form on $W$ decomposes as a summation of a self-dual form and an anti-self-dual form. Let $d^+: L^2_{k,\delta}(W,\Lambda^{1})\to L^2_{k-1,\delta}(W,\Lambda^{+})$ be the composition of $d$ and the projection on the space of self-dual 2-forms. Define the {\it anti-self-duality operator} ({\it ASD operator}) to be the differential operator $- d^* \oplus d^+: L^2_{k,\delta}(W,\Lambda^{1}) \to L^2_{k-1,\delta}(W,\Lambda^{0}\oplus \Lambda^+)$. 

In order to understand the ASD operator on a cobordism with cylindrical ends, we firstly consider the ASD operator on a cylinder $(a,b)\times Y$ with the product metric. A 1-form $A$ and an anti-self-dual form $B$ on this cylinder can be decomposed in the following way:
\begin{equation}\label{decom-1}
	A=\alpha(t)dt+\beta(t) \hspace{2cm} B=\frac{1}{2}(dt\wedge \gamma(t)+*_3 \gamma(t))
\end{equation}	
where for each $t\in (a,b)$, $\alpha(t)$ is a 0-form, and $\beta(t)$, $\gamma(t)$ are 1-forms on $Y$. Therefore, a 1-form on the cylinder is given by a 1-parameter family of 0-forms and 1-forms, and an anti-self-dual 2-form is determined by a 1-parameter family of 1-forms. Given this, we can rewrite the ASD operator as $\frac{d}{dt}-Q$ where $Q$ is equal to the following differential operator acting on $\Lambda^0(Y)\oplus \Lambda^1(Y)$:
\begin{equation} \label{al-be}
	Q:=\left(
	\begin{array}{cc}
		0&d_3^*\\
		d_3&-*_3d_3
	\end{array}
	\right)
\end{equation} 

Here $d_3$, $d_3^*$, and  $*_3$ are the exterior derivative, its adjoint, and the Hodge operator on $Y$. The operator $Q$ is an unbounded and self-adjoint operator acting on $L^2(Y,\Lambda^0\oplus \Lambda^1)$. There exists an orthonormal basis $\{\phi_i\}_{i=1}^\infty$ for $L^2(Y,\Lambda^0\oplus \Lambda^1)$ that consists of the eigenvectors of $Q$. If $\lambda_i$ is the eigenvalue corresponding to $\phi_i$, then $\{\lambda_i\}_{i=1}^\infty$ is a discrete subset of $\rr$. Therefore the set of non-zero eigenvalues has an element with the smallest magnitude. Fix $\delta$ to be a positive number smaller than the magnitude of this element. The kernel of $Q$ can be also identified as:
\begin{equation} \label{kerQ}
	\left(
	\begin{array}{c}
		\alpha\\
		\beta
	\end{array}
	\right)\in \ker(Q)
	\iff
	d\alpha=0, d\beta=0, d^*\beta=0
\end{equation} 
That is to say $\alpha$ is a constant function and $\beta$ is a harmonic 1-form. Therefore, $\ker(Q)$ has dimension $b_1(Y)+1$. 

In general, a 1-form $a=\alpha(t)dt+\beta(t)$ on a cylinder can be decomposed with respect to the spectrum of $Q$ in the following way:
\begin{equation}\label{spec-dec-1}
	\left(
	\begin{array}{c}
		\alpha(t)\\
		\beta(t)
	\end{array}
	\right)= \sum _i f_i(t) \phi_i
\end{equation} 
Now assume that $a$ is defined on $\rr^{\geq 0} \times Y$ (respectively, on $\rr^{\leq 0}\times Y$) and $d^*(a)=0$, $d^+(a)=0$. Vanishing of $a$ by the ASD operator implies that $f_i(t)=c_ie^{\lambda_it}$. The 1-form $a$ has a finite $L^2_{k,\delta}$ norm if and only if $a$ has a  finite $L_k^2$-norm because if $||a||_{L_k^2}$ is finite, then $c_i=0$ unless $\lambda_i$ is negative (respectively, positive). This in turn suffices to ensure that $||a||_{L_{k,\delta}^2}$ is finite. For our purposes, we need to work with slightly larger Banach spaces. The {\it extended weighted Sobolev space} $L^2_{k,\ext}(Y\times \rr^{\geq 0}, \Lambda^1)$ consists of the 1-forms $a$ that can be written as a sum $b+h$ where $b\in L^2_{k,\delta}(Y\times \rr^{\geq 0}, \Lambda^1)$ and $h$ is the pull-back of a harmonic 1-form on $Y$. Note that this decomposition is unique and hence $L^2_{k,\ext}$ is a Banach bundle over the set of harmonic 1-forms with fiber $L^2_{k,\delta}$. 

Going back to the case of a cobordism $W: Y_0 \to Y_1$, let $\delta$ be smaller than the smallest of the eigenvalues of the operator $Q$ associated with the 3-manifold $\overline {Y_0}\coprod Y_1$. The Banach space $L^2_{k,\ext}(W,\Lambda^1)$ for a cobordism $W$ consists of the 1-forms that have finite $L^2_{k}$ norm on the compact subsets of $W$ and their restrictions to the ends lie in $L^2_{k,\ext}(Y_0\times \rr^{\leq 0}, \Lambda^1)$ and $L^2_{k,\ext}(Y_1\times \rr^{\geq 0}, \Lambda^1)$. Let $C_0$ and $C_1$ be the spaces of harmonic 1-forms on $Y_0$ and $Y_1$. Also, suppose $\varphi: C_0 \oplus C_1 \to L^2_{k,\ext}(W,\Lambda^1)$ is a linear map that sends an element $h \in C_0 \oplus C_1$ to a smooth 1-form that is equal to the pull back of $h$ on the ends. This map can be constructed for example by the aid of appropriate cut-off functions on the ends. Each element of $L^2_{k,\ext}(W,\Lambda^1)$ can be uniquely written as the sum $b+\varphi(h)$ where $b\in L^2_{k,\delta}(W,\Lambda^1)$ and $h\in C_0 \oplus C_1$. Therefore, $L^2_{k,\ext}(W,\Lambda^1)$ is isomorphic to $L^2_{k,\delta}(W,\Lambda^1)\oplus C_0 \oplus C_1$. This decomposition can be used to equip $L^2_{k,\ext}(W,\Lambda^1)$ with an inner product. The dual of this Banach space is $L^2_{k,-\delta}(W,\Lambda^1)\oplus C_0 \oplus C_1$. In the following, we will write $\iota_i$ for the projection of $L^2_{k,\ext}(W,\Lambda^1)$ into $C_i$. 

It is a standard fact that the ASD operator, acting on $L^2_{k,\delta}(W,\Lambda^1)$, (and hence on $L^2_{k,\ext}(W,\Lambda^1)$) is elliptic and Fredholm. In the following lemma we characterize the kernel and the cokernel of the ASD operator. Firstly we need to introduce the following notation: let $H^i_{comp}(W)$ be the $i^{\rm th}$ de Rham cohomology group with compact support. The intersection pairing $Q:H^2_{comp}(W) \times H^2(W)\to\zz$ on $W$ induces a non-degenerate bi-linear form on $I(W):=\image(i:H^2_{comp}(W) \to  H^2(W))$. This space can be decomposed  as $I^+(W)\oplus I^-(W)$ where $I^+(W)$ and $I^-(W)$ are choices of a maximal positive definite and a maximal negative definite subspaces of $I(W)$. The dimension of these spaces are denoted by $b^+(W)$ and $b^-(W)$.

\begin{lemma} \label{ind-DW-1}
	The kernel and the cockerel of $- d^* \oplus d^+:L^2_{k,\ext}(W,\Lambda^1) \to L^2_{k-1,\delta}(W,\Lambda^0\oplus \Lambda^+)$ can be identified with 
	$H^1(W;\rr)$ and $I^+(W)\oplus \rr$, respectively.
\end{lemma}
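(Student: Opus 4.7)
The plan is to combine elliptic regularity with the asymptotic analysis provided by the operator $Q$, and then identify the kernel and the cokernel of the ASD operator with their cohomological counterparts via a version of Hodge theory adapted to cylindrical ends.

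For the kernel, suppose $a \in L^2_{k,\ext}(W,\Lambda^1)$ satisfies $d^*a = 0$ and $d^+ a = 0$. Elliptic regularity makes $a$ smooth, and writing $a = b + \varphi(h)$ with $b \in L^2_{k,\delta}$ and $h \in C_0 \oplus C_1$ harmonic, the form $\varphi(h)$ is closed on the ends (since $h$ is harmonic on each $Y_i$), so $da \in L^2_{k-1,\delta}(W,\Lambda^2)$. Combining $(da)^+ = 0$, which gives $da \wedge da = -|da|^2\,dV$, with Stokes' theorem on an exhaustion by compact sets (the boundary terms at $t=\pm\infty$ vanish by the exponential decay of $da$) yields $\|da\|_{L^2}^2 = -\int_W da\wedge da = 0$, hence $a$ is closed and defines a class $[a] \in H^1(W;\rr)$. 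For injectivity, if $a = df$ then $\Delta f = 0$; the asymptotic harmonic $1$-form of $df$ on each $Y_i$ is also exact and hence zero, so $df$ decays, $f$ tends to constants on each end, and the maximum principle on large truncations of the connected $W$ forces $f$ to be globally constant. For surjectivity, I would represent a class by a closed $1$-form $\omega$ equal to $\varphi(h)$ on the ends (after subtracting an exact form) and solve $\Delta f = -d^*\omega$ in a weighted Sobolev space; the compactly supported source $d^*\omega$ is orthogonal to bounded harmonic functions by a standard integration-by-parts argument using the closedness of $\omega$.

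For the cokernel, I compute the kernel of the formal adjoint. The dual of the target is $L^2_{k-1,-\delta}(W,\Lambda^0 \oplus \Lambda^+)$, and once the summands $C_0,C_1$ of the domain are accounted for via the projections $\iota_0,\iota_1$, an element of the adjoint kernel consists of a pair $(f,\omega^+)$ satisfying the PDE $-df + d^*\omega^+ = 0$ together with two asymptotic linear conditions coming from pairings with the extensions $\varphi(h)$ at infinity. The $L^2_{k-1,-\delta}$ regularity and the structure of $\ker Q$ force $f$ to be asymptotically constant on each end and $\omega^+$ to decay exponentially. The $\rr$ summand is supplied by the pair with $f$ a constant and $\omega^+ = 0$: this satisfies the PDE, and the boundary-at-infinity pairings against $\varphi(h)$ vanish because $\iota_{\partial_t}\varphi(h)=0$ on the ends. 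For the $I^+(W)$ summand, after subtracting the asymptotic constant value of $f$ to reduce to $f \in L^2_{k-1,\delta}$, applying $d^*$ to the PDE gives $\Delta f = 0$ with decay, hence $f\equiv 0$; the remaining $\omega^+$ is a closed self-dual $L^2$ $2$-form, and the pairings coming from the $\iota_i$ are, by Poincar\'e duality on each $Y_i$, equivalent to the vanishing of the cohomological restriction of $[\omega^+]$ to $\partial W$, which via the long exact sequence of the pair $(W,\partial W)$ identifies the remaining space with $I^+(W)$.

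The main obstacle I anticipate is the careful bookkeeping of the boundary-at-infinity pairings built into the extended Sobolev space $L^2_{k,\ext}$: these pairings modify both the description of the adjoint kernel and the identification of the decaying closed self-dual $2$-forms. Getting these orthogonality conditions right is what cuts $H^+(W)$ down to the image piece $I^+(W)$, and it is the main analytic content that makes the cohomological identification honest.
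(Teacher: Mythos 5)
Your proposal follows the same overall strategy as the paper: establish $d\alpha=0$ via the self-dual/anti-self-dual decomposition and Stokes' theorem, identify $\ker(-d^*\oplus d^+)$ with $H^1(W;\rr)$ by showing the natural map from harmonic $1$-forms is an isomorphism, and characterize the cokernel by pairing the formal adjoint against compactly supported and $\varphi(h')$-type test forms to conclude that $f$ is constant and $\omega$ is an $L^2_{k,\delta}$ self-dual harmonic form. The variations are minor: where you sketch a maximum-principle argument for injectivity of $\mathcal H^1(W)\to H^1(W)$, the paper cites \cite{APS:spec-asym}; and where you propose applying $d^*$ to the adjoint PDE and ``subtracting the asymptotic constant of $f$'' (a step that needs care, since the ends of $W$ could a priori have different asymptotic constants), the paper instead uses the cleaner integration-by-parts chain $\|df\|^2_{L^2}=\|d^*\omega\|^2_{L^2}=-\langle df,d^*\omega\rangle=0$, which directly forces $df=0$, $d^*\omega=0$, $d\omega=0$ on the connected manifold $W$.
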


\begin{proof}
	For $\alpha \in L^2_{k,\ext}(W,\Lambda^1)$ in the kernel of $- d^* \oplus d^+$, we have:
	\begin{equation} \label{ASD=>closed-1}
		0=\int_{W}d\alpha\wedge d\alpha=
		\int_{W}|d^+\alpha|^2-|d^-\alpha|^2=-\int_{W}|d^-\alpha|^2\implies d\alpha=0
	\end{equation}
	where the first equality holds by Stokes' theorem and the assumption $k\geq 1$. 
	Therefore, $\ker(- d^* \oplus d^+)$ consists of $L^2_{k,\ext}$ 1-forms that are annihilated by both $d$ and $d^*$. 
	There is a natural map from this space, denoted by $\mathcal H^1(W)$, to $H^1(W)$. 
	We shall show that this map is an isomorphism (cf. also \cite{APS:spec-asym}).\\
	If $\alpha \in \mathcal H^1(W)$ represents a zero cohomology class, then the restriction of 
	$\alpha$ to $\{-t\}\times Y_0 \coprod \{t\}\times Y_1$ for any value of $t$ is an exact 1-form. 
	But this 1-form is exponentially asymptotic to a harmonic 1-form on $ \overline{Y_0} \coprod Y_1$. 
	Therefore, the harmonic 1-form that $\alpha$ is asymptotic to on the ends of $W$ vanishes  and $\alpha \in L^2_{k,\delta}(W,\Lambda^1)$. 
	In particular, $\alpha$ is an $L^2$ harmonic 1-form that represents the zero cohomology class. 
	In the proof of Proposition 4.9 of  \cite{APS:spec-asym} it is shown that any such $\alpha$ has to be zero.
	To address surjectivity, fix a cohomology class $\zeta\in H^1(W)=H^1(W^\circ)$ and let $\beta$ be a closed 1-form on $W^\circ$ representing $\zeta$. 
	By subtracting an exact 1-form, we can assume that $\beta$  in a collar neighborhood of $W^\circ$
	 is the pull-back of a harmonic 1-form on $ \overline{Y_0} \coprod Y_1$. 
	Note that $W^\circ \backslash \partial W^\circ$ is diffeomorphic to $W$. We can use such a diffeomorphism to pull back the 1-form $\beta$ to $W$ to 
	produce the closed 1-form $\gamma$ that its restriction to the ends is pull back of harmonic 1-forms and it represents $\zeta$. 
	Now the Fredholm alternative for the Fredholm operator 
	$d^*d:L^2_{k+1,\delta}(W) \to L^2_{k-1,\delta}(W)$ implies that there exists $f\in L^2_{k+1,\delta}(W)$ such that $d^*d f=d^* \gamma$. 
	Thus $df \in L^2_{k,\delta}(W,\Lambda^1)$
	and the harmonic 1-form $\gamma-df \in L^2_{k,\ext}(W,\Lambda^1)$ represents the cohomology class $\zeta$.
	
	Suppose $(f,\omega)\in L^2_{k,-\delta}(W,\Lambda^0\oplus \Lambda^+)$ is in the co-kernel of $- d^* \oplus d^+$. 
	If $\alpha$ is a smooth 1-form with compact support, then:
	$$0=\int_{W}\langle (d^*\alpha,d^+\alpha),(f,\omega)\rangle=\int_{W}\langle d^*\alpha,f\rangle+\int_{W}\langle d\alpha,\omega\rangle=
	\int_{W}\langle \alpha,df+d^*\omega \rangle$$
	
	Thus $df+d^*\omega=0$. In particular, $(f,\omega)$ on the cylindrical ends is asymptotic to an element in $\ker(Q)$. 
	This in particular justifies the following identities:
	$$||df||_{L^2}^2=||d^* \omega||_{L^2}^2=-\langle df ,d^*\omega\rangle=0 \implies df=0, d^*\omega=0,d\omega=0$$ 
	Furthermore, if $\omega$ is asymptotic to $h \in C_0\oplus C_1$, then for an element $h'\in C_0 \oplus C_1$, we have:
	$$\int_{W}\langle (d^*\varphi(h'),d^+\varphi(h')),(f,\omega)\rangle=\int_{W}\langle \varphi(h'),df+d^*\omega \rangle+
	\int_{\overline{Y_0}\coprod Y_1}\langle h',h \rangle=
	\int_{\overline{Y_0}\coprod Y_1}\langle h',h \rangle \implies h=0$$
	Consequently, $\omega \in L^2_{k,\delta}$. 
	Therefore, cokernel of $- d^* \oplus d^+$ consists of the pairs $(f,\omega)$ where $f$ is a constant function and $\omega$ is an $L^2_{k,\delta}$ self-dual and harmonic
	2-form. By \cite{APS:spec-asym} this space represents $I^+(W)\oplus \rr$.
\end{proof}

\begin{remark} \label{dif-oper-ind}
	Lemma \ref{ind-DW-1} implies that the index of $- d^* \oplus d^+:L^2_{k,\ext}(W,\Lambda^1) \to L^2_{k-1,\delta}(W,\Lambda^0\oplus \Lambda^+)$ is equal 
	to $b^1(W)-b^+(W)-1$. An examination of the long exact sequence of cohomology groups for the pair $(W,Y_0 \coprod Y_1)$ shows that this number is 
	equal to: 
	$$-\frac{\chi(W)+\sigma(W)}{2}+\frac{b_1(Y_0)+b_1(Y_1)}{2}-\frac{b_0(Y_0)+b_0(Y_1)}{2}.$$ 
	There are two other operators that are of interest to us:
	\[\Scale[.85]{\DW:L^2_{k,\ext}(W,\Lambda^1) \to L^2_{k-1,\delta}(W,\Lambda^0)_0 \oplus L^2_{k-1,\delta}(W,\Lambda^+) \oplus C_1 \hspace{1.5cm} 
	\DWP:L^2_{k,-\delta,\delta}(W,\Lambda^1)\to L^2_{k,-\delta,\delta}(W,\Lambda^0\oplus \Lambda^+)}\] 
	\[\Scale[.85]{\DW(\alpha):=(- d^*(\alpha),d^+(\alpha),\iota_1(\alpha)) \hspace{5cm}  \DW(\alpha):=(- d^*(\alpha),d^+(\alpha))}\] 
	where $L^2_{k-1,\delta}(W,\Lambda^0)_0$ is the subspace of $L^2_{k-1,\delta}(W,\Lambda^0)$ that consists of the functions that their integrals over $W$ vanish. 
	An argument similar to the proof of Lemma \ref{ind-DW-1} shows that:  
	\begin{equation} \label{ker-eq}
		\ker (\DW)=\ker (\DWP)=\{\alpha\in L^2_{k,\exp}(W,\Lambda^1) \mid d\alpha=0,d^*\alpha=0,\iota_1(\alpha)=0\}
	\end{equation}
	\begin{equation}	
		\coker (\DW)=\coker (\DWP)=\{\omega \in L^2_{k,-\delta,\delta}(W,\Lambda^+) \mid d\omega=0\}
	\end{equation}
	A step in the proof of (\ref{ker-eq}) involves showing that if $\alpha\in L^2_{k,-\delta,\delta}(W,\Lambda^1)$ is a closed and co-closed 1-form then 
	$\alpha\in L^2_{k,\ext}(W,\Lambda^1)$ with $\iota_1(\alpha)=0$. Because $\alpha\in L^2_{k,-\delta,\delta}$, the 1-form $\alpha$ is asymptotic to zero on the end 
	$Y_1$. On the other hand, the relations $d\alpha=0$ and $d^*\alpha=0$ assert that $\alpha$ is asymptotic to an element of $\ker(Q)$ on the end $Y_0$. 
	Since $Y_0$ is connected, an application of Stokes theorem for the closed 3-form $*\alpha$ shows that this element in $\ker(Q)$ cannot have a component in $\Omega^0(Y)$ and hence 
	$\alpha\in L^2_{k,\ext}(W,\Lambda^1)$.
	Lemma  \ref{ind-DW-1} shows that the index of the operator $\DW$ (and hence $\DWP$) 
	is equal to: 
	\begin{equation} \label{index}
		-\frac{\chi(W)+\sigma(W)}{2}+\frac{b_1(Y_0)-b_1(Y_1)}{2}-\frac{b_0(Y_0)+b_0(Y_1)}{2}+1
	\end{equation}	
\end{remark}

A {\it family of metrics} on a cobordism $W^{\circ}$ parametrized by a manifold $G$ is a fiber bundle $\bW^{\circ}$ with the base $G$ and the fiber $W^{\circ}$ that is equipped with a {\it partial metric}. A partial metric on $\bW^{\circ}$ is a 2-tensor $g\in \Gamma(T\bW^{\circ}\otimes T\bW^{\circ})$ such that its restriction to each fiber is a metric. Furthermore, we assume that in a collar neighborhood of the boundary the metric is the product metric corresponding to the fixed metrics on $Y_0$ and $Y_1$. That is to say, there exists a sub-bundle $([0,1] \times Y_0 \coprod [-1,0]\times Y_1)\times G$ of $\bW^{\circ}$ such that the restriction of the partial metric to this sub-bundle is the product metric for the fixed metric metric on $\partial W$. Adding the cylindrical ends to the fibers of $\bW^{\circ}$ results in a fiber bundle over $G$ such that each fiber is diffeomorphic to $W$. We will write $\bW$ for this family of metrics with cylindrical ends.

The family of metrics $\bW$ define a family of Fredholm operators parametrized by $G$. For each element $g\in G$, the corresponding operator is $\mathcal D_{W^g}$ where $W^g$ is the fiber of $\bW$ over $g\in G$. The index of this family of Fredholm operators is an element of the real $K$-group of the base $G$ (cf. \cite{Ati:Hil}), and is denoted by $\ind(\bW;G)$. We can equivalently use the operator $\DWP$ to define $\ind(\bW;G)$. However, the operator $\DW$ is more suitable for the geometrical set up of this paper. There is only one point (Lemma \ref{add-ind}) that it is more convenient for us to work with $\DWP$.

Above discussion can be further generalized by working with {\it broken Riemannian metrics} \cite{KM:Kh-unknot}. 
A broken Riemannian metric, strictly speaking, is not a metric on $W$. However, it can be considered as the limit of a sequence of metrics on $W$. We firstly discuss model cases for such family of metrics. Let $T$ be a compact orientable codimension 1 sub-manifold of $W$ with $i$ connected components $T_1$, $\dots$, $T_i$ . Also, define $T_0:=Y_0$, $T_{i+1}=Y_1$. We call $T$ a {\it cut}, if removing $T$ decomposes $W$ into a union of $i+1$ cobordisms $W_0$, $\dots$, $W_i$ where $W_k$ is a cobordism with exactly one incoming end, which is $T_k$, and several (possibly zero) outgoing ends.  For each $1\leq k \leq i+1$, the connected 3-manifold $T_k$ appears as one of the outgoing ends of a cobordism that is denoted by $W_{o(k)}$. The simplest arrangement is when $W_k: T_k \to T_{k+1}$ and $W=W_0\circ \dots \circ W_i$. In this case $o(k)=k-1$. For the most part, we are interested in such cuts. However, we need the more general cuts in the proof of exact triangles. It is worthwhile to point out that the formula (\ref{index}) for such decompositions of $W$ is additive, i.e.,  $\ind(\DW)=\ind(\mathcal D_{W_0})+\dots+\ind(\mathcal D_{W_i})$.

A broken metric on $W$ with a cut along $T$ is a metric $g$ with cylindrical ends on the union of the cobordisms $W_0 \coprod \dots \coprod W_i$ such that the product metrics on $\rr^{\geq 0} \times T_k \subset W_{o(k)}$ and $\rr^{\leq 0} \times T_k \subset W_k$ are modeled on the same metric of $T_k$. Given $g$, we can construct a family of (possibly broken) metrics parametrized by $[0,\infty]^i$ on $W$ in the following way: for $(t_1,\dots,t_i)\in [0,\infty]^i$, if $t_k\neq \infty$, we remove the cylindrical ends $\rr^{\geq 0} \times T_k \subset W_{o(k)}$, $\rr^{\leq 0} \times T_k \subset W_{k}$, and glue $[-t_k,t_k] \times T_k$ by identifying $\{-t_k\} \times T_k$ with $\{0\}\times T_k\subset W_{o(k)}$ and $\{t_k\} \times T_k$ with $\{0\}\times T_k\subset W_k$. On the remaining points of $W$, we use the same metric as $g$. More generally, let $\bW_k$ be a family of (non-broken) metrics on $W_k$ parametrized with $G_k$ such that for each $k$, the metrics on the ends  $\rr^{\leq 0}\times T_k$ and $\rr^{\geq 0}\times T_k$, induced by $\bW_k$ and $\bW_{o(k)}$, are modeled on the same metric of $T_k$. Then we can construct a family of (possibly broken) metrics on $W$ parametrized by $[0,\infty]^i\times G_0\times \dots \times G_i$. From this point on, we work with the following more general definition of family of metrics: a family of metrics $\bW$ on the cobordisms $W$ parametrized by the cornered manifold $G$ is a bundle over $G$ with a partial metric such that for any codimension $i$ face of $G$, the family over the interior of the face has the form $\bW_0 \times \dots \times \bW_i$. Furthermore, the family over a neighborhood of the interior of this face has the form of the above family parametrized by $[0,\infty]^i\times G_0\times \dots \times G_i$. For an organized review of manifolds with corners we refer the reader to \cite{LS:KhoHomTyp}. Our treatment of cornered manifolds in this paper, for the sake of simplicity of exposition, is rather informal.

\begin{lemma} \label{add-ind}
	Suppose $\bW$ is a family of metrics on $W$ parametrized by $G$. Then there exists an element of $KO(G)$, called the index bundle of the family $\bW$ and 
	denoted by $\ind(\bW;G)$, such that the following holds. Let $G'$ be a face of $G$ corresponding to a cut $T \subset W$. 
	Assume that $W\backslash T=W_0 \coprod \dots \coprod W_i$, $G'=G_0\times \dots \times G_i$ and the family of metrics over $G'$ has the form 
	$\bW_0\times\dots \times \bW_i $ where $\bW_k$ is a family of (non-broken) metrics on $W_k$ parametrized by $G_k$. 
	Then the restriction of the index bundle to $G'$ is isomorphic to the following element of $KO(G')$:
	\begin{equation} \label{dir-sum}
		\ind(\bW_0,G_0)\oplus \dots \oplus \ind(\bW_i,G_i)
	\end{equation}	
\end{lemma}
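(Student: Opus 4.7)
The plan is to use the operator $\DWP$ (rather than $\DW$) throughout, since $\DWP$ uses uniformly weighted Sobolev spaces on both ends and is better suited to the standard gluing/neck-stretching analysis; the extra book-keeping from the extended space $L^2_{k,\ext}$ in $\DW$ makes the additivity argument awkward near the face, and this is exactly the point alluded to just before the lemma. By the remark preceding the lemma, $\DW$ and $\DWP$ have the same kernel, cokernel, and index, so the resulting class in $KO(G)$ is insensitive to this choice. Over the open interior $\mathring G$ of the stratum of unbroken metrics, the operators $\DWP_{W^g}$ form a continuous family of Fredholm operators parametrized by $\mathring G$, and hence determine an index bundle in $KO(\mathring G)$ via the standard construction of Atiyah: after adding a finite-rank trivial bundle to stabilize the cokernels, one obtains a genuine vector bundle difference.

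Near a codimension-$i$ face $G' = G_0 \times \dots \times G_i$, the family has the local model $[0,\infty]^i \times G_0\times\dots\times G_i$, where the coordinates $t_1,\dots,t_i$ control the lengths of neck pieces inserted along the components of the cut $T = T_1\sqcup\dots\sqcup T_i$. For $(t_1,\dots,t_i)\in[0,\infty)^i$ we have an honest metric on $W$ and the operator $\DWP$ is defined as before; as some $t_k\to\infty$, the cobordism $W$ breaks into the pieces carrying the product family $\bW_0 \times \dots \times \bW_i$. The analytic input I would use is the standard excision/gluing theorem for Fredholm operators on manifolds with cylindrical ends (as in Donaldson--Kronheimer and as used in \cite{KM:Kh-unknot}): for sufficiently large $(t_1,\dots,t_i)$, a parametrix for $\DWP_{W^{(t,g_0,\dots,g_i)}}$ can be constructed from parametrices for the pieces $\DWP_{W_k^{g_k}}$ by cutting off on the necks and gluing via a partition of unity. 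Because $\delta$ lies strictly below the smallest nonzero eigenvalue of the tangential operator $Q$ on each $T_k$, the model cylindrical operator $\frac{d}{dt} - Q$ on $\rr \times T_k$ is invertible on the weighted space, and one obtains uniform bounds on the glued parametrix.

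These uniform estimates let one produce, locally near $G'$, a continuous family of finite-rank subspaces that stabilize the cokernels simultaneously for all $(t,g)$ in the neighborhood. The resulting stabilized kernel bundle on $[0,\infty]^i \times G_0\times\dots\times G_i$ restricts over $t_k = +\infty$ to the direct sum of the corresponding stabilized kernel bundles on the pieces. This gives a continuous extension of the index-bundle data across the face and simultaneously proves the decomposition \eqref{dir-sum}: on each boundary face the virtual bundle agrees with $\ind(\bW_0,G_0)\oplus\dots\oplus\ind(\bW_i,G_i)$. Patching these local models together using that the cornered manifold $G$ is covered by such neck-stretching neighborhoods at each face produces a global element $\ind(\bW;G)\in KO(G)$ with the required compatibilities on all strata.

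The main obstacle I expect is carrying out the gluing \emph{uniformly} in the family parameters, i.e.\ getting a lower bound on $\DWP^*\DWP$ modulo the direct-summed small eigenspaces of the pieces that is independent of $(t,g)$ in a neighborhood of the boundary. Once this uniform spectral gap is in hand, the Atiyah finite-dimensional approximation works family-wise, and assembling the index bundle over the cornered manifold $G$ and verifying \eqref{dir-sum} becomes essentially formal. The positivity of the weight $\delta$ is what makes the uniform estimate available, so the weighted Sobolev set-up chosen in Subsection \ref{abelian-asd-op} is precisely what is needed for this step.
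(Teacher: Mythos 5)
Your proposal is correct and follows essentially the same route as the paper: switch to $\DWP$ because of the extended-space bookkeeping, use the neck-stretching local model $[0,\infty]^i\times G_0\times\dots\times G_i$ together with the Donaldson gluing argument (the paper cites \cite[Section 3.3]{Don:YM-Floer}) to identify the index bundle with $\ind(\bW_0,G_0)\oplus\dots\oplus\ind(\bW_i,G_i)$ near each face, and then patch the local identifications to get a global element of $KO(G)$. Your write-up spells out the mechanics (parametrix gluing, uniform spectral gap from the weight $\delta$, Atiyah stabilization) that the paper delegates to the citation, but the overall strategy is the same.
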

Note that $\bW_k$ consists of non-broken metrics and hence $\ind(\bW_k,G_k)$ in (\ref{dir-sum}) is already defined and we do not need to appeal to the lemma to define it. With a slight abuse of notation, $\ind(\bW_j,G_j)$ in (\ref{dir-sum}) denotes an element of $KO(G')$ which is given by the pull-back of $\ind(\bW_j,G_j)\in KO(G_i)$ via the projection map.

\begin{proof}
	This lemma is a global manifestation of the additivity of the numerical index of the operator $\DW$. 
	Firstly let $\bW_0 \times \dots \times \bW_i$ be a family of broken metrics parametrized by $G_1 \times \dots \times G_i$
	on $W$ corresponding to a decomposition of $W$ to cobordisms $W_0$,  $\dots$, $W_i$. Here we assume that $G_j$ is a compact space (not necessarily a manifold)
	that parametrizes the family of non-broken metrics $\bW_j$. These families determine a family of metrics parametrized by 
	$[0,\infty]^i \times G_0\times \dots \times G_i$ on $W$. If $T_0$ is large enough and one uses the operator $\DWP$ in the definition of the index bundles, then the 
	arguments in \cite[section 3.3]{Don:YM-Floer} shows that the pull-back of the
	index bundle:
	$$\ind(\bW_0,G_0)\oplus \dots \oplus \ind(\bW_i,G_i)$$
	 to $(T_0,\infty]^i \times G_0\times \dots \times G_i$ gives an element of $KO((T_0,\infty]^i \times G_0\times \dots \times G_i)$ whose restriction to each face is 
	 isomorphic to the index bundle of the corresponding family of metrics. In the general case, the parametrizing set $G$ is a union of the sets of the above 
	 form. As it can be seen from the arguments of \cite[section 3.3]{Don:YM-Floer}, the isomorphisms in the intersection of these sets can be made compatible.
	 Therefore, one can construct the desired determinant bundle over $G$.
\end{proof}

The orientation bundle of $\ind(\bW;G)$ forms a $\zz/2\zz$-bundle over $G$ which is called the {\it determinant bundle} of the family of metrics $\bW$ and is denoted by $o(\bW)$. For each $g\in G$ the fiber $o(\bW)|_{g}$ consists of the set of orientations of the line $\Lambda^{\max}(\ker(\mathcal \sD_{g}))\otimes (\Lambda^{\max}(\coker(\sD_{g}))^*$. 

As it is mentioned in the proof of Lemma \ref{add-ind}, in the construction of the index bundles, we need to identify the index bundles for broken metrics with the index of non-broken metrics that are converging to the broken ones. These identifications (which are generalization of those of \cite{Don:YM-Floer}) involve choosing cut-off functions and hence are far from being unique. However, the set of choices is contractible. Therefore, any two different choices in the construction of the index bundles give rise to isomorphic determinant bundles and the choice of the isomorphism is unique. 

For a cobordism $W$ and an arbitrary metric $g$ on $W$, the determinant bundle for the one point set $\{g\}$ consists of two points. A homology orientation for $W$ is a choice of one of these two points. Any other metric $g'$  can be connected to $g$ by a path of metrics. This path produces an isomorphism of the determinant bundles for $g$ and $g'$. Furthermore, this isomorphism is independent of the choice of the path. As a result, choice of the homology orientation for one metric determines a canonical choice of the homology orientation for any other metric. Therefore, it is legitimate to talk about homology orientations of $W$ without any reference to a metric on $W$. We will write $o(W)$ for the set of homology orientations of $W$.

Suppose the cobordism $W$ is the composition of cobordisms $W_0:Y_0 \to Y_1$ and $W_1:Y_1 \to Y_2$. Let also $g_0$ and $g_1$ be metrics on these two cobordisms. There is a 1-parameter family of metrics parametrized by $(0,\infty]$ such that the metric over $\infty$ is the broken one on $W=W_0 \circ W_1$ determined by $g_0$ and $g_1$, and the metrics over the other points of $(0,\infty]$ are non-broken. The index bundle for this family over the point $\infty$ has the form $\ind(W_0^{g_0}) \oplus \ind(W_1^{g_1})$. Consequently, this family of metrics produces an isomorphism of $o(W_1)\otimes_{\zz/2\zz} o(W_2)$ and $o(W)$. In particular, if $W_0$ and $W_1$ are given homology orientations, then $W$ inherits a homology orientation that is called the {\it composition} of the homology orientations of $W_0$ and $W_1$. As in the previous paragraph it is easy to see that the composition of homology orientations is independent of the choice of $g_0$ and $g_1$. 

Suppose $W_0$, $\dots$, $W_i$ is a list of cobordisms with homology orientations. These homology orientations can be used to define a homology orientation for $W_0\coprod \dots \coprod W_i$ which is denoted by $o(W_0, \dots, W_i)$. This homology orientation depends on the order of $W_0$, $\dots$, $W_i$. For example:
$$o(W_0,\dots, W_{j+1},W_j,\dots,W_i)=(-1)^{ind(\mathcal D_{W_{j}})\cdot ind(\mathcal D_{W_{j+1}})}o(W_0,\dots, W_j,W_{j+1},\dots,W_i)$$
Next, suppose that there is a cut $T$ in a cobordism $W$ such that $W\backslash T=W_0\coprod \dots \coprod W_i$. Lemma \ref{add-ind} asserts that the homology orientations of  $W_0$, $\dots$, $W_i$ define a homology orientation for $W$. If it does not make any confusion, we denote this homology orientation of $W$ with $o(W_0, \dots, W_i)$, too. This is a generalization of the composition of homology orientations in the previous paragraph.

\subsection{Abelian ASD equation} \label{ab-asd-eq}

Suppose $Y$ is a (possibly disconnected) closed Riemannian 3-manifold. A $\spinc$ structure $\st$ on $Y$ is a principal $Spin(3)$-bundle (or equivalently a principal $U(2)$-bundle) $P$ such that the induced $SO(3)$-bundle, determined by the adjoint action $\ad:Spin^c(3) \cong U(2) \to SO(3)$, is identified with the framed bundle of $TY$. We can also use the determinant map $\det:U(2) \to U(1)$ to construct a complex line bundle that is called the {\it determinant bundle of $\st$} and is denoted by $L_{\st}$. A smooth connection $\underline B$ on $\st$ is $\spinc$ if the induced connection on $\ad(\st)$ is the Levi-Civita connection. The $\spinc$ connection $\underline B$ also determines a connection on $L_\st$ that is called the {\it central part} of $\underline B$ and is denoted by $B$. Since the data of connections on $\st$ is equivalent to that of connections on $TY$ and $L_{\st}$, a $\spinc$ connection $\underline B$ is uniquely determined by its central part. We will write $A_f(Y,\st)$ for the space of all $\spinc$ connections on $\st$ with flat central parts. Note that $L_{\st}$ admits a flat connection if and only if $c_1(L_\st)$ is a torsion element of $H^2(Y;\zz)$, and then any two flat connections on this bundle differs by a closed 1-form on $Y$. Therefore, if $c_1(L_\st)$ is torsion, then $A_f(Y,\st)$ is an affine space modeled on the space of closed 1-forms.

An automorphism of the $\spinc$ structure $\st$ is a smooth automorphism of $\st$ as a principal $U(2)$-bundle that acts trivially on the tangent bundle of $Y$. The {\it gauge group of $Y$}, denoted by $\mathcal G(Y)$, is the group of all such elements, and can be identified with the space of smooth maps from  $Y$ to $S^1$. The space $\mathcal G(Y)$ acts on the space of $\spinc$ connections by pulling back the connections. Given $u: Y \to S^1$ and a $\spinc$ connection $\underline B$ on $\st$, this action sends $\underline B$  to a connection with the central part $B-2u^{-1}du$. Therefore, this action changes the central part $B$ by the twice of an integral closed 1-form. The quotient of $A_f(Y,\st)$ with respect to the action of $\mathcal G(Y)$ is denoted by $\mathcal R(Y,\st)$. Let $\mathfrak t$ be a torsion $\spinc$ structure, and fix a base point in $A_f(Y,\st)$. Then $\mathcal R(Y,\st)$ can be identified with $J(Y):=H^1(Y;\rr)/2H^1(Y;\zz)$ which is a rescaling of the Jacobian torus of $Y$. We will write $\mathcal R(Y)$ for the union $\cup_\st \mathcal R(Y,\st)$. Therefore, this space has a copy of $J(Y)$ for each torsion $\spinc$ structure $\st$.

Next consider a cobordism $W^{\circ}:Y_0\to Y_1$ as in the previous section. In particular, we assume that $Y_0$ is a connected 3-manifold. As in the case of 3-manifolds, a $\spinc$ structure $\s$ on $W^{\circ}$ (or equivalently $W$) is a principal $Spin^c(4)$-bundle such that the induced $SO(4)$-bundle, induced by the adjoin action $\ad:Spin^c(4) \to SO(4)$, is identified with  the frame bundle of $TW^{\circ}$. This principal bundle gives rise to a determinant bundle that is denoted by $L_{\s}$. A $\spinc$ connection $\underline A$ on $\s$ is also uniquely determined by the induced connection on $L_\s$. Again, this induced connection is called the central part of $\underline A$ and is denoted by $A$. Here we are interested in $\spinc$ connections defined on $W$ (rather than $W^{\circ}$) and for analytical purposes we have to work with connections which are in an appropriate Sobolev space and behaves in a controlled way on the ends of $W$. To give the precise definition of this space of connections, fix a connection $\underline {A_0}$ on $\s$ such that the restriction of $A_0$ to the ends $\rr^{\leq 0} \times Y_0 \coprod \rr^{\geq 0} \times Y_1$ is the pull-back of flat connections on $Y_0$ and $Y_1$. The {\it space of weighted Sobolev $\spinc$ connections} for the $\spinc$ structure $\mathfrak s$ is defined as:
$$ \mathcal A(W,\s):=\{\underline A \mid A=A_0+ia, a\in L^2_{k,\ext}(W,\Lambda^1)\}$$
where $k$ is an integer number greater than $1$. 

The restriction of $\mathfrak s$ to $\{0\}\times Y_0$ and $\{0\}\times Y_1$ gives rise to the $\spinc$ structures $\st_0$ and $\st_1$ on $Y_0$ and $Y_1$, respectively. Furthermore,  the limit of $\underline{A}|_{ \{-t\}\times Y_0}$ and $\underline{A}|_{ \{t\}\times Y_1}$ as $t$ goes to $\infty$ induces $\spinc$ connections on $\st_0$ and $\st_1$ with flat central parts. Therefore, there is a map $R: \mathcal A(W,\s) \to A_f(Y_0,\st_0) \times A_f(Y_1,\st_1)$.  Note that if $\st_i$ is non-torsion, then $\mathcal A(W,\mathfrak s)$ is empty, and from now on we assume that $\mathfrak s|_{Y_i}$ is torsion. 

A smooth map $w:W \to S^1$ is {\it harmonic on the ends} if the restriction of $w$ to $ \rr^{\leq0} \times Y_0 \coprod \rr^{\geq0} \times Y_1$ is pull-back of a harmonic circle-valued map on $Y_0$ and $Y_1$. The {\it weighted gauge group} $\mathcal G(W)$ is defined as follows:
$$\mathcal G(W):=\{u:W \to S^1 \mid \exists v,w: \hspace{1mm} u=vw, v \in L^2_{k+1,\delta}(W,S^1),\hspace{1mm} w:W \to S^1 \text{ is harmonic on the ends}\}$$
Here $L^2_{k+1,\delta}(W,S^1)$ is the set of maps $v:Y \to S^1$ such that $v^{-1}dv \in L^2_{k,\delta}(W,\cc)$. Roughly speaking, an element of $\mathcal G(W)$ is exponentially asymptotic to harmonic circle-valued maps on $Y_0$ and $Y_1$. It is straightforward to see $\mathcal G(W)$ is an abelian Banach Lie group with the Lie algebra $L^2_{k+1,\delta}(W,i\rr)$.
The set of connected components of $\mathcal G(W)$ can be identified with $H^1(W,\zz)\cong [W,S^1]$. The gauge group $\mathcal G(W)$ acts on $\mathcal A(W,\mathfrak s)$ by pulling-back the connections. If $u:W\to S^1$ is an element of $\mathcal G(W)$ and $\underline A$ is a $\spinc$ connection, then the action of $u$ maps $\underline A$ to a $\spinc$ connection with the central part $A-2u^{-1}du$. The stabilizer of any point in $\mathcal A(W,\mathfrak s)$ consists of the constant maps in $\mathcal G(W)$. 

The quotient space $\mathcal B(W,\mathfrak s):=\mathcal A(W,\mathfrak s)/\mathcal G(W)$ is a smooth Banach manifold and is called the {\it configuration space of weighted Sobolev $\spinc$ connections} on $\s$. The tangent space at $[\underline A] \in \mathcal B(W,\mathfrak s)$ is isomorphic to the kernel of $d^*:L^2_{k,\ext}(W,i\Lambda^1)\to L^2_{k-1,\delta}(W,i\rr)_0$. Moreover, the action of the gauge group is compatible with $R: \mathcal A(W,\s) \to A_f(Y_0,\st_0) \times A_f(Y_1,\st_1)$. Thus $R$ produces the {\it restriction maps} $r_0: \mathcal B(W,\s) \to \mathcal R(Y_0,\st_0)$ and $r_1: \mathcal B(W,\s) \to \mathcal R(Y_1,\st_1)$. We will write $\mathcal B(W)$ for the union $\bigcup_\mathfrak s \mathcal B(W,\mathfrak s)$. There are ressriction maps from $\mathcal B(W)$ to $\mathcal R(Y_0)$ and $\mathcal R(Y_1)$ which are also denoted by $r_0$ and $r_1$.

\begin{remark}
	In fact, $\mathcal B(W,\mathfrak s)$ can be globally identified with the quotient of $\ker(d^*)$ 
	with respect to the action of the discrete group $H^1(W,\zz)$.
\end{remark} 

By definition, the central part of a connection $\underline A \in \mathcal A(W,\mathfrak s)$ can be written as a sum of the smooth connection $A_0$ that is flat on the ends and an imaginary 1-form $ia\in L^2_{k,\ext}(W,i\Lambda^1)$. Thus $F_c(\underline A)$, the curvature of $A$, is an element of $L^2_{k-1,\delta}(W,i\Lambda^2)$. Consequently, $F_c^+(\underline A)$, the self-dual part of the curvature of $A$, is in $L^2_{k-1,\delta}(W,i\Lambda^+)$. The action of the gauge group does not change the curvature and we have a well-defined map $F_c^+: \mathcal B(W,\mathfrak s) \to L^2_{k-1,\delta}(W,i\Lambda^+)$. In local coordinates around $[\underline A]$, this map is equal to $d^++F_c^+(\underline A): \ker(d^*)\subset L^2_{k,\ext}(W,i\Lambda^1) \to L^2_{k-1,\delta}(W,i\Lambda^+)$ where $d^+$ is the self-dual part of the exterior derivative. For an arbitrary $\nu\in L^2_{k-1,\delta}(W,\Lambda^2)$, define the following moduli spaces:
$$M_\nu(W,\mathfrak s):=\{[\underline A]\in \mathcal B(W,\mathfrak s) \mid F_c^+(\underline A)=i\nu^+\}$$
$$M_\nu(W):=\bigcup_\mathfrak{s} M_\nu(W,\mathfrak s)$$

If there is a chance of confusion about the metric $g$ on $W$ which is used to define $M_\nu(W)$, we denote this moduli space with $M_\nu(W^g)$. With a slight abuse of notation, we will also write $r_0:M_\nu(W) \to \mathcal R(Y_0)$ and $r_1:M_\nu(W) \to \mathcal R(Y_1)$ for the restriction maps restricted to the moduli space $M_\nu(W)$.

\begin{lemma} \label{ind-F^+-1}
	The map $F_c^+:\mathcal B(W) \to L^2_{k-1,\delta}(W,i\Lambda^2)$ is smooth and Fredholm 
	with $\dim(\ker)=b^1(W)$ and $\dim({\rm coker})=b^+(W)$. 
\end{lemma}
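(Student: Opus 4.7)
The plan is to reduce everything to Lemma \ref{ind-DW-1}. First I will check smoothness of $F_c^+$. Because the $\spinc$ structure has abelian structure group, the central part of $\underline A = \underline{A_0} + ia$ satisfies $F_c(\underline A) = F_c(\underline{A_0}) + i\, da$, which is affine in $a \in L^2_{k,\ext}(W, \Lambda^1)$. On each Coulomb slice of $\mathcal B(W,\mathfrak s)$ the map $F_c^+$ is therefore the affine composition $a \mapsto F_c^+(\underline{A_0}) + i\, d^+ a$, which is obviously smooth, and its differential at every point is the bounded linear operator
$$d^+ : \ker(d^*) \cap L^2_{k,\ext}(W, i\Lambda^1) \longrightarrow L^2_{k-1,\delta}(W, i\Lambda^+).$$
It therefore suffices to show that this operator is Fredholm with the stated kernel and cokernel dimensions.

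For the kernel, observe that $\ker(d^+|_{\ker(d^*)})$ is precisely the joint kernel of $-d^* \oplus d^+$ from Lemma \ref{ind-DW-1}, which that lemma identifies with $H^1(W;\rr)$ of dimension $b^1(W)$. So the kernel computation is immediate.

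For the cokernel I want to apply the elementary fact that if $L_1: X \to Y_1$ is surjective and $L_2: X \to Y_2$ is any bounded linear map, then $L_2|_{\ker L_1}$ has the same kernel and cokernel as the joint operator $L_1 \oplus L_2: X \to Y_1 \oplus Y_2$. To apply this with $L_1 = -d^*$ and $L_2 = d^+$, I first note that $d^* a$ has integral zero on $W$ for any $a \in L^2_{k,\ext}(W, \Lambda^1)$: by Stokes' theorem the boundary contributions at infinity vanish because the asymptotic harmonic $1$-forms on the ends have no $dt$-component, so $-d^*$ lands in $L^2_{k-1,\delta}(W,\Lambda^0)_0$. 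Next, Lemma \ref{ind-DW-1} computes the cokernel of the full map $-d^* \oplus d^+: L^2_{k,\ext}(W,\Lambda^1) \to L^2_{k-1,\delta}(W,\Lambda^0) \oplus L^2_{k-1,\delta}(W,\Lambda^+)$ as $\rr \oplus I^+(W)$, with the $\rr$ factor spanned by constants in $\Lambda^0$. Restricting the target of the first component to $L^2_{k-1,\delta}(W,\Lambda^0)_0$ kills exactly this $\rr$ factor, so the restricted joint operator $\widetilde L$ is Fredholm with cokernel $I^+(W)$ supported entirely in the $\Lambda^+$ component. In particular the first component $-d^*: L^2_{k,\ext}(W,\Lambda^1) \to L^2_{k-1,\delta}(W,\Lambda^0)_0$ is surjective, since any $(f_0, 0)$ pairs trivially with the cokernel and so must lie in the image of $\widetilde L$. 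The linear algebra fact then yields $\coker(d^+|_{\ker(d^*)}) \cong \coker(\widetilde L) \cong I^+(W)$, of dimension $b^+(W)$.

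I do not anticipate any serious obstacles: all the genuinely analytic content is already packaged in Lemma \ref{ind-DW-1}, and what remains is the bookkeeping sketched above, together with the one-line Stokes computation for the integral-zero property of $d^* a$ on elements of $L^2_{k,\ext}(W,\Lambda^1)$.
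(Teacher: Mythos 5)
Your proof is correct and takes the same route as the paper: the paper's proof simply observes that $F_c^+$ is locally affine and cites Lemma \ref{ind-DW-1}, and your argument is exactly the intended elaboration of that reduction. The only extra content you supply—the Stokes computation showing $-d^*$ lands in $L^2_{k-1,\delta}(W,\Lambda^0)_0$, the surjectivity of $-d^*$ onto that subspace, and the snake-lemma/restriction bookkeeping identifying $\coker(d^+|_{\ker d^*})$ with $\coker(-d^*\oplus d^+)$ once the constant $\rr$ factor is killed—is precisely what "follows easily" is standing in for.
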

\begin{proof}
	Locally, the map $F_c^+$ is affine and hence is smooth. The rest of the lemma follows easily from Lemma \ref{ind-DW-1}.
\end{proof}
This lemma implies that the index of $F_c^+$ is equal to: 
$$b^1(W)-b^+(W)=-\frac{\chi(W)+\sigma(W)}{2}+\frac{b_1(Y_0)+b_1(Y_1)}{2}.$$ 
A global description of the moduli space $M_\nu(W)$ is given in the following lemma:

\begin{lemma} \label{cob-moduli-1}
	The moduli space $M_\nu(W,\mathfrak s)$ is either empty or can be identified with $J(W):=H^1(W;\rr)/2H^1(W;\zz)$, 
	and this identification is canonical up to a translation on $J(W)$. If $b^+(W)>0$, then $\nu$ 
	can be chosen in such a way
	that $M_\nu(W,\mathfrak s)$ is empty, 
	and in the case $b^+(W)=0$, the moduli space, for any choice of $\nu$, is isomorphic to $J(W)$.
\end{lemma}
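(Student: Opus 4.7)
The plan is to analyze $F_c^+ : \mathcal B(W, \mathfrak s) \to L^2_{k-1,\delta}(W, i\Lambda^+)$ using the observation that, in the affine chart $\underline A = \underline A_0 + ia$, it takes the form $F_c^+(\underline A_0) + i d^+ a$, and that $d^+(u^{-1}du) = 0$ makes it descend to the gauge quotient. Thus $F_c^+$ is a globally affine map of Banach manifolds whose linear part is (up to a factor of $i$) the Fredholm operator $d^+$ of Lemma \ref{ind-F^+-1}.

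For the existence dichotomy, Lemma \ref{ind-F^+-1} implies that the image of $F_c^+$ is a closed affine subspace of codimension $b^+(W)$. If $b^+(W) = 0$, the map is surjective, so $M_\nu(W, \mathfrak s)$ is non-empty for every choice of $\nu$. If $b^+(W) > 0$, I would pick $\nu$ with $i\nu^+$ outside the image, forcing $M_\nu(W, \mathfrak s) = \emptyset$.

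Assuming $M_\nu(W, \mathfrak s)$ is non-empty, I would fix $[\underline A_0] \in M_\nu(W, \mathfrak s)$ and invoke the Remark preceding the statement, which identifies $\mathcal B(W, \mathfrak s)$ with the quotient of $\ker(d^*) \subset L^2_{k,\ext}(W, i\Lambda^1)$ by the discrete action of $H^1(W, \zz)$ via large gauge transformations. A class $[\underline A_0 + ia]$ with $a \in \ker(d^*)$ then lies in $M_\nu(W, \mathfrak s)$ exactly when $d^+ a = 0$, i.e. when $a \in \ker(-d^* \oplus d^+)$. Lemma \ref{ind-DW-1} identifies this kernel canonically with $H^1(W; \rr)$, so $M_\nu(W, \mathfrak s)$ becomes the quotient of $H^1(W; \rr)$ by the lattice of harmonic representatives of large gauge transformations. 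The factor of $2$ in the gauge action $A \mapsto A - 2 u^{-1}du$, combined with the fact that $u^{-1}du/(2\pi i)$ has integral periods, produces precisely the lattice $2 H^1(W, \zz)$ after the natural rescaling by harmonic representatives, yielding the isomorphism with $J(W)$. Different choices of $[\underline A_0]$ change the identification by a translation, confirming the canonical-up-to-translation statement.

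The main subtlety I would expect to navigate lies in the weighted Sobolev / cylindrical-end framework: the Coulomb-slice description of $\mathcal B(W, \mathfrak s)$ and the identification $\ker(-d^* \oplus d^+) \cong H^1(W; \rr)$ from Lemma \ref{ind-DW-1} must be compatible with the action of $\mathcal G(W)$, whose members are required to be harmonic on the ends. In particular, tracking the precise normalization through to the lattice $2 H^1(W, \zz)$ (rather than a different scaling) requires careful bookkeeping of the factor of $2$ in the gauge action together with the correspondence between integral winding classes in $\mathcal G(W)$ and harmonic representatives in $H^1(W; \rr)$.
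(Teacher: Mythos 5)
Your argument is correct and follows essentially the same path as the paper's: both derive the existence dichotomy from the Fredholm cokernel computation (Lemma \ref{ind-DW-1}, equivalently Lemma \ref{ind-F^+-1}) together with the global affine structure of $F_c^+$, and both identify the non-empty moduli space with $J(W)$ by matching representatives against the discrete residual action of $H^1(W;\zz)$ arising from large gauge transformations. The paper works directly with closed $1$-forms (invoking the Stokes identity (\ref{ASD=>closed-1}) to upgrade $d^+(A-A_0)=0$ to $d(A-A_0)=0$) rather than first fixing a Coulomb slice, but the underlying input --- Lemma \ref{ind-DW-1}'s identification of $\ker(-d^*\oplus d^+)$ with $H^1(W;\rr)$ --- is the same, so the difference is cosmetic.
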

\begin{proof}
	Suppose $M_\nu(W,\s)$ is not empty and $[\underline{A_0}]\in M_\nu(W,\mathfrak s)$. 
	For $[\underline A] \in \mathcal B(W,\mathfrak s)$:
	$$F^+_c(\underline A)=F^+_c(\underline {A_0}+(\underline A-\underline {A_0}))=
	F^+_c(\underline {A_0})+d^+(A-A_0)=\nu^++d^+(A-A_0)$$
	Thus $[\underline A]\in M_\nu(W,\mathfrak s)$	if and only if $d^+(A-A_0)=0$, 
	which in turn is equivalent to $d(A-A_0)=0$ by (\ref{ASD=>closed-1}). 
	Two connections $\underline A$ and $\underline A'$ give rise to the same element of $M_\nu(W,\mathfrak s)$
	if $A'=A-2u^{-1}du$ for $u \in \mathcal G(W)$, i.e., $A'$ and $A$ differ by twice of an integral closed 1-form. 
	In summary, after fixing $\underline {A_0}$, $M_\nu(W,\mathfrak s)$ can be identified with 
	$J(W)=H^1(W; \rr)/2H^1(W;\zz)$. 
	
	In order to prove the second part, start with an arbitrary $\spinc$ connection $\underline{A_0}$
	and try to modify this element by $a\in L^2_{k,\delta}(W,\Lambda^1)$:
	\begin{equation} \label{mod-space-non-empty-1}
		F^+(A_0+ia)=i\nu^+ \iff d^+ia=i\nu^+-F^+(A_0)
	\end{equation}
	Lemma \ref{ind-DW-1} states that $d^+$ is surjective in the case $b^+(W)=0$, 
	and hence Equation (\ref{mod-space-non-empty-1}) has solution for any choice of $\nu$. 
	If $b^+(W)>0$, then $d^+$ is not
	surjective and $\nu$ can be picked such that 
	Equation (\ref{mod-space-non-empty-1}) does not have any solution.
\end{proof}
\begin{remark} \label{res-recast-1}
	By our previous discussions, $J(Y)$ and $J(W)$ act on $\mathcal R(Y,\st)$ and $M_\nu(W,\mathfrak s)$, respectively. These actions
	give isomorphisms of $M_\nu(W,\mathfrak s)$, $\mathcal R(Y,\st)$ 
	with $J(W)$, $J(Y)$, in the case that $M_\nu(W,\mathfrak s)$, $\mathcal R(Y,\st)$ are non-empty.
	If these isomorphisms are chosen appropriately, 
	then the restriction map $r=(r_0,r_1):M_\nu^g(W,\mathfrak s)\to\mathcal R(Y_0,\st_0) \times \mathcal R(Y_1,\st_1)$ 
	can be identified with the restriction map $i=(i_0,i_1):J(W) \to J(Y_0)\times J(Y_1)$, induced by the inclusion of $Y_0$ and $Y_1$ in $W$.
\end{remark}

We can also define similar moduli spaces  in the case that $W$ is equipped with a broken metric $g$. Suppose $g$ is a broken metric in correspondence with the cut $T$, and removing $T$ decomposes $W$ into the union of cobordisms $W_0$, $\dots$, $W_i$. Let $T_1$, $\dots$,$T_i$ be the connected components of $T$ such that $T_j$ is the incoming end of the cobordism $W_j$ and one of the outgoing ends of $W_{o(j)}$. Let also $T_0= Y_0$ and $T_{i+1}=Y_1$. The metric $g$ induces a metric $g_j$ on the cobordism $W_j$ and hence we can define the moduli spaces $M_{\nu_j}(W_j^{g_j})$ for a perturbation term $\nu_j$. Since $T_j$ is the incoming end of $W_j$, there is a restriction map $r^0_j:M_{\nu_j}(W_j^{g_j}) \to \mathcal R(T_j)$. Furthermore, there is a restriction map from $M_{\nu_{o(j)}}(W^{g_{o(j)}}_{o(j)})$ to $\mathcal R(T_j)$ which will be denoted by $r_j^1$. Define:
$$M_{\nu}(W^{g}):=\{([\underline A_0],\dots,[\underline A_{i}])\in M_{\nu_0}(W_0^{g_0})\times \dots \times M_{\nu_i}(W_i^{g_{i}})\mid r^j_0([\underline A_{j}])=r_1^{j}([\underline A_{o(j)}])\hspace{3mm} 1\leq j\leq i\}$$
One can still define the restriction maps  $r_0:M_\nu(W^g)\to\mathcal R(Y_0)$ and $r_1:M_\nu(W^g)\to\mathcal R(Y_1)$ with the aid of the restriction maps on the moduli spaces $M_{\nu_0}(W^{g_0}_0)$ and $M_{\nu_{o(i+1)}}(W^{g_{o(i+1)}}_{o(i+1)})$. 

The broken metric $g$ determines a family of metrics parametrized with $[0,\infty]^i$ on $W$. For each element $g'=(t_1, \dots, t_i)\in [1,\infty]^i$ of this family and $([\underline A_0],\dots,[\underline A_{i}]) \in M_{\nu}(W^{g})$, we can construct a $\spinc$ connection on $W$. To that end, fix a function $\chi:\rr \to \rr^{\geq 0}$ such that $f(t)=1$ for $t\leq \frac{1}{2}$ and $f(t)=0$ for $t\geq 1$. This function can be used to define a function $\chi_{t_j}:W_j \to \rr^{\geq 0}$ for $0\leq j \leq i$. The function $\chi_{t_j}$ is defined to be equal to 1 on the compact part $W_j^{\circ}$. For a point $(t,y)\in \rr^{\leq 0} \times Y_j$ in the incoming end, define $\chi_{t_j}(t,y):= \chi(-\frac{t}{t_j})$. Use a similar definition to extend $\chi_{t_j}$ to the outgoing ends. Now suppose $\underline A'_j$ is the $\spinc$ connection on $W_j$ which is equal to $\underline A_j$ on $W_j^\circ$ and is equal to $\chi_{t_j} \underline A_j+ (1-\chi_{t_j})\pi_2^*(R_{k}(\underline A_j))$ for each end of the form $\rr^{\leq 0}\times T_k$ or $\rr^{\geq 0}\times T_k$ where $1\leq k \leq i$. Here $\pi_2:\rr^{\leq 0}\times T_k \coprod \rr^{\geq 0} \times T_k \to T_k$ is the projection to the second factor, and $R_k: \mathcal A(W) \to A_f(T_k)$ is the analogue of the map $R$, defined earlier. Therefore, $\underline A'_j$ is a $\spinc$ connection whose restriction to the subset $(-\infty,-t_j]\times T_j$ is pull back of a connection on $T_j$. A similar property holds for the outgoing ends. Because each cobordism $W_j$ has one incoming end, there is a unique way to glue the connections $\underline A_j'$ to define an element $\phi_{g'}(\underline A_0,\dots,\underline A_{i})$ of the configuration space of $\spinc$ connections on $W$ with the (possibly broken) metric $g'$. Note that the  construction of this element depends on the choice of the lift of $[\underline A_j]\in \mathcal B(W_j)$ to $\underline A_j \in \mathcal A(W_j)$. However, for a non-broken metric $g'\in [1,\infty)^i$, $\phi_{g'}(\underline A_0,\dots,\underline A_{i})$ determines a $\spinc$ connection on $W$ that is independent of the choice of $g'$. We assign this $\spinc$ connection to $([\underline A_0],\dots,[\underline A_{i}])$. In particular, the partition of the moduli space $M_{\nu}(W^{g})$ with respect to the $\spinc$ structures is still well-defined in the case of broken metrics. 

Let $\bW$ be a family of metrics on $W$ parametrized by a cornered manifold $G$. For each choice of integers $l$ and $k$, we shall construct a space $\mathcal V^l_k(\bW)$ with a projection map $\pi: \mathcal V^l_k(\bW) \to G$. The fiber of $\pi$ over a non-broken metric $g\in G$ is equal to $L^2_{k,\delta}(W^g,\Lambda^l)$. These fibers together form a Banach bundle on the interior of $G$. Let $g\in G$ be a broken metric determined by a Riemannian metric on the decomposition $W_0\coprod \dots \coprod W_i$ of the complement of a cut $T$ in $W$. We define the fiber of $\mathcal V^l_k(\bW)$ over $g$ to be equal to $L^2_{k,\delta,c}(W_0,\Lambda^l)\oplus L^2_{k,c,c}(W_1,\Lambda^l) \oplus\dots \oplus L^2_{k,c,c}(W_{i-1},\Lambda^l) \oplus L^2_{k,c,\delta}(W_i,\Lambda^l)$. The space $L^2_{k,\delta,c}(W_0,\Lambda^l)$ is the subspace of elements of $L^2_{k,\delta}(W_0,\Lambda^l)$ that are supported in $\rr^{\leq 0} \times Z_0\cup W_0^\circ\cup [0,1]\times (\partial^{out}W_0)$ where $\partial^{out}W_0$ is the outgoing boundary of $W_0$. The index $c$ in the other Banach spaces should be interpreted similarly. If we instead use the Banach space $L^2_{k,c,c}(W_0,\Lambda^l)\oplus L^2_{k,c,c}(W_1,\Lambda^l) \oplus\dots \oplus L^2_{k,c,c}(W_{i-1},\Lambda^l) \oplus L^2_{k,c,c}(W_i,\Lambda^l)$, the resulting space will be dented by $\mathcal V^l_k(\bW^\circ)$. Starting with such a metric, there is a family of metrics on $W$ parametrized by $[0,\infty]^i$ that forms part of $G$. The point of considering forms that vanish on the intermediate ends is that any element of $L^2_{k,\delta,c}(W_0,\Lambda^l)\oplus \dots \oplus L^2_{k,c,\delta}(W_i,\Lambda^l)$ determines an element of $L^2_{k,\delta}(W^{g'},\Lambda^l)$ where $g'\in [0,\infty]^i$. We can also define $\mathcal V_k^+(\bW)$ as a subspace of $\mathcal V^2_k(\bW)$ that consists of the self-dual 2-forms.

The definition of the moduli spaces can be extended to the family of metrics $\bW$. A perturbation term for this family is a smooth section $\eta: G \to \mathcal V^2_{k-1}(\bW^\circ)$. Smoothness of $\eta$ at the boundary points of $G$ should be interpreted in the following way. Fix a face $G'=G_0 \times \dots \times G_i$ of codimension $i$ in $G$. This face parametrizes a family of metrics broken along a cut $T$. Removing $T$ decomposes $W$ into the union of cobordisms $W_0 \coprod \dots \coprod W_i$, and $G_k$ parametrizes a family of metrics $\bW_k$ on $W_k$ such that $\bW|_{G'}=\bW_0 \times \dots \times \bW_i$. Smoothness of $\eta$ over the face $G'$ implies that there are $\eta_k\in\mathcal V_{k-1}^2(\bW_k^{\circ})$ and a neighborhood of $G'$ in $G$ of the form $[0,\infty]^i\times G_0 \times \dots \times G_i$ such that:
$$\eta(t_1,\dots,t_i,g_0,\dots,g_i)|_{W_k}=\eta_k(g_k)$$

The moduli space $\meGb$ is defined to be:
$$\bigcup_{g\in G}M_{\eta(g)}(W^g)$$
Again, use $\pi$ to denote the projection map from $\meGb$ to $G$. The definition of topology on this moduli space is standard and we refer the reader to \cite{KM:monopoles-3-man}. Given a face $G'$, we will write $\meGbr$ for the part of the moduli space that is mapped to $G'$ by $\pi$. 

The decomposition of the moduli spaces $\meGb$ with respect to $\spinc$ structures on $W$ can be recovered partly. Firstly let $[\underline A]\in \mathcal B(W,\s)$. Then the restriction of $\s$ to $\partial W$ is a torsion $\spinc$ structure and hence self-intersection of $c_1(\s)$, as a rational cohomology class, is well-defined. Therefore:
$$E([\underline A]):=- c_1(\s)\cdot c_1(\s)=-c_1(L_\s)\cdot c_1(L_\s)=\frac{1}{4\pi^2}\int_W F(A)\wedge F(A)$$
assigns a rational number to $[\underline A]$ that is called the {\it energy} of $[\underline A]$. In fact, if $m$ is the number of torsion elements in $H^2(\partial W,\zz)$, then the restriction of $mc_1(\s)$ to $\partial W$, as an integer class, vanishes. Consequently, $m^2e([\underline A])$ is an integer number. In particular, the set of possible values for $E([\underline A])$ is a discrete subset of the rational numbers. If $g$ is a broken metric, inducing the decomposition $W_1\coprod \dots \coprod W_i$ of $W$, and $[\underline A]\in M_{\nu}(W^{g})$, then:
$$E([\underline A]):= \sum_{j=1}^iE([\underline A_j])$$
where $[\underline A_j]$ is the induced connection on $W_j$. If $\s$ is the $\spinc$ structure associated with the broken $\spinc$ connection $[\underline A]$, then it is still true that $E([\underline A])=- c_1(\s)\cdot c_1(\s)$. Therefore, the energy of an element of the moduli space $M_{\nu}(W^{g})$ again has the form $\frac{k}{m^2}$ for an integer number $k$.

For a metric $g$ (broken or non-broken), let $M_{\nu}(W^{g},e)$ be the set of elements of $M_{\nu}(W^{g})$ with energy $e$. More generally, if $\bW$ is a family of metrics, then define:
$$\meGbe:=\{[\underline A]\in \meGb \mid E([\underline A])=e\}$$
Since the set of possible values for $e$ is discrete, $\meGbe$ is a union of the connected components of $\meGb$. 

\begin{lemma} \label{finite-spinc}
	Suppose $\bW$ is a family of metrics parametrized by a compact cornered manifold $G$. For any choice of a smooth perturbation $\eta$, the moduli space 
	$\meGbe$ is compact. Moreover, there is a positive number $\epsilon$ such that if $||\eta(g)||_2< \epsilon$ for any $g\in G$, 
	then $\meGbe$ is empty for negative values of $e$. 
\end{lemma}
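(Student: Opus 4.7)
The plan is to derive a single identity expressing $E([\underline A])$ through the $L^2$-norms of the self-dual and anti-self-dual parts of the curvature, which simultaneously gives the negative-energy assertion and furnishes the a priori curvature bound required for compactness. Writing $F_c(\underline A) = i\omega$ for a real 2-form $\omega$, the identities $F_c\wedge F_c = -\omega\wedge\omega$ and $\int_W\omega\wedge\omega = \|\omega^+\|_2^2 - \|\omega^-\|_2^2$, combined with the perturbed ASD equation $\omega^+ = \eta(g)^+$, yield
\begin{equation}\label{eq:energy-identity-pp}
4\pi^2\,E([\underline A]) \;=\; \|\omega^-\|_2^2 - \|\eta(g)^+\|_2^2 \;\geq\; -\|\eta(g)\|_2^2,
\end{equation}
an identity that persists on broken metrics by additivity of both energy and the relevant $L^2$-norms across the cuts (since elements of $\mathcal V^l_{k-1}(\bW^\circ)$ vanish on intermediate ends). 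Since the excerpt records $E([\underline A]) \in \tfrac{1}{m^2}\zz$ with $m$ determined uniformly by the torsion in $H^2(\partial W;\zz)$, a negative value satisfies $E \leq -1/m^2$, so \eqref{eq:energy-identity-pp} forces $\|\eta(g)\|_2 \geq 2\pi/m$. Choosing $\epsilon := 2\pi/m$ therefore rules out negative-energy solutions whenever the hypothesis of the second assertion holds.

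For compactness, let $\{[\underline A_n]\}$ be a sequence in $\meGbe$ with $g_n := \pi([\underline A_n])$; by compactness of $G$ I may pass to a subsequence so that $g_n \to g_\infty$. Rearranging \eqref{eq:energy-identity-pp} gives $\|F_c(A_n)\|_2^2 = 2\|\eta(g_n)^+\|_2^2 + 4\pi^2 e$, uniformly bounded in $n$ because $\eta$ is smooth on the compact $G$. The integer class $c_1(L_{\s_n}) = (i/2\pi)[F_c(A_n)]$ then has its self-dual harmonic projection controlled by $\eta(g_n)^+$ and its anti-self-dual part controlled by $\|\omega_n^-\|_2$, so only finitely many $\spinc$-structures can occur among the $\s_n$; I pass to a further subsequence fixing $\s$. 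Within $\mathcal B(W,\s)$, Lemma~\ref{cob-moduli-1} identifies the solution space with the compact torus $J(W)$, and standard Coulomb gauge-fixing combined with the elliptic estimates for the Fredholm operator $\DW$ of Lemma~\ref{ind-DW-1} extracts a convergent subsequence whenever $g_\infty$ is non-broken.

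The main obstacle is handling a broken limit $g_\infty$ corresponding to a cut $T$ with $W\setminus T = W_0\sqcup\cdots\sqcup W_i$: the sequence must converge to an element of the fibered product $M_{\eta(g_\infty)}(W^{g_\infty})$ with the restriction maps $r_0^j, r_1^j$ matched across the cuts. This is the abelian analogue of the standard Uhlenbeck--Floer neck-stretching compactness, and is considerably simpler than the non-abelian case because there is no bubbling in $U(1)$ gauge theory: loss of compactness can come only from elongating necks. Gauge-fixing separately on each piece $W_k$ using the weighted Sobolev structure underlying $\mathcal V_k^l(\bW)$, together with continuity of the asymptotic flat connections across the cut cylinders, produces a limit in $M_{\eta(g_\infty)}(W^{g_\infty})$; additivity of the energy across the cut then places this limit in the subspace of energy $e$, completing the proof.
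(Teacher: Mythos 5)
Your argument follows essentially the same route as the paper: the identity $4\pi^2 E = \|\omega^-\|_2^2-\|\omega^+\|_2^2$ (with $\omega^+=\eta(g)^+$ on the moduli space) yields both the lower bound $E\geq -\tfrac{1}{4\pi^2}\|\eta(g)\|_2^2$ and, after rearranging, the uniform $L^2$ curvature bound $\|F(A)\|_2^2 = 2\|\eta(g)^+\|_2^2 + 4\pi^2 e$ feeding into Uhlenbeck/Hodge-type compactness, with the broken-metric case handled by Donaldson-style neck-stretching on cylindrical ends. The one point you gloss over that the paper flags explicitly is why no energy escapes to infinity along the ends (or the elongating intermediate necks): this is not merely ``no bubbling'' but the fact that $L^2_{k,\delta}$ solutions decay exponentially to flat limits, so the curvature integral picks up no contribution from the necks in the limit---worth stating, since it is exactly where Yang--Mills compactness for higher-rank bundles would fail.
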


\begin{proof}
	Suppose $[\underline A] \in \meGbe$ and $g\in G$ is the corresponding metric. Then:
	\begin{equation} \label{F^+-F^-_1}
		E([\underline A] )= -\frac{1}{4\pi^2} (||F^+(A)||^2_2-||F^-(A)||^2_2)\geq -\frac{1}{4\pi^2} ||\eta(g)||_2^2
	\end{equation}	
 	This inequality verifies the second claim in the lemma, because the set of possible values for $E$ is discrete.
	
	Compactness of $\meGbe$ can be proved by a standard argument. For the convenience of the reader, we sketch the main steps of the proof. 
	Let $\{z_i\}_{i\in \nn}\subset  \meGbe$ and $\pi([\underline z_i])=g_i$. Compactness of $G$ implies that,
	after passing to a subsequence, there is $g_0\in G$ such that the sequence converges to $g_0$. 
	By abuse of notation, we use the same notation $\{z_i\}_{i\in \nn}$ each time that we pass to a subsequence. 
	Firstly assume that $g_0$ is in the interior of $G$ and parametrizes a non-broken metric on $W$. Therefore, we can assume that $g_i$ is a non-broken metric.
	By trivializing $\bW$ in a neighborhood of $g_0$, each element of the sequence $\{z_i\}$ is represented by a $\spinc$ connection $\underline A_i$ on a fixed cobordism 
	$W$. As a consequence of (\ref{F^+-F^-_1}): 
	\begin{equation} \label{bound-L^2}
		||F(A_i)||_2^2= ||F^{+_i}(A_i)||^2_2+||F^{-_i}(A_i)||^2_2\leq 4\pi^2 e+2||\eta(g_i)||_2^2
	\end{equation}	
	where $F^{+_i}(A_i)$ and $F^{-_i}(A_i)$ are the self-dual and the anti-self-dual parts of $F(A_i)$ with respect to the metric $g_i$. 
	The $L^2$ norm in (\ref{bound-L^2}) is also computed with respect to $g_i$. Because $g_i$ is convergent to $g_0$ and $\eta$ is a continuous section over $G$, 
	the $L^2$ norm of $F(A_i)$ is uniformly bounded. By Uhlenbeck compactness theorem, there is a sequence $u_i\in \mathcal G(W)$ such that after passing
	to a subsequence, $u_i \cdot \underline A_i$ is $L^2_k$ convergent over a regular neighborhood of $W^{\circ}$. 
	Note that we are using the Uhlenbeck compactness theorem for $U(1)$ connections (which is 
	essentially a result of the Hodge theory). In particular, the uniform bound on $||F(A_i)||_2$ can be an arbitrary number (as oppose to the non-abelian case that the 
	uniform bound has to be small enough.) Since the perturbations $\eta(g_i)$ is zero on the cylindrical ends, the restriction of the connection $A_i$ to the ends is ASD 
	and the technique of \cite[Chapter 4]{Don:YM-Floer} can be invoked to show that, after passing to a subsequence, the restriction of $A_i$ to the ends is 
	$L^2_{k,\delta}$-convergent. (In fact $k$ can be any arbitrary positive number.)  Consequently, the constructed subsequence is convergent over $W$ 
	by a standard patching argument \cite{DK:geo-4-man}. It is worth noting that because of the linear nature of our moduli spaces, the energy of a subsequence of the 
	elements of $\meGbe$ does not slide off the ends.

	Next, let $g_0$ be a broken metric and $W_0 \coprod \dots \coprod W_i$ be the corresponding decomposition of $W$. A similar argument as before proves the 
	convergence of a subsequence on the compact set $W_0^\circ \coprod \dots \coprod W_i^\circ$, the incoming end of $W_0$ and the outgoing end of $W_{o(i+1)}$.
	For the intermediate ends, an analogue of \cite[Proposition 4.4]{Don:YM-Floer} can be applied to finish the proof of compactness.
\end{proof}

We need to show that the moduli space $\meGb$ is a ``nice'' space for a ``generic'' choice of $\eta$. We start with the case that $\bW$ consists of only non-broken metrics. Define the {\it family of configuration spaces} $\mathcal B(\bW)$ to be the Banach bundle over $G$ that its fiber over $g\in G$ is $\mathcal B(W^g)$. We denote an element of $\mathcal B(\bW)$ by $([\underline A],g)$ to emphasize that $[\underline A]$ is in the fiber over $g$. The space $\mathcal V_{k-1}^+(\bW)$ also defines a Banach bundle over $G$. For a perturbation $\eta:G \to \mathcal V^2_{k-1}$, we can define:
$$F_G^+:\mathcal B(\bW) \to \mathcal V_{k-1}^+(\bW)$$
$$F_G^+([\underline A],g)=F^{+_g}(A)-i\eta(g)^{+_g}$$
 The moduli space $\meGb$ can be realized as the inverse image of the zero section of $\mathcal V_{k-1}^+(\bW)$ by the map $F_G^+$.  Lemma \ref{ind-F^+-1} implies that $F^+_G$ is Fredholm and its index is equal to $b^1(W)-b^+(W)$. Therefore, if $F^+_G$ is transverse to the zero section of $\mathcal V_{k-1}^+(\bW)$, then $\meGb$ is a smooth manifold of dimension $b^1(W)-b^+(W)+\dim(G)$. The following lemma states that for an appropriate choice of $\eta$ a stronger version of this transversality assumption holds:

\begin{lemma} \label{super-transversality-1}
	Let $\bW$ be a family of non-broken metrics on the cobordism $W$ parametrized by the manifold $G$. 
	A smooth manifold $X$ and a smooth map $\Phi: X \to \mathcal R(Y_0) \times \mathcal R(Y_1)$ are also given. 
	Then there is a perturbation $\eta$ such that $F^+_G$ is transversal to the zero section of $\mathcal V_{k-1}^+(\bW)$ and 
	$r=(r_0,r_1):\meGb \to \mathcal R(Y_0) \times  \mathcal R(Y_1)$ is transversal to $\Phi$.
\end{lemma}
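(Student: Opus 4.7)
The plan is to run the standard Sard-Smale argument on a universal moduli space parametrized by perturbations. First I would introduce a separable Banach space $\mathcal P$ of smooth sections $\eta : G \to \mathcal V^2_{k-1}(\bW^\circ)$ (hence vanishing on all the ends of every fiber), equipped with a Floer-type weighted norm $\sum_k \varepsilon_k \|\cdot\|_{C^k}$ so that $\mathcal P$ sits densely inside the relevant Sobolev completions.  Then form the universal parametrized moduli space
$$\mathcal M := \bigl\{([\underline A], g, \eta) \in \mathcal B(\bW) \times \mathcal P \,:\, F_c^{+_g}(A) = i\,\eta(g)^{+_g}\bigr\}.$$
The first step is to verify that $\mathcal M$ is a smooth Banach manifold.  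The linearization of the defining equation in the variables $(a, \delta g, \delta\eta)$ has the form $d^{+_g} a + (\text{metric variation}) - \delta\eta^{+_g} = 0$, and I need to show it is surjective onto $L^2_{k-1,\delta}(W^g, i\Lambda^+)$.  By Lemma~\ref{ind-DW-1}, the cokernel of $d^+$ restricted to $\ker d^* \subset L^2_{k,\ext}(W,i\Lambda^1)$ is represented by closed harmonic self-dual forms of class $I^+(W)$.  Compactly supported smooth self-dual perturbations $\delta\eta$ pair non-trivially with every non-zero such harmonic form by unique continuation, and this is the analytic input that yields surjectivity.

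Once $\mathcal M$ is known to be smooth, I incorporate $\Phi$ by setting
$$\mathcal N := \bigl\{([\underline A], g, \eta, x) \in \mathcal M \times X \,:\, r([\underline A], g) = \Phi(x)\bigr\},$$
and check that $\mathcal N$ is likewise a smooth Banach manifold.  For this it suffices that $dr - d\Phi : T\mathcal M \oplus TX \to T\mathcal R(Y_0) \oplus T\mathcal R(Y_1)$ be surjective, which reduces to $dr$ alone surjecting onto $T\mathcal R(Y_0) \oplus T\mathcal R(Y_1) \cong H^1(Y_0;\rr) \oplus H^1(Y_1;\rr)$.  The tangent space $T_{([\underline A],g,\eta)}\mathcal M$ contains all infinitesimal deformations of the form $(a, 0, d^+ a)$ with $a \in \ker d^* \subset L^2_{k,\ext}(W,i\Lambda^1)$, since any such $a$ preserves the defining equation once $\delta\eta$ is chosen accordingly.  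Given any prescribed pair $(h_0, h_1) \in C_0 \oplus C_1$, one extends it to an element of $L^2_{k,\ext}(W,i\Lambda^1)$ by cut-off and projects into $\ker d^*$ by the Fredholm alternative for $d^* d$ as in the proof of Lemma~\ref{ind-DW-1}; this realizes every tangent vector in $H^1(Y_0;\rr) \oplus H^1(Y_1;\rr)$.

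Finally, the projection $\pi : \mathcal N \to \mathcal P$ is Fredholm (its fiber over $\eta$ being the fiber product of $\meGb$ with $X$ over $\mathcal R(Y_0) \times \mathcal R(Y_1)$).  The Sard-Smale theorem then produces a residual, hence non-empty, set of regular values $\eta \in \mathcal P$; for any such $\eta$ both transversality conclusions of the lemma hold simultaneously.  The main obstacle is the surjectivity check in the first step: one must verify that the compact-support-on-ends constraint built into $\mathcal P$ still leaves enough freedom to eliminate the $I^+(W)$-dimensional cokernel of $d^+$, and this is precisely the content of the unique continuation property for harmonic self-dual forms on $W$.  Once this analytic point is in place, the rest of the argument is formal.
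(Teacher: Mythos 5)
Your argument is correct and rests on the same two analytic inputs as the paper's: (i) any harmonic self-dual form in $L^2_{k-1,\delta}$ that is $L^2$-orthogonal to all compactly supported $2$-forms must vanish — the paper derives this from the spectral decomposition of $\tfrac{d}{dt}-Q$ on the cylindrical ends, while you invoke unique continuation, which is equally adequate since $W$ is connected — and (ii) the cut-off extensions $\varphi(h)$ of boundary harmonic $1$-forms $h\in C_0\oplus C_1$ give tangent directions whose asymptotic limit is exactly $h$, so that $dr$ surjects onto $T\mathcal R(Y_0)\oplus T\mathcal R(Y_1)$. The packaging, however, is genuinely different. For the $F^+_G$-transversality the paper builds $\eta$ inductively over a finite open cover of $G$ in which $\bW$ has been trivialized, running a local Sard--Smale step with perturbations in the fixed Banach space $L^2_{k-1,c}(W,\Lambda^2)$ and gluing by cut-offs; you instead fix a separable Floer-norm Banach space $\mathcal P$ of global sections once and for all and run Sard--Smale on a single universal moduli space $\mathcal M$. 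For the transversality of $r$ to $\Phi$ the paper exploits the abelian translation identity $M_{\eta+d^+\varphi(h)}(W^g)=M_\eta(W^g)+\varphi(h)$, which reduces the problem to a \emph{finite-dimensional} Sard argument on the already-cut-out moduli space; you instead enlarge to a universal space $\mathcal N$ and run Sard--Smale once more. Your version is more uniform and would adapt directly to non-abelian settings where the translation trick is unavailable, while the paper's construction keeps the second step finite-dimensional and, as a by-product of the inductive patching, yields the relative extension statement of Remark~\ref{ext-per-1} essentially for free.

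Two small corrections. Your claim that $T\mathcal M$ contains \emph{all} deformations $(a,0,d^+a)$ with $a\in\ker d^*\subset L^2_{k,\ext}$ overshoots: for a general such $a$ the form $d^+a$ lies in $L^2_{k-1,\delta}$ but is not compactly supported, so it need not lie in $T_\eta\mathcal P$. It does hold for $a=\varphi(h)-df$ (with $f$ solving $d^*df=d^*\varphi(h)$, so that $d^+a=d^+\varphi(h)$ is supported in $W^\circ$ and $a$ is still asymptotic to $h$), which is the only case your argument actually uses. Second, you should say explicitly that $\eta$ needs to be a common regular value of \emph{both} projections $\mathcal M\to\mathcal P$ and $\mathcal N\to\mathcal P$: regularity of the fiber product $\meGb\times_\Phi X$ by itself does not force $\meGb$ to be cut out transversally. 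You do have both, since you establish smoothness of $\mathcal M$ first and the two residual sets intersect in a residual set, but as written the last paragraph reads as though one Sard--Smale application suffices.
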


\begin{proof}
	Firstly it will be shown that there is $\eta$ such that the first required transversality holds.
	Choose two open covers $\{U_i\}$ and $\{V_i\}$ of $G$ such that:
	$$G=\bigcup_{i=1}^n U_i =\bigcup_{i=1}^n V_i \quad \overline {V_i}\subset U_i$$
	and $\bW|_{U_i}$ is a trivial bundle.
	We construct the perturbation $\eta$ inductively on $\bigcup_{i=1}^k U_i$ 
	such that the transversality assumption holds on $\bigcup_{i=1}^k V_i$. Suppose $\eta$ is 
	constructed on $\bigcup_{i=1}^k U_i$ and $\phi_1$ and $\phi_2$ are two smooth functions on $G$ with the following properties: $\phi_1$ is supported in 
	$\bigcup_{i=1}^k U_i$ and is non-zero on $\bigcup_{i=1}^k V_i$. Similarly, $\phi_2$ is supported in $U_{k+1}$ and non-zero on $V_{k+1}$. 
	Furthermore, we assume that $\phi_1+\phi_2|_{\bigcup_{i=1}^{k+1} V_i}=1$ .
	Define:
	\begin{align}
		\Psi&:\mathcal B(\pi^{-1}(\bigcup_{i=1}^{k+1} U_i)) \times L^2_{k-1,c}(W,\Lambda^2) \to\mathcal  V_{k-1}^+(\bW)\nonumber \\
		&([\underline A],g,\nu) \to F^{+_g}(A)-i\phi_1\eta^{+_g}(g)-i\phi_2 \nu^{+_g} \nonumber
	\end{align}	
	Note that the fibers of $\bW$ over $U_{k+1}$ can be identified with $W$. In the definition of $\Psi$ we fix one such identification and hence elements of  	
	$L^2_{k-1,c}(W,\Lambda^2)$ can be 
	considered as 2-forms on $W^g$ when $g\in U_{k+1}$. Because of our assumption on the supports of $\phi_1$ and $\phi_2$, it is clear that $\Psi$ is well-defined.
	
	For $g\in\bigcup_{i=1}^{k+1} V_i$, if $\phi_2(g)=0$, then $g\in \bigcup_{i=1}^k V_i$ and $\phi_1(g)=1$. Our assumption on $\eta$ implies that $\Psi$ is 
	transverse to the zero section in the fiber over $g$. 
	If $\phi_2(g)>0$ and $\Psi([\underline A],g,\nu)=0$, then post-composing the derivative of $\Psi$ at $f=([\underline A],g,\nu)$ with the projection to the fiber of 
	$\mathcal  V_{k-1}^+(\bW)$ gives rise to a linear map: 
	\begin{align}
		D_f\Psi&: \ker(d^*)\oplus T_gG \oplus L^2_{k-1,c}(W,\Lambda^2) \to L^2_{k-1,\delta}(W,\Lambda^{+_{g}})\nonumber \\
		&(a,0,\mu) \to d^{+_{g}}(a)-\phi_2\mu^{+_{g}} \nonumber
	\end{align}
	In order to construct the right inverse for $D_f\Psi$, observe that the set of harmonic $g$-self-dual forms, denoted by $\mathcal H^+$, 
	does not have any non-zero element that is orthogonal to $L^2_{k-1,c}(W,\Lambda^2)$. 
	Because otherwise that element would be supported in the cylindrical ends and this is absurd according to the discussion of the solutions of the equation
	$\frac{d}{dt}-Q$ in the previous subsection.
	Therefore, there is a linear map $T_1: \mathcal H^+ \to L^2_{k-1,c}(W,\Lambda^{+_g})$ such that 
	$$\omega-T_1(\omega) \in (\mathcal H^+)^\bot \subset L^2_{k-1,\delta}(W,\Lambda^{+_g}).$$
	where the orthogonal complement $(\mathcal H^+)^\bot$ of $\mathcal H^+$ is defined with respect to the $L^2$ norm defined by $g$.
	On the other hand, there is a linear operator $T_2: (\mathcal H^+)^\bot \to \ker(d^*)$ such that $d^{+_g} \circ T_2=id$. 
	Using these two operators, we can construct a right inverse for $D_f\Psi$:
	$$L: L^2_{k-1,\delta}(W,\Lambda^{+_{g}})=\mathcal H^+\oplus (\mathcal H^+)^\bot \to \ker(d^*)\oplus T_gG \oplus L^2_{k-1,c}(W,\Lambda^2)$$
	$$L(\omega,v)=(T_2(v+\omega-T_1(\omega)),0,-T_1(\omega))$$
	
	By the implicit function theorem for maps between Banach manifolds, the inverse image of the zero section, 
	$\Psi^{-1}(0) \subset\mathcal B(\pi^{-1}(\bigcup_{i=1}^{k+1} U_i)) \times L^2_{k-1,c}(W,\Lambda^2)$, 
	is a regular Banach manifold.
	The projection map from the Banach manifold $\Psi^{-1}(0)$ to 
	$L^2_{k-1,c}(W,\Lambda^2)$ is a Fredholm map whose index is equal to $dim(G)+b^1(W)-b^+(W)$. 
	Thus by the Sard-Smale theorem there is a residual subset of $L^2_{k-1,c}(W,\Lambda^2)$ 
	which are regular values of this projection map. 
	
	Next we want to show that $\eta$ can be modified such that $r$ is transverse to $\Phi$. 
	For the simplicity of the exposition we assume that $G$ consists of one point $g$. Essentially the same argument treats the general case.
	The key point is that replacing $\eta(g)$ with $\eta(g)+d^{+_g}a$ for an arbitrary $a\in L^2_{k,\ext}(W,\Lambda^1)$ 
	produces a moduli space that can be identified with $M_\eta(W^g)$. 
	In fact, if $F^{+_g}( A)=i\eta(g)^{+_g}$, then $F^{+_g}({A+ia})=i\eta(g)^{+_g}+d^{+_g}a$. 
	Moreover, regularity of $M_\eta(W^g)$ at $[\underline A]$ implies the regularity of $M_{\eta+d^+a}(W^g)$ 
	at $[\underline {A+ia}]$.
	Consider the map:
	\begin{align}
		\Psi'&:M_\eta(W^g)\times C_0 \oplus C_1 \to \mathcal R(Y_0) \times \mathcal R(Y_1)\nonumber\\
		&([\underline A],h)\to r([\underline{A+i\varphi(h)}])\nonumber
	\end{align}	
	This map is clearly a submersion. Therefore again for a generic choice of 
	$h\in C_0 \oplus C_1$ the restriction map of $M_{\eta+d^+\varphi(h)}(W^g)=M_\eta(W^g)+\varphi(h)$ is transversal to $\Phi$. 
\end{proof}

\begin{remark} \label{ext-per-1}
	The proof of Lemma \ref{super-transversality-1} shows that the following relative version is also valid. 
	Suppose $H \subset G$ is a compact sub-manifold and a perturbation $\eta_0$, defined on an open neighborhood of $H$, is given such that 
	the transversality assumptions of Lemma \ref{super-transversality-1} holds. 
	Then there exists a smooth perturbation $\eta$ defined on $G$ such that $\eta$ agrees with $\eta_0$ in a probably smaller neighborhood of $H$. 
\end{remark}

A perturbation $\eta$ for a family of non-broken metrics $\bW$ is called {\it admissible} if $F^+_G$ is transverse to the zero section of $\mathcal V_{k-1}^+(\bW)$. In the presence of broken metrics, we need to require more in order to have a well-behaved moduli spaces. In this case, if $G'$ is a face of $G$, then it parametrizes a family of non-broken metrics on a cobordism $W\backslash T=W_0 \coprod \dots \coprod W_i$ for a cut $T$. By our assumption $G'$ is equal to $G_0 \times \dots \times G_i$ and for $0\leq k \leq i$, there is a family of (non-broken) metrics $\bW_k$ on $W_k$,  parametrized with $G_k$, such that $\bW|_{G'}=\bW_0 \times \dots \times \bW_i$. By definition, if $\eta$ is a smooth perturbation for $\bW$, then there are perturbations $\eta_k$ for the family of metrics $\bW_k$ that determine $\eta|_{G'}$. We say $\eta$ is admissible over $G'$, if it satisfies the following properties. Firstly we require $\eta_k$ to be an admissible perturbation for $\bW_k$. Consider the restriction map:  
\begin{equation} \label{r_G'}
	r_{G'}:M_{\eta_0}(\bW_0)\times \dots \times M_{\eta_i}(\bW_i) \to \mathcal R(Y_0)\times \mathcal R(Y_1)\times \prod_{j=1}^{i} (\mathcal R(Z_j)\times \mathcal R(Z_j))
\end{equation}	
that is defined by the restriction maps of the moduli spaces $M_{\eta_k}(\bW_k)$. Also, consider: 
\begin{align} \label{phi-delta}
\Phi_\Delta&: \prod_{k=1}^{i} \mathcal R(Z_k)  \to  \prod_{k=1}^{i}  (\mathcal R(Z_k)\times \mathcal R(Z_k))\nonumber\\
&([\underline {B_1}],\dots,[\underline {B_i}]) \to ([\underline {B_1}],[\underline {B_1}],\dots,[\underline {B_i}],[\underline {B_i}])
\end{align}
As another assumption on the admissible perturbation $\eta$, we demand that $r_{G'}$ is transverse to $\Phi_\Delta$. Let $\meGb|_{G'}$ be the subset of $\meGb$ that is mapped to $G'$ by the projection map $\pi$. The space $\meGb|_{G'}$ is equal to the following fiber product:
\begin{equation} \label{brok-mod-spc}
	\xymatrix{
		 \meGb|_{G'}\ar[d]\ar[r]& \mathcal R(Y_0)\times \mathcal R(Y_1) \times \prod_i \mathcal R(Z_i)  \ar[d]_{(id,id,\Phi_\Delta)}\\
		M_{\eta_0}(\bW_0)\times \dots \times M_{\eta_i}(\bW_i) \ar[r]^{r_{G'}\hspace{11mm}}&{\mathcal R(Y_0)\times \mathcal R(Y_1) \times \prod_{j=1}^{i} (\mathcal R(Z_j)\times \mathcal R(Z_j))}\\
	}
\end{equation}
Consequently, if $\eta$ is an admissible perturbation, then $\meGb|_{G'}$ is a smooth manifold of dimension:
$$\dim(G')-\frac{\chi(W)+\sigma(W)}{2}+\frac{b_1(Y_0)+b_1(Y_1)}{2}$$

For a smooth admissible perturbation $\eta$ the moduli space is a cornered manifold of the expected dimension. The main step to show this claim is the following proposition:

\begin{proposition} \label{glu-com}
	Suppose $T$ is a cut in a cobordism $W$ with connected components $T_1$, $\dots$, $T_i$. Suppose also $W\backslash T= W_0 \coprod \dots \coprod W_i$, and 
	$\bW_i$ is a family of (non-broken) metrics on the cobordism $W_k$ parametrized with $G_k$. 
	These families define a family of metrics $\bW'$ on $W\backslash T$ that is parametrized by $G':=G_0 \times \dots \times G_i$.
	Assume that for each element of $G'$ the product metric on the ends corresponding to $T_i$ is induced by the same 
	metric on $T_i$. Therefore, this family can be extended to a family of metrics $\bW$ on the cobordism $W$ parametrized with 
	$[0,\infty]^i\times G'$. Let also $\eta_k$ be an admissible perturbation for $\bW_k$ and $\eta'$ be the induced perturbation for $\bW'$. We can also pull back $\eta'$
	to produce a perturbation for the family $\bW$.
	We require that the restriction map $r_{G'}$ in (\ref{r_G'}) is transverse to the map: 
	$$(id_{\mathcal R(Y_0)\times \mathcal R(Y_1)},\Phi_\Delta):\mathcal R(Y_0)\times \mathcal R(Y_1) \times \prod_{k=1}^{i} \mathcal R(Z_k)  \to  \mathcal R(Y_0)\times \mathcal R(Y_1) 
	\prod_{k=1}^{i}  (\mathcal R(Z_k)\times \mathcal R(Z_k)).$$
	Then there exists $\tau \geq 1$ such that the followings hold:
	\vspace{-10pt}
	\begin{itemize}
		\item[i)] $\eta$ is an admissible perturbation for the non-broken metrics on $W$ parametrized by $(\tau,\infty)^i\times  G'$ and
		 the corresponding moduli space is diffeomorphic to $(\tau,\infty)^i \times M_{\eta}(\bW)|_{G'}$. 
		More generally, $\eta$ defines an admissible perturbation for the family of metrics that lie over 
		$H_1 \times \dots \times H_i\times G'$ where $H_j$ is either $(\tau,\infty)$ or $\{\infty\}$ for each $j$. 
		The corresponding moduli space is diffeomorphic to $H_1  \times \dots \times H_i \times M_{\eta}(\bW)|_{G'}$.
		\item[ii)] The moduli space $M_{\eta}(\bW)_{(\tau,\infty]^i \times G'}$ is homeomorphic to $(\tau,\infty]^i \times M_{\eta}(\bW)|_{G'}$.
	\end{itemize}
	Moreover, if a map $\Phi:X \to \mathcal R(Y_0)\times \mathcal R(Y_1)$ is given and $r_{G'}$ is transverse to $(\Phi, \Phi_\Delta)$,
	then $\tau$ can be chosen large enough such that the map $r:M_{\eta}(\bW) \to \mathcal R(Y_0)\times \mathcal R(Y_1)$ 
	restricted to the part of the moduli space that is parametrized by a set of the form $H_1 \times \dots \times H_i\times G'$ 
	is transverse to $\Phi$. The fiber products: 
	$$M_{\eta}(\bW)|_{(\tau,\infty]^i \times G'} \times_{\Phi} X \hspace{2cm} 
	M_{\eta}(\bW)|_{H_1  \times \dots \times H_i\times G'} \times_{\Phi} X$$
	are respectively homeomorphic and diffeomorphic to: 
	$$(\tau,\infty]^i\times (M_{\eta}(\bW)|_{G'}  \times_{\Phi} X) \hspace{1cm}
	H_1 \times  \dots \times H_i \times (M_{\eta}(\bW)|_{G'}  \times_{\Phi} X)$$
\end{proposition}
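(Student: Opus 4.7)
The plan is to exploit the linearity of the abelian ASD equation to reduce the gluing problem to linear Fredholm analysis with exponential weights. The equation $F^+(A) = i\eta^+$ is affine in the connection, so once an approximate solution is built from a broken configuration via $\phi_{g'}$, the correction equation is just $d^+ a = -(F^+(\phi_{g'}) - i\eta^+)$ with $d^* a = 0$, and gluing reduces to solving a linear elliptic equation with small right-hand side on the stretched cylinders. This is the abelian counterpart of Donaldson's gluing in \cite{Don:YM-Floer}, simpler because no Newton iteration is needed.

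For part (i) on the open face $(\tau,\infty)^i \times G'$, I would first show that the linearization of the family ASD operator at the broken configuration decomposes as $\DW \cong \bigoplus_{k=0}^i \mathcal D_{W_k}$, restricted to sections matching at the cuts $T_k$. The hypothesis that $r_{G'}$ is transverse to $(id, \Phi_\Delta)$ translates precisely to surjectivity of the constrained direct-sum operator together with the parameter derivatives. Given $(\underline{A}_0,\ldots,\underline{A}_i) \in M_\eta(\bW)|_{G'}$ and $g' = (t_1,\ldots,t_i,g_0,\ldots,g_i)$ with each $t_k > \tau$, the approximate connection $\phi_{g'}(\underline A_0,\ldots, \underline A_i)$ has curvature error supported on the gluing regions $[-t_k, t_k]\times T_k$, and this error decays like $e^{-\delta \min_k t_k}$ by the spectral decomposition from \eqref{spec-dec-1}. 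An inverse for the glued linearization can be patched together from the inverses on each piece using cut-offs, with operator norm bounded uniformly for $\tau$ large; the contraction mapping principle (applied to the linear correction equation) then produces a unique small $L^2_{k,\delta}$ solution, and smoothness in $(t_k, g_k)$ follows from smoothness of the cut-offs and the implicit function theorem. The same argument applies face by face when some $t_k = \infty$, treating the corresponding $T_k$ as a genuine cut rather than a gluing locus. This yields the diffeomorphism onto $H_1 \times \cdots \times H_i \times M_\eta(\bW)|_{G'}$.

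For part (ii), I would promote this diffeomorphism over the open strata to a homeomorphism on $(\tau,\infty]^i \times G'$. Continuity at points with some $t_k = \infty$ requires showing that as the gluing parameters tend to $\infty$, the family of glued solutions converges to the broken configuration in the topology the paper uses on $M_\eta(\bW)$. This follows from uniform exponential decay of the correction term, which is itself a consequence of the uniform Fredholm bounds just established. Surjectivity of the extended map (every non-broken solution with large $t_k$ arises from gluing) follows from the compactness argument of Lemma \ref{finite-spinc}: any sequence in $M_\eta(\bW)$ over $(\tau,\infty]^i \times G'$ with parameters tending to a boundary point subsequentially converges to a broken solution, and by the uniqueness of the solution to the linear correction equation, this limit is the unique preimage under $\phi$.

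The main obstacle is ensuring that the constants controlling the right inverse to the glued linearization are uniform over $G'$ and over all admissible gluing parameters; this is what permits a single $\tau$ to work globally. I would handle this by covering $G'$ by compact pieces, using the relative perturbation construction of Remark \ref{ext-per-1}, and invoking continuity of the spectral gap of the boundary operator $Q$ from \eqref{al-be} to get uniform decay rates. The $\Phi$-transversal extension is then formal: since $r: M_\eta(\bW)|_{G'} \to \mathcal R(Y_0)\times \mathcal R(Y_1)$ factors through the projection $r_{G'}$ composed with the pullback along $\Phi_\Delta$, transversality of $r_{G'}$ to $(\Phi,\Phi_\Delta)$ immediately implies transversality of $r$ to $\Phi$, and the claimed diffeomorphism/homeomorphism of fiber products is obtained by taking fiber products with $X$ on both sides of the previously constructed identifications, using that the projection to $(\tau,\infty]^i$ factors do not enter the restriction map.
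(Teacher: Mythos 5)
Your proposal is correct and follows essentially the approach the paper has in mind: the paper's own proof is just a citation of Donaldson's gluing techniques (Chapter 4 of the Floer homology book), and you carry those out for the abelian case, correctly observing that linearity of the abelian ASD equation removes the need for Newton iteration, so a uniformly bounded right inverse suffices. One small imprecision to fix: the restriction of a glued solution to $Y_0$, $Y_1$ does depend on the stretching parameters $t_k$, albeit by exponentially small amounts; transversality to $\Phi$ survives for $\tau$ large because it is an open condition with uniform control from the bounded right inverse, not because the restriction map literally ignores the $(\tau,\infty]^i$ factors.
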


The techniques of \cite[Chapter 4]{Don:YM-Floer} can be used to prove this proposition and we leave the details of the proof for the reader. 

\begin{proposition} \label{fam-mod-tra-1}
	For a family of metrics $\bW$ parametrized by a compact cornered manifold $G$, there is an admissible perturbation $\eta$ such that the following holds. 
	For a face $G'$ of $G$ with co-dimension $i$, $M_{\eta}(\bW)|_{G'}$ is a smooth manifold of dimension:
	$$\dim(G)-i-\frac{\chi(W)+\sigma(W)}{2}+\frac{b_1(Y_0)+b_1(Y_1)}{2}.$$
	The total moduli space $\meGb$ is a cornered $C^0$-manifold. 
	Moreover, there is a continuous map of cornered manifolds $\pi:\meGb\to G$ which maps a codimension $i$ face of $\meGb$ to a codimension $i$ 
	face of $G$ by a smooth map. The perturbation $\eta$ can be chosen such that for any face $G'$ of $G$, inducing the decomposition $W_0 \coprod \dots \coprod W_i$,
	the restriction map $r_{G'}$ is transverse 
	to a given map $\Phi_{G'} :X_{G'} \to \mathcal R(\partial W_0) \times \dots \times \mathcal R(\partial W_i)$. 
\end{proposition}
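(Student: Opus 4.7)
The plan is to build the admissible perturbation $\eta$ by a downward induction on the codimension of faces of $G$, combining Lemma \ref{super-transversality-1} (together with its relative version in Remark \ref{ext-per-1}) to secure transversality face-by-face, and using Proposition \ref{glu-com} to extract the local cornered structure near broken metrics.

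First I would stratify $G$ by its faces and process them starting from the highest codimension. At a codimension-$i$ face $G' = G_0\times\dots\times G_i$ corresponding to a cut $T\subset W$ with $W\setminus T = W_0\coprod\dots\coprod W_i$, the family restricts as $\bW|_{G'} = \bW_0\times\dots\times\bW_i$. Applying Lemma \ref{super-transversality-1} to each factor $\bW_k$ (with the induction hypothesis that admissible perturbations have already been built on the deeper faces of each $G_k$), I can produce perturbations $\eta_0,\dots,\eta_i$ such that each $M_{\eta_k}(\bW_k)$ is a smooth manifold and so that the combined restriction map $r_{G'}$ of (\ref{r_G'}) is transverse to the composition of $\Phi_\Delta$ from (\ref{phi-delta}) with the prescribed auxiliary map $\Phi_{G'}$ (the product map in Lemma \ref{super-transversality-1} accommodates both transversalities at once, since $\Phi_\Delta\times\Phi_{G'}$ is just another smooth map into a product of Jacobian tori). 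The fiber-product diagram (\ref{brok-mod-spc}) then realizes $\meGb|_{G'}$ as a smooth manifold of the expected dimension, namely $\dim G' - \tfrac{\chi(W)+\sigma(W)}{2} + \tfrac{b_1(Y_0)+b_1(Y_1)}{2}$, which agrees with the formula in the proposition since $\dim G' = \dim G - i$.

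Next I would use Remark \ref{ext-per-1} to extend the perturbation from a compact neighborhood of each treated stratum into the adjacent lower-codimension stratum, while keeping it unchanged in a slightly smaller neighborhood of the deeper face so that all previously arranged transversalities are preserved. Since $G$ is compact and its stratification is finite, this terminates after finitely many steps and yields a globally defined admissible $\eta$ with the desired property on every face. At this point Proposition \ref{glu-com} does the remaining geometric work: near any point of $\meGb|_{G'}$, the proposition supplies, for some $\tau$, a homeomorphism from a neighborhood of that point in $\meGb$ to $(\tau,\infty]^i\times M_\eta(\bW)|_{G'}$, which is smooth along the open directions $H_j=(\tau,\infty)$ and continuous across the $\{\infty\}$-boundary. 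This is precisely the local model of a cornered $C^0$-manifold, and the projection $\pi:\meGb\to G$ inherits from this model the required structure: smooth on each codimension-$i$ face, continuous globally, and sending codimension-$i$ faces to codimension-$i$ faces. The transversality of $r_{G'}$ to $\Phi_{G'}$ in an open collar of $G'$ also follows from the final clause of Proposition \ref{glu-com}.

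The main obstacle is the combinatorial bookkeeping required to run the induction: at each face one must simultaneously arrange admissibility of every $\eta_k$, transversality of $r_{G'}$ to $\Phi_\Delta$, and transversality of $r_{G'}$ to the externally specified $\Phi_{G'}$, and the relative extension step must not destroy any transversality previously achieved on deeper faces. Handling this requires choosing the successive cut-off functions in Remark \ref{ext-per-1} in a nested fashion, with the collar neighborhoods from Proposition \ref{glu-com} chosen thin enough that the perturbation remains in the correct product form throughout the gluing region. The reason the total space is only $C^0$-cornered, rather than smooth across corners, is that the gluing maps supplied by Proposition \ref{glu-com} are smooth in the open stratum $(\tau,\infty)^i$ but only continuous at $\{\infty\}^i$; this limitation is intrinsic to the analytic gluing construction and is the reason the conclusion of the proposition is stated only in the $C^0$ category.
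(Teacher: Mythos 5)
Your proposal follows the paper's own proof essentially verbatim: downward induction over faces of $G$ starting from the $0$-dimensional strata, using Lemma \ref{super-transversality-1} to arrange admissibility and transversality to $\Phi_\Delta$ and $\Phi_{G'}$, Remark \ref{ext-per-1} to extend the perturbation relative to the already-treated deeper faces, and Proposition \ref{glu-com} both for the collar models near broken metrics and for the cornered $C^0$-manifold structure of $\meGb$ together with the map $\pi$. Your closing remark explaining why the total space is only $C^0$-cornered (the gluing maps of Proposition \ref{glu-com} are smooth on $(\tau,\infty)^i$ but merely continuous at the $\{\infty\}$ faces) is a correct and helpful elaboration of what the paper states more tersely.
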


\begin{proof}
	Let $G'$ be a 0-dimensional face of $G$ that defines a broken metric on $W$, and 
	$W_0 \coprod \dots \coprod W_i$ is the induced decomposition of $W$. 
	Fix admissible perturbations $\eta_0$, $\dots$, $\eta_i$ on $W_0$, $\dots$, $W_{i}$, and let $\eta$ be the induced perturbation for the 0-dimensional face $G'$. 
	Then use the argument in the second part of Lemma \ref{super-transversality-1} to insure that $M_\eta(\bW)|_{G'}$ is cut out transversally and $r_{G'}$ is transverse to
	$\Phi_{G'}$. Repeat this construction to extend $\eta$ to all 0-dimensional faces of $G$.
	Use Lemma \ref{super-transversality-1}, Proposition \ref{glu-com}, and Remark \ref{ext-per-1} to extend $\eta$, as an admissible perturbation, to the edges of $G$
	such that the required transversality assumptions hold. The same argument can be used to extend $\eta$ inductively to all faces of $G$. Proposition \ref{glu-com} 
	implies that $\meGb$ is a cornered $C^0$-manifold. This proposition also asserts the existence of the map $\pi$ with 
	the claimed properties.
\end{proof}

The following example is extracted from \cite{KMOS:mon-lens}:
\begin{example} \label{example-family-metrics-1}
	The 4-manifold $U=D^2\times S^2 \# \overline{\cc P}^2$ satisfies $b_1(U)=b^+(U)=0$ and can be considered 
	as a cobordism from $Q=S^1\times S^2$ to the empty set.
	There exists a disc $D$ and a sphere $S_0$ in the connected summand $D^2\times S^2$
	such that the following assumptions hold. 
	The disc $D$ bounds  $S^1\times \{x\} \subset Q$ for an arbitrary point $x\in S^2$,
	the sphere $S_0$ intersects $D$ in exactly one point, and it has self-intersection $0$. 
	Pick also an embedded sphere $S_1$ in the summand $\overline{\cc P}^2$ of $U$ that has self intersection -1 and is 
	disjoint from $D$ and $S_0$. Let $T_1$ be the boundary of a regular neighborhood of $S_1$ that is diffeomorphic 
	to $S^3$. Consider a metric on $W$ which is the standard product metric in a neighborhood of $T_1$. 
	We can construct a family of metrics parametrized with $[0,\infty]$ by stretching this metric along $T_1$. 
	In particular, the metric corresponding to $\infty$ is broken along the cut $T_1$, i.e., it is a metric with cylindrical ends on 
	$U\backslash T_1$ with connected components 
	$U_0\cong D^2\times S^2\backslash D^4$ and $U_1 \cong \overline{\cc P}^2\backslash D^4$.
	
	Next, consider the 2-sphere $S_1'=S_1\# S_0$. This sphere also has self-intersection -1 and its intersection with $D$ is 1.
	By replacing $S_1$ with $S_1'$, we can generate another family of metrics parametrized by $[-\infty,0]$ where $-\infty$ 
	corresponds to a broken metric with a cut along another copy of $S^3$ that is denoted by $T_1'$. 
	The 4-manifold $U\backslash T_1'$ has connected components $U'_0\cong D^2\times S^2\backslash D^4$ 
	and $U'_1\cong \overline{\cc P}^2\backslash D^4$.
	We can glue the two families to obtain a family of metrics $\mathbb U$ parametrized by $G=[-\infty,\infty]$.
	 There is only one torsion $\spinc$ structure on $Q$ which is the spin structure $\st_0$ on $Q$. 
	 Let $\mathfrak s$ be a $\spinc$ structure on $U$ such that $\mathfrak s|_Q=\st_0$. 
	 Any such $\spinc$ structure $\mathfrak s$ on $U$ is uniquely determined with an odd integer which is the pairing
	of $c_1(\mathfrak s)$ and $S_1$.  Suppose $\mathfrak s_{2k+1}$ be the choice of the $\spinc$ structure such that  the above number 
	is $2k+1$. Then $\overline{\s_{2k+1}}$, the complex conjugate of $\mathfrak s_{2k+1}$, has pairing $-2k-1$ with $S_1$ and hence is equal to $\s_{-2k-1}$. 
	The $\spinc$ structures $\s_{2k+1}$ and $\s_{-2k-1}$ have energy $(2k+1)^2$. 
	Because $G$ is contractible, it makes sense to talk about the $\spinc$ connection $\s_{2k+1}$ for the family 
	$\mathbb U|_{(-\infty,\infty)}$.
	
	Since $b^+(U)=0$, any choice of the perturbation gives a regular moduli space (Lemma \ref{cob-moduli-1}). 
	We will write $M(\mathbb U,\s_{2k+1})$ for the moduli space 
	with zero perturbation and the $\spinc$ structure $\mathfrak s_{2k+1}$.
	For each $g\in G$ we have a unique connection $\underline A^g$ on $\s_{2k+1}$ such that $F^{+_g} (A^g)=0$. 
	Thus the moduli space $M(\mathbb U,\s_{2k+1})$ is an interval for each choice of $k\in \zz$. 
	Self-duality of the curvature of $A^g$  implies that $\frac{1}{2\pi i}F(A^g)$ is a $g$-harmonic form representing 
	$c_1(\s_{2k+1})$. As $g$ tends to $\infty$, these forms converge to a harmonic 2-form on $U\backslash T_1$ 
	that vanishes on $U_0$ and represents $(2k+1)PD(S_1)$ on $U_1$. A similar discussion applies to $g=-\infty$.
	
	Note that $\mathcal R(Q)$ is a circle and is parametrized by the holonomy of its elements around $\partial D_0$ 
	which take the following values:
	$$hol(\underline B,\partial D_0)\in \mathcal R=\left \{\left(
	\begin{array}{cc}
		e^{2\pi i\theta}&0\\
		0&e^{2\pi i\theta}
	\end{array}\right)h_0\mid \theta\in [0,1]\right \}$$
	where $\underline B$ is a $\spinc$ connection on $\st_0$, 
	and $h_0\in SU(2)$ is the holonomy of the spin connection around $\partial D$. By Gauss-Bonnet:
	$$\det (hol(\underline A^g,\partial D))=\exp \left ( \int_{D} F(A^g)\right )$$
	
	Therefore $ \int_{D} F(A^\infty)=0$ because $D \subset U_0$ and $A^{\infty}$ is flat on $U_0$. Similarly, $ \int_{D} F(A^{-\infty})=2 \pi i(2k+1)$ 
	because $S_1'$ and $D$ have intersection 1. As a result, $\det (hol(\underline A^g,\partial D))$ starts from 1 and traverses the unit circle an odd number
	of times. Thus the image of $r_k:M(\mathbb U|_{(-\infty,\infty)},\s_k) \to \mathcal R(Q)$ is given by a path between $h_0$ and $-h_0$ in $\mathcal R$
	that consists of $(2k+1)$ half-circles. 
	For the complex conjugate $\spinc$ structure, $ \s_{-2k-1}$, this path is the complex conjugate of the above path. 
	Union of these two paths form a loop in $\mathcal R$ with winding number $2k+1$. In summary, image of $M(\mathbb U,(2k+1)^2)$ in $\mathcal R(Q)$
	is a loop with winding number $(2k+1)$. 
\end{example}

\subsection{Cobordism maps}\label{cob-maps}

Plane Floer homology of a 3-manifold $Y$, denoted by $\PFH(Y)$, is defined to be the homology of the representation variety $\mathcal R(Y)$. In this subsection, we use the elements of the following Novikov ring as the coefficient ring:
\begin{equation*}
	\Lambda_0:=\{\sum_{q_i\in \qq}a_iu^{q_i}|a_i \in \zz/2\zz, a_i\neq 0, q_i \in \qq^{\geq 0}, \lim_{i\to \infty}q_i=\infty\}
\end{equation*}
where $u$ is a formal variable. If we relax the condition that $q_i$ is non-negative, then the resulting field is denoted by $\Lambda$. The space $\mathcal R(Y)$ is a union of $| {\rm Tor} (H_1(Y,\zz))|$ copies of $J(Y)$. Therefore, we have: 
$$\PFH(Y)\cong H_*(J(Y),\Lambda_0)^{\oplus |{\rm Tor}(H_1(Y,\zz))|}\cong \Lambda_0^{2^{b_1(Y)}\cdot  |{\rm Tor}(H_1(Y,\zz))|}$$

Later we will need to work with the lift of $\PFH(Y)$ to the level of chain complexes. To fix one such chain complex, let $f$ be a  Morse-Smale function on the compact manifold $\mathcal R(Y)$ and define $(\PFC(Y),d_p)$ to be the Morse complex for the homology of $\mathcal R(Y)$ associated with $f$. Therefore, the generators of $\PFC(Y)$ is equal to the set of the critical points of $f$ and the differential $d_p$ maps an index $i$ critical point of $f$ to a linear combination of index $i-1$ critical points. The definition of $d_p$ involves counting unparameterized trajectories between critical points of $f$. Note that, however, because $\Lambda_0$ has characteristic 2, we do not need to orient the moduli spaces of trajectories between critical points. This Morse complex is called the {\it plane Floer complex} of $Y$. Equip $\PFC(Y)$ with a grading that its value on a critical point $\alpha$ of $f$ is equal to:
$$\deg_p(\alpha):= 2\cdot\ind(\alpha)-b_1(Y)$$
where $\ind(\alpha)$ stands for the Morse index of $\alpha$. 

For the remaining part of Section \ref{Ab-ASD-con}, we fix a family of metrics $\bW$ on a cobordism $W:Y_0 \to Y_1$ which is parametrized by a compact cornered manifold $G$. As before, $Y_0$ is required to be connected. A typical face of codimension $i$ of $G$ is denoted by $G'$. This face parametrizes a family of broken metrics with a cut along a sub-manifold $T=T_1 \cup T_2 \cup \dots \cup T_i$. The restriction of $\bW$ to this face is given by the family of metrics $\bW_0 \times \dots \times \bW_i$ on $W \backslash T=W_0 \coprod \dots \coprod W_i$. Here $\bW_j$ is a family of (non-broken) metrics on $W_j$, parametrized by a manifold $G_j$. In particular, $G'=G_0 \times \dots \times G_i$. In order to set up the stage, fix Morse-Smale functions $f_0$ on $\mathcal R(Y_0)$, $f_1$ on $\mathcal R(Y_1)$, and $h_j$ on $\mathcal R(T_j)$ for each $T_j$ as above. By the results of the last subsection, there is an admissible perturbation $\eta$ for the family of metrics $\bW$ such that the following hypothesis holds:

\begin{hypothesis} \label{eta-hypo-1}
	Let $\alpha$ and $\beta$ be critical points of $f_0$ and $f_1$, respectively. We will write $U_\alpha$ (respectively, $S_\beta$) for 
	the unstable (respectively, stable) manifold of $\alpha$ (respectively, $\beta$). Let 
	$\Phi(\alpha):U_\alpha \xhookrightarrow{}\mathcal R(Y_0)$ and $\Phi(\beta):S_\beta \xhookrightarrow{}\mathcal R(Y_1)$ be the inclusion maps.  
	For a face $G'$ of $G$ with codimension $i$, suppose that the connected components of $T= \bigcup_{j=1}^i T_j$ is divided to two disjoint sets 
	$\{T_{i_1}, T_{i_2},\dots\}$ and $\{T_{j_1}, T_{j_2},\dots\}$. For each $i_k$, consider critical points $\gamma_{i_k}$, $\gamma'_{i_k}$ of $h_{i_k}$ and let
	 $\Phi_{i_k}: S_{\gamma_{i_k}} \times U_{\gamma'_{i_k}} \xhookrightarrow{} \mathcal R(T_{i_k}) \times \mathcal R(T_{i_k})$ denote the inclusion map.
	 For each $j_l$, define $\Phi_{j_l}:\mathcal R(T_{j_l}) \times [0,\infty) \to \mathcal R(T_{j_l})\times \mathcal R(T_{j_l})$ by 
	 $\Phi_{j_l}(z,t)=(\varphi^t_{j_l}(z),\varphi^{-t}_{j_l}(z))$ where $\varphi^t_{j_l}$ is the negative-gradient-flow of $h_{j_l}$ on 
	 $\mathcal R(T_{j_l})$. Given these maps, we introduce:
	 \begin{equation*}
		\Phi:U_{\alpha} \times S_{\beta} \times  \prod_{i_k} (S_{\gamma_{i_k}} \times U_{\gamma'_{i_k}} ) \times \prod_{j_l}(\mathcal R(T_{j_l}) \times [0,\infty)) \to 
		\hspace{4cm}
	\end{equation*}
	\begin{equation*}
		\hspace{4cm} \mathcal R(Y_0) \times \mathcal R(Y_1) \times  \prod_{i_k}  (\mathcal R(T_{i_k}) \times \mathcal R(T_{i_k})) \times  \prod_{j_l}  
		(\mathcal R(T_{j_l}) \times \mathcal R(T_{j_l}))
	\end{equation*}
	\begin{equation*}
		\Phi:=(\Phi(\alpha),\Phi(\beta),\Phi_{i_1},\dots,\Phi_{j_1},\dots)
	\end{equation*}	
	The smooth admissible perturbation $\eta$ is such that:
	$$r_{G'}: M_{\eta_0}(\bW_0)\times \dots \times M_{\eta_i}(\bW_i) \to \mathcal R(Y_0) \times \mathcal R(Y_1) \times  \prod_{i_k}  \mathcal R(T_{i_k}) \times 
	\mathcal R(T_{i_k}) \times  \prod_{j_l} \mathcal R(T_{j_l}) \times \mathcal R(T_{j_l})$$
	is transverse to all such maps $\Phi$. Also, $\eta$ is chosen such that $\meGbe$ is empty for negative values of $e$.
\end{hypothesis}

There are a few remarks about this hypothesis in order: firstly the domain of $\Phi$ is a cornered manifold. The transversality assumption means that the restriction of $\Phi$ to each face is transverse to $r_{G'}$. Secondly, for each face $G'$ there are different but finitely many choices for $\Phi$. This does not make any issue, as we can consider the union of the domains of all these different choices, and define a total map $\Phi_{G'}$ in order to use Proposition \ref{fam-mod-tra-1}.

By Hypothesis \ref{eta-hypo-1} and Proposition \ref{fam-mod-tra-1}, the following set: 
\begin{equation} 
	\meGab=\{z\in \meGb \mid (r_0(z),r_1(z))\in U_\alpha\times S_\beta\}
\end{equation}
is a cornered manifold and the restriction of $\pi$ maps the faces of $\meGab$ to the faces of $G$. We also have:
\begin{align} \label{dim}
	\dim(\meGab)&=\dim(\meGb)+\dim(U_\alpha\times S_\beta)-\dim(\mathcal R(Y_0)\times \mathcal R(Y_1))\nonumber \\
			     &=\frac{1}{2}(\deg_p(\alpha)-\deg_p(\beta)-\sigma(W)-\chi(W))+\dim(G)
\end{align}

Let $\meGabe$ be the space $\meGab \cap \meGbe$. This space is also a cornered manifold. Unlike $\meGbe$, the moduli space $\meGabe$ is not necessarily a compact manifold. To see what might go wrong, consider a sequence $\{z_i\}_{i\in \mathbb N}\in \meGabe$. Since $\meGbe$ is compact, this sequence is convergent to $z_0 \in \meGbe$ after passing to a subsequence. The sequence  $r_0(z_i)$ converges to $r_0(z_0)$. However, the unstable manifold of $\alpha$ is not necessarily compact and $r_0(z_0)$ might not lie on this set. Nevertheless, $z_0$ lives on the unstable manifold of a critical point $\alpha'$ that $\deg_p(\alpha')\leq \deg_p(\alpha)$ and equality holds if and only if $\alpha=\alpha'$. (In fact, there exists a broken trajectory from $\alpha$ to $\alpha'$ \cite{W:m-trajectory}.)  Similarly, $z_0$ is mapped by $r_1$ to the stable manifold of a critical point $\beta'$ of $f_1$ such that $\deg_p(\beta')\geq \deg_p(\beta)$ and equality holds if and only if $\beta=\beta'$. In summary, $z_0\in M_\eta(\bW,\alpha',\beta';e)$ and the dimension of this moduli space is not greater than that of $\meGabe$. Furthermore, equality holds if and only if $\alpha'=\alpha$ and $\beta'=\beta$. 

Let $\alpha$ and $\beta$ be chosen such that $\meGab$ is 0-dimensional, i.e.:
\begin{equation} \label{deg-m-f-g-1}
	\deg_p(\beta)-\deg_p(\alpha)=2\dim(G)-\sigma(W)-\chi(W)
\end{equation}	
The discussion of the previous paragraph implies that in this special case, $\meGabe$ is compact. Furthermore, the transversality assumptions imply that the elements of $\meGab$ live over the interior of $G$. The 0-dimensional space $\meGab$ is subset of a {\it secondary moduli space} $\tmeGab$. The space $\tmeGab$ is defined over an {\it open fattening} $\bf G$ of $G$, i.e., there is a map $\tilde \pi:\tmeGab \to \bf G$, and ${\tilde \pi}^{-1}(G)=\meGab$.

In order to construct ${\bf G}$, firstly note that $G$ is the union of all products $(0,\infty]^i \times G'$ when $i$ is an integer number and $G'$ is a codimension $i$ face of $G$. The set:
\begin{equation} \label{subset-G''}
	\{\infty\}\times \dots \{\infty\}\times \underbracket{(0,\infty)}_{k^{\rm{th}}\text{ factor}} \times \{\infty\} \times \dots \times \{\infty\} \times G'
\end{equation}	
can be regarded as an open subset of a codimension $i-1$ face $G''$ in $G$. We can arrange the standard neighborhoods $(0,\infty]^i \times G'$ and $(0,\infty]^{i-1}\times G''$ of these two faces such that:
\begin{equation} \label{identify-G'-G''}
	(t_1,\dots,t_i,g) \in (0,\infty]^i \times G'\hspace{2cm} (t_1,\dots,t_{k-1},t_{k+1},\dots,t_i,(t_k,g)) \in (0,\infty]^{i-1} \times G''
\end{equation}	
represent the same points in $G$, where $t_k\in(0,\infty)$ and $(t_k,g)$ is considered as a point in (\ref{subset-G''}).

\begin{figure}
	\centering
		\includegraphics[width=.3\textwidth]{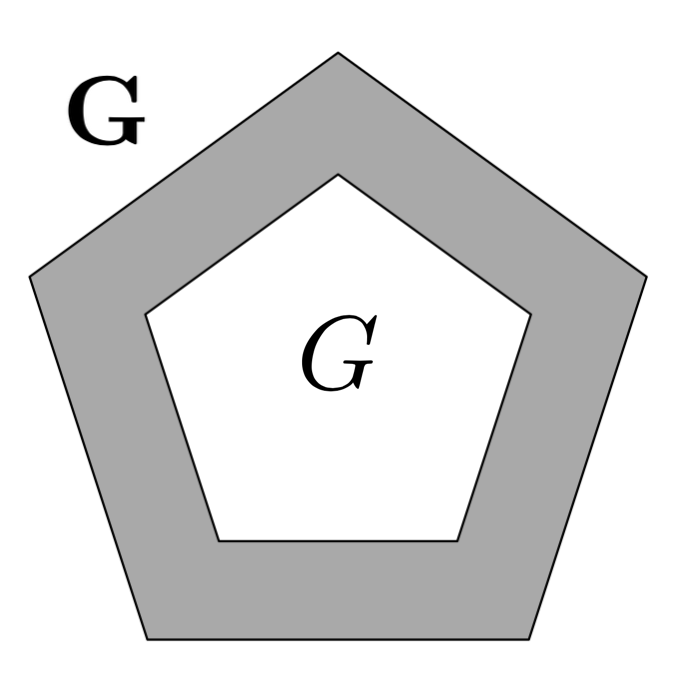}
		\caption{An open fattening of the white pentagon $G$ is given by adding the shaded region.}
		\label{2-handle}
\end{figure}

Let  $\overline{(0,\infty)}$ be the result of gluing the intervals $(0,\infty]$ and $[0.\infty)$ by identifying the $\infty$ end of the first interval (the {\it primary} interval) with the $0$ end of the second interval (the {\it secondary} interval). The cornered manifold $\bf G$ is the union of all the products of the form $\overline{(0,\infty)}^i \times G'$ with the following equivalence relation. For faces $G'$ and $G''$ we identify 
$$\overline{(0,\infty)}^{k-1}\times (0,\infty) \times \overline{(0,\infty)}^{i-k}\times G'$$
in which the $k^{\rm th}$ factor is a primary interval, with an open subset of $\overline{(0,\infty)}^{i-1}\times G''$ using (\ref{identify-G'-G''}). The manifold $\bf G$ is non-compact and can be compactified to a cornered manifold that is homeomorphic to $G$. This compactification is given by a similar construction except that one replaces the secondary interval $[0,\infty)\subset \overline{(0,\infty)}$ with the closed interval $[0,\infty]$. If the product $G_1 \times G_2$ is a face of $G$, then the corresponding face of the compactification of $\bf G$ is the same as $ \bf {G_1}\times {\bf G_2}$. A contraction map $\rho: {\bf G} \to G$ can be defined in the following way. The subset $\overline{(0,\infty)}^i \times G' \subset \bf G$ can be decomposed to $2^n$ sets of the following form:
\begin{equation} \label{fat-1}
	H=A_{l_1}\times \dots \times A_{l_i} \times G' 
\end{equation}	
where each $l_k$ is either 1 or 2 and determines whether the $k^{\text th}$ interval is primary or secondary, i.e., $A_1=(0,\infty]$ and $A_2=[0,\infty)$. Define:
$$\rho:H \to (0,\infty]^i\times G'$$
$$(g,t_1,t_2,\dots,t_i) \to (g,\epsilon_{l_1}(t_1),\epsilon_{l_2}(t_2),\dots,\epsilon_{l_i}(t_i))$$
where $\epsilon_1:A_1\to (0,\infty]$ is the identity map and $\epsilon_2:A_2\to (0,\infty]$ is the constant $\infty$ map. These maps together, for different involved choices, define the contraction map $\rho:{\bf G} \to G$. 

The moduli space $\tmeGab$ will be produced by constructing moduli spaces over the subsets $H\subset \bf G$ of the form of (\ref{fat-1}), and then putting them together. We denote the moduli space over $H$ by $\widetilde M_\eta(\bW,\alpha,\beta)|_H$. This space can be defined without any assumption on the dimension of $\meGab$. 
Without loss of generality, assume that in the definition of $H$ we have $l_1=\dots=l_k=1$, $l_{k+1}=\dots=l_{i}=2$. The subset $I=(0,\infty]^{k}\times \{\infty\}^{i-k}\times G'$ of $G$ parametrizes a family of metrics $\bW'$ on $W':=W\backslash \bigcup_{k+1\leq j\leq i} T_j$ with $i-k+1$ connected components. The perturbation $\eta$ produces a perturbation $\eta'$ for the family of metrics $\bW'$ and hence we can form the moduli space $M_{\eta'}(\bW',\alpha,\beta)$. Each 3-manifold $T_j$, $k+1 \leq j \leq i$, appears both as the incoming end of one of the components of $W'$, and also as an outgoing end of another component of $W'$. Therefore, we have the restriction maps $r^{in}_j:M_{\eta'}(\bW',\alpha,\beta) \to \mathcal R(T_j)$ and $r^{out}_j:M_{\eta'}(\bW',\alpha,\beta) \to \mathcal R(T_j)$. The space $\widetilde M_\eta(\bW,\alpha,\beta)|_H$ is defined to be the fiber-product of the following diagram:
\begin{equation} \label{fib-prod-1}
	\xymatrix{
		\widetilde M_\eta(\bW,\alpha,\beta)|_H \ar[d]\ar[rrr]&&&\prod_{k+1\leq j\leq i} \mathcal R(T_j)\times [0,\infty)\ar[d]^{\Phi}\\
		M_{\eta'}(\bW',\alpha,\beta)\ar[rrr]^{\prod_{k+1\leq j\leq i}r^{in}_j\times r^{out}_j\hspace{20pt}}&&&
		\prod_{k+1\leq j\leq i} \mathcal R(T_j)\times \mathcal R(T_j)\\
	}
\end{equation}
where  $\Phi$ is defined as:
$$\Phi(([\underline B_{k+1}],t_{k+1}),\dots,([\underline B_{i}],t_{i})) \to 
(\varphi_{k+1}^{t_{k+1}}([\underline B_{k+1}]),\varphi_{k+1}^{-t_{k+1}}([\underline B_{k+1}]),\dots,\varphi_{i}^{t_{i}}([\underline B_{i}]),\varphi_{i}^{-t_{i}}([\underline B_{i}])))$$
The map $\varphi_j^t$ is the negative-gradient-flow of the Morse-Smale functions $h_j$. Hypothesis \ref{eta-hypo-1} implies that $\widetilde M_\eta(\bW,\alpha,\beta)|_H$ is a smooth cornered manifold. This set consists of the following points:
\begin{equation} \label{sub-tmGabe}
	\widetilde M_\eta(\bW,\alpha,\beta)|_H=\{([\underline A],g,t_{k+1},\dots,t_i) \mid ([\underline A],g) \in M_{\eta'}(\bW',\alpha,\beta),
	\hspace{1pt}  \varphi_j^{t_j}(r^{out}_{j}([\underline A]))=\varphi_j^{-t_j}(r^{in}_{j}([\underline A]))\}
\end{equation}
We can also define $\tilde \pi:\widetilde M_\eta(\bW,\alpha,\beta)|_H\to H$ by:
$$\tilde \pi([\underline A],g,t_{k+1},\dots,t_i)=(g,t_{k+1},\dots,t_i)$$
The dimension of $\widetilde M_\eta(\bW,\alpha,\beta)|_H$ is equal to that of $\meGab$. Thus in the case that $\meGab$ is zero dimensional, $\widetilde M_\eta(\bW,\alpha,\beta)|_H$ is 0-dimensional and ${\tilde \pi}^{-1}(\partial H)$ is empty. Therefore, we can put together the moduli spaces $\widetilde M_\eta(\bW,\alpha,\beta)|_H$, for different choices of $H$, to form a zero dimensional moduli space $\tmeGab$ and a map $\tilde \pi:\tmeGab \to \bf G$. The energy functional can be extended to $\tmeGab$ and the spaces $\tmeGabe$ is defined to be the elements of $\tmeGab$ with energy equal to $e$.

\begin{lemma} \label{0-dim-comp-1}
	The moduli space $\tmeGabe$ is compact.
\end{lemma}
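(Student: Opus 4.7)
The strategy is to take an arbitrary sequence $\{z_n\}\subset \tmeGabe$ and extract a subsequence converging to a point in $\tmeGabe$. Since the open fattening $\mathbf{G}$ is covered by finitely many pieces $H$ of the form \eqref{fat-1}, after passing to a subsequence we may assume that all the $z_n$ lie in a single $\widetilde M_\eta(\bW,\alpha,\beta)|_H$, where (after reordering) $l_1=\cdots=l_k=1$ and $l_{k+1}=\cdots=l_i=2$. Using \eqref{sub-tmGabe}, write $z_n=([\underline A_n],g_n,t_{k+1,n},\dots,t_{i,n})$ with $([\underline A_n],g_n)\in M_{\eta'}(\bW',\alpha,\beta)\subset\meGbe$.

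Lemma \ref{finite-spinc} states that $\meGbe$ is compact, so after a further subsequence $([\underline A_n],g_n)\to([\underline A_\infty],g_\infty)$ in $\meGbe$. Continuity of the restriction maps together with the standard Morse-Smale closure facts $\overline{U_\alpha}\subset\bigcup_{\deg_p(\alpha')\le\deg_p(\alpha)}U_{\alpha'}$ and $\overline{S_\beta}\subset\bigcup_{\deg_p(\beta')\ge\deg_p(\beta)}S_{\beta'}$ place the limits $r_0([\underline A_\infty])$ and $r_1([\underline A_\infty])$ in $U_{\alpha'}$ and $S_{\beta'}$ for some such $\alpha',\beta'$. For each $j\in\{k+1,\dots,i\}$ we may assume, after a further subsequence, that either $t_{j,n}\to t_{j,\infty}\in[0,\infty)$ or $t_{j,n}\to\infty$; let $J^c$ denote the set of indices with $t_{j,n}\to\infty$. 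For $j\in J^c$, the constraint $\varphi_j^{t_{j,n}}(r^{out}_j([\underline A_n]))=\varphi_j^{-t_{j,n}}(r^{in}_j([\underline A_n]))$, together with the fact that the negative-gradient flow $\varphi_j^t$ on $\mathcal R(T_j)$ can only converge in forward (resp.\ backward) time to a critical point along its stable (resp.\ unstable) manifold, forces $r^{out}_j([\underline A_\infty])\in S_{\gamma_j}$ and $r^{in}_j([\underline A_\infty])\in U_{\gamma_j}$ for a common critical point $\gamma_j$ of $h_j$.

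The pair $([\underline A_\infty],g_\infty)$ therefore lies in a stratum of the broken moduli space cut out by precisely the sort of map $\Phi$ considered in Hypothesis \ref{eta-hypo-1}: at the outer boundaries $Y_0,Y_1$ by $\Phi(\alpha'),\Phi(\beta')$; at each intermediate cut $T_j$ with $j\in J^c$ by a map of type $\Phi_{i_k}$ with critical point $\gamma_j$; and at each $T_j$ with $j\notin J^c$ by a map of type $\Phi_{j_l}$ at parameter value $t_{j,\infty}$. By Hypothesis \ref{eta-hypo-1} this stratum is either empty or a transverse intersection, whose expected dimension differs from $\dim\tmeGab$ by
\[
-\,|J^c|\;-\;\tfrac{1}{2}\bigl(\deg_p(\alpha)-\deg_p(\alpha')\bigr)\;-\;\tfrac{1}{2}\bigl(\deg_p(\beta')-\deg_p(\beta)\bigr),
\]
each of whose summands is non-positive. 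Since $\dim\tmeGab=0$ by assumption \eqref{deg-m-f-g-1}, non-emptiness forces $J^c=\emptyset$, $\alpha'=\alpha$, and $\beta'=\beta$. Consequently all the $t_{j,n}$ are bounded, the restriction limits land in $U_\alpha\times S_\beta$, and continuity of the fiber-product relations of \eqref{fib-prod-1} shows that $z_\infty:=([\underline A_\infty],g_\infty,t_{k+1,\infty},\dots,t_{i,\infty})$ lies in $\widetilde M_\eta(\bW,\alpha,\beta)|_H\subset\tmeGabe$, proving compactness.

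The main obstacle is the dimension-count in the third paragraph: one has to identify the limit configuration as an element of a moduli space of the transversal form controlled by Hypothesis \ref{eta-hypo-1}, so that non-positive expected dimension genuinely obstructs non-emptiness; the key geometric input is that in the secondary interval $[0,\infty)$ the only way a sequence can fail to converge is $t_{j,n}\to\infty$, which (by the gradient-flow analysis above) pushes the limit into a stratum of codimension at least one higher than the $\Phi_{j_l}$-type stratum it came from.
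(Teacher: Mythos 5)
Your proof is correct and follows essentially the same route as the paper: pass to a subsequence living in a single chart $H$ of $\mathbf G$, use compactness of $\meGbe$ (Lemma \ref{finite-spinc}) and standard closure facts for (un)stable manifolds to identify where the limit can sit, and then invoke the transversality assumptions of Hypothesis \ref{eta-hypo-1} to show that any drop in $\alpha$, jump in $\beta$, or intermediate gradient-trajectory breaking strictly lowers the expected dimension below $0$, a contradiction. The paper packages this via the fiber product $\mathfrak M$ of \eqref{fib-prod} and the inequalities \eqref{dim-ineq}; your ``expected dimension drops by at least $|J^c|+\cdots$'' bookkeeping is the same estimate stated directly.

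One small imprecision: when $t_{j,n}\to\infty$, Weber's compactness theorem gives a \emph{broken} trajectory from $r^{out}_j([\underline A_\infty])$ to $r^{in}_j([\underline A_\infty])$, which may pass through several critical points, so the correct conclusion is $r^{out}_j([\underline A_\infty])\in S_{\gamma_j}$ and $r^{in}_j([\underline A_\infty])\in U_{\gamma'_j}$ for possibly \emph{distinct} critical points $\gamma_j,\gamma'_j$ of $h_j$ connected by a (broken) flow line, not a common one; this is exactly what the $\Phi_{i_k}$-type maps in Hypothesis \ref{eta-hypo-1} allow, and one still has $\dim S_{\gamma_j}+\dim U_{\gamma'_j}\leq\dim\mathcal R(T_j)$ since the Morse index can only decrease along the breaking, so your dimension count (and conclusion) is unaffected. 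Also, the paper passes to the compact sub-charts $H'\subset H$ with $[1,\infty]$ in place of $(0,\infty]$ in the primary factors, which cleanly ensures convergence of the $\mathbf G$-coordinate without having to worry about escaping to a different chart; you sidestep this by appealing directly to compactness of $\meGbe$ over all of $G$, which also works but requires the tacit observation that the limit, if it lands over a deeper face of $G$, is then accounted for as further breaking and only makes the dimension count more restrictive.
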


\begin{proof}
	Suppose $H=A_{l_1}\times \dots \times A_{l_i} \times G' $ is as in (\ref{fat-1}), and $H'\subseteq H$ is the set $A_{l_1}'\times \dots \times A_{l_i}' \times G'$ such that 
	$A_{l_j}'=[1,\infty]$ if $A_{l_j}$ is a primary interval, and $A_{l_j}'=A_{l_j}$ if $A_{l_j}$ is secondary. 
	It suffices to show that for each choice of $H$, $\widetilde M_\eta(\bW,\alpha,\beta;e)|_{H'}:=\pi^{-1}(H')\cap \tmeGabe$ is compact. 
	For the simplicity of exposition, we assume that $H$ has the above special form, i.e., the 
	first $k$ intervals are primary and the remaining ones are secondary.
	Let $\{([\underline A^j],g^j,t^j_{k+1},\dots,t^j_i)\}_{j \in \mathbb N}$ be a sequence in $\widetilde M_\eta(\bW,\alpha,\beta;e)|_{H'}$.
	This sequence after passing to a subsequence is convergent to $([\underline A^0],g^0,t^0_{k+1},\dots,t^0_i)$ with 
	 $t^0_l \in [1,\infty]$, and $([\underline A^0],g^0) \in M_{\eta'}(\bW',\alpha',\beta';e)$ 
	 for critical points $\alpha'$ of $f_0$
	 and $\beta'$ of $f_1$. Furthermore,  $\deg_p(\alpha')\leq \deg_p(\alpha)$ and $\deg_p(\beta')\geq \deg_p(\beta)$ and the equalities
	 hold if and only if $\alpha'=\alpha$ and $\beta'=\beta$. Suppose $t^0_l=\infty$ for some $l$. 
	 There is a trajectory $\gamma_l: [0,t^j_l] \to \mathcal R(T_l)$ of the flow
	 $\varphi_l^t$ that $\gamma_l(0)=r^{out}_l([\underline A^j])$ and $\gamma_l(2t^j_l)=r^{in}_l([\underline A^j])$. 
	 Since $t_l^j$ tends to $t^0_l=\infty$, according to \cite{W:m-trajectory} there is a broken trajectory that starts from 
	 $r^{out}_l([\underline A^0])$ and terminates at $r^{in}_l([\underline A^0])$. 
	 In particular, $(r^{in}_l([\underline A^0]),r^{out}_l([\underline A^0]))\in U_{\alpha_l}\times S_{\beta_l}$  wehre $\alpha_l$ and $\beta_l$
	 are two critical points of $h_l$, and $\dim(U_{\alpha_l})+\dim(S_{\beta_l})\leq \dim(\mathcal R(T_l))$. 
	 The point $([\underline A^0],g^0,t^0_{k+1},\dots,t^0_i)$ lives in the following fiber product:
	 \begin{equation} \label{fib-prod}
	\xymatrix{
		\mathfrak M\ar[d]\ar[rrr]&&&\prod_{k+1\leq l\leq i} \mathfrak R_l\ar[d]^{\Phi'}\\
		M_{\eta'}(\bW',\alpha',\beta')\ar[rrr]^{\prod_{k+1\leq l\leq i}r^{in}_l\times r^{out}_l\hspace{20pt}}&&&
		\prod_{k+1\leq l\leq i} \mathcal R(T_l)\times \mathcal R(T_l)\\
	}
	\end{equation}
	If $t_l\in [0,\infty)$, $\mathfrak R_l=\mathcal R(T_l)\times [0,\infty)$ , otherwise $\mathfrak R_l=U_{\alpha_l}\times S_{\beta_l}$ is 
	chosen as above. The smooth map $\Phi$ is defined as in Hypothesis \ref{eta-hypo-1}. Hypothesis \ref{eta-hypo-1} implies that, 
	$\prod_{k+1\leq l\leq i}r^{in}_l\times r^{out}_l$ 
	is transverse to $\Phi$. The inequalities:
	\begin{equation} \label{dim-ineq}
		\dim( \mathfrak R_l)\leq\dim(\mathcal R(T_l)\times [0,\infty))\hspace{1cm}\dim(M_{\eta'}(\bW',\alpha',\beta'))
		\leq\dim(M_{\eta'}(\bW',\alpha,\beta))
	\end{equation}	
	imply that for a choice of $\eta$ as above:
	$$\dim(\mathfrak M)\leq\dim(\widetilde M_\eta(\bW,\alpha,\beta)|_{H'})$$
	But the dimension of $\widetilde M_\eta(\bW,\alpha,\beta)|_{H'}$ is zero. Therefore, $\mathfrak M$ is non-empty only if the
	inequalities in (\ref{dim-ineq}) become equalities. That is to say, $\alpha'=\alpha$, $\beta'=\beta$, and all $t^0_l$ are finite
	and hence $([\underline A^0],g^0,t^0_{k+1},\dots,t^0_i)\in \widetilde M_\eta(\bW,\alpha,\beta;e)|_{H'}$. 
	Therefore, $\widetilde M_\eta(\bW,\alpha,\beta;e)|_{H'}$ is compact.
\end{proof}

Now we are ready to define the map $\feW:\PFC(Y_0) \to \PFC(Y_1)$. Let $\alpha$ be a critical point of $f_0$ and $\langle \alpha \rangle$ be the corresponding generator of $\PFC(Y_0)$:
\begin{equation} \label{feG}
	\feW(\langle \alpha \rangle)=\sum_{\beta\in Crit(f_1)} \left( \sum_{p\in\tmeGab}u^{e(p)} \right) \langle \beta \rangle
\end{equation}

Here $\beta$ ranges over all critical points of $f_1$ that $\meGab$ is 0-dimensional. Lemma \ref{0-dim-comp-1} insures that the above sum is a well-defined element of $\Lambda_0$. Note that this is the first place that we need to work with the Novikov ring $\Lambda_0$ rather than, say, $\zz/2\zz$. We call the coefficient of $\langle \beta \rangle$ in the above expression, the {\it matrix entry} for the pair $(\alpha,\beta)$ and denote it by $\feW(\alpha,\beta)$. As a result of (\ref{deg-m-f-g-1}):
\begin{equation} \label{morphism-morse-deg}
	\deg_p(\feW)=2\dim(G)-\sigma(W)-\chi(W)
\end{equation}	

In order to study the homotopic properties of $\feW$, we construct secondary moduli spaces when $\meGab$ is 1-dimensional. In this case, the space $\widetilde M_\eta(\bW,\alpha,\beta)|_H$ is a 1-manifold whose boundary lies over the codimension 1 faces of $H$ via the projection map $\tilde \pi$. If $H$ and $H'$ are two sets of the form (\ref{fat-1}) that share a face, then the moduli spaces over them match with each other. In the case that the common face has codimension at least 2, this is clear because the moduli spaces over the common face are empty and hence equal. To prove the case of codimension 1, let $H$ be as above and $H'$ be the following subset of $\bf G$:
$$A_{l'_1}\times \dots \times A_{l'_i}\times G_b$$
where $l'_1=l_1=1$, $\dots$, $l'_{k-1}=l_{k-1}=1$, $l'_k=2$, $l'_{k+1}=l_{k+1}=2$, $\dots$, $l'_{i}=l_{i}=2$. Then the moduli spaces over the common face are equal to:
$$(\widetilde M_\eta(\bW,\alpha,\beta)|_H)|_{H\cap H'}=(\widetilde M_\eta(\bW,\alpha,\beta)|_{H'})|_{H\cap H'}=\{([\underline A],g,t_{k+1},\dots,t_i) \mid ([\underline A],g) \in M_{\eta'}(\bW',\alpha,\beta),
\hspace{25pt}$$
\vspace{-5pt}
$$\hspace{35pt} g\in \underbrace{[0,\infty] \times \dots \times [0,\infty]}_{k-1} \times \{\infty\} \times G_b, \hspace{1pt}
	 r^{out}_{k}([\underline A])=r^{in}_{k}([\underline A]), \hspace{1pt}\varphi_j^{t_j}(r^{out}_{j}([\underline A]))=\varphi_j^{-t_j}(r^{in}_{j}([\underline A]))\} $$
By gluing together these moduli spaces, form the 1-dimensional secondary moduli space $\tmeGab$. As in the 0-dimensional case, the energy functional can be extended to $\tmeGab$, and the set of elements of $\tmeGab$ with energy $e$ is denoted again by $\tmeGabe$. The 1-dimensional manifold $\tmeGab$ is a union of open intervals and simple closed curves. An open interval can be compactified by adding two endpoints, and a punctured neighborhood of these endpoints are called an {\it open end} of the interval.

Let $G^1,\dots,G^m$ be the codimension 1 faces of $G$ such that each $G^i$ parametrizes a family of metrics with a cut along a connected 3-manifold $T_i$. There are two possibilities for each $T_i$. Removing $T_i$ decomposes $W$ to either cobordisms $W^i_1:Y_0 \to T_i$ and $W^i_2:T_i \to Y_1$ or cobordisms $W^i_1:Y_0 \to T_i\coprod Y_1$ and $W^i_2:T_i \to \emptyset$. In each of these cases, there are families of metrics $\bW^i_1$, $\bW^i_2$ over the cobordisms  $W^i_1$, $W^i_2$, parametrized by $G_1^i$, $G_2^i$, such that $G^i=G^i_1\times G^i_2$ and the restriction of $\bW$ to the face $G^i$ is given by $\bW^i_1 \times \bW^i_2$. 

\begin{lemma}\label{ends-bar-M-1}
	Let $\meGab$ be a 1-dimensional moduli space. Then the open ends of the moduli space $\tmeGab$ can be identified with the following set:
	$$\bigcup_{i,\gamma\in Crit(h_i)} \widetilde {M}_\eta(\bW^{i}_1,\alpha,\gamma) \times 
	\widetilde {M}_\eta(\bW^{i}_2,\gamma,\beta) \cup $$
	\begin{equation} \label{bdry-1}
		\bigcup_{\substack{\gamma\in Crit(f_0)\\ \ind(\alpha)-\ind(\gamma)=1}} 
		\mathcal M(\alpha,\gamma)\times \widetilde {M}_\eta(\bW,\gamma,\beta) \hspace{5mm}\cup 
		\bigcup_{\substack{\gamma\in Crit(f_1)\\ ind(\gamma)-ind(\beta)=1}} 
		\widetilde {M}_{\eta}(\bW,\alpha,\gamma) \times \mathcal M(\gamma,\beta)
	\end{equation}	
	where $\mathcal M(\alpha,\gamma)$ $($respectively, $\mathcal M(\gamma,\beta)$$)$ 
	is the set of unparameterized Morse trajectories from $\alpha$ to $\gamma$ $($respectively, from $\gamma$ to $\beta$$)$.
	Furthermore, adding the endpoints of the moduli space $\tmeGabe$ makes this moduli space compact.
\end{lemma}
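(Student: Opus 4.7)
The plan is to classify the open ends of $\tmeGab$ by examining sequences $\{z_j=([\underline{A}^j],g^j,t^j)\}\subset \tmeGabe$ that fail to converge inside $\tmeGabe$, and to match each family of ends with one of the three pieces of (\ref{bdry-1}).

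First I would use compactness of $G$ together with compactness of $\meGbe$ (Lemma \ref{finite-spinc}) to pass to a subsequence so that $g^j$ converges to some $g^0\in G$, the secondary flow parameters $t^j$ converge in a product of compactified intervals, and $[\underline{A}^j]$ converges modulo gauge to $[\underline{A}^0]$ with $([\underline{A}^0],g^0)\in M_{\eta'}(\bW',\alpha',\beta';e)$ for some critical points $\alpha',\beta'$. Linearity of the abelian ASD equation prevents energy from bubbling or sliding off the cylindrical ends, so non-compactness of $\tmeGabe$ can only stem from one of three mechanisms: a secondary coordinate $t^j_l\to\infty$ at some cut $T_l$, forcing a broken negative-gradient trajectory of $h_l$; the projection $r_0(z^j)$ escaping $U_\alpha$, which by Wehrheim's compactification \cite{W:m-trajectory} produces a broken $f_0$-trajectory from $\alpha$ to some $\alpha'$ with $\deg_p(\alpha')<\deg_p(\alpha)$; or the analogous degeneration of $r_1(z^j)$ on the $Y_1$ side with $\deg_p(\beta')>\deg_p(\beta)$.

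The heart of the argument is a dimension count modeled on Lemma \ref{0-dim-comp-1}. Each broken configuration fits into a fiber-product diagram analogous to (\ref{fib-prod}) in which the factors $\mathcal R(T_l)\times[0,\infty)$, $U_\alpha$, or $S_\beta$ are replaced by smaller stable/unstable manifolds of critical points of $h_l$, $f_0$, or $f_1$. Hypothesis \ref{eta-hypo-1} asserts that the restriction map $r_{G'}$ is transverse to every such product of stable/unstable embeddings together with flow-diagonal maps, so each individual breaking strictly drops the expected dimension by one. Since $\dim \tmeGab=1$, the limit configuration can be nonempty only if exactly one breaking occurs. A single $t_l\to\infty$ associated with a codimension-one face $G^i$ yields $r^{out}_l([\underline{A}^0])\in S_\gamma$ and $r^{in}_l([\underline{A}^0])\in U_\gamma$ for some $\gamma\in \mathrm{Crit}(h_i)$, and decoding (\ref{sub-tmGabe}) over the two components $W^i_1,W^i_2$ of $W\setminus T_i$ recovers a product of the form $\tilde M_\eta(\bW^i_1,\alpha,\gamma)\times \tilde M_\eta(\bW^i_2,\gamma,\beta)$ with matching energies. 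A single Morse break on the $Y_0$ side forces $\mathrm{ind}(\alpha)-\mathrm{ind}(\gamma)=1$ and produces the second union; the $Y_1$ side is symmetric.

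For the converse and the compactification claim I would invoke the gluing statement of Proposition \ref{glu-com}, together with a standard Morse-theoretic gluing at the $Y_0$ and $Y_1$ ends, to produce from each point of (\ref{bdry-1}) an embedded half-open arc in $\tmeGabe$ emanating from that endpoint. At fixed energy $e$, each of the three unions in (\ref{bdry-1}) is a finite set, being a finite union of compact $0$-dimensional moduli spaces, so adjoining these endpoints turns $\tmeGabe$ into a compact 1-manifold with boundary. The main obstacle I anticipate is the bookkeeping step of ensuring that any joint degeneration — two secondary parameters escaping simultaneously, or a cut-breaking combined with a Morse break at $\alpha$ or $\beta$ — is cut out in codimension at least two and therefore contributes no ends to the 1-manifold. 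This again reduces to Hypothesis \ref{eta-hypo-1}, applied carefully over each face $G'$ of $G$ and every partition of its intermediate cuts.
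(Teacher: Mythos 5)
Your treatment of the compactness direction --- passing to a subsequence, noting that the abelian equation forbids bubbling and energy sliding, isolating the three possible Morse-theoretic degenerations, and using Hypothesis~\ref{eta-hypo-1} for the dimension count that forces exactly one breaking --- coincides with the paper's, which simply cites the proof of Lemma~\ref{0-dim-comp-1}. Where you diverge is in producing the ends from each point of~(\ref{bdry-1}). You invoke Proposition~\ref{glu-com} together with Morse gluing at $Y_0$ and $Y_1$; the paper instead observes that ASD gluing has already been baked into the definition of $\tmeGab$ via the secondary flow parameters, so \emph{no} metric-degeneration gluing is needed. For the first union the paper works directly with the flow-time moduli space $\mathcal M(\mathcal R(T_i),\mathcal R(T_i))$: by Wehrheim's result this compactifies to a cornered manifold with $U_\gamma\times S_\gamma$ as a codimension-one face, and since Hypothesis~\ref{eta-hypo-1} makes the restriction map $r:M_\eta(\bW^i_1)\times M_\eta(\bW^i_2)\to\mathcal R(Y_0)\times\mathcal R(T_i)\times\mathcal R(T_i)\times\mathcal R(Y_1)$ transverse to $\mathcal R(Y_0)\times\mathcal M_\gamma\times\mathcal R(Y_1)$, the preimage $r^{-1}(\mathcal M_\gamma)$ is a $1$-manifold with boundary whose interior sits inside $\tmeGab$ and whose boundary is exactly $\widetilde M_\eta(\bW^i_1,\alpha,\gamma)\times\widetilde M_\eta(\bW^i_2,\gamma,\beta)$. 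This replaces explicit gluing arcs with a one-line transversality statement. Your appeal to Proposition~\ref{glu-com} is somewhat off target here: that proposition handles the stretching of the \emph{metric} near a face of $G$, while the ends in the first union of~(\ref{bdry-1}) arise from the \emph{flow-time} parameter $t_l\to\infty$ inside the secondary interval of ${\bf G}$, which is a Morse degeneration at the cut $T_l$, not at $Y_0$ or $Y_1$. The transversality-to-$\mathcal M_\gamma$ route makes that distinction transparent and avoids any gluing analysis; it is worth noting that the same scheme (apply $r^{-1}$ to a Wehrheim-type compactification of trajectory space) handles the second and third unions as well, so the argument is uniform.
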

\begin{proof}
	The proof of this lemma is standard and consists of two parts. Firstly we need to show that for each point in (\ref{bdry-1}) there is an open end of $\tmeGab$. 
	In the second part, we need to show that after adding the endpoints in (\ref{bdry-1}) the moduli space $\tmeGabe$ is compact. The latter can be proved in the similar 
	way as in the proof of Lemma \ref{0-dim-comp-1}. 	
	In order to prove the first part, fix a codimension 1 face $G^i$ of $G$ and a critical point $\gamma$ of the function $h_i$. Define:
	\begin{equation} \label{traj}
		\mathcal M(\mathcal R(T_i),\mathcal R(T_i)):=\{(z_1,z_2) \in \mathcal R(T_i)\times \mathcal R(T_i) \mid 
		\exists t \in \rr^{\geq 0}\hspace{5pt} 
		\phi_i^t(z_1)=\phi_i^{-t}(z_2)\}.
	\end{equation}
	According to \cite{W:m-trajectory}, $\mathcal M(\mathcal R(T_i),\mathcal R(T_i))$ can be compactified to a cornered manifold and the set 
	$U_\gamma \times S_\gamma$ is a codimension 1 face of this compactification. Roughly speaking, this face can be thought as the case that $t=\infty$ and 
	$\phi_i^t(z_1)=\phi_i^{-t}(z_2)=\gamma$. Let $\mathcal M_\gamma$ be the union of 
	$\mathcal M(\mathcal R(T_i),\mathcal R(T_i))$ and $U_\gamma \times S_\gamma$. This space as a manifold with boundary can be embedded in 
	$\mathcal R(T_i)\times \mathcal R(T_i)$ in a natural way. 
	By Hypothesis \ref{eta-hypo-1}, the restriction map $r:M_\eta(\bW^i_1)\times M_\eta(\bW^i_2)\to
	\mathcal R(Y_0)\times \mathcal R(T_i)\times \mathcal R(T_i)\times \mathcal R(Y_1)$ is transverse to the sub-manifolds
	$\mathcal R(Y_0)\times \mathcal M_\gamma \times \mathcal \mathcal R(Y_1)$. Note that 
	$r^{-1}(\mathcal M(\mathcal R(T_i),\mathcal R(T_i)))$ is an open subset of $\tmeGab$. Therefore, $ r^{-1}(\mathcal M_\gamma)$ is a 1-dimensional manifold 
	that its interior is a subset of $\tmeGab$ and its boundary is equal to $\widetilde {M}_\eta(\bW^i_1,\alpha,\gamma) \times \widetilde {M}_\eta(\bW^i_2,\gamma,\beta)$.
	 This verifies that the first term in (\ref{traj}) parametrizes part of the open ends of $\tmeGab$. 
	 A similar argument shows that the other two terms parametrize other open ends and these three sets of the open ends are mutually disjoint.
\end{proof}
 
\begin{lemma}\label{boundary-1}
	For the family of metrics $G$ as above, we have:
	\begin{equation}
		\sum_i   f^{\eta}_{\bW^{i}_1 \times \bW^{i}_2}=d\circ \feW+\feW \circ d
	\end{equation}
	If $W\backslash T_i$ is the composition of the cobordisms $W^i_1: Y_0 \to T_i$ and 
	$W^i_2: T_i \to Y_1$, then $f^{\eta}_{\bW^{i}_1 \times \bW^{i}_2}=  f^{\eta}_{\bW^{i}_1} \circ f^{\eta}_{\bW^{i}_2}$. 
	If $W\backslash T_i$ is the union of the cobordisms $W^{i}_1: Y_0 \to T_i\coprod Y_1$ and $W^{i}_2: T_i \to \emptyset$, then:
	$$f^{\eta}_{\bW^{i}_1}:\PFC(Y_0) \to \PFC(T_i \coprod Y_1)=\PFC(T_i)\otimes \PFC(Y_1)\hspace{1cm} f^{\eta}_{\bW^{i}_2}:\PFC(T_i) \to \Lambda_0$$
	In this case, $f^{\eta}_{\bW^{i}_1 \times \bW^{i}_2}$ is equal to $(f^{\eta}_{\bW^{i}_2} \otimes id) \circ f^{\eta}_{\bW^{i}_1}$
\end{lemma}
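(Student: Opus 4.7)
The plan is to derive the identity from a boundary count on the $1$-dimensional moduli space $\tmeGabe$. The hypothesis that $\meGab$ is $1$-dimensional is guaranteed by taking $\alpha$, $\beta$ with
\[
\deg_p(\beta)-\deg_p(\alpha) = 2\dim(G)-\sigma(W)-\chi(W)+1.
\]
By Lemma \ref{ends-bar-M-1}, after adding the endpoints, $\tmeGabe$ is a compact $1$-dimensional manifold whose boundary is the disjoint union of three kinds of $0$-dimensional strata: one stratum for each codimension $1$ face $G^i$ (corresponding to stretching along a cut $T_i$), together with two strata coming from Morse-breaking of $r_0$ at a critical point $\gamma$ of $f_0$ or of $r_1$ at a critical point $\gamma$ of $f_1$. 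Because a compact $1$-manifold has an even number of boundary points, the count of boundary points weighted by $u^{e(p)}$ vanishes in $\Lambda_0$; summing over $\beta$ and keeping track of the energy grading gives the chain-level identity once I match the three types of contributions to the three terms of the formula.

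Next I would identify each type of stratum with a matrix entry. Morse breaking at the incoming end produces pairs $(\sigma, z)\in\mathcal M(\alpha,\gamma)\times\widetilde M_\eta(\bW,\gamma,\beta)$ where $\deg_p(\gamma)=\deg_p(\alpha)+1$; summing the count of such pairs weighted by energy recovers the matrix entry of $\feW\circ d$ applied to $\langle\alpha\rangle$. Symmetrically, Morse breaking at the outgoing end gives $(d\circ\feW)(\langle\alpha\rangle,\langle\beta\rangle)$. The stratum corresponding to the face $G^i$ is, by Lemma \ref{ends-bar-M-1}, the union over $\gamma\in\mathrm{Crit}(h_i)$ of $\widetilde M_\eta(\bW^i_1,\alpha,\gamma)\times \widetilde M_\eta(\bW^i_2,\gamma,\beta)$, which by definition is the matrix entry of $f^\eta_{\bW^i_1\times\bW^i_2}$.

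It then remains to verify the two factorization formulas. In the composition case $W^i_1:Y_0\to T_i$, $W^i_2:T_i\to Y_1$, I would expand the definition of the matrix entry of $f^\eta_{\bW^i_1\times\bW^i_2}$ as
\[
\sum_{\gamma\in \mathrm{Crit}(h_i)}\Bigl(\sum_{p\in\widetilde M_\eta(\bW^i_1,\alpha,\gamma)}u^{e(p)}\Bigr)\Bigl(\sum_{q\in\widetilde M_\eta(\bW^i_2,\gamma,\beta)}u^{e(q)}\Bigr),
\]
which, since $e$ is additive under the cut along $T_i$ and the products telescope critical point by critical point, equals the $(\alpha,\beta)$-entry of the composition of the two cobordism maps. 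The second factorization, where $W\setminus T_i = W^i_1\coprod W^i_2$ with $W^i_2$ closing off, is the same computation with the tensor product $\PFC(T_i)\otimes \PFC(Y_1)$ in place of $\PFC(Y_1)$ on the intermediate group; here a critical point $\gamma$ of $h_i$ pairs with a critical point $\beta$ of $f_1$ to give the generator $\langle\gamma\rangle\otimes\langle\beta\rangle$, and the fiber product over $\mathcal R(T_i)$ of $\widetilde M_\eta(\bW^i_1,\alpha,\gamma\otimes\beta)$ with $\widetilde M_\eta(\bW^i_2,\gamma)$ again splits as a product of counts.

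The main obstacle I anticipate is making the energy bookkeeping rigorous, because the sum $\sum_p u^{e(p)}$ involves an infinite series indexed by $\spinc$ structures and one must check that the identification of boundary points in the three strata is compatible with the decomposition $e(p)=e(p_1)+e(p_2)$ at a broken configuration; this is exactly where the Novikov ring $\Lambda_0$ is needed. The remaining technical point is to ensure that my compactification of $\tmeGabe$ is indeed a $C^0$ $1$-manifold with boundary precisely the set listed in Lemma \ref{ends-bar-M-1}, and that no extra ends are created by sliding energy off the ends of $W$—this is ruled out by the linearity of the equations and the uniform energy bound, exactly as in the proof of Lemma \ref{finite-spinc}.
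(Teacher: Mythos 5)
Your proof follows the same strategy as the paper: invoke Lemma \ref{ends-bar-M-1} to identify the three families of open ends of the compact $1$-dimensional moduli space $\tmeGabe$, observe that each interval component contributes two endpoints of equal energy so the $u^{e}$-weighted count vanishes over the characteristic-two ring $\Lambda_0$, and match the three families of ends with the three terms of the identity (with the factorizations following from the product description of the broken moduli space over the face $G^i$). Two small bookkeeping slips worth fixing, neither of which affects the structure: for $\meGab$ to be $1$-dimensional one needs $\deg_p(\beta)-\deg_p(\alpha)=2\dim(G)-\sigma(W)-\chi(W)-2$ rather than $+1$, and in the Morse-breaking stratum the intermediate critical point $\gamma$ satisfies $\deg_p(\gamma)=\deg_p(\alpha)-2$ rather than $+1$, since $\deg_p=2\cdot\ind-b_1$ and hence the Morse differential drops $\deg_p$ by $2$.
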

\begin{proof}
	By Lemma \ref{ends-bar-M-1} elements of the set (\ref{bdry-1}) form the boundary of a 1-dimensional manifold. 
	In particular, they can be divided into pairs where elements of each pair consists of open ends of an inerval. 
	Elements of each pair have the same energy, because they are connected by a path in $M_\eta(\bW)$. 
	Therefore, the weighted sum of the points is zero.
	Weighted number of the points in the first, second, and third sets of (\ref{bdry-1}), respectively, 
	correspond to the coefficients of $\langle\beta\rangle$ in
	$\sum_{i=1}^k f^{\eta}_{\bW^{i}_1\times \bW^{i}_2}(\langle\alpha\rangle)$, 
	$\feW\circ d(\langle\alpha\rangle)$, and $d \circ \feW(\langle\alpha\rangle)$. 
	Thus vanishing of the summation of these numbers verifies the above relation.
\end{proof}

In the case that $G$ has only one metric $g$, Lemma \ref{boundary-1} asserts that $\feW$ is a chain map.  We will write $\PFC(W)$ for $\feW$ in this special case, and the resulting map at the level of homology will be denoted by $\PFH(W)$. Standard arguments show that:
\begin{proposition} \label{functoriality}
	The map $\PFH(W):\PFH(Y_0) \to \PFH(Y_1)$ is independent of the perturbation and the metric on $W$. 
	If $W=W_0\circ W_1$ then $\PFH(W)=\PFH(W_1)\circ \PFH(W_0)$. 	
\end{proposition}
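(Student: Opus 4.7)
The plan is to deduce both statements from Lemma \ref{boundary-1} by choosing suitable one-parameter families of metrics on $W$. The key observation is the degree formula (\ref{morphism-morse-deg}): when $\dim(G)=1$, the map $\feW$ has Morse-degree shift two greater than the cobordism chain map for a single metric, which is precisely what is needed for a chain homotopy between chain maps of degree $-\sigma(W)-\chi(W)$. Once the relevant chain homotopies are exhibited, both statements follow at the level of homology.

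For independence of the metric and perturbation, fix two admissible pairs $(g_0,\eta_0)$ and $(g_1,\eta_1)$ on $W$. Apply Proposition \ref{fam-mod-tra-1} together with the relative construction of Remark \ref{ext-per-1} to the parametrizing interval $G=[0,1]$, with the prescribed data installed on the two endpoint faces, producing a smooth family $\bW$ of metrics on $W$ and a global admissible perturbation $\eta$ that restrict to the given choices over $\{0\}$ and $\{1\}$. Treating the two endpoints as codimension-$1$ faces on which the family decomposes trivially (the restriction is just $W$ equipped with $g_i$), the identity of Lemma \ref{boundary-1} yields
\begin{equation*}
	f_{W^{g_0},\eta_0}+f_{W^{g_1},\eta_1}=d\circ\feW+\feW\circ d,
\end{equation*}
where the sum is taken over the characteristic-two ring $\Lambda_0$. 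Thus the two cobordism chain maps are chain homotopic and induce the same map on $\PFH$.

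For the composition law, suppose $W=W_0\circ W_1$ with $W_0\colon Y_0\to Y_1$ and $W_1\colon Y_1\to Y_2$, and fix admissible pairs $(g_0,\eta_0)$ and $(g_1,\eta_1)$ on $W_0$ and $W_1$. Form the neck-stretching family $\bW$ over $G=[0,\infty]$: for finite $t$ the metric on $W$ contains an isometric collar $[-t,t]\times Y_1$ around the separating 3-manifold $T=Y_1$, while the point $t=\infty$ corresponds to the broken metric with cut along $T$ and components $W_0$ and $W_1$ equipped with $g_0$ and $g_1$. Apply Proposition \ref{fam-mod-tra-1} and Remark \ref{ext-per-1} to extend the prescribed perturbations at the two boundary faces to an admissible perturbation $\eta$ on the whole family. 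Lemma \ref{boundary-1} then produces the identity
\begin{equation*}
	f_{W^{g},\eta'}+f^{\eta_1}_{\bW_1}\circ f^{\eta_0}_{\bW_0}=d\circ\feW+\feW\circ d,
\end{equation*}
in which the first term arises from the endpoint $t=0$ and represents a valid cobordism map for $W$ with some metric $g$ and perturbation $\eta'$, and the second term arises from the cut face at $t=\infty$. Passing to homology and invoking the independence statement already established yields $\PFH(W)=\PFH(W_1)\circ\PFH(W_0)$.

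The main technical obstacle is arranging the admissible perturbations so that they realize the prescribed boundary data on $G$ while still meeting Hypothesis \ref{eta-hypo-1}; this is precisely the purpose of the inductive construction in Proposition \ref{fam-mod-tra-1} together with the relative extension in Remark \ref{ext-per-1}. A secondary point is ensuring that the matrix coefficients defined in (\ref{feG}) are well-defined elements of $\Lambda_0$: compactness of the zero-dimensional secondary moduli spaces from Lemma \ref{0-dim-comp-1} guarantees finiteness in each energy level, while positivity of energies under Hypothesis \ref{eta-hypo-1} ensures convergence in the Novikov ring $\Lambda_0$ rather than merely in $\Lambda$.
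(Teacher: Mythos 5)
Your proposal is correct and follows the standard Floer-theoretic route (one-parameter families of metrics producing chain homotopies), which is exactly what the paper is alluding to when it writes ``Standard arguments show that.''

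One point deserves to be made explicit, because as written your argument leans on Lemma \ref{boundary-1} a little beyond its literal scope. In the paper's formalism, codimension-$1$ faces of $G$ are required to parametrize metrics broken along a nonempty cut, and the left-hand side $\sum_i f^{\eta}_{\bW^i_1 \times \bW^i_2}$ in Lemma \ref{boundary-1} ranges only over such faces, via Lemma \ref{ends-bar-M-1}'s enumeration of \emph{open} ends of $\tmeGab$. The endpoints $\{0\}$, $\{1\}$ of your interval $G=[0,1]$, and the endpoint $t=0$ of $G=[0,\infty]$, carry unbroken metrics, so they are not faces of that kind and cannot be read off from Lemma \ref{boundary-1} as stated. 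What actually happens is that the compact $1$-dimensional moduli space $\tmeGab$ has genuine (closed) boundary lying over these unbroken endpoints, contributing the terms $f_{W^{g_0}}$, $f_{W^{g_1}}$ (respectively $f_{W^{g}}$), in addition to the open ends from Morse breakoff and (in the composition case) the broken face over $t=\infty$. Counting all boundary points of the compactified $1$-manifold then gives exactly the two chain-homotopy identities you wrote, and the rest of your argument goes through. So the mathematics is right; you just need to supplement the cited lemma with the boundary-over-$\partial G$ contributions rather than treating the unbroken endpoints as trivially decomposed codimension-$1$ faces, which is not a case the paper's face structure allows.
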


\subsection{Oriented Moduli Spaces and a Local Coefficient System} \label{ori-loc}

In the previous subsections, plane Floer homology is defined as a functor from the category of 3-manifolds and cobordisms between them to the category of $\zz$-graded $\Lambda_0$-modules. In this section, this functor will be lifted to the category of of $\zz$-graded modules over the following ring: 
\begin{equation} \label{tLz}
	\tLz:=\{\sum_{i=1}^{\infty} a_iu^{q_i} \mid a_i \in \zz, q_i \in \qq^{\geq 0},  \lim_{i \to \infty} q_i=\infty\}
\end{equation}
In the above definition, if we allow the exponents $q_i$ to be negative, then the resulting Novikov ring is denoted by $\tL$. To define such lift of plane Floer homology, it is necessary to change slightly the definition of morphisms in our topological category. In the new topological category, a morphism between two 3-manifolds is a cobordism with a choice of a homology orientation. Composition of two morphisms is also given by the composition of the underlying cobordisms equipped with the composition of homology orientations. As before, an important feature of the cobordism maps is the possibility of extending their definition to the more general case of family of metrics.

Firstly we set our conventions for orienting the elements of the real $K$-group of a topological space. For a topological space $G$, a line bundle can be associated with each element of $KO(G)$ that is called the {\it orientation bundle}. This line bundle for $O=[E_1]-[E_2]\in KO(G)$ is defined as $l_{O}=\Lambda^{\max}(E_1)\otimes (\Lambda^{\max}E_2)^*$. Different representations of this element of $KO(G)$ gives rise to naturally isomorphic line bundles (more precisely, $\zz/2\zz$-bundles). As an example, the determinant bundle of a family of metrics is the orientation bundle of the index bundle. If $O=[E_1]-[E_2]$ and $P=[F_1]-[F_2]$ are two elements of $KO(G)$ then $l_O\otimes l_P$ and $l_{O+P}$ are isomorphic and we fix this isomorphism to be:
$$(e_1\wedge e_2^*)\otimes (f_1 \wedge f_2^*) \in l_O\otimes l_P \to (-1)^{\dim(E_2)\cdot \dim(F_1)}(e_1\wedge f_1) \wedge (e^*_2 \wedge f^*_2) \in l_{O+P}$$
where $e_1\in \Lambda^{\max} E_1$, and so on. In what follows, this convention is used to trivialize $l_{O+P}$ when the line bundles $l_{O}$ and $l_{P}$ are trivialized. Note that, the isomorphism depends on the order of $O$ and $P$, and reversing this order changes the isomorphism by $(-1)^{{\rm rk}(O)\cdot {\rm rk}(P)}$.

For a 3-manifold $Y$, fix a Morse-Smale function $f$ on $\mathcal R(Y)$ and define $(\tPFC(Y),\td_p)$ to be the Morse complex of $f$ with coefficients in $\tLz$(cf., for example, \cite{KM:monopoles-3-man}). Because $\tLz$ is a characteristic zero ring, to define this chain complex, one needs to work with oriented moduli spaces of gradient flow lines between the critical points. For the convenience of the reader, we briefly recall the definition of the Morse  complex in this case. For a critical point $\alpha$ of the Morse function $f$, let $\cU^+_\alpha$ and $\cU^-_\alpha$ be the tangent bundles of the stable and the unstable manifolds of $\alpha$. Because these manifolds are contractible, the tangent bundles are trivial. Define $o(\alpha)$ to be the abelian group generated by the two different orientations of $\cU^+_\alpha$ with the relation that these two generators are reverse of each other. Clearly, this group has rank 1. However, it does not have a distinguished generator. With this convention, $\tPFC=\bigoplus_{\alpha \in Crit(f)} o(\alpha)\otimes_\zz \tLz$. In order to define the differential, we orient the moduli spaces of flow lines in the following way. For two distinct critical points $\alpha$ and $\beta$, let $\check{\mathcal M}(\alpha,\beta)$ be the set of parametrized flow lines from $\alpha$ to $\beta$. By the aid of inclusion maps of $\check{\mathcal M}(\alpha,\beta)$ in the stable and the unstable manifolds, $[\cU^+_{\beta}]-[\cU^+_{\alpha}]$ can be regarded as an element of $KO(\check{\mathcal M}(\alpha,\beta))$, and is canonically isomorphic to $[T\check{\mathcal M}(\alpha,\beta)]$. The translation action of $\rr$ on $\check{\mathcal M}(\alpha,\beta)$ lets us to view this manifolds as a fiber bundle over $\mathcal M(\alpha,\beta)$ with the fiber $\rr$. Thus, as an identity in $KO(\check{\mathcal M}(\alpha,\beta))$: 
$$[\underline \rr]+[q^*(T\mathcal M(\alpha,\beta))]=[\cU^+_{\beta}]-[\cU^+_{\alpha}]$$ 
where $q: \check{\mathcal M}(\alpha,\beta) \to \mathcal M(\alpha,\beta)$ is the quotient map. Therefore, given elements of $o(\alpha)$ and $o(\beta)$, we can orient $T\mathcal M(\alpha,\beta)$. In the case that $\mathcal M(\alpha,\beta)$ is 0-dimensional, the orientation assigns a sign to each gradient flow line and the signed count of the points of $\mathcal M(\alpha,\beta)$ is used to define the differential of the Morse complex. The homology of the complex $(\tPFC(Y),\td_P)$, denoted by $\tPFH(Y)$, is the oriented plane Floer homology of $Y$.

\begin{remark}
	The standard convention to orient the moduli space $\mathcal M(\alpha,\beta)$ uses orientations of $\cU_{\alpha}^-$ 
	and $\cU_{\beta}^-$ rather than $\cU_{\alpha}^+$ and $\cU_{\beta}^+$. 
	In fact, the standard orientation gives rise to a Morse homology group that is canonically 
	isomorphic to singular homology groups. However, note that if $M$ is a compact manifold and $f$ is a 
	Morse-Smale function, then the stable manifolds (respectively, unstable manifolds) of $f$ are the same as unstable manifolds 
	(respectively, stable manifolds) of $-f$. Therefore, the Morse complex for the homology of $M$, defined by $f$ 
	and the orientations of the stable manifolds of $f$, is the same as the Morse complex for the cohomology of $M$, 
	defined by $-f$ and the orientations of the unstable manifolds of $-f$. This is a manifestation of Poincar\'e
	duality at the level of Morse complexes. Thus $\tPFH(Y)$ is in fact naturally isomorphic to the cohomology of $\mathcal R(Y)$, 
	and in order to identify it with the homology of $\mathcal R(Y)$, 
	one needs to fix an orientation of the representation variety. 
\end{remark}

Let $\bW$ be a family of metrics on a cobordism $W:Y_0 \to Y_1$ parametrized by a cornered manifold $G$. Let also $G^1$, $\dots$, $G^m$ be the set of open faces of codimension 1 in $G$ such that $G^i=G^i_1\times G^i_2$ and the family of metrics $\bW|_{G^i}$ has the form $\bW^i_1 \times \bW^i_2$. Here $\bW^i_j$ is a family of metrics parametrized by $G^i_j$. We assume that for each $i$, $j$, an orientation of $\ind(\bW^i_j;{ G^i_j})$ and also an orientation of $\ind(\bW;G)$ are fixed such that the following holds. The orientation of $\ind(\bW; G)$ induces an orientation of $\ind(\bW|_{G^i};G^i)\in KO(G^i)$ which is required to be the same as the orientation given by the sum $\ind(\bW^i_1;{ G^i_1}) + \ind(\bW^i_2;{ G^i_2})$. Furthermore, we assume that $G$ and all $G^i_j$'s are oriented. Thus $G^i$ can be either oriented as a codimension 1 face of $G$ by the outward-normal-first convention or as the product $G^i_1\times G^i_2$ by the first-summand-first convention. We demand these two orientations to be the same. We will write $\ind(W;{\bf G})$ for the pull-back of the index bundle $\ind(W;G)$ to the fattening ${\bf G}$ by the contraction map.

The following lemma explains how to orient $\tmeGab$:

\begin{lemma} \label{ori-conv}
	Suppose $\bW$ is a family of metrics as above and $\eta$ is an admissible perturbation for this family that satisfies Hypothesis \ref{eta-hypo-1}. Let also 
	$\alpha$, $\beta$ be critical points of the Morse functions $f_0$, $f_1$ such that $\meGab$ is either 0- or 1-dimensional. Then the orientation 
	double cover of $\tmeGab$ is canonically isomorphic to the pull-back of the orientation bundle of: 
 	\begin{equation} \label{orien-KO}
		-[\cU^+_{\alpha}]+\ind(\bW;{\bf G})+[\cU^+_\beta]+[T{\bf G}]
	\end{equation}	
	 by the projection map $\pi:\tmeGab \to {\bf G}$.
\end{lemma}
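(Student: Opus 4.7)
The plan is to compute the virtual tangent bundle $[T\tmeGab]\in KO(\tmeGab)$ and verify it equals the pull-back of $-[\cU^+_\alpha] + \ind(\bW;{\bf G}) + [\cU^+_\beta] + [T{\bf G}]$; since the orientation double cover of a virtual bundle is a canonical invariant of its $KO$-class (via the conventions for orienting sums of $KO$ classes set up earlier in Section \ref{abelian-asd-op}), this reduction suffices to produce the stated isomorphism. I will carry out the calculation first on the interior of $G$ and then extend to the boundary strata of ${\bf G}$ via the fiber-product descriptions already built into the definition of $\tmeGab$.

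On the interior of $G$, where ${\bf G} = G$ and $\tmeGab = \meGab$, Hypothesis \ref{eta-hypo-1} presents $\meGab$ as the transverse fiber product of $r=(r_0,r_1)\colon\meGb\to \mathcal R(Y_0)\times\mathcal R(Y_1)$ with the inclusion of $U_\alpha\times S_\beta$. The standard $KO$-theoretic formula for such fiber products yields $[T\meGab]=[T\meGb]-[NU_\alpha]-[NS_\beta]$. Since each $\mathcal R(Y_i)$ is a Jacobian torus whose trivial tangent bundle carries the Hessian splitting $[T\mathcal R(Y_i)]=[\cU^+]+[\cU^-]$, the normal bundles are $[NU_\alpha]=[\cU^+_\alpha]$ and $[NS_\beta]=[\cU^-_\beta]$. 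The key intermediate identity is $[T\meGb] = \ind(\bW;G) + [TG] + r_1^*[T\mathcal R(Y_1)]$: the $\iota_1$ component of $\DW$ is precisely the linearization of $r_1$, so Remark \ref{dif-oper-ind} identifies $\ker\DW$ (minus $\coker\DW$) with the vertical tangent bundle of the map $(\pi,r_1)\colon\meGb\to G\times\mathcal R(Y_1)$. Substituting $[T\mathcal R(Y_1)]=[\cU^+_\beta]+[\cU^-_\beta]$ and cancelling $[\cU^-_\beta]$ gives the target formula on the interior.

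On a chart $H = A_{l_1}\times\cdots\times A_{l_i}\times G'$ of ${\bf G}$, diagram (\ref{fib-prod-1}) exhibits $\tmeGab|_H$ as a transverse fiber product of $M_{\eta'}(\bW',\alpha,\beta)$ with $\prod_{j\ \text{sec.}}(\mathcal R(T_j)\times[0,\infty))$ over $\prod_{j\ \text{sec.}}(\mathcal R(T_j)\times\mathcal R(T_j))$. Additivity of the index bundle (Lemma \ref{add-ind}) applied to $W'$, combined with the interior computation applied to each component, gives $[TM_{\eta'}(\bW',\alpha,\beta)]=\ind(\bW';I)+[TI]+[\cU^+_\beta]-[\cU^+_\alpha]+\sum_{j\ \text{sec.}}[T\mathcal R(T_j)]$; the extra $[T\mathcal R(T_j)]$ factors appear because $\bW'$ imposes no restriction at the intermediate ends. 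The $KO$-arithmetic of the fiber product over the diagonal cancels these $[T\mathcal R(T_j)]$ contributions and adds one $[\underline\rr]$ per secondary interval, and together with $[TI]$ these reassemble into $[T{\bf G}|_H]$; using $\ind(\bW;{\bf G})|_H=\rho^*\ind(\bW;G)$ and Lemma \ref{add-ind} once more closes the computation. Consistency of the identifications across overlapping charts of ${\bf G}$ follows from the orientation conventions on index bundles under breaking (Lemma \ref{add-ind}) together with the ordering rule $(-1)^{\dim E_2\cdot\dim F_1}$ for $KO$-sums fixed at the start of Section \ref{ori-loc}.

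The main technical obstacle is the sign bookkeeping: one must track how orientation identifications propagate through the fiber-product normal bundle isomorphisms, the additivity decomposition of $\ind(\bW;G)$ along cuts, and the reorderings of $KO$-summands, and verify that the local identifications agree on overlaps \emph{canonically}, not merely up to sign. Once the ordering conventions are laid out uniformly, the virtual equality in $KO$ promotes to the canonical isomorphism of orientation double covers claimed in the lemma.
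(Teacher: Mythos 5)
Your approach is genuinely different from the paper's, and it is essentially sound, but it defers the hardest part of the argument. The paper linearizes the cut-out equations at each point of $\tmeGab$ to obtain a concrete Fredholm operator $D$, stabilizes by $id_{\cU^+_\beta}$, and then constructs an explicit homotopy through Fredholm operators to the split operator $D' = A_\alpha \oplus \mathcal D_{W'} \oplus B_\beta \oplus C_G$. The homotopy transfers orientations canonically, and the index of $D'$ is visibly the class (\ref{orien-KO}); the canonicity is thus built in by the homotopy, not deduced afterward. You instead compute $[T\tmeGab]$ by $KO$-arithmetic: the transverse fiber-product formula $[T\meGab] = [T\meGb] - [NU_\alpha] - [NS_\beta]$, the Hessian splitting $[T\mathcal R(Y_1)] = [\cU^+_\beta] + [\cU^-_\beta]$, and the intermediate identity $[T\meGb] = \ind(\bW;G) + [TG] + r_1^*[T\mathcal R(Y_1)]$. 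This gets you the same $KO$-class, which is a nice consistency check, but it is not quite the lemma: the statement demands a \emph{canonical} isomorphism of orientation double covers, and an identity in $KO(\tmeGab)$ only specifies the orientation bundle up to a non-canonical isomorphism unless you also fix the specific isomorphism of virtual bundles realizing the equality.

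Two places where this matters concretely. First, your intermediate identity for $[T\meGb]$ is justified by saying ``the $\iota_1$ component of $\DW$ is the linearization of $r_1$,'' which holds pointwise at the level of kernels only when $r_1$ is submersive and $\coker \DW = 0$; in general this is an index-bundle statement that itself requires a stabilization/homotopy argument just like the paper's, so you have not actually bypassed that step. Second, on the charts $H$ of $\bf G$, the reassembly of the $[\underline{\rr}]$ factors from the secondary intervals, the $[T\mathcal R(T_j)]$ cancellations, and the identifications from Lemma \ref{add-ind} across overlapping charts all involve ordering choices; your closing paragraph correctly identifies this as the main obstacle but does not carry it out. What your approach buys is a cleaner picture of the $KO$-class on the interior stratum; what it costs is that the canonicity, which the paper's explicit Fredholm homotopy packages automatically, has to be re-established by the sign bookkeeping you postpone. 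As written, the proposal is a correct outline of an alternative proof, but the ``sign bookkeeping'' step is not a detail---it is where the content of the lemma lives.
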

In (\ref{orien-KO}), $[\cU^+_{\alpha}]$  and $[\cU^+_{\beta}]$ are given by the pull-back of arbitrary constant maps from $G$ to the stable manifolds of $\alpha$ and $\beta$. These elements of $KO({\bf G})$ are well-defined because $\cU^+_{\alpha}$ and $\cU^+_{\beta}$ are bundles over contractible spaces. 
\begin{proof}
	Firstly consider the case that $\meGab$ is 0-dimensional. Fix $p\in\tmeGab$, 
	and without loss of generality assume that $p \in \widetilde M_\eta(\bW,\alpha,\beta)|_H$ for the same choice of $H$ 
	that produces (\ref{sub-tmGabe}). Recall that this set is described as:
	\begin{equation*} 
		\widetilde M_\eta(\bW,\alpha,\beta)|_H=\{([\underline A],g,t_{k+1},\dots,t_i) \mid 
		([\underline A],g) \in M_{\eta'}(\bW',	\alpha,\beta),
		\hspace{1pt}  \varphi_j^{t_j}(r^{out}_{j}([\underline A]))=\varphi_j^{-t_j}(r^{in}_{j}([\underline A]))\}
	\end{equation*}
	where the definitions of $W'$, $\bW'$ and $M_{\eta'}(\bW',\alpha,\beta)$ are given in the discussion before Diagram (\ref{fib-prod-1}). 
	Schematically, the set $\widetilde M_\eta(\bW,\alpha,\beta)|_H$ is equal to $F^{-1}(C)$ for a Fredholm map of Banach manifolds $F:B_1 \to B_2$ 
	and a finite dimensional smooth sub-manifold $C \subset B_2$. 
	Hypothesis \ref{eta-hypo-1} asserts that $F$ is transversal to $C$ and the tangent space of $\widetilde M_\eta(\bW,\alpha,\beta)|_H$ 
	at $p=([\underline A],g,t_{k+1},\dots,t_i)$ is given by the kernel of the composition of $D_pF$ 
	and the projection map to the the normal bundle of $C$ at the point $F(p)$. 
	More explicitly, the tangent space $T_p\widetilde M_\eta(\bW,\alpha,\beta)|_H$ is given 
	by the kernel of a map $D$ that has the following form:
	$$D: L^2_{k,\ext}(W',i\Lambda^1) \oplus T_g G' \oplus \rr^{i-k}\to \hspace{7cm}$$
	$$ \hspace{3cm} \to L^2_{k-1,\delta}(W',i\Lambda^0)_0 \oplus L^2_{k-1,\delta}(W', i\Lambda^+)\oplus 
	\cS^+_\alpha \oplus C(T_{k+1})\oplus \dots \oplus C(T_{i})\oplus \cU^-_{\beta}$$
	\begin{equation*}
		D=\left(
		\begin{array}{ccc}
		-d^*\oplus d^+ & *&*\\
		*& *&*\\
		\vdots&\vdots&\vdots\\
		*&*&*\\
		\end{array}
		\right)
	\end{equation*}
	Here $C(T_j)$ denotes the space of harmonic 1-forms on $T_j$. 
	Although the terms which are represented by $*$ above can be spelled out in more detail, 
	the only point that we need about them is that for each of them either the domain or the codomain is finite dimensional. 
	The direct sum $D \oplus id_{\cU_\beta^+}$ of $D$ and the identity map of $\cU_{\beta^+}$ has the same kernel as $D$, 
	and  there is a homotopy from this operator through the set of 
	Fredholm operators to the following one:
	\begin{equation*}
		D'=A_\alpha \oplus \mathcal D_{W'} \oplus B_\beta \oplus C_G 
	\end{equation*}
	where $A_\alpha:\{0\} \to \mathcal \cU_{\alpha}^+$, $B_\beta: \mathcal \cU_{\beta}^+ \to \{0\} $ and $C_G:T_{(g,t_{k+1},\dots,t_i)}{\bf G} \to \{0\}$.
	Orientation of $\widetilde M_\eta(\bW,\alpha,\beta)|_H$ at $p$ is determined by an orientation of the kernel (and hence the index) of $D$. 
	The homotopy to $D'$ shows the set of
	orientations of the index of $D$ is canonically isomorphic to the set of orientations of the index of $D'$. The latter can be identified with the orientation of:
	\begin{equation*} 
		-[\cU^+_\alpha]+\ind(\bW;{\bf G})+[\cU^+_{\beta}] +[T{\bf G}]
	\end{equation*}	
	at $(g,t_{k+1},\dots,t_i)$. It is also easy to check that these identifications for different choices of $H$ are compatible.
	
	In the case that $\meGab$ is 1-dimensional, the elements of $\tmeGab$ that lie over the interior of $H$ can be oriented analogously, and this orientation can be 
	extended to the boundary of $H$ in the obvious way. It is again straightforward to check that these orientations are compatible and define an orientation of $\tmeGab$.
\end{proof}

\begin{remark} \label{tensor-line-bndle}
	In the context of Lemma \ref{ori-conv} consider the special case that $G$ has only one element $g$. Let $L$ be a complex line bundle whose restriction to the ends of 
	$W:Y_0 \to Y_1$ is trivialized. Let also $A$ be a connection on $L$ such that the induced connections on $Y_0$ and $Y_1$ are trivial and $F^{+_g}(A)=0$. 
	Tensor product with the connection $A$, defines a map from $\widetilde M_\eta(W^g,\alpha,\beta)$ to itself. 
	The proof of Lemma \ref{ori-conv} shows that this map is orientation preserving. 
\end{remark}

Suppose that $\meGab$ is 0-dimensional. In the previous subsection, the matrix entry $\feW (\alpha,\beta)$ was defined by the weighted count of the points of the moduli space $\tmeGab$. To lift this matrix entry element to a $\tLz$-module homomorphism $\tfeW(\alpha,\beta):o(\alpha)\otimes \tLz \to o(\beta)\otimes \tLz$, fix a generator for each of $o(\alpha)$ and $o(\beta)$ (i.e. an orientation for each of $\cU^+_\alpha$ and $\cU^+_\beta$). We can orient (\ref{orien-KO}) using our convention to orient the direct sums. According to Lemma \ref{ori-conv}, this determines an orientation of $\tmeGab$, i.e., a sign assignment to the points of $\tmeGab$. For $p \in \tmeGab$, define $\epsilon_p:o(\alpha) \to o(\beta)$ to be the map that sends the generator of $o(\alpha)$ to the generator of $o(\beta)$ if the sign of $p$ is positive. Otherwise $\epsilon_p$ maps the generator of $o(\alpha)$ to reverse of the generator of $o(\beta)$. Define:
\begin{equation} \label{mult}
	\tfeW(\alpha,\beta):=\sum_{p\in\tmeGab}\epsilon_p\otimes u^{e(p)}.
\end{equation}

\begin{lemma} \label{signed-bdry-form-lem}
	With our conventions as above:
	\begin{equation} \label{signed-bdry-form}
		\sum_i \tilde f^{\eta}_{\bW^i_1\times \bW^i_2}=\tilde d_p\circ \tfeW-(-1)^{\dim(G)}\tfeW \circ \tilde d_p
	\end{equation}	
\end{lemma}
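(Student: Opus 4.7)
The plan is to mimic the proof of Lemma \ref{boundary-1} while tracking orientations via Lemma \ref{ori-conv}. For each pair $(\alpha,\beta)$ with $\dim\meGab=1$, adding the boundary points listed in (\ref{bdry-1}) turns $\tmeGabe$ into a compact oriented $1$-manifold (the energy is constant on each connected component, so restricting to energy $e$ is consistent with the $u$-grading), and its signed boundary count weighted by $u^{e(p)}$ therefore vanishes. Collecting these identities over $\beta$ of the appropriate degree produces a single relation whose three sets of contributions correspond to the three terms in (\ref{signed-bdry-form}), and the task reduces to identifying the three boundary signs.

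For the codimension-one face contribution on $G^i=G^i_1\times G^i_2$, I would apply the compatibility assumptions fixed just before Lemma \ref{ori-conv}: the orientations of $\ind(\bW;G)|_{G^i}$ and $\ind(\bW^i_1;G^i_1)+\ind(\bW^i_2;G^i_2)$ agree, and the product orientation of $G^i_1\times G^i_2$ matches the outward-normal-first boundary orientation of $G^i\subset G$. Together with the direct-sum rule of Subsection \ref{ori-loc}, these identify the induced boundary orientation on $\widetilde M_\eta(\bW^i_1,\alpha,\gamma)\times\widetilde M_\eta(\bW^i_2,\gamma,\beta)$ with the composition orientation defining $\tilde f^\eta_{\bW^i_1\times\bW^i_2}$ in (\ref{mult}), in both subcases of the statement (honest composition, or a piece ending in $\emptyset$). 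For the trajectory-breaking contributions, the orientation line $-[\cU^+_\alpha]+\ind(\bW;{\bf G})+[\cU^+_\beta]+[T{\bf G}]$ decomposes at a $Y_1$-breaking as $\bigl(-[\cU^+_\alpha]+\ind(\bW;{\bf G})+[\cU^+_\gamma]+[T{\bf G}]\bigr)\oplus\bigl(-[\cU^+_\gamma]+[\cU^+_\beta]\bigr)$ and at a $Y_0$-breaking as $\bigl(-[\cU^+_\alpha]+[\cU^+_\gamma]\bigr)\oplus\bigl(-[\cU^+_\gamma]+\ind(\bW;{\bf G})+[\cU^+_\beta]+[T{\bf G}]\bigr)$. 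The first pattern reproduces the composition orientation of $\tilde d_p\circ\tfeW$ directly, while the second requires one to bring $[T{\bf G}]$ into its standard position to match the defining orientation of $\tfeW\circ\tilde d_p$; by the direct-sum convention $(-1)^{\dim(E_2)\dim(F_1)}$ and the general principle that a family cobordism map of base-dimension $\dim G$ satisfies a graded Leibniz rule, the net extra sign is $(-1)^{\dim G}$. Combined with the outward-normal-first convention at the gluing intervals --- which introduces a relative $-1$ between the two types of breakings, exactly as in the standard single-metric argument that cobordism maps are chain maps --- this yields (\ref{signed-bdry-form}) with the $-(-1)^{\dim G}$ prefactor of $\tfeW\circ\tilde d_p$.

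The main technical obstacle is pinning down the $(-1)^{\dim G}$ sign: it is not visible in the unsigned Lemma \ref{boundary-1} and is forced by the specific placement of $[T{\bf G}]$ at the right end of the orientation line in Lemma \ref{ori-conv} together with the direct-sum sign convention of Subsection \ref{ori-loc}. A careful stratum-by-stratum audit of the resulting Koszul signs, consistent with the expected graded-Leibniz behavior of a family cobordism map of base-dimension $\dim G$, completes the proof.
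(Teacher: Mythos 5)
Your proposal takes essentially the same route as the paper's own proof, which is a two-line sketch: orient the one-dimensional secondary moduli spaces $\tmeGab$ via Lemma \ref{ori-conv}, identify the oriented boundary with the three kinds of ends from Lemma \ref{ends-bar-M-1}, and verify that the conventions produce the stated signs (the paper explicitly leaves this last step ``to the reader''). One caution: invoking ``the general principle that a family cobordism map of base-dimension $\dim(G)$ satisfies a graded Leibniz rule'' to explain the $(-1)^{\dim(G)}$ factor is circular here --- that is precisely the content of (\ref{signed-bdry-form}) --- so the weight of the argument has to rest entirely on the stratum-by-stratum Koszul-sign audit you defer to the end, tracking how $[T{\bf G}]$ and the outward-normal factor $[\underline\rr]$ must be commuted past the $\cU^+$ terms in the orientation line of Lemma \ref{ori-conv} at each of the three boundary strata.
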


\begin{proof}
	This lemma is the oriented version of Lemma \ref{boundary-1} and the proof is very similar. Lemma \ref{ori-conv} explains how to orient $\tmeGab$ when it is 
	1-dimensional. The terms in (\ref{signed-bdry-form}) are in correspondence with the oriented boundary of such 1-dimensional manifolds. 
	We leave it to the reader to check that our conventions produce the signs in (\ref{signed-bdry-form}).
\end{proof}

Now let $W:Y_0 \to Y_1$ be a cobordism with a homology orientation, and $g$ be a metric on $W$. By Lemma \ref{signed-bdry-form-lem}, a $\tLz$-module homomorphism $\tPFH(W):\tPFH(Y_0)\to \tPFH(Y_1)$ can be defined by $\tilde f_{W^g}^\eta$. An analogue of Lemma \ref{functoriality} holds for $\tPFH$, namely, $\tPFH$ defines a functor from the category of 3-manifolds and cobordisms with homology orientations to the category of $\tLz$-modules. 

To characterize the map $\tPFH(W)$, we shall recall the definition of {\it pull-up-push-down} map. In the following diagram, suppose $M$, $R_0$, $R_1$ are smooth closed manifolds and $r_0$, $r_1$ are smooth maps:
\begin{equation} \label{cor-mor}
	\xymatrix{
		&M\ar[dl]_{r_0} \ar[dr]^{r_1}\\
		R_0&&R_1\\
	}
\end{equation}
The pull-up-push-down map from $H^*(R_0,\zz)$ to $H^*(R_1,\zz)$ is given by:
$$F_M:={\rm PD}^{-1}_{R_1} \circ (r_1)_*\circ {\rm PD}_M \circ (r_0)^*$$
where ${\rm PD}_X$ stands for the Poincar\'e duality isomorphism from $H^*(X,\zz)$ to $H_*(X,\zz)$. If $f_0$, $f_1$ are respectively Morse functions on $R_0$, $R_1$, then the pull-up-push-down map can be lifted to the level of Morse complexes by mapping a critical point $\alpha$ of $f_0$ to:
$$\sum_{\beta \in Cirt(f_1)} f_M(\alpha,\beta) \beta$$
Here $f_M(\alpha,\beta)$ is the signed count of the points of:
$$\{z\in M \mid (r_0(z),r_1(z))\in U_{\alpha}\times S_\beta\}$$
where the signs of the points of this set are determined by an orientation of:
$$-[\cU^+_\alpha]+[TM]-[r_1^*TR_1]+[\cU^+_\beta]$$
The involved moduli space in the definition of $\tPFH(W)$ is $M_\nu(W)$ for a generic choice of a 2-form $\nu$. Lemma \ref{cob-moduli-1} implies that $\tPFC(W)$ (and hence $\tPFH(W)$) vanishes if $b^+(W)>0$. On the other hand, if $b^+(W)=0$ and $\s$ is a $\spinc$ structure on $W$, with $\s|_{Y_i}=\st_i$, then the diagram: 
\begin{equation} \label{cor-mor}
	\xymatrix{
		&M_\nu(W,\s) \ar[dl]_{r_0} \ar[dr]^{r_1}\\
		\mathcal R(Y_0,\st_0)&&\mathcal R(Y_1,\st_1)\\
	}
\end{equation}
can be identified with:
\begin{equation} 
	\xymatrix{
		&J(W)\ar[dl]_{i_0} \ar[dr]^{i_1}&\\
		J(Y_0)&&J(Y_1)\\
	}
\end{equation}
where $i_0$ and $i_1$ are induced by the inclusion maps of $Y_0$ and $Y_1$ in $W$ (Remark \ref{res-recast-1}). Although the identification of $\mathcal R(Y_i,\st_i)$ with $J(Y_i)$ is canonical only up to translation, the identification of the homology groups of these spaces is canonical because the action of translation on the homology of a torus is trivial. Therefore, the part of $\tPFH(W)$, that is induced by the moduli space $M_\nu(W,\s)$, maps the summand $H^*(\mathcal R(Y_0,\st_0);\tLz)$ of $\tPFH(Y_0)$ by:
$$F_{J(W)}\otimes id_{\tLz}: H^*(\mathcal R(Y_0,\st_0);\tLz)=H^*(J(Y_0))\otimes \tLz \to H^*(\mathcal R(Y_1,\st_1);\tLz)=H^*(J(Y_1))\otimes \tLz $$
and maps the summands, corresponding to the other torsion $\spinc$ structures of $Y_0$, to zero. We will write $D(W;Y_0,Y_1)(\mathfrak t_0,\mathfrak t_1)$ for this map. In summary we have:
\begin{proposition} \label{mor-b^+}
	Suppose $W:Y_1 \to Y_2$ is a cobordism: 
	\vspace{-8pt}
	\begin{enumerate}
		\item[i)] If $b^+(W)>0$, then $\tPFH(W)=0$.
		\item[ii)] If $b^+(W)=0$, then: 
		$$\tPFH(W)=\sum_{\s:\mathfrak t_0 \to \mathfrak t_1} 
		u^{-c_1(\mathfrak s).c_1(\mathfrak s)}D(W;Y_0,Y_1)(\mathfrak t_0,\mathfrak t_1)$$
	\end{enumerate}
	where $\mathfrak s:\mathfrak t_0 \to \mathfrak t_1$ denotes a $\spinc$ structure $\s$ on $W$, with 
	$\mathfrak s|_{Y_0}=\mathfrak t_0$ and $\mathfrak s|_{Y_1}=\mathfrak t_1$ being torsion $\spinc$ structures.
\end{proposition}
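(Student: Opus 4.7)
When $b^+(W)>0$, Lemma \ref{cob-moduli-1} lets me choose a perturbation $\nu$ so that $M_\nu(W,\mathfrak s)$ is empty for every $\spinc$ structure $\mathfrak s$ on $W$ whose restrictions to the ends are torsion. Since each moduli space $\tmeGab$ used to define a matrix entry of $\tfeW$ sits inside $\bigsqcup_{\mathfrak s} M_\nu(W,\mathfrak s)$, all matrix entries vanish, hence $\tPFH(W)=0$.

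\textbf{Part (ii).} The plan is to identify each matrix entry of $\tfeW$ with the Morse-theoretic lift of the pull-up-push-down map of the diagram
\begin{equation*}
\xymatrix{
&J(W)\ar[dl]_{i_0}\ar[dr]^{i_1}&\\
J(Y_0)&&J(Y_1)
}
\end{equation*}
from Remark \ref{res-recast-1}, weighted by the energy monomial $u^{-c_1(\mathfrak s)\cdot c_1(\mathfrak s)}$ and summed over $\spinc$ structures. Fix torsion classes $\mathfrak t_i$ on $Y_i$ and a $\spinc$ structure $\mathfrak s$ on $W$ with $\mathfrak s|_{Y_i}=\mathfrak t_i$. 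By Lemma \ref{cob-moduli-1} and Remark \ref{res-recast-1} I may choose an identification $M_\nu(W,\mathfrak s)\cong J(W)$ under which $(r_0,r_1)$ becomes the inclusion-induced $(i_0,i_1)$. Since $r_i$ lands in the component $\mathcal R(Y_i,\mathfrak t_i)$ of $\mathcal R(Y_i)$, only $\spinc$ structures on $W$ with the prescribed boundary restrictions contribute to the block of $\tPFH(W)$ between $H^*(\mathcal R(Y_0,\mathfrak t_0))$ and $H^*(\mathcal R(Y_1,\mathfrak t_1))$, which accounts for the range of summation in (ii). For such $\mathfrak s$ and critical points $\alpha,\beta$ of the Morse functions $f_0,f_1$, the set defining the matrix entry in \eqref{mult} becomes precisely $\{z\in J(W)\mid (i_0(z),i_1(z))\in U_\alpha\times S_\beta\}$, which is exactly the set counted by the Morse lift of $F_{J(W)}=D(W;Y_0,Y_1)(\mathfrak t_0,\mathfrak t_1)$. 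Every element of $M_\nu(W,\mathfrak s)$ has energy $-c_1(\mathfrak s)\cdot c_1(\mathfrak s)$, so the contribution of $\mathfrak s$ carries the common factor $u^{-c_1(\mathfrak s)\cdot c_1(\mathfrak s)}$, and summation over $\mathfrak s$ yields the claimed series.

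\textbf{The main obstacle} is the sign comparison. I would invoke Lemma \ref{ori-conv}, which orients $\tmeGab$ via an orientation of $-[\cU^+_\alpha]+\ind(\sD_W)+[\cU^+_\beta]$, whereas the Morse-theoretic definition of $F_{J(W)}$ uses an orientation of $-[\cU^+_\alpha]+[TJ(W)]-[i_1^*TJ(Y_1)]+[\cU^+_\beta]$. The comparison therefore reduces to identifying $\ind(\sD_W)$ with $[TJ(W)]-[i_1^*TJ(Y_1)]$ in $KO(J(W))$, canonically up to the chosen homology orientation of $W$; this follows from the explicit descriptions of $\ker(\sD_W)$ and $\coker(\sD_W)$ in Remark \ref{dif-oper-ind} together with the long exact sequence of $(W,Y_0\coprod Y_1)$ when $b^+(W)=0$. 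Once the signs are matched, the chain-level identification of $\tfeW$ with the Morse lift of $\sum_{\mathfrak s:\mathfrak t_0\to\mathfrak t_1}u^{-c_1(\mathfrak s)\cdot c_1(\mathfrak s)}F_{J(W)}$ holds, and passing to homology yields (ii).
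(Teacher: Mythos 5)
Your proposal is correct and follows the paper's own argument: part (i) is Lemma \ref{cob-moduli-1}, and part (ii) is the identification of the restriction-map diagram with the inclusion diagram for $J(W)$ via Remark \ref{res-recast-1}, with the energy yielding the factor $u^{-c_1(\mathfrak s)\cdot c_1(\mathfrak s)}$. The only difference is that you flag and sketch the orientation comparison between $\ind(\sD_W)$ and $[TJ(W)]-[i_1^*TJ(Y_1)]$, a point the paper's discussion leaves implicit; your sketch of that step is sound and in fact fills a small gap in the exposition.
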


For the purpose of proving the existence of exact triangles in the next section, we need to extend the definition of plane Floer homology again. Suppose $Y$ is a connected and closed 3-manifold and $\zeta$ is an embedded closed 1-manifold. We shall assign $\tPFH(Y,\zeta)$ to this pair that is equal to $\tPFH(Y)$ in the case that $\zeta$ is empty. This extension is defined with the aid of a local coefficient system $\Gz$ defined on $\mathcal R(Y)$. For a cobordism of the pairs $(W,Z):(Y_0,\zeta_0)\to (Y_1,\zeta_1)$, we also define a map $\tPFH(W;Z):\tPFH(Y_0;\zeta_0)\to \tPFH(Y_1,\zeta_1)$. More generally, cobordism maps can be defined for the {\it family of metrics over the pair $(W,Z)$} (Definition \ref{fam-met-pair}).

Fix a pair $(Y,\zeta)$ and let $\alpha$ be the equivalence class of a $\spinc$ connection on $Y$ under the action of the gauge group. A triple $(W,Z,[\underline A])$ {\it fills} $(Y,\zeta,\alpha)$, if $W$ is a compact oriented 4-manifold such that $\partial W=Y$, $Z$ is a compact, oriented and embedded surface in $W$ with $\partial Z=\zeta$, and $[\underline A]$ is the equivalence class of a $\spinc$ connection $\underline A$ on $W$ with $[\underline A]|_{\partial W} = \alpha$. For technical convenience, we assume that the restriction of $\underline A$ to a collar neighborhood of $\partial W$ is given by the pull-back of a representative of $\alpha$. 

\begin{lemma}
	Let $(W,Z,[\underline A])$ fills $(Y,\zeta,\alpha)$ and fix a framing for the 1-manifold $\zeta$. Use the framing to define:
	\begin{equation} \label{iota}
		\iota(W,Z,[\underline A]):= \frac{1}{2}(Z\cdot Z- \frac{i}{2\pi}\int_Z F(A))
	\end{equation}	
	This function is constant under the homotopies of the connection $[\underline A]$ that fix $[\underline A]|_{Y}$.
	If $(W',Z',[\underline A'])$ is another triple that fills $(Y,\zeta,\alpha)$, then
	 $\iota(W,Z,[\underline A])-\iota(W',Z',[\underline A'])$ is an integer independent of the choice of the framing.
\end{lemma}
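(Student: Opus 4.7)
The plan is to split the statement into three claims: homotopy invariance, integer-valuedness of the difference, and framing-independence. My plan for homotopy invariance is to combine gauge-invariance of curvature with Stokes' theorem applied to $Z$. For integrality, I would glue $W$ to an orientation-reverse of $W'$ along $Y$ to obtain a closed $\spinc$ $4$-manifold containing a closed oriented surface, and then appeal to the Wu formula. Framing-independence will then follow essentially for free.

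Turning to homotopy invariance: since $Z\cdot Z$ does not involve the connection and $F(A)$ is gauge-invariant, $\iota$ descends to the space of equivalence classes with fixed boundary class $\alpha$. Given a smooth path $\{[\underline A_t]\}$ with $[\underline A_t]|_Y=\alpha$, I would lift to a smooth family of representatives $\underline A_t$ whose restrictions to $Y$ are literally constant. This is possible because any smooth path of $U(1)$-valued gauge transformations on $Y$ starting at the identity is of the form $\exp(i\phi_t)$, and $\phi_t$ extends to $W$. For such a lift, $a_t:=A_t-A_0$ is an imaginary $1$-form on $W$ vanishing on $Y$, so
\begin{equation*}
\int_Z F(A_t)-\int_Z F(A_0)=\int_Z d a_t=\int_\zeta a_t|_\zeta=0
\end{equation*}
by Stokes' theorem.

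For integrality, I would glue $W$ and $\overline{W'}$ along $Y$ to form a closed oriented $4$-manifold $\widetilde W$; the surfaces glue along $\zeta$ (with compatible orientations) to a closed oriented surface $\widetilde Z:=Z\cup_\zeta\overline{Z'}$, and the collar-constant connections glue to a $\spinc$ connection $\underline{\widetilde A}$ with determinant bundle $\widetilde L$. Any discrepancy between the two boundary representatives of $\alpha$ can be absorbed into a gauge transformation, which does not affect the curvature integrals. The given framing of $\zeta$ produces a pushoff of $Z$ in $W$ and a pushoff of $\overline{Z'}$ in $\overline{W'}$ whose boundary pushoffs of $\zeta$ coincide in $Y$, and therefore glue to a pushoff of $\widetilde Z$. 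Tracking the orientation reversal on $W'$ gives
\begin{equation*}
\widetilde Z\cdot\widetilde Z=Z\cdot Z-Z'\cdot Z',\qquad \tfrac{i}{2\pi}\int_{\widetilde Z}F(\widetilde A)=\tfrac{i}{2\pi}\int_Z F(A)-\tfrac{i}{2\pi}\int_{Z'}F(A')=c_1(\widetilde L)\cdot[\widetilde Z].
\end{equation*}
Hence
\begin{equation*}
\iota(W,Z,[\underline A])-\iota(W',Z',[\underline{A'}])=\tfrac{1}{2}\bigl(\widetilde Z\cdot\widetilde Z-c_1(\widetilde L)\cdot[\widetilde Z]\bigr),
\end{equation*}
which is an integer because $c_1(\widetilde L)\equiv w_2(\widetilde W)\pmod 2$ and the Wu formula on the closed $4$-manifold $\widetilde W$ gives $w_2(\widetilde W)\cdot x\equiv x\cdot x\pmod 2$ for every $x\in H_2(\widetilde W;\zz)$. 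Framing-independence of this integer is then immediate: a change of framing of $\zeta$ shifts $Z\cdot Z$ and $Z'\cdot Z'$ by the same amount, and the curvature integrals do not depend on any framing of $\zeta$. The chief bookkeeping obstacle I anticipate is pinning down the orientation conventions on $\overline{W'}$ and $\overline{Z'}$ used in the gluing, which is exactly what yields the two crucial minus signs above.
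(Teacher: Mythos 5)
Your proposal is correct. For the integrality and framing-independence claims you take precisely the paper's route: glue $(W,Z,[\underline A])$ to the orientation-reversal of $(W',Z',[\underline A'])$ along $Y$, identify the difference of the $\iota$'s with $\frac{1}{2}\bigl(\widetilde Z\cdot\widetilde Z - c_1(\widetilde L)\cdot[\widetilde Z]\bigr)$ on the closed $4$-manifold, and invoke Wu's formula via $c_1(\widetilde L)\equiv w_2(\widetilde W)\bmod 2$ to conclude both that the difference is an integer and that it has no dependence on the framing of $\zeta$. The only place you depart from the paper is the homotopy-invariance clause. The paper deduces it for free from the topological interpretation of the glued difference: $\frac{1}{2}\bigl(\widetilde Z\cdot\widetilde Z - c_1(\widetilde L)\cdot[\widetilde Z]\bigr)$ depends only on the $\spinc$ structure and the homology class of $\widetilde Z$, and a homotopy fixing the boundary class neither changes the $\spinc$ structure on $W$ nor the surface, so $\iota$ cannot change. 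You instead give a direct Stokes-theorem computation, choosing representatives $\underline A_t$ with literally constant boundary restriction (using the contractibility to the identity of the path $u_t$ in the gauge group of $Y$ and extending $\phi_t$ over $W$) so that $a_t|_\zeta=0$ and $\int_Z\bigl(F(A_t)-F(A_0)\bigr)=\int_\zeta a_t=0$. Both are valid; the paper's deduction is slicker in that it recycles the Wu-formula statement, whereas your Stokes argument is self-contained and does not rely on first having proved integrality — a trade-off of economy against transparency, with the same conclusion.
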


\begin{proof}
	Glue the reverse of $(W',Z',[\underline A'])$ to $(W,Z,[\underline A])$ in order to form 
	$(W\#W',Z\#Z',[\underline A\# \underline A'])$:
	$$\iota(W,Z,[\underline A])-\iota(W',Z',[\underline A'])=\frac{1}{2}(Z\#Z'\cdot Z\#Z' - \frac{i}{2\pi}\int_Z F(A\#A'))=
	\frac{1}{2}(Z\#Z'\cdot Z\#Z' - c_1(\s\#\s')[Z\#Z'])$$
	where $\s\#\s'$ is the $\spinc$ structure that carries $[\underline A\#\underline A']$. 
	Since $c_1(\s\#\s') \equiv \omega_1(W\#W')$ mod 2, Wu's formula asserts that 
	$\iota(W,Z,[\underline A])-\iota(W',Z',[\underline A'])$ is an integer independent of the framing. Also, this topological interpretation implies that $\iota$ is constant under
	a homotopy that fixes the restriction of $[\underline A]$ to the boundary.
\end{proof}

Let $\Gz(\alpha)$ be the abelian group generated by all the triples that fill $(Y,\zeta,\alpha)$ with the relation that for any two triples $(W,Z,[\underline A])$, $(W',Z',[\underline A'])$ we have $(W,Z,[\underline A])=\pm (W',Z',[\underline A'])$ where the sign is positive or negative depending on whether $\iota(W,Z,[\underline A])-\iota(W',Z',[\underline A'])$ is even or odd. Take a cobordism of the pair $(W,Z):(Y_0,\zeta_0)\to (Y_1,\zeta_1)$ and let $[\underline A]$ be the equivalence class of a $\spinc$ connection on $W$ such that its restrictions to $Y_0$, $Y_1$ are equal to $\alpha_0$, $\alpha_1$, respectively. Then we can define a map of the local coefficient systems:
$$\GZ ([\underline A]): \Gzz(\alpha_0) \to \Gzo(\alpha_1)$$
$$\GZ([\underline A])([W_0,Z_0,[\underline A_0]])=(-1)^{\iota(W_0\#W\#W_1,Z_0\#Z\#Z_1,[\underline A_0\# \underline A\#\underline A_1])}[W_1,Z_1,[\underline A_1]]$$
where $(W_0,Z_0,[\underline A_0])$, $(W_1,Z_1,[\underline A_1])$ are fillings of $(Y_0,\zeta_0,\alpha_0)$, $(Y_1,\zeta_1,\alpha_1)$, respectively. Here, $\iota$ is defined as in (\ref{iota}). This definition is well-defined and does not depend on the choice of the generators of $\Gzz(\alpha_0)$ and $\Gzo(\alpha_1)$.

Suppose $\{\alpha_t\}$ is a path of $\spinc$ connections. This path determines a $\spinc$ connection (in temporal gauge) on $I \times Y$. Thus for any 1-manifold $\zeta \subset Y$, we can define an isomorphism $\Gamma_{I \times \zeta}(\{\alpha_t\}):\Gz(\alpha_0)\to \Gz(\alpha_1)$. This map depends only on the homotopy class of the path.  Thus $\Gz$ defines a local coefficient system on $\mathcal R(Y)$. The plane Floer homology of the pair $(Y,\zeta)$, denoted by $\tPFH(Y,\zeta)$, is the homology of $\mathcal R(Y)$ with coefficients in $\Gz$. The Morse complex that realizes this homology group is denoted by $\tPFC(Y,\zeta)$(cf. \cite[Section 2]{KM:monopoles-3-man}). In particular, for each critical point $\alpha$, this complex has a generator of the form $\Gz(\alpha)\otimes o(\alpha)\otimes \tLz$.

\begin{lemma} \label{mod-2-dep-loc-coef}
	If $\zeta$ and $\zeta'$ are 1-dimensional embedded sub-manifolds in a 3-manifold $Y$, inducing the same elements of
	$H_1(Y;\zz/2\zz)$, then the local coefficient systems $\Gamma_\zeta$ and $\Gamma_{\zeta'}$ are isomorphic. 
	In particular, $\tPFH(Y,\zeta)\cong \tPFH(Y,\zeta')$.
\end{lemma}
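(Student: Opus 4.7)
The approach is to realize $\Gz$ as a rank-one $\zz$-local system on $\mathcal R(Y)$ whose isomorphism class is captured entirely by a monodromy character $\chi_\zeta\colon \pi_1(\mathcal R(Y))\to \{\pm 1\}$, and then to show that $\chi_\zeta$ depends only on $[\zeta]\in H_1(Y;\zz/2\zz)$. Two rank-one local systems on a fixed space with the same monodromy character are isomorphic, so this will yield $\Gz\cong\Gamma_{\zeta'}$ and, passing to Morse homology with these coefficients, $\tPFH(Y,\zeta)\cong \tPFH(Y,\zeta')$.

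First, I would observe that each stalk $\Gz(\alpha)$ is a free abelian group of rank one, so $\Gz$ is indeed a rank-one local system and its monodromy takes values in $\{\pm 1\}$. Since $\mathcal R(Y)$ is a disjoint union of tori $J(Y)=H^1(Y;\rr)/2H^1(Y;\zz)$ with $\pi_1(J(Y))\cong H^1(Y;\zz)$, the character $\chi_\zeta$ on each component corresponds to an element of $H^1(Y;\zz/2\zz)\cong H_1(Y;\zz/2\zz)$.

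Second, I would make $\chi_\zeta(\gamma)$ explicit for a based loop $\gamma=\{\alpha_t\}$ using the prescription for $\Gamma_{I\times \zeta}(\{\alpha_t\})$ in the excerpt. Fixing one filling $(W,Z,[\underline A])$ of $(Y,\zeta,\alpha_0)$ and using it at both ends of the cylinder, the gluing produces a closed triple $(M,\Sigma,[\underline A^*])$ where $M$ is the double of $W$ along $Y$, $\Sigma=Z\cup_\zeta Z^{op}$ is the closed double of $Z$ along $\zeta$, and $[\underline A^*]$ is the glued $\spinc$ connection. Comparing with the constant loop $\gamma=e$ cancels any base sign and yields
\[
\chi_\zeta(\gamma)=(-1)^{\iota(M,\Sigma,[\underline A^*])-\iota(M,\Sigma,[\underline A_0^*])}.
\]

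Third, I would isolate the $\gamma$-dependence of $\iota$. Only the $\spinc$ structure $\mathfrak s^*$ on $M$ varies with $\gamma$: lifting the loop to $I\times Y$ in temporal gauge gives $F(A^*)-F(A^*_0)=dt\wedge\partial_t\alpha_t$, so the difference class $\mathfrak s^*-\mathfrak s^*_0=\delta(\gamma)\in H^2(M;\zz)$ is supported on the collar and equal, after Poincar\'e duality, to $[\gamma]\cup[dt]$ with $[\gamma]\in H^1(Y;\zz)=\pi_1(J(Y))$ the loop class. Since $c_1(\mathfrak s^*)-c_1(\mathfrak s^*_0)=2\delta(\gamma)$, the formula \eqref{iota} gives
\[
\iota(M,\Sigma,[\underline A^*])-\iota(M,\Sigma,[\underline A_0^*])=-\langle \delta(\gamma),[\Sigma]\rangle=-\langle[\gamma],[\zeta]\rangle,
\]
where the last equality uses $\Sigma\cap(I\times Y)=I\times\zeta$ to reduce the pairing to the Kronecker evaluation of $[\gamma]\in H^1(Y;\zz)$ on $[\zeta]\in H_1(Y;\zz)$. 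Hence $\chi_\zeta(\gamma)=(-1)^{\langle[\gamma],[\zeta]\rangle}$, which is unchanged when $\zeta$ is replaced by any $\zeta'$ with the same class in $H_1(Y;\zz/2\zz)$.

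The main obstacle is the topological bookkeeping in the third step: tracking how the glued $\spinc$ structure on the closed double $M$ varies with the loop class $\gamma$, showing that the difference class is localized on the collar as $[\gamma]\cup[dt]$, and reducing its pairing with $[\Sigma]$ to the Kronecker pairing between $[\gamma]$ and $[\zeta]$ on $Y$. Once this computation is carried out, the remaining steps are formal: equality of monodromy characters implies isomorphism of rank-one local systems, and hence of the associated Morse homologies.
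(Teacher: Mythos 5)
Your proof is correct and follows essentially the same strategy as the paper: compute the monodromy of $\Gamma_\zeta$ around a loop in $\mathcal R(Y)$ via the $\iota$-invariant of a closed triple realizing the loop, and extract a factor of two so that only $[\zeta]\bmod 2$ matters. The paper closes the cylinder up to $S^1\times Y$ and invokes $w_2(S^1\times Y)=0$ (Wu's formula) together with the vanishing of the self-intersections $(S^1\times\zeta)\cdot(S^1\times\zeta)$, whereas you double a filling $W$ along $Y$ and track the $\spinc$-structure change on the collar explicitly, but both routes isolate the same factor of two from $c_1(\mathfrak s^*)-c_1(\mathfrak s_0^*)=2\delta(\gamma)$.
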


\begin{proof}
	Let $\gamma$ be a closed loop in $\mathcal R(Y)$. This loop induces a $\spinc$ connection $[\underline A]$ on $S^1 \times Y$. Since $S^1 \times \zeta$, 
	$S^1 \times \zeta$ have zero self-intersection and $S^1 \times Y$ is spin, the difference between the following numbers is even:
	$$\frac{1}{2}((S^1\times \zeta) \cdot (S^1\times \zeta)- \frac{i}{2\pi}\int_{S^1 \times \zeta} F(A)) \hspace{1cm}
	\frac{1}{2}((S^1\times \zeta') \cdot (S^1\times \zeta')- \frac{i}{2\pi}\int_{S^1 \times \zeta'} F(A))$$
	Therefore, $\Gz$ and $\Gamma_{\zeta'}$ are isomorphic local coefficient systems.
\end{proof}

\begin{definition} \label{fam-met-pair}
	A family of non-broken metrics $(\bW,\bZ)$ on the pair $(W,Z)$ and parametrized by $G$ 
	consists of a family of non-broken metrics $\bW$ over $W$ 
	and an embedded manifold $\bZ\subset \bW$ such that $\bZ$ is a fiber bundle over $G$ with fiber $Z$. 
\end{definition}

Given a pair $(W,Z)$, let $T$ be a cut in $W$ transversal to $Z$ and $\xi$ be the embedded 1-manifold $Z \cap T$.
Let also $\bigcup_{j=1}^i T_j$ be the decomposition of $T$ to the connected components, and $\xi_j=\xi\cap T_j$. Removing
$T$ decomposes $W$ and $Z$ to $W_0 \coprod \dots \coprod W_i$ and $Z_0 \coprod \dots \coprod Z_i$ such that 
$Z_j \subset W_j$. Fix the labeling such that the pair $(T_j,\xi_j)$ is equal to the incoming end of $(W_j,Z_j)$ and an outgoing
end of $(W_{o(j)},Z_{o(j)})$:
\begin{definition} \label{pair-fam-met}
	A broken metric on $(W,Z)$ with a cut along $(T,\xi)$ is given by a metric on $W_j$ for each $j$ 
	such that the following holds. After an appropriate identification of the incoming end of $(W_j,Z_j)$ 
	(respectively the outgoing end of $(W_{o(j)},Z_{o(j)})$ in correspondence with $(T_j,\xi_j)$) 
	with $\rr^{\leq0}\times (T_j,\xi_j)$ (respectively with $\rr^{\geq0}\times (T_j,\xi_j)$), the metric on 
	$\rr^{\leq 0} \times T_j\subset W_j$ (respectively $\rr^{\geq 0} \times T_j\subset W_{o(j)}$) is given by the product metric.
	Furthermore, we require that these metrics are given by the same metrics on $T_j$.
\end{definition}

A broken metric as above can be used to define a family of (possibly broken) metrics on $(W,Z)$ that is parametrized with $[0,\infty]^i$. As in the case of the family of metrics on a cobordism $W$, we can combine the definitions \ref{fam-met-pair} and \ref{pair-fam-met} to define a family of (possibly broken) metrics on a pair $(W,Z)$.

A family of metrics $(\bW,\bZ)$ on the pair $(W,Z):(Y_0,\zeta_0) \to (Y_1,\zeta_1)$ gives rise to a map between the chain complexes $\tPFC(Y_0,\zeta_0)$, $\tPFC(Y_1,\zeta_1)$. To that end, we need to extend the definition of $\GZ([\underline A])$ to the case that $[\underline A]$ is an element of $\tmeGab$. Fix $p\in \tmeGab$ and without loss of generality assume that  $p=([\underline A],g,t_{k+1},\dots,t_i)$ is an element of $\widetilde M_\eta(\bW,\alpha,\beta)|_H$, defined in (\ref{sub-tmGabe}). Then $\underline A$ is a $\spinc$ connection on $W'=W\backslash \bigcup_{j=k+1}^n T_j$. For each $j$, the flow $\varphi_j$ determines a path from  $r^{out}_{j}([\underline A])$ to $r^{in}_{j}([\underline A])$. Use the same methods as in subsection \ref{ab-asd-eq} to glue these paths and $\underline A$ to form a $\spinc$ connection on $W$. This connection can be used to define a homomorphism $\GZ([A]):\Gzz(\alpha) \to \Gzo(\beta)$. This map does not depend on the choices involved in gluing and hence is well-defined. We can define a map $\tfeWZ:\tPFC(Y_0,\zeta_0)\to \tPFC(Y_1,\zeta_1)$ with the following matrix entry:
\begin{equation*}
	\tfeWZ(\alpha,\beta):\Gzz(\alpha)\otimes o(\alpha)\otimes \tLz \to \Gzo(\beta)\otimes o(\beta)\otimes \tLz
\end{equation*}	
\begin{equation} \label{mult}
	\tfeWZ(\alpha,\beta):=\sum_{p\in\tmeGab} \GZ(p)\otimes \epsilon_p\otimes u^{e(p)}
\end{equation}	
This map has degree:
\begin{equation} \label{ori-morphism-deg}
	\deg_p(\tfeWZ)=2\dim(G)-\sigma(W)-\chi(W)
\end{equation}	
Because the map $\GZ$ is locally constant, an analogue of (\ref{signed-bdry-form}) holds for the map $\tfeWZ$:
\begin{equation} \label{pair-signed-bdry-form-loc-sys}
		\sum_i \tilde f^{\eta}_{(\bW^i_1,\bZ^i_1) \times (\bW^i_2,\bZ^i_2)}=
		\widetilde d_p\circ \tfeWZ-(-1)^{\dim(G)}\tfeWZ \circ \widetilde d_p
\end{equation}	
As a result of (\ref{pair-signed-bdry-form-loc-sys}), we have a well-defined map $\tPFH(W,Z):\tPFH(Y_0,\zeta_0)\to \tPFH(Y_1,\zeta_1)$.
\begin{remark} \label{bdry-terms}
	As in the unoriented case, it is more convenient to write down $\tilde f^{\eta}_{(\bW^i_1,\bZ^i_1) \times (\bW^i_2,\bZ_2^i)}$
	 in the left hand side of (\ref{signed-bdry-form}) in terms
	of $\tilde f^{\eta}_{(\bW^i_1,\bZ_1^i)}$ and $\tilde f^{\eta}_{(\bW^i_2,\bZ_2^i)}$. However, due to possible incompatibilities in orienting moduli spaces, 
	more care is needed.
	If $W\backslash Z_i$ is the composition of the cobordisms $W^i_1: Y_0 \to Z_i$ and 
	$W^i_2: Z_i \to Y_1$, then: 
	$$f^{\eta}_{(\bW^i_1,\bZ_1^i) \times (\bW^i_2,\bZ_2^i)}= (-1)^{\dim(G^i_1)\times \dim(G^i_2)}  f^{\eta}_{(\bW^i_1,\bZ_1^i)} \circ f^{\beta}_{(\bW^i_2,\bZ_2^i)}.$$
	Next, let $W\backslash Z_i$ be the union of the cobordisms $W^i_1: Y_0 \to Z_i\coprod Y_1$ and $W^i_2: Z_i \to \emptyset$. If $\dim(G^i_2)$ is an even number
	then a similar formula holds:
	$$f^{\eta}_{(\bW^i_1,\bZ_1^i) \times (\bW^i_2,\bZ_2^i)}=(-1)^{\dim(G^i_1)\times \dim(G^i_2)}(f^{\eta}_{(\bW^i_2,\bZ_2^i)} \otimes id) \circ f^{\eta}_{(\bW^i_1,\bZ_1^i)}.$$
	In the case that $\dim(G^i_2)$ is an odd number, we need to introduce $s:\tPFC(Y,\zeta) \to \tPFC(Y,\zeta)$ in the following way:
	$$s(\alpha):=(-1)^{b_1(Y)-\ind(\alpha)}\alpha=(-1)^{\frac{\deg_p(\alpha)-b_1(Y)}{2}}\alpha$$
	where $\alpha$ is a generator of $\tPFC(Y,\zeta)$. The map $s$ commutes with the cobordism maps up to a sign. For example, if $(\bW,\bZ)$ is a family of
	metrics parametrized with $G$ on the cobordism $(W,Z):(Y_0,\Gamma_0)\to (Y_1,\Gamma_1)$, with $Y_0$ and $Y_1$ being connected, then:
	\begin{equation} \label{com-s}
		s\circ \tfeWZ=(-1)^{\ind(\mathcal D_W)+\dim(G)}\tfeWZ\circ s.
	\end{equation}	
	For an odd dimensional family of metrics $G^i_2$, we have:
	$$f^{\eta}_{(\bW^i_1,\bZ_1^i) \times (\bW^i_2,\bZ_2^i)}=(-1)^{\dim(G^i_1)\times \dim(G^i_2)}(f^{\eta}_{(\bW^i_2,\bZ_2^i)} \otimes s) \circ f^{\eta}_{(\bW^i_1,\bZ_1^i)}.$$
\end{remark}

In the following, we study the cobordism map $\tfeWZ$ in two special cases. Suppose $Z$ is an embedded sphere in $\overline {\cc P}^2$with self-intersection -1. Let $D^4$ be a disc neighborhood of a point away from $Z$ and consider $R:= \overline {\cc P}^2\backslash D^4$. Equip the pair $(R,Z)$, as a cobordism from $(S^3,\emptyset)$ to the empty pair, with a metric that is cylindrical on the end. Let $\tPFC(R,Z):\tPFC(S^3) \to \tLz$ be the cobordism map induced by an arbitrary homology orientation of $R$ and the zero perturbation. The representation variety $\mathcal R(S^3)$ consists of only one point $\alpha$ and $b^+(R)=b^1(R)=0$. Therefore, the zero perturbation satisfies Hypothesis \ref{eta-hypo-1}, and the moduli space $M(R)$ consists of one $\spinc$ connection for each choice of $\spinc$ structure. Any $\spinc$ structure $\s$ on $R$ is uniquely determined by the pairing of $c_1(\s)$ and $Z$ which is an odd number. Let $\s_{2k+1}$ be the $\spinc$ structure that this pairing is equal to $2k+1$ and $\underline A_{2k+1}$ is the corresponding element of $M(R)$. The $\spinc$ connections $\underline A_{2k+1}$, $\underline A_{-2k-1}$ have the same energy and  $\GZ(\underline A_{2k+1})=-\GZ(\underline A_{-2k-1})$. Furthermore, by Remark \ref{tensor-line-bndle} the orientation of the moduli space at these two points match with each other. Consequently, we have the following result, which is our main motivation to introduce plane Floer homology for cobordism pairs:
\begin{lemma} \label{special-fam}
	The cobordism map $\tPFC(R,Z)$ is equal to zero.
\end{lemma}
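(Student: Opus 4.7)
The plan is to exhibit a fixed-point-free, sign-reversing involution on the set of contributing $\spinc$ structures. Since $\mathcal R(S^3)$ consists only of the trivial class $\alpha$, $b^+(R)=b_1(R)=0$, and Lemma \ref{cob-moduli-1} ensures that for the zero perturbation each moduli space $M(R,\s)$ is a single point while Hypothesis \ref{eta-hypo-1} is automatically satisfied, the definition (\ref{mult}) specializes to
\begin{equation*}
\tPFC(R,Z)(\alpha)=\sum_{k\in\zz}\GZ(\underline A_{2k+1})\otimes \epsilon_{\underline A_{2k+1}}\otimes u^{e(\underline A_{2k+1})},
\end{equation*}
where $\underline A_{2k+1}$ is the unique element of $M(R,\s_{2k+1})$ and $k$ runs over $\zz$ because every $\spinc$ structure on $R$ extending the trivial one on $S^3$ has the form $\s_{2k+1}$.

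I would pair $\s_{2k+1}$ with its complex conjugate $\bar\s_{2k+1}=\s_{-(2k+1)}$; since $2k+1\neq 0$, this is a fixed-point-free involution on $\zz$. Three matching conditions must be verified. For the energies, $e(\underline A)=-c_1(\s)^2$ depends only on $c_1(\s)^2$, which is invariant under $c_1\mapsto -c_1$, so $e(\underline A_{2k+1})=e(\underline A_{-(2k+1)})=(2k+1)^2$. For the local coefficient system, substituting $\tfrac{i}{2\pi}\int_Z F(A_{\pm(2k+1)})=\pm(2k+1)$ and $Z\cdot Z=-1$ into definition (\ref{iota}) yields $\iota(\underline A_{2k+1})=-(k+1)$ and $\iota(\underline A_{-(2k+1)})=k$; their difference $-(2k+1)$ is odd, so by the defining relation of $\GZ$ one obtains $\GZ(\underline A_{2k+1})=-\GZ(\underline A_{-(2k+1)})$.

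The third matching condition, $\epsilon_{\underline A_{2k+1}}=\epsilon_{\underline A_{-(2k+1)}}$, will be supplied by Remark \ref{tensor-line-bndle}. I would choose a connection $A_L$ on the complex line bundle $L\to R$ with $c_1(L)=-(2k+1)\,\mathrm{PD}(Z)$, trivial on $\partial R=S^3$ (possible because $H^2(S^3;\zz)=0$) and with $F^+(A_L)=0$ (possible because $b^+(R)=0$, by the argument used in the proof of Lemma \ref{cob-moduli-1}). Tensoring by $A_L$ defines a map $M(R,\s_{2k+1})\to M(R,\s_{2k+1}\otimes L)=M(R,\s_{-(2k+1)})$; the image has anti-self-dual central part and boundary restriction equal to the trivial class on $S^3$, so by the uniqueness part of Lemma \ref{cob-moduli-1} it must be $[\underline A_{-(2k+1)}]$. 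Remark \ref{tensor-line-bndle} asserts that this tensoring map is orientation-preserving, so the two $\epsilon$-signs coincide.

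Combining the three matching conditions, each paired contribution vanishes and so $\tPFC(R,Z)(\alpha)=0$. The main technical point is the third step: recognizing the complex-conjugate pairing as an instance of the tensor-product map from Remark \ref{tensor-line-bndle}, and producing a line bundle $L$ equipped with a connection that is simultaneously flat on the boundary and anti-self-dual on $R$.
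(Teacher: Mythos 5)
Your argument is correct and follows exactly the paper's strategy: pair $\s_{2k+1}$ with $\s_{-(2k+1)}$, observe equal energies, check that the local-coefficient contributions $\GZ$ differ by a sign, and invoke Remark \ref{tensor-line-bndle} to match orientations so the paired terms cancel. Your version just fills in the explicit $\iota$ computation and the construction of the twisting line bundle that the paper leaves implicit (modulo a harmless sign in your choice of $c_1(L)$, which should be $(2k+1)\,\mathrm{PD}(Z)$ rather than $-(2k+1)\,\mathrm{PD}(Z)$).
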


\begin{remark} \label{special-fam-2}
	Removing a ball neighborhood of a point in $Z \subset \overline {\cc P}^2$ produces a cobordism of the 
	pairs $(R,Z\backslash D^2)$ from $(S^3,S^1)$ to the empty pair. 
	A similar argument as above shows that $\tPFC(R,Z\backslash D^2)=0$. 
\end{remark}

Next, consider the cobordism $U$ in Example \ref{example-family-metrics-1} from $Q$ to the empty 3-manifold. Extend $U$ to a cobordism of pairs $(U,Z): (Q,\emptyset) \to (\emptyset,\emptyset)$ by defining $Z:=S_1$. The family of metrics $\bU$ in Example \ref{example-family-metrics-1} determines a family of metrics $(\bU,\bZ)$ on the pair $(U,Z)$. Fix a Morse-Smale function on $\mathcal R(Q)$ with two critical points $\alpha$ of index 0 and $\alpha'$ of index 1 such that neither of them is equal to $h_0$ (defined in Example \ref{example-family-metrics-1}) or $-h_0$. Also fix a homology orientation for $U$ and a generator for $o(\alpha)$. By the characterization of moduli spaces in Example \ref{example-family-metrics-1}, the cobordism map $f_{\bU,\bZ}:\tPFC(Q) \to \tLz$, induced by the zero perturbation, sends the corresponding generator of $o(\alpha)$ to:
\begin{equation} \label{constant-c}
	c=\sum_{i=0}^\infty \pm (2k+1)u^{(2k+1)^2}\in \tLz
\end{equation}	
for an appropriate choices of the signs. The ring element $c$ can be regarded as an element of $\tL$ and independent of what the signs are it is invertible in $\tL$. On the other hand, $f_{\bU,\bZ}$ maps the summand $o(\alpha')$ to zero.

\begin{lemma} \label{quasi-iso}
	Consider $(V:=S^1 \times D^3,\emptyset)$ as a cobordism form $(Q,\emptyset)$ to the empty pair. Then there exists a homology orientation of 
	$V$ such that: 
	$$\tilde f_{\bU,\bZ}=c\cdot\tPFC(V)$$ 
\end{lemma}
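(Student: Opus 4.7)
The plan is to evaluate both $\tilde f_{\bU,\bZ}$ and $c\cdot\tPFC(V)$ on the two generators $\alpha$ (minimum, $\deg_p=-1$) and $\alpha'$ (maximum, $\deg_p=+1$) of $\tPFC(Q)$, verify that they agree on each, and absorb any residual sign into the homology orientation of $V$. Both maps annihilate $\alpha'$: for $\tilde f_{\bU,\bZ}(\alpha')=0$ this is stated in the paragraph preceding the lemma, while for $\tPFC(V)(\alpha')$ the relevant fiber product has dimension $\dim J(V)+\dim U_{\alpha'}-\dim\mathcal R(Q)=1+1-1=1$, so the $0$-dimensional matrix entry is identically zero.

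Next I would compute $\tPFC(V)(\alpha)$ using Proposition \ref{mor-b^+}. Since $b^+(V)=0$ and $H^2(V;\zz)=0$, $V$ carries a unique spin$^c$ structure $\mathfrak s$ with $\mathfrak s|_Q$ torsion, and $c_1(\mathfrak s)\cdot c_1(\mathfrak s)=0$; hence $\tPFC(V)$ reduces to the pull-up-push-down map $D(V;Q,\emptyset)$ associated to the correspondence $J(Q)\xleftarrow{i_0}J(V)\to\mathrm{pt}$. The inclusion $Q=\partial V\hookrightarrow V$ induces an isomorphism on $H^1$, so $i_0$ is a diffeomorphism and the correspondence collapses to integration along the circle $J(V)\cong S^1$. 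Under the stable-manifold Morse conventions introduced in this subsection (which compute cohomology), $\alpha$ represents the top-dimensional cohomology generator of $J(Q)$ and is therefore sent to $\pm 1\in\tLz$, while $\alpha'$ represents the $H^0$ generator and is sent to $0$.

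Combining these with the computation $\tilde f_{\bU,\bZ}(\alpha)=c$ from Example \ref{example-family-metrics-1} and the paragraph preceding the lemma, the two chain maps $\tilde f_{\bU,\bZ}$ and $c\cdot\tPFC(V)$ agree on $\alpha'$ (both zero) and agree up to a global sign on $\alpha$. The residual sign depends on the homology orientations of $U$ (already fixed when defining $\tilde f_{\bU,\bZ}$) and of $V$; by Lemma \ref{ori-conv}, reversing the homology orientation of $V$ negates $\tPFC(V)(\alpha)$, so exactly one of the two homology orientations of $V$ yields the desired equality $\tilde f_{\bU,\bZ}=c\cdot\tPFC(V)$. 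The main technical step is the sign bookkeeping in the middle paragraph: identifying $\alpha$ with its cohomological representative under Poincar\'e duality and tracking orientations through Proposition \ref{mor-b^+}; everything else is a direct compilation of data already assembled in the text.
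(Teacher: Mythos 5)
Your argument is correct and is essentially the paper's own, just spelled out in more detail: the paper's one-line proof simply observes that $M(V)=\mathcal R(Q)$ (the restriction map is a diffeomorphism of circles), from which it reads off directly that $\tPFC(V)$ sends the generator of $o(\alpha)$ to $\pm1\in\tLz$ and annihilates $o(\alpha')$, and then absorbs the sign into the homology orientation. Your detour through Proposition \ref{mor-b^+} and the pull-up-push-down correspondence $J(Q)\leftarrow J(V)\to\mathrm{pt}$ recovers exactly the same two matrix entries and so amounts to the same computation.
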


\begin{proof}
	Since $M(V)=\mathcal R(Q)$ and the outgoing end of $V$ is empty, the cobordism map $\tPFC(V)$, for appropriate 
	choices of homology orientations of $V$ and generators of $o(\alpha)$, maps the generator of $o(\alpha)$ to $1\in \tLz$
	 and  the summand $o(\alpha')$ to zero. This verifies the claim of the lemma.
\end{proof}

\begin{remark} \label{fam-met2}
	Consider two 2-spheres $S_1$ and $S_2$. Then the cobordism $U$ of Example \ref{example-family-metrics-1} is diffeomorphic to the plumbing of two disc bundles 
	with Euler number -1 over these spheres. This diffeomorphism can be chosen such that $S_1$ is in correspondence with
	the previously defined $S_1$, and $S_2$ is sent to $S_1' $ with an orientation reversing map. Let $\widehat{Z}$ be the 
	embedded surface in $U$ that is union of a fiber of the disc bundle over $S_1$ and a fiber of the disc bundle over $S_2$.
	The family of metrics $\bU$ can be lifted to a family of metrics $(\bU,\widehat \bZ)$ over the pair $(U,\widehat{Z})$.
	The intersection $\widecheck{\zeta}=\widehat{Z}\cap Q$ is equal to 
	$S^1 \times \{p,q\} \subset Q \cong S^1 \times S^2$ where $p,q \in S^2$
	and the induced orientations on these two circles are reverse of each other. Since the orientations of these two circles
	are different, there is an embedded annulus $\widecheck Z$ in $V$ such that its intersection with the boundary of $V$, 
	identified with $Q$, is equal to $\widecheck \zeta$. Therefore, $(V,\widecheck{Z})$ is another cobordism from 
	$(Q,\widecheck{\zeta})$ to the empty pair. The proof of the following relation is analogous to that of Lemma \ref{quasi-iso}:
	$$\tilde f_{\bU,\widehat{\bZ}}=c\cdot\tPFC(V,\widecheck{\zeta})$$ 
\end{remark}

\section{Link Surgery Cube Complex} \label{ex-tri}
Suppose $(C_1,d_1)$ and $(C_2,d_2)$ are two (not necessarily $\zz$-graded) chain complexes. A map $f:C_1 \to C_2$ is an anti-chain map if:
$$f\circ d_1+d_2 \circ f=0$$
Composition of two maps which are either chain map or anti-chain map is again either a chain map or an anti-chain map. Two anti-chain maps  $f_1, f_2:C_1\to C_2$ are chain homotopy equivalent if there is a map $h:C_1 \to C_2$ such that:
$$f_2-f_1=h\circ d_1-d_2 \circ h$$
A chain homotopy of chain maps is also defined in the standard way. Define $\mc$ to be the category whose objects are the collection of chain complexes over a fixed ring $R$. A morphism from $(C_1,d_1)$ to $(C_2,d_2)$ is represented by either a chain map or an anti-chain map and two such morphisms represent the same elements if they are chain homotopy equivalent.

Given a general category $\mathcal C$, we can define the {\it isomorphism equivalence relation} on the objects of $\mathcal C$. That is to say, two objects are equivalent if there is an isomorphism between them. The equivalence class of an object with respect to this relation is called the {\it isomorphism class} of that object. In the case of the category $\mc$, the isomorphism class of an object is also called the chain homotopy type of the corresponding chain complex. 

Given a pair $(Y,\zeta)$, the chain complex $\tPFC(Y,\zeta)$ was associated with this pair in the previous section. We use $\tL$ as the coefficient ring in this section and hence $\tPFC(Y,\zeta)$ is an object of $\mctL$. We shall show that, for any framed link $L$ in $Y\backslash \zeta$, one can construct a new chain complex that has the same chain homotopy type as $\tPFC(Y,\zeta)$. In subsection \ref{3-man-link}, starting with the pair $(Y,\zeta)$ and $L$, various families of metrics are introduced. These families of metrics are used in subsection \ref{triangles} to construct the new chain complex and to show that this chain complex has the same chain homotopy type as $\tPFC(Y,\zeta)$.

\subsection{Framed Links and Families of Metrics} \label{3-man-link}

Fix the lexicographical ordering on the lattice $\zz^n$ which is induced by the standard ordering of integer numbers. For $\bm=(m_1\dots,m_n)\in\zz^n$, define: 
$$|\bm|_1:=\sum_{i=1}^n m_i \hspace{2cm} |\bm|_\infty=\max\{m_1,\dots,m_n\}$$
\begin{definition}
	A {\it cube} of dimension $d$ in $\zz^n$ consists of a pair $(\bm,\bn)$ such that $\bm > \bn$, $|\bm-\bn|_\infty=1$ and $|\bm-\bn|_1=d$. 
	A 1-dimensional (respectively, 2-dimensional) cube is called an edge (respectively, a face) of $\zz^n$. The set of edges and faces of $\zz^n$ are denoted by 
	$\mathcal E(n)$ and $\mathcal F(n)$ .
\end{definition}
\begin{definition}
	 A {\it double-cube} in $\zz^n$ is a pair $(\bm,\bn)$ such that $\bm > \bn$, $|\bm-\bn|_\infty=2$, 
	 and the entries of $\bm$, $\bn$ differ by 2 in exactly one instance. 
	 A {\it triple-cube} in $\zz^n$ is a pair $(\bm,\bn)$ such that $\bm > \bn$, $|\bm-\bn|_\infty=3$. 
	 Furthermore, we require that the entries of $\bm$, $\bn$ differ by 3 in exactly one instance and otherwise they differ by at most 1. 
	 Dimension of a double-cube (respectively, triple-cube) is equal to $|\bm-\bn|_1-1$ (respectively, $|\bm-\bn|_1-2$).
\end{definition}

Suppose $(Y,\zeta)$ is a pair of a 3-manifold and an embedded closed 1-manifold. Let $L=\bigcup_{1\leq i \leq n} K_i$ be an oriented framed link with $n$ connected components in $Y\backslash \zeta$. The complement of a regular neighborhood of $L$ in $Y$, denoted by $Y^L$, is a 3-manifold with boundary $\bigcup_{1\leq i \leq n} \tau_i$ where $\tau_i$ is the torus boundary of a neighborhood of $K_i$. The complement of a regular neighborhood of only the $i^{\rm th}$ component is denoted by $Y^{K_i}$. Since $K_i$ is framed, $\tau_i$ has well-defined oriented longitude and meridian and we will denote them by $\lambda_i$ and $\mu_i$. In particular, the orientation of $\tau_i=\partial Y^{K_i}$ determined by the outward-normal-first convention agrees with the orientation determined by the ordered pair $\{\mu_i,\lambda_i\}$. Let $\gamma_i^{0}$, $\gamma_i^{1}$, and $\gamma_i^{2}$ be oriented curves in $\tau_i$ in the homology class of $\lambda_i$, $-(\lambda_i+\mu_i)$, and $\mu_i$. We extend these curves to the family $\{\gamma_i^{j}\}_{j \in \mathbb Z}$ by requiring that this family is 3-periodic in $j$. Given ${\bf m}=(m_1,\dots,m_n)\in\zz^n$, the closed 3-manifold $Y_{\bf m}$ is defined to be the result of the Dehn filling of $Y^L$ along the curves $\gamma_i^{m_i}$. We will write $D_{\bm,i}$ for the solid torus which is glued to $Y^L$ along $\tau_i$. Note that $\gamma_i^{m_i-1}$ defines a framing of this solid torus. The 1-manifold $\zeta_\bm$ is equal to the union of $\zeta$ and $\bigcup_i \zeta_{\bm,i}$, where $\zeta_{\bm,i}$ is the core of $D_{\bm,,i}$ if $m_i \equiv 0$ mod 3, and is empty otherwise. Fix a metric on $Y^L$ which is the standard product metric in the union $\bigcup_{1\leq i \leq n} \tau_i$, and extend this to $Y_\bm$ by the standard metric on $D_{\bm, i}$.

For each pair $(\bm,\bn)$ that ${\bf m} \geq {\bf n}$, a cobordism of the pairs $(\Wmn,\Zmn):(\ym,\zm) \to (\yn,\zn)$ can be defined as follows. Firstly let $(\bm,\bn)$ be a 1-dimensional cube and the $i^{\rm th}$ entry be the only coordinate that $\bm$ and $\bn$ differs. In this case, define $\Wmn$ to be the elementary cobordism from $\ym$ to $\yn$ given by attaching a 2-handle along $D_{\bm,i}$. More concretely, this cobordism can be constructed as the union of $[0,1]\times Y^{K_i}$ and the space:
$$H=\{(z_1,z_2)\in \cc^2\mid |z_1|, |z_2|\leq 1, |z_1|\cdot |z_2|\leq \frac{1}{2}\}$$
Boundary of $H$ is the union of the following sets (Figure \ref{2-handle}):
$$\partial_0 H=\{(z_1,z_2)\in \cc^2 \mid |z_1|= 1, |z_2|\leq \frac{1}{2}\} 
\hspace{1cm}\partial_1 H=\{(z_1,z_2)\in \cc^2 \mid |z_1|\leq \frac{1}{2}, |z_2|= 1\}$$
$$\partial_2 H=\{(z_1,z_2)\in \cc^2 \mid |z_1|, |z_2|\leq 1, |z_1|\cdot |z_2|= \frac{1}{2}\}$$
\begin{figure}
	\centering
		\includegraphics[width=.5\textwidth]{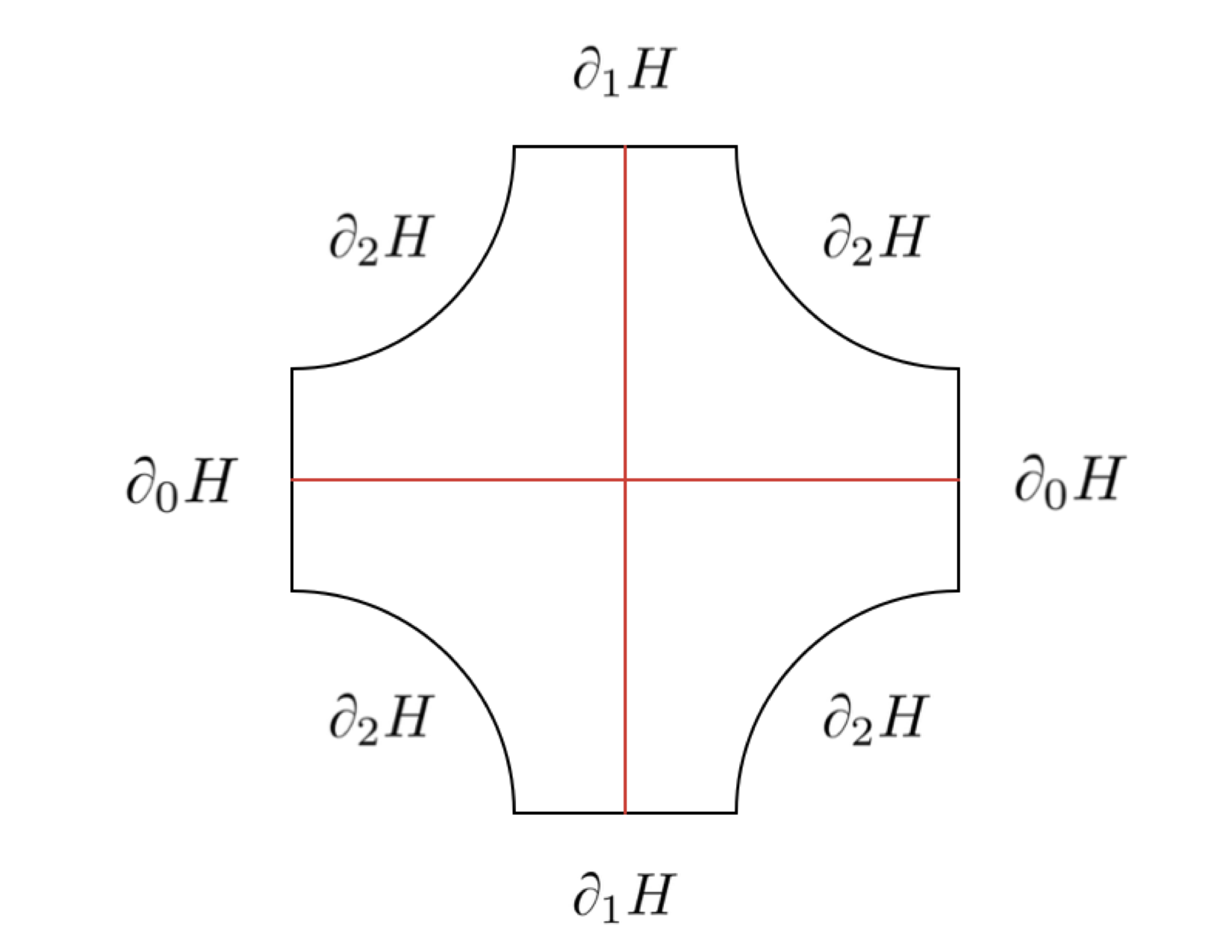}
		\caption{A schematic picture of a 2-handle; the horizontal red line represents the core of the 2-handle and the vertical red line is the representative of the co-core}
		\label{2-handle}
\end{figure}
With the aid of the negative gradient flow of the Morse function $|z_1|^2-|z_2|^2$, the set $\partial_2 H $ can be identified with the product of the interval $[0,1]$ and the 2-dimensional torus. The space $[0,1]\times \partial Y^{K_i}$ can be identified with the same space using the closed curves $\gamma_i^{m_i}$ and $\gamma_i^{m_i-1}$ in $\partial Y^{K_i}$. The cobordism $\Wmn$ is given by gluing $[0,1] \times Y^{K_i}$ to $H$ along these parts of their boundaries and then adding the cylindrical ends. Thus $\Wmn$ is equal to the union of $\rr \times Y^{K_i}$ and $\bar H$  where $\bar H:= H\cup \rr^{\leq 0} \times \partial_0 H \cup \rr^{\geq 0} \times \partial_1 H$. The 2-dimensional cobordism $\Zmn:\zm \to \zn$ is determined by the value of $m_i=n_i+1$ mod 3. If $m_i \equiv 0$ mod 3, then $\zeta_{\bn,i}$ is empty and $\Zmn$ is the union of $\rr \times \zn $ and the following cobordism from $\zeta_{\bm,i}$ to the empty set:
\begin{equation} \label{S-1}
	\{(z_1,0) \mid |z_1|\leq 1\} \cup \rr^{\leq 0} \times \{z_1,0) \mid |z_1|= 1\} \subset H \cup \rr^{\leq 0} \times \partial_0 H
\end{equation}	
If $m_i \equiv 1$ mod 3, then $\zm$ is empty and $\Zmn$ is the union of $\rr \times \zm$ and the following cobordism from the empty set to $\zeta_{\bn,i}$:
\begin{equation} \label{S-2}
	\{(0,z_2) \mid |z_2|\leq 1\} \cup \rr^{\geq 0} \times \{0,z_2) \mid |z_2|= 1\} \subset H \cup \rr^{\geq 0} \times \partial_1 H.
\end{equation}	. Finally, if $m_i \equiv 2$ mod 3, then $\zeta_{\bm,i}$ and $\zeta_{\bn,i}$ are both empty and $\Zmn$ is defined to be $\rr \times \zm$

For a general pair $(\bm,\bn)$, we can find a sequence ${\bf m}={\bf k_0} > {\bf k_1} > \dots {\bf k_{l-1}} > {\bf k_l}={\bf n}$ such that $|{\bf k_i} -{\bf k_{i+1}}|_1=1$. In this case we define $\Wmn=W_{\bf k_0}^{\bf k_1}\circ W_{\bf k_1}^{\bf k_2}\circ\dots \circ W_{\bf k_{l-1}}^{\bf k_l}$, $\Zmn=Z_{\bf k_0}^{\bf k_1}\circ Z_{\bf k_1}^{\bf k_2}\circ\dots \circ Z_{\bf k_{l-1}}^{\bf k_l}$. It is clear that the diffeomorphism type of $(\Wmn,\Zmn)$ is independent of how we choose ${\bf k_i}$'s. 
\begin{lemma} \label{fix-hom-ori}
	For each pair $(\bm,\bn)$ with $\bm\geq \bn$, there exists a homology orientation $o(\bm,\bn)$ of the cobordism $\Wmn$ such that if $\bm\geq\bk\geq \bn$, then
	the composition of $o(\bm,\bk)$ and $o(\bk,\bn)$ is equal to $o(\bm,\bn)$. Furthermore, if $o'(\bm,\bn)\in o(\Wmn)$ is another series
	of homology orientations that  has the same property, then there exists $\mathfrak z:\zz^n \to \{\pm 1\}$, such that:
	\begin{equation} \label{additiviity-2}
		o'(\bm,\bn)=\mathfrak z(\bm)\mathfrak z(\bn)o(\bm,\bn).
	\end{equation}	
	The function $\mathfrak z$ is unique up to an overall sign.
\end{lemma}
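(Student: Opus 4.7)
The plan is obstruction theory on the cube complex $\zz^n$, using its contractibility to glue local choices of edge orientations into a globally compatible family, followed by a direct cocycle argument for uniqueness.

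\textbf{Existence.} First assign arbitrary homology orientations $o(\bm,\bn)$ to each edge of $\zz^n$, i.e.\ each pair with $|\bm-\bn|_1=1$. For each square face with opposite corners $\bm>\bn$ and intermediate vertices $\bk_1,\bk_2$, the two compositions $o(\bm,\bk_1)\circ o(\bk_1,\bn)$ and $o(\bm,\bk_2)\circ o(\bk_2,\bn)$ orient the same cobordism and hence differ by a sign $\sigma\in\{\pm 1\}$, defining a cellular $2$-cochain on $\zz^n$ with $\zz/2$-coefficients. Using the swap identity $o(\ldots,W_{j+1},W_j,\ldots)=(-1)^{\ind(\mathcal D_{W_j})\cdot\ind(\mathcal D_{W_{j+1}})}o(\ldots,W_j,W_{j+1},\ldots)$ one verifies that $\sigma$ is a $2$-cocycle: around each $3$-cube, the product of face-swap signs corresponds to traversing the six orderings of three elementary cobordisms under the graded $S_3$-action on the oriented direct sum of their index bundles, a closed loop whose total sign is $+1$. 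Since $\zz^n$ is contractible, $H^2(\zz^n;\zz/2)=0$, so $\sigma=d\tau$ for some edge $1$-cochain $\tau$. Replacing each edge orientation by $\tau(\bm,\bn)\cdot o(\bm,\bn)$ forces the new $\sigma\equiv 1$. Then for $|\bm-\bn|_1\geq 2$, define $o(\bm,\bn)$ as the composition along any chain in the Hasse diagram; induction on $|\bm-\bn|_1$ together with $2$-face consistency shows the result is independent of the chain and satisfies the required cocycle condition with respect to every intermediate $\bk$.

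\textbf{Uniqueness.} Given another consistent family $o'$, set $\delta(\bm,\bn):=o'(\bm,\bn)/o(\bm,\bn)\in\{\pm 1\}$. Compatibility of both families yields $\delta(\bm,\bn)=\delta(\bm,\bk)\delta(\bk,\bn)$ for $\bm\geq\bk\geq\bn$. Fix $\bp_0\in\zz^n$; for each $\bm$ pick any $\bq$ with $\bq\geq\bm$ and $\bq\geq\bp_0$ (take the coordinatewise maximum), and set
$$\mathfrak z(\bm):=\delta(\bq,\bm)\,\delta(\bq,\bp_0).$$
Comparing two choices of $\bq$ via a common upper bound and applying the cocycle identity shows $\mathfrak z$ is independent of $\bq$. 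For $\bm\geq\bn$, choosing $\bq\geq\bm$ gives $\mathfrak z(\bm)\mathfrak z(\bn)=\delta(\bq,\bm)\delta(\bq,\bn)=\delta(\bm,\bn)$ using $\delta(\bq,\bn)=\delta(\bq,\bm)\delta(\bm,\bn)$. If $\mathfrak z'$ is another solution, then $f:=\mathfrak z/\mathfrak z'$ satisfies $f(\bm)f(\bn)=1$ on every comparable pair; since any two vertices admit a common upper bound, $f$ is a constant in $\{\pm 1\}$, giving uniqueness up to overall sign.

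\textbf{Main obstacle.} The heart of the argument is the $3$-cube case of the cocycle condition on $\sigma$. Although it reduces to the abstract statement that the sign of a graded permutation in $S_3$ is well-defined on a direct sum of three index bundles, one must carefully identify the six face-signs with the adjacent-transposition signs appearing in going around the Cayley graph of $S_3$, and verify that the index bundles at each vertex of the $3$-cube admit a consistent direct-sum decomposition realizing this picture.
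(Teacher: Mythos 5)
Your approach is essentially correct, but more explicit than the paper's: for existence the paper simply cites \cite[Lemma 6.1]{KM:Kh-unknot} (after noting associativity of composition of homology orientations), and for uniqueness it observes that the difference $\mathfrak u$ is a simplicial $1$-cocycle on $\rr^n$ and invokes $H^1(\rr^n)=0$. You instead spell out the obstruction theory for existence and give a hands-on construction of $\mathfrak z$ via a reference vertex and common upper bounds. The latter is clean, correct, and a nice elementary substitute for the simplicial-cohomology argument; the common-upper-bound trick also makes the ``unique up to overall sign'' statement transparent.

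The one place where the sketch misfires is the justification of the $2$-cocycle condition. You invoke the swap identity $o(\ldots,W_{j+1},W_j,\ldots)=(-1)^{\ind \mathcal D_{W_j}\cdot \ind\mathcal D_{W_{j+1}}}o(\ldots,W_j,W_{j+1},\ldots)$ and a ``graded $S_3$-action,'' but that identity governs reordering of the components of a \emph{fixed} disjoint-union cut; in a $3$-cube, adjacent maximal chains do not share a cut---a $2$-face move replaces one intermediate $3$-manifold by a different one---so the swap formula does not directly apply. The correct (and simpler) mechanism is the following: for a maximal chain $c$ in a $3$-cube $(\bm,\bn)$, let $p_c \in o(\Wmn)$ be the composite of the edge orientations along $c$, well-defined by associativity. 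Equivariance of the composition map $o(W_1)\otimes o(W_2)\to o(W_1\circ W_2)$ under the $\zz/2\zz$-action shows that if $c,c'$ differ by a single $2$-face move across a face $F$, then $p_c/p_{c'}=\sigma(F)$. Traversing the hexagonal cycle of six maximal chains yields the telescoping identity $\prod_F\sigma(F)=\prod_i p_{c_i}/p_{c_{i+1}}=1$, once one checks directly that the six hexagon edges hit the six $2$-faces of the $3$-cube exactly once each. Equivalently, one may appeal to the contractibility of the hexagonal family of metrics $\Gmn$ constructed in subsection \ref{3-man-link}, whose index bundle has trivial monodromy around its boundary circle. With this substitution your proof is complete.
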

\begin{proof}
	The composition of homology orientations is associative and hence the first part of this lemma can be proved with the same argument as in 
	\cite[Lemma 6.1]{KM:Kh-unknot}. To prove the second part of the lemma, note that $\mathfrak u$, defined as the difference of $o$ and $o'$, is a map from the set of 
	$(\bm,\bn)\in \zz^n\times \zz^n$, with $\bm\geq \bn$, to $\zz/2\zz$ that satisfies:
	$$\mathfrak u (\bm,\bn)=\mathfrak u (\bm,\bk)+\mathfrak u (\bk,\bn)$$
	for $\bm\geq \bk \geq \bn$. This identity implies that $\mathfrak u$ induces a simplicial co-cycle on $\rr^n$ with respect to the simplicial structure spelled out in 
	\cite{KM:Kh-unknot}. Since $\rr^n$ is contractible, there is a map 
	$\mathfrak z:\zz^n \to \zz/2\zz$ such that:
	$$\mathfrak u(\bm,\bn)=\mathfrak z(\bm)-\mathfrak z(\bn)$$
	that is to say $\mathfrak z$ satisfies (\ref{additiviity-2}).
\end{proof}

The pairs $(Y_\bm,\zeta_\bm)$ and $(\Wmn,\Zmn)$ are 3-periodic. That is to say, for any $\bk \in \zz^n$ we have the natural identifications $(Y_{\bm+3\bk},\zeta_{\bm+3\bk})=(Y_\bm,\zeta_\bm)$ and $(W_{\bm+3\bk}^{\bn+3\bk},Z_{\bm+3\bk}^{\bn+3\bk})=(\Wmn,\Zmn)$. However, Lemma \ref{fix-hom-ori} does not guarantee that the homology orientations for the cobordisms $\Wmn$ and $W_{\bm+3\bk}^{\bn+\bk}$ are the same. Nevertheless, as a consequence of Lemma \ref{fix-hom-ori}, there exists $s_{\bk}:\zz^n \to \{\pm 1\}$ such that $o(\bm+3\bk,\bn+3\bk)=s_{\bk}(\bm)s_{\bk}(\bn)o(\bm,\bn)$. 

The rest of this subsection is devoted to define families of metrics on $(\Wmn,\Zmn)$, in the case that $(\bm,\bn)$ represents a cube, double-cube, or a triple-cube . An important ingredient of the proof of exact triangles in the next subsection is the existence of such families of metrics. Almost the same families of metrics are described in \cite{Bl:HM-branch} which in turn are based on those of \cite{KMOS:mon-lens} (cf. also \cite{CS:iCube}). Families of orbifold metrics with similar formal properties play a similar role in \cite{KM:Kh-unknot}. 

We start with the case that $(\bm,\bn)$ is a $d$-dimensional cube in $\zz^n$ and construct a family of metrics of dimension $d-1$. For the sake of simplicity of exposition, we assume that $\bm=(1,\dots,1)+\bn$. The cobordism $\Wmn$ in this case is produced by gluing 2-handles along disjoint curves in $\ym$. This cobordism can be decomposed to the union $\rr\times Y^L \cup \bar H_1 \cup \dots \cup \bar H_n$. Here $\bar H_i$ is a copy of $\bar H$ corresponding to the $i^{\rm th}$ component of $L$. The cobordism $\Zmn$ also decomposes to the union $\rr\times \zeta \cup Z_1 \cup \dots \cup Z_n$ where $\rr\times \zeta \subset \rr \times Y^L$ and $ Z_i \subset \bar H_i$. The product metric on $\rr\times Y^L$ and the standard metrics on $\bar H_i$ determine a metric on $(\Wmn,\Zmn)$. 
Given $t_i \in \rr$ for each $1\leq i \leq n$, we can compose the gluing map of $\bar H_i$ to $\rr \times Y^L$ with a translation in the $\rr$ direction and produce a new metric on $(\Wmn,\Zmn)$. This family of metrics is parameterized by $\rr^n$. The fibers of this family over $(t_1,\dots,t_n)$ and $(t_1 + t,\dots,t_n+t)$ can be naturally identified with each other. Thus by considering only the metrics which are parametrized by $\rr^{n-1}=\{(t_1,\dots,t_n)\in \rr^n\mid \sum t_i=0\}$, we can come up with a more effective family of metrics. This family can be compactified to a family $(\bWmn,\bZmn)$ by allowing the difference of some of the coordinates to be equal to $\infty$. More precisely, $(\bWmn,\bZmn)$ is parametrized by a polyhedron $\Gmn$ such that a face of codimension $i$ of $\Gmn$ is determined by a partition of $\{1,\dots,n\}$ to $i+1$ (labeled) sets $A_0=\{a^0_1,\dots,a^0_{j_0}\}$, $\dots$, $A_i=\{a^i_1,\dots,a^i_{j_i}\}$. Roughly speaking, this face parametrizes the set of points $(t_1,\dots,t_n)$ that the entries with indices in $A_{l+1}$ have larger values than those with indices in $A_{l}$ and in fact the difference is $\infty$. In order to give a more detailed definition and for the sake of simplicity of exposition, assume that:
$$A_0=\{1,\dots,j_0\}\hspace{.5cm} A_1=\{j_0+1,\dots,j_0+j_1\} \hspace{.5cm} \dots \hspace{.5cm} A_i=\{(\sum_{l=0}^{i-1} j_l)+1,\dots,n=\sum_{l=0}^{i} j_l\}$$ 
and for $1\leq l \leq i$ define $\bk_l$ to be:
$$\bk_l:=(\underbrace{1,\dots,1}_{j_0+\dots+ j_{l-1}},\underbrace{0,\dots,0}_{j_{l}+\dots+j_i})+\bn$$
Then closure of the corresponding face of $\Gmn$ can be identified with $G_{\bm}^{\bk_{i}}\times G_{\bk_{i}}^{\bk_{i-1}}\times \dots \times G_{\bk_1}^{\bn}$. A point in this face has the form $((t_1,\dots,t_{j_0}),(t_{j_0+1},\dots,t_{j_0+j_1}),\dots,(t_{(\sum_{l=0}^{i-1} j_l)+1},\dots,t_n))$ such that $t_{j_0+\dots+j_{l-1}+1}+\dots+t_{j_0+\dots+j_l}=0$ for each $l$. Intersection of a neighborhood of this point and the interior of $\Gmn$ parametrizes the normalized version of the points of the following form:
$$(t'_1,\dots,t'_{j_0},t'_{j_0+1}+M_1,\dots, t'_{j_0+j_1}+M_1,t'_{j_0+j_1+1}+M_1+M_2,\dots,\hspace{2cm}$$
$$\hspace{4cm} t'_{(\sum_{l=0}^{i-1} j_l)+1}+M_1+\dots+M_{i},\dots,t'_n+M_1+\dots+M_{i} )$$
where $t'_k$ is a real number close to $t_k$ and $M_j$'s are large real numbers. Here the normalization of the above element of $\rr^n$ is produced by subtracting a constant real number from all the entries such that the summation of the entries is equal to zero. A point in the interior of this face parametrizes a broken metric on $(\Wmn,\Zmn)$ with a cut along $(Y_{\bk_1},\zeta_{\bk_1})\cup \dots \cup (Y_{\bk_{i}},\zeta_{\bk_i)}$. In particular, a face of co-dimension 1 of $\Gmn$ has the form $G_{\bm}^{\bk}\times G_{\bk}^{\bn}$ where $\bk=\bI+\bn$, $\bI\in \{0,1\}^n$, and $\bk\neq \bm,\bn$. The restriction of the family of metrics to this face is $(\bWmk,\bZmk)\times (\bWkn,\bZkn)$.

\begin{lemma}\label{G-ori}
	The polyhedron $\Gmn$ can be oriented for all cubes $(\bm,\bn)$ such that for each co-dimension 1 face $G_\bm^\bk\times G_\bk^\bn$ of $\Gmn$ the 
	orientation given by the outward-normal-first convention differs from the product orientation by $(-1)^{\dim(G_\bm^\bk)\cdot\dim(G_\bk^\bn)+|\bn|_1}$.
	Moreover, in the case that $(\bm,\bn)$ is an edge, the orientation of the point $\Gmn$ is given by the following sign:
	\begin{equation}\label{eta}
		{\sgn}(\bm,\bn):=(-1)^{\sum_{i=1}^{i_0-1}m_i}
	\end{equation}	
where $i_0$ is the unique place that $\bm$ and $\bn$ differ.
\end{lemma}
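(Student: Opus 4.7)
The plan is to induct on the cube dimension $d = |\bm - \bn|_1$, with the base case $d = 1$ forced directly by the formula (\ref{eta}). For $d \geq 2$, I would identify the interior of $\Gmn$ with the hyperplane $V_I = \{(t_i)_{i \in I} : \sum_{i \in I} t_i = 0\}$, where $I = \{i : m_i > n_i\}$, and fix a reference orientation $\omega_I$ on $V_I$ (for instance, by using the ordered coordinates $t_{i_1}, \dots, t_{i_{d-1}}$ with $i_d = \max I$ eliminated via the constraint). Then set $o(\bm, \bn) := \epsilon(\bm, \bn) \cdot \omega_I$, where $\epsilon(\bm, \bn) \in \{\pm 1\}$ is the unknown sign to be solved for.

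Each codimension-1 face $G_\bm^\bk \times G_\bk^\bn$ corresponds to a 2-partition $I = I_1 \sqcup I_2$ with $I_1 = \operatorname{supp}(\bm - \bk)$ and $I_2 = \operatorname{supp}(\bk - \bn)$; in the coordinates above, it is the stratum reached by letting $t_i$ for $i \in I_1$ tend to $+\infty$ and $t_i$ for $i \in I_2$ tend to $-\infty$. Contracting $\omega_I$ with the outward normal (proportional to $|I_2|\sum_{i \in I_1} e_i - |I_1|\sum_{i \in I_2} e_i$) and comparing with the product $\omega_{I_1} \wedge \omega_{I_2}$ yields a combinatorial shuffle sign $(-1)^{\sigma(I_1, I_2)}$. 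Demanding that the induced boundary orientation agree with $o(\bm, \bk) \times o(\bk, \bn)$ up to the sign $(-1)^{(|I_1|-1)(|I_2|-1) + |\bn|_1}$ prescribed by the lemma produces the multiplicative recursion
\begin{equation*}
	\epsilon(\bm, \bn) = (-1)^{\sigma(I_1, I_2) + (|I_1|-1)(|I_2|-1) + |\bn|_1} \, \epsilon(\bm, \bk) \, \epsilon(\bk, \bn).
\end{equation*}

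The heart of the proof is checking that this recursion is consistent, that is, the right-hand side is independent of the choice of intermediate $\bk$. Examining codimension-2 strata of $\Gmn$, which parametrize either chains $\bm > \bk_1 > \bk_2 > \bn$ of consecutive cuts or pairs of independent (disjoint) cuts, the consistency reduces to a standard 2-cocycle identity on shuffle signs of ordered triple partitions of $I$, together with the elementary parity fact that $|\bk_1|_1 + |\bk_2|_1$ is controlled modulo 2 across adjacent cubes. Once consistency is secured, $\epsilon$ is uniquely determined up to a global sign, and its specialization at $d = 1$ is forced to match ${\sgn}(\bm, \bn) = (-1)^{\sum_{i < i_0} m_i}$ by direct inspection, which fixes the overall sign. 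The main obstacle throughout is precisely this delicate sign bookkeeping at the codimension-2 corners of the permutohedral polytope $\Gmn$; once handled, the lemma follows by assembling the inductive data.
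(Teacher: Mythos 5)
Your route is genuinely different from the paper's. The paper does not reconstruct the orientation: it simply cites \cite{KM:Kh-unknot} for the existence of orientations of $\Gmn$ with boundary sign $(-1)^{|\bm-\bk|_1-1}$ and edge sign $(-1)^{\sum_{i=i_0}^{n}m_i}$, then twists by $(-1)^{{\rm sgn}'(\bm,\bn)}$ with ${\rm sgn}'(\bm,\bn)=\tfrac{|\bm-\bn|_1(|\bm-\bn|_1-1)}{2}+|\bm|_1$. The verification that this twist converts the KM boundary and edge signs into those of the lemma is a short algebraic calculation using $|\bm-\bk|_1+|\bk-\bn|_1=|\bm-\bn|_1$ and $|\bk|_1-|\bn|_1=|\bk-\bn|_1$, and it offloads the hard part (that compatible orientations exist at all) to the reference. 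You instead propose to re-prove existence from scratch by an induction over the face lattice of the permutohedron; that is legitimate, but considerably more labor than the paper's reduction.

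The problem is that you do not actually do the labor. You isolate the consistency of the recursion
\begin{equation*}
\epsilon(\bm, \bn) = (-1)^{\sigma(I_1, I_2) + (|I_1|-1)(|I_2|-1) + |\bn|_1}\, \epsilon(\bm, \bk)\, \epsilon(\bk, \bn)
\end{equation*}
as ``the main obstacle'' and then dispatch it as ``a standard 2-cocycle identity on shuffle signs'' together with an unspecified ``parity fact.'' But that verification \emph{is} the lemma: the identity is not standard because the exponent mixes a Koszul term $\dim(G_\bm^\bk)\dim(G_\bk^\bn)$ with the non-Koszul correction $|\bn|_1$, and the interplay between the change in shuffle sign and the parities $|\bk_1|_1,|\bk_2|_1$ across a codimension-2 corner must be computed, not asserted. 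Until it is, you have a candidate construction whose well-definedness is exactly what the lemma claims. There is also a small misstatement: for the permutohedron $\Gmn$, every codimension-2 face corresponds to an ordered 3-block partition of $\operatorname{supp}(\bm-\bn)$, i.e.\ a chain $\bm>\bk_1>\bk_2>\bn$; there is no separate class of faces from ``pairs of independent (disjoint) cuts'' (you may be conflating the face lattice of $\Gmn$ with that of the cube $\{0,1\}^n$). Either carry out the cocycle computation explicitly, or follow the paper and reduce the whole statement to the one algebraic identity ${\rm sgn}'(\bm,\bn)-{\rm sgn}'(\bm,\bk)-{\rm sgn}'(\bk,\bn)\equiv |\bm-\bk|_1-1+\dim(G_\bm^\bk)\dim(G_\bk^\bn)+|\bn|_1 \pmod 2$.
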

\begin{proof}
	In \cite{KM:Kh-unknot}, the polyhedrons $\Gmn$ are oriented such that the boundary orientation on the codimension 1 face $G_{\bm}^\bk\times G_\bk^\bn$ of 
	$\Gmn$ differs from the product orientation by $(-1)^{|\bm-\bk|_1-1}$. As it is shown there, this orientation can be fixed such that the orientation of $\Gmn$ for 
	an edge is given by the following sign:
	\begin{equation*}
		(-1)^{\sum_{i=i_0}^{n}m_i}
	\end{equation*}	
	where $i_0$ is defined above.
	In order to produce the desired orientation, multiply this orientation of $\Gmn$ by 
	$(-1)^{{\rm sgn}'(\bm,\bn)}$, where:
	$${\rm sgn}'(\bm,\bn):=\frac{|\bm-\bn|_1(|\bm-\bn|_1-1)}{2}+|\bm|_1$$
\end{proof}

If $(\bm,\bn)$ is a $d$-dimensional double-cube, then a family of metrics on $(\Wmn,\Zmn)$, parametrized by a polyhedron $\Nmn$, can be constructed in a similar way. We explain the construction of this family in the special case that $\bm=(2,1,\dots,1)+\bn$. The general case can be treated similarly. The cobordism $\Wmn$ can be decomposed as $\rr \times Y^L \cup D \cup \bar H_2 \cup \dots \cup \bar H_n$, where $D$ is produced by gluing two 2-handles $H_1^1$, $H_1^2$ along $\partial_1 H_1^1=\partial_0 H_1^2$ and then adding the cylindrical ends $\rr^{\leq 0}\times \partial_0 H_1^1$ and $\rr^{\geq 0}\times \partial_1 H_1^2$. Note that $\partial D$ is the same as $\rr \times \widebar{\partial Y^{K_1}} $. An elementtary but important observation is that our choices of framings produce a 2-sphere $S$ with self-intersection -1 which is the union of the co-core disc of $H_1$ (i.e. $\{(z_1,z_2)\in H_1^1 \mid z_1 =0\}$) and the core of $H_2$ (i.e. $\{(z_1,z_2)\in H_1^2 \mid z_2 =0\}$). Consequently, a regular neighborhood of $S$ denoted by $R$ is diffeomorphic to $\widebar{\cc P}^2\backslash D^4$. We will write $T$ for the boundary of $R$. The space $D$ can be equipped with a family of metrics that is parametrized by $\rr$. For $\tau \in\rr^{\geq 0}$, insert the cylinder $[-\tau,\tau]\times \partial_1 H_1^1=[-\tau,\tau]\times \partial_0 H_1^2$ between $H_1^1$ and $H_1^2$ and then glue the cylindrical ends. For $\tau \in\rr^{\leq 0}$, insert the cylinder $[\tau,-\tau]\times T$ between $R$ and its complement. In summary, for positive values of $\tau$, we stretch along the solid torus $\partial_1 H_1^1=\partial_1 H_1^2$ and for negative values of $\tau$, we stretch along the boundary of $R$. For this family of metrics, pin down an isomorphism between the boundary of $D$ with $\rr \times \partial \widebar{Y^{K_1}}$ in the following way. For $\tau \in \rr^{\geq 0}$, let $[-\tau,\tau]\times \partial(\partial_1 H_1^1)$, inside the inserted cylinder, be identified with $[-\tau,\tau]\times \partial  \widebar{Y^{K_1}}$ and extend this identification in an obvious way. For $\tau \in \rr^{\leq 0}$, the corresponding metrics form a constant family on the boundary of $D$ and let the isomorphism be the same as the isomorphism for $\tau=0$. For a given $(t_1,\dots,t_n)\in \rr^n$, glue the handle $H_i$ to $\rr\times Y^{L}$  such that the gluing map is determined by $t_i$. Furthermore, glue the metric corresponding to $t_1$ on $D$ to $\rr \times Y^L$ with the above identification. As in the case of cubes, this family of metrics can be compactified to a family of (possibly broken) metrics $\bWmn$ parametrized by a polyhedron $\Nmn$. 

The 2-dimensional cobordism $\Zmn:\zm \to \zn$ can be decomposed as the union $\rr \times \zeta \cup Z\cup Z_2 \cup  \dots\cup Z_n$ where $\rr \times \zeta \subset \rr \times Y^L$, $Z \subset D$, $Z_2 \subset \bar H_2$, and so on. For $i \geq 2$, $Z_i \subset \bar H_i$ has the same form as in the case of cubes; it is either empty, or of the form (\ref{S-1}), or of the form (\ref{S-2}). There are three possibilities for $Z$ depending on $m_1$ mod 3. If $m_i \equiv 0$ mod 3, then $Z$ is a disc cobordism from $\zeta_{\bm,1}$ to the empty set. The disc intersects $S$ in exactly one point. Therefore, $Z$ intersects boundary of $R$ in a circle. On the other hand, the intersection with $\partial_1 H_1^1=\partial_0 H_1^2$ is empty. A similar discussion applies to the case that $m_i \equiv 2$ mod 3. If $m_i \equiv 1$ mod 3, then $Z=S$ and intersection of $Z$ and $\partial_1 H_1^1=\partial_0 H_1^2$ is a circle while $S \cap \partial R=\emptyset$. In each of these cases, extend the family of metrics on $D$ in the previous paragraph to a family of metrics for the pair $(D,Z)$ by inserting $[-\tau,\tau] \times (Z \cap \partial_1 H_1^1)$ (respectively, $[\tau,-\tau] \times (Z \cap \partial R)$) in the inserted $[-\tau,\tau] \times \partial_1 H_1^1$ (respectively, $[\tau,-\tau] \times \partial R$) for $\tau \geq 0$ (respectively, $\tau \leq 0$). Use this family of metrics to extend $\bWmn$ to a family of metrics $(\bWmn,\bZmn)$ on the pair $(\Wmn,\Zmn)$, parametrized by $\Nmn$. Codimension 1 faces of this polyhedron parametrized the following broken metrics: 
\begin{itemize}
	\item broken metrics along the cut $(Y_\bk,\zeta_\bk)$ where $\bm>\bk>\bn$. 
	This case can be divided to three different cases:
	\begin{itemize}
		\item[(a)] $\bk=(0,k_2,\dots k_n)$; this face can be identified with the family parametrized by 
		$N_{\bm}^{\bk} \times G_{\bk}^{\bn}$
		\item[(b)] $\bk=(1,k_2,\dots k_n)$; this face can be identified with the family parametrized by 
		$G_{\bm}^{\bk} \times G_{\bk}^{\bn}$				
		\item[(c)] $\bk=(2,k_2,\dots k_n)$; this face can be identified with the family parametrized by 
		$G_{\bm}^{\bk} \times N_{\bk}^{\bn}$				
	\end{itemize}
	\item broken metrics along the cut $(T,Z \cap T)$. This face will be denoted by $I_{\bm}^{\bn}$
\end{itemize}
The family of broken metrics in the second bullet is determined by families of metrics on the two connected components of $(\Wmn \backslash T,Z\backslash T)$. The family of metrics on the component $(R,Z \cap R)$ is either the same as the one used in Lemma \ref{special-fam} or that of Remark \ref{special-fam-2}.

Suppose an orientation of the polyhedrons $\Gmn$, satisfying the assumption of Lemma \ref{G-ori} is fixed. Then the following lemma sets our convention for the orientation of $\Nmn$:
\begin{lemma} \label{ori-N}
	The polyhedron $\Nmn$ can be oriented for all double-cubes $(\bm,\bn)$ such that for each co-dimension 1 face $A_\bm^\bk\times B_\bk^\bn$ of $\Nmn$,
	in which $A,B \in \{G,N\}$, the 
	orientation given by the outward-normal-first convention differs from the product orientation by $(-1)^{\dim(A_\bm^\bk)\cdot\dim(B_\bk^\bn)+|\bn|_1}$.
\end{lemma}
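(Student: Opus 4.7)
My plan is to mirror the proof of Lemma~\ref{G-ori}: construct a preliminary orientation on each $\Nmn$ coming directly from the parameter description, then multiply by an explicit normalizing sign $(-1)^{h(\bm,\bn)}$ so that the boundary orientation on every codimension-1 face $A_\bm^\bk\times B_\bk^\bn$ (with $A,B\in\{G,N\}$) matches the product orientation up to the prescribed factor $(-1)^{\dim(A_\bm^\bk)\dim(B_\bk^\bn)+|\bn|_1}$. I will reuse the orientations of the $\Gmn$ fixed in Lemma~\ref{G-ori} without modification, so only the internal stretching coordinate $\tau$ carried by the doubled handle $D$ introduces new bookkeeping.

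First, assuming without loss of generality that $\bm=(2,1,\dots,1)+\bn$, I would orient $\Nmn$ by writing the family parameters in the ordered frame with $\tau$ inserted in a fixed slot (say first) and the remaining translational parameters arranged in the same order used to orient $\Gmn$; this gives a canonical preliminary orientation. Codimension-1 faces then fall into three types: (i) $\tau$ limits to $\pm\infty$ first, producing a face of type $N_\bm^\bk\times G_\bk^\bn$ or $G_\bm^\bk\times N_\bk^\bn$; and (ii) some $t_i\to\pm\infty$ with $\tau$ still free on one side of the partition, producing a face of type $G\times G$ where the $N$-factor sits on whichever side contains $\tau$. In case (ii) with $\tau$ absent, the face is literally a $G\times G$ face and the sign computation reduces to Lemma~\ref{G-ori}.

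Next I would compute the boundary signs for the $N$-containing faces. On each such face the outward-normal direction is either $\tau$ itself or one of the $t_i$, and the only new effect compared to Lemma~\ref{G-ori} is one extra transposition that moves $\tau$ from the shared slot into either the $A$-block or the $B$-block of the frame. This extra permutation contributes the parity of $\dim(\text{the block not containing }\tau)$, which combines with the cube-case parity $\dim(A)\dim(B)+|\bn|_1$ already verified in Lemma~\ref{G-ori}. Choosing the normalizing sign by a formula of the shape
\[
h(\bm,\bn)\;=\;\frac{|\bm-\bn|_1(|\bm-\bn|_1-1)}{2}+|\bm|_1,
\]
directly analogous to $\mathrm{sgn}'$ in the proof of Lemma~\ref{G-ori}, will then produce the desired boundary parities on all three face types simultaneously, because doubling one entry of $\bm$ adds exactly one unit both to $|\bm-\bn|_1$ and to the $\tau$-transposition count, keeping the two parities in step.

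The main obstacle I anticipate is the consistency check at codimension-2 strata, where two codimension-1 faces of different types (e.g.\ $G\times N$ meeting $N\times G$, or $G\times G\times N$ viewed as the boundary of either a $G\times N$ or a $G\times G$ face) must induce the same orientation on their common boundary. One has to verify that the $\tau$-transposition parity contributed on one side cancels the analogous transposition on the other side once both outward-normal conventions have been applied. This is the only place where the presence of the $N$-factor can interact nontrivially with the inductive framework. Once this cocycle condition is checked, the proof concludes by an induction on $|\bm-\bn|_1$ identical in structure to that of Lemma~\ref{G-ori}, and the claim follows.
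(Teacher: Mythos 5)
Your overall strategy — build a preliminary orientation of $\Nmn$ from the parameter frame, then twist by the same global sign $h(\bm,\bn)=\mathrm{sgn}'(\bm,\bn)$ used in the proof of Lemma~\ref{G-ori} — matches the paper's proof, which takes the baseline orientation of $\Nmn$ from \cite{KM:Kh-unknot} and applies the same correction. You are also right that the content of the argument is a cocycle compatibility that must be checked across all codimension-1 faces (a point the paper elides by deferring to \cite{KM:Kh-unknot}).

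However, there is a genuine error in your description of the codimension-1 faces of $\Nmn$, and this invalidates the boundary-parity bookkeeping on which your verification rests. Sending $\tau\to+\infty$ does \emph{not} produce the mixed faces $N_\bm^\bk\times G_\bk^\bn$ or $G_\bm^\bk\times N_\bk^\bn$: it separates $H_1^1$ from $H_1^2$ inside the doubled handle $D$, producing exactly the face $G_\bm^\bk\times G_\bk^\bn$ with $\bk=(1,k_2,\dots,k_n)$, while $\tau\to-\infty$ stretches along $T=\partial R$ and produces the face $I_\bm^\bn$, which is not of the form $A_\bm^\bk\times B_\bk^\bn$ at all and is excluded from the statement of the lemma. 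Conversely, the mixed faces $N_\bm^\bk\times G_\bk^\bn$ ($\bk=(0,\dots)$) and $G_\bm^\bk\times N_\bk^\bn$ ($\bk=(2,\dots)$) arise when one of the other translational parameters $t_i$, $i\geq 2$, goes to $\pm\infty$; on those faces $\tau$ remains a live internal coordinate of the $N$-factor, not the outward normal. Since your ``one extra transposition moving $\tau$ into the $A$- or $B$-block'' argument depends precisely on knowing whether $\tau$ is the outward normal or sits inside one of the blocks, and you have these roles reversed in every case, the parity computation would have to be redone from scratch after correcting the face structure before one could trust the conclusion that $h=\mathrm{sgn}'$ works.
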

\begin{proof}
	As in Lemma \ref{G-ori}, we can change the orientation of $\Nmn$, defined in \cite{KM:Kh-unknot}, to produce the desired orientation of the polyhedron $\Nmn$.
\end{proof}

For a $d$-dimensional triple-cube $(\bm,\bn)$, a $(d+1)$-dimensional family of metrics parametrized by a polyhedron $\Kmn$ is defined in an analogous way. For the simplicity of the exposition, we explain the construction in the special case that $\bm=(3,1,\dots,1)+\bn$. There is a decomposition $\rr \times Y^L \cup F \cup \bar H_2 \cup \dots \cup \bar H_n$ of the cobordism $\Wmn$ with $F$ being the result of composing $\rr^{\leq 0} \times \partial_0 H_1^1$, the handles $H_1^1$, $H_1^2$, $H_1^3$, and finally $\rr^{\geq 0} \times \partial_1 H_1^3$. There is also a decomposition of $\Zmn$ as the union $\rr \times \zeta \cup B \cup Z_2 \cup \dots \cup Z_n$ where $\rr \times \zeta \subset \rr \times Y^L$, $B \subset F$, $Z_2 \subset \bar H_2$, and so on. 

The co-core of the handle $H_1^i$ and the core of $H_1^{i+1}$ produces a 2-sphere $S_i$ for $i=1,2$ ($S_1$ is what we called $S$ before). Regular neighborhoods of $S_1$, $S_2$, denoted by $R_1$, $R_2$, are diffeomorphic to $\widebar{\cc P}^2\backslash D^4$.  We will also write $T_1$, $T_2$ for the boundaries of $R_1$, $R_2$. The spheres $S_1$ and $S_2$ intersect each other in exactly one point, and hence a neighborhood of their union, denoted by $U$, is diffeomorphic to $D^2\times S^2 \# \overline{\cc P^2}$. The boundary of $U$ is also denoted by $Q$. We assume that $U$ contains $R_1$ and $R_2$. In the list:
$$ E_1=\partial_1 H_1^2=\partial_0 H_1^3\hspace{1cm} E_2=\partial_1 H_1^1=\partial_0 H_1^2 \hspace{1cm}  E_3=T_2  \hspace{1cm} E_4=Q \hspace{1cm}  E_5=T_1 $$
each 3-manifold $E_i$ is disjoint from $E_{i-1}$ and $E_{i+1}$ where $E_0=E_5$ and $E_6=E_1$. In fact removing $E_i$ disconnects $F$, and $E_{i+1}$, $E_{i-1}$ are both in the same connected component of $F\backslash E_i$. For each $i$, construct a family of metrics on $F$ that are broken along $E_i$, and are parametrized by $[-\infty,\infty]$, in the following way. Firstly remove $E_i$ and then for $\tau \in [0,\infty]$ (respectively, $\tau \in [-\infty,0]$) remove $E_{i+1}$ (respectively, $E_{i-1}$) and insert $[-\tau,\tau]\times E_{i+1}$ (respectively, $[\tau,-\tau]\times E_{i-1}$). The metrics corresponding to $\infty$ and $-\infty$ are broken along $E_{i+1}$ and $E_{i-1}$, respectively. Furthermore, this family of metrics is constant on the connected component of $F\backslash E_{i}$ that does not contain $E_{i-1}$ and $E_{i+1}$. Example \ref{example-family-metrics-1} determines one such family for $i=4$. As in the case of double-cubes, these families can be lifted to families of metrics on the pair $(F,B)$.

These five families of metrics consist of broken metrics parametrized by the edges of a pentagon, denoted by $\mathfrak p$. Extend these metrics into the interior of $\mathfrak p$ in an arbitrary way. This extended family on $(F,B)$, and the metrics on $(\rr \times Y^L, \rr \times \zeta)$, $(\bar H_i, Z_i)$ generate a family of metrics on $(\Wmn,\Zmn)$ parametrized by $\rr^{n-1} \times \mathfrak p$. This family can be compactified by adding broken metrics. The resulting family, $(\bWmn,\bZmn)$, is parametrized by a polyhedron $\Kmn$. Codimension 1 faces of this polyhedron parametrize the following families of metrics:

\begin{itemize}
	\item broken metrics along the cut $(Y_\bk,\zeta_\bk)$ where $\bm>\bk>\bn$. This case can be divided to four different sub-cases:
	\begin{itemize}
		\item[(a)] $\bk=(0,k_2,\dots k_n)$; this face can be identified with the family parametrized by 
		$K_{\bm}^{\bk} \times G_{\bk}^{\bn}$
		\item[(b)] $\bk=(1,k_2,\dots k_n)$; this face can be identified with the family parametrized by 
		$N_{\bm}^{\bk} \times G_{\bk}^{\bn}$				
		\item[(c)] $\bk=(2,k_2,\dots k_n)$; this face can be identified with the family parametrized by 
		$G_{\bm}^{\bk} \times N_{\bk}^{\bn}$
		\item[(d)] $\bk=(3,k_2,\dots k_n)$; this face can be identified with the family parametrized by 
		$G_{\bm}^{\bk} \times K_{\bk}^{\bn}$				
	\end{itemize}
	\item broken metrics along the cut $(Q,B\cap Q)$. This face will be denoted by $Q_{\bm}^{\bn}$
	\item broken metrics along the cut $(T_1,B \cap T_1)$. This face will be denoted by $(I_1)_{\bm}^{\bn}$
	\item broken metrics along the cut $(T_2, B \cap T_2)$. This face will be denoted by $(I_2)_{\bm}^{\bn}$
\end{itemize}

Orientation of the polyhedrons $\Kmn$ can be fixed such that these orientations are compatible with the orientations of $\Gmn$ and $\Nmn$ from Lemmas \ref{G-ori} and \ref{ori-N} in the following way:

\begin{lemma}\label{ori-K}
	The polyhedron $\Kmn$ can be oriented for all triple-cubes $(\bm,\bn)$ such that for each co-dimension 1 face 
	$A_\bm^\bk\times B_\bk^\bn$ of $\Kmn$,
	in which $A,B \in \{G,N,K\}$, the 
	orientation given by the outward-normal-first convention differs from the product orientation by 
	$(-1)^{\dim(A_\bm^\bk)\cdot\dim(B_\bk^\bn)+|\bn|_1}$.
\end{lemma}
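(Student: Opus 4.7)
The plan is to follow the same blueprint used in the proofs of Lemmas \ref{G-ori} and \ref{ori-N}. In \cite{KM:Kh-unknot}, polyhedra with the same combinatorial face structure as $\Kmn$ appear in the context of families of orbifold metrics on triple-cube cobordisms and come equipped with standard orientations for which the boundary orientation on every codimension $1$ product face $A_\bm^\bk\times B_\bk^\bn$, with $A,B\in\{G,N,K\}$, differs from the product orientation by the uniform sign $(-1)^{|\bm-\bk|_1-1}$. The first step is to transport these orientations to our $\Kmn$ using the identification of face stratifications.

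The second step is to apply the same global sign twist used in Lemma \ref{G-ori}: multiply the imported orientation of $\Kmn$ by $(-1)^{{\rm sgn}'(\bm,\bn)}$, where
$${\rm sgn}'(\bm,\bn):=\frac{|\bm-\bn|_1(|\bm-\bn|_1-1)}{2}+|\bm|_1.$$
On each product face $A_\bm^\bk\times B_\bk^\bn$ of $\Kmn$, the twist changes the discrepancy between the outward-normal-first boundary orientation and the product orientation by the factor $(-1)^{{\rm sgn}'(\bm,\bn)-{\rm sgn}'(\bm,\bk)-{\rm sgn}'(\bk,\bn)}$. Setting $p=|\bm-\bk|_1$ and $q=|\bk-\bn|_1$, a binomial identity evaluates this exponent as $pq-|\bk|_1$; using $|\bk|_1=|\bn|_1+q$ and the uniform dimension formulas $\dim A_\bm^\bk=p-1$, $\dim B_\bk^\bn=q-1$ (which hold for each $A,B\in\{G,N,K\}$), a short mod $2$ check shows the combined discrepancy becomes $(-1)^{(p-1)(q-1)+|\bn|_1}$, exactly the required $(-1)^{\dim(A_\bm^\bk)\cdot\dim(B_\bk^\bn)+|\bn|_1}$.

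The step most likely to require care is the first: verifying that the orientations of the counterpart polyhedra from \cite{KM:Kh-unknot} really do yield the uniform discrepancy $(-1)^{|\bm-\bk|_1-1}$ on every codimension $1$ product face of $\Kmn$. The new feature compared with the cube and double-cube cases is the pentagonal parameter space $\mathfrak{p}$ and the extra broken-metric faces $Q_\bm^\bn$, $(I_1)_\bm^\bn$, $(I_2)_\bm^\bn$ inside $\Kmn$, so one must arrange the extension of the orientation across $\mathfrak{p}$ to be compatible simultaneously with all four product face sub-types (a)--(d). Once this coherence is in place, the remainder of the proof reduces to the combinatorial sign calculation above.
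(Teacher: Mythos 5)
Your proposal is correct and follows the same blueprint the paper itself uses for Lemmas \ref{G-ori} and \ref{ori-N}: import the orientation of the corresponding polyhedron from \cite{KM:Kh-unknot} with the uniform codimension-one discrepancy $(-1)^{|\bm-\bk|_1-1}$ (the paper omits a written proof for Lemma \ref{ori-K} precisely because it is the same one-line argument), then twist by $(-1)^{{\rm sgn}'(\bm,\bn)}$. Your binomial computation is right (the exponent collapses to $pq-|\bk|_1$, and with $|\bk|_1=|\bn|_1+q$ and $\dim A_\bm^\bk=p-1$, $\dim B_\bk^\bn=q-1$ this gives exactly $(p-1)(q-1)+|\bn|_1$ mod $2$), and the uniform dimension formula $\dim=|\bm-\bn|_1-1$ does hold for all three of $\Gmn$, $\Nmn$, $\Kmn$, so the final sign matches the statement. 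Your flagging of the coherence of the imported orientation across the pentagon $\mathfrak{p}$ and the non-product faces $Q_\bm^\bn$, $(I_i)_\bm^\bn$ is the right place to be cautious; the paper quietly absorbs this into the citation of \cite{KM:Kh-unknot}, as you do.
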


The families broken along $T_1$ and $T_2$ are similar to the family parametrized by $I_{\bm}^\bn$ in the case of double-cubes. The family of metrics broken along $Q$ is determined by  an $(|\bm-\bn|_1-3)$-dimensional family of metrics on $(\Wmn \backslash U,\Zmn\backslash U)$, denoted by $\tQmn$ and a family on $(U,B\cap U)$ parametrized by a closed interval $\tilde I$. The former family is the compactification of a family parametrized by $\rr^{n-1}$ such that a typical element in the family is given by gluing $\rr \times Y^L$, with the product metric, to the translations of $H_i$'s, with the standard metrics, and $F \backslash U$, with a fixed metric. The family of metrics on $(U,B\cap U)$ is one of the families introduced in Example \ref{example-family-metrics-1} and Remark \ref{fam-met2}. Note that an orientation of $\tQmn$ can be fixed such that the product orientation on $ \tQmn \times \tilde I$ agrees with the boundary orientation, induced by the orientation of $\Kmn$ from Lemma \ref{ori-K}. 

Let $V$ be a copy of $S^1 \times D^3$ and $C$ is empty or the surface introduced in Remark \ref{fam-met2}, depending on whether $(U,B\cap U)$ is the pair of Example \ref{example-family-metrics-1} or Remark \ref{fam-met2}. Glue a copy of $(V,C)$ to $(\Wmn \backslash U,\Zmn \backslash U)$, along the boundary component $Q$, in order to form a pair $(\cWmn,\cZmn)$. Note that Gluing $(V,C)$ to $(F\backslash U,B \backslash U)\subset (\Wmn \backslash U,\Zmn \backslash U)$ gives the product pair $(\rr \times \partial_0 H_1^1,\rr \times (\partial_0 H_1^1\cap \zeta_\bm))$. Therefore, $(\cWmn,\cZmn)$ is diffeomorphic to $(W_{\bm'}^{\bn},Z_{\bm'}^{\bn})$ where $\bm'=(m_1-3,m_2,\dots,m_n)$. Fix homology orientations of $U$ and $V$ such that the assertion of Lemma \ref{fam-met2} holds. Then these homology orientations and the homology orientation of $\Wmn$ can be utilized to define a homology orientation for $\cWmn$ by requiring:
$$o(\cWmn)=o(\cWmn\backslash V,V)\hspace{1cm} o(\Wmn)=o(\Wmn\backslash U,U)\hspace{1cm}o(\cWmn\backslash V)=o(\Wmn\backslash U)$$

\begin{lemma} \label{hom-ori-checkWmn}
	Suppose $(\bm,\bn)$ is a triple-cube and $\bm \geq \bk,\bk' \geq \bn$ such that  $(\bm,\bk)$ and $(\bk',\bn)$ are triple-cubes, i.e., $(\bk,\bn)$ and $(\bm,\bk')$ 
	are cubes.
	Then the composition $\widecheck W_\bm^\bk \circ W_\bk^\bn$ and $W_\bm^{\bk'} \circ \widecheck W_{\bk'}^\bn$ are diffeomorphic to $\cWmn$. 
	Furthermore, the homology orientations of these cobordisms are related via the following relations:
	\begin{equation} \label{hom-cWmn}
		o(\cWmn)=(-1)^{\ind(\mathcal D_{W_\bk^\bn})} o(\widecheck W_\bm^\bk,W_\bk^\bn)=o(W_\bm^\bk,\widecheck W_\bk^\bn)
	\end{equation}
\end{lemma}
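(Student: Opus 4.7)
The plan is to observe first that the diffeomorphism assertion is essentially tautological: since $(\bm,\bk)$ is a triple-cube, the distinguished coordinate where $\bm$ and $\bn$ differ by $3$ lives in the decomposition $W_\bm^\bk$, and therefore the neighborhood $U$ of $S_1\cup S_2$ sits inside $W_\bm^\bk$. Consequently $W_\bm^\bk\setminus U$ is naturally a sub-cobordism and $\widecheck W_\bm^\bk\circ W_\bk^\bn$ has the same underlying cobordism as $(\Wmn\setminus U)\cup V=\cWmn$. The argument for $(\bk',\bn)$ is identical, with $U\subset W_{\bk'}^\bn$ in that case.

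The substantive content is in the homology-orientation book-keeping. I will use the definition $o(\cWmn)=o(\cWmn\setminus V,V)$ together with $o(\cWmn\setminus V)=o(\Wmn\setminus U)$ and the composition rule. Concretely, for the first equality I write
\begin{align*}
o(\widecheck W_\bm^\bk,W_\bk^\bn)
&=o(\widecheck W_\bm^\bk\setminus V,\,V,\,W_\bk^\bn)
=o(W_\bm^\bk\setminus U,\,V,\,W_\bk^\bn),\\
o(\cWmn)
&=o(\cWmn\setminus V,\,V)
=o(W_\bm^\bk\setminus U,\,W_\bk^\bn,\,V).
\end{align*}
Swapping $V$ past $W_\bk^\bn$ using the transposition rule from subsection \ref{abelian-asd-op} produces the sign $(-1)^{\ind(\mathcal D_V)\cdot\ind(\mathcal D_{W_\bk^\bn})}$. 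For the second equality, $U\subset W_{\bk'}^\bn$, so
$$
o(W_\bm^{\bk'},\widecheck W_{\bk'}^\bn)
=o(W_\bm^{\bk'},\,W_{\bk'}^\bn\setminus U,\,V)
=o(\cWmn\setminus V,\,V)=o(\cWmn),
$$
with no transposition required.

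The main computational step is then to evaluate the two indices modulo $2$ using formula (\ref{index}). For $V=S^1\times D^3$ viewed as a cobordism from $Q=S^1\times S^2$ to $\emptyset$, one has $\chi(V)=\sigma(V)=0$, $b_1(Q)=b_0(Q)=1$, giving $\ind(\mathcal D_V)=1$. This yields the sign $(-1)^{\ind(\mathcal D_{W_\bk^\bn})}$ asserted in the first identity. For consistency I must also check that the two ways of computing $o(\Wmn)$ — either by $o(\Wmn)=o(\Wmn\setminus U,U)=o(W_\bm^\bk\setminus U,W_\bk^\bn,U)$ or by $o(\Wmn)=o(W_\bm^\bk,W_\bk^\bn)=o(W_\bm^\bk\setminus U,U,W_\bk^\bn)$ — agree, which requires $\ind(\mathcal D_U)\cdot\ind(\mathcal D_{W_\bk^\bn})\equiv 0 \pmod 2$. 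A direct application of (\ref{index}) to $U=D^2\times S^2\,\#\,\overline{\cc P}^2$ (with $\chi=3$, $\sigma=-1$) gives $\ind(\mathcal D_U)=0$, so this parity check is automatic.

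The only place where caution is needed is the bookkeeping of the definitions, since the unchecked and checked cobordisms each involve their own convention $o(\cWmn)=o(\cWmn\setminus V,V)$, $o(\Wmn)=o(\Wmn\setminus U,U)$, together with $o(\cWmn\setminus V)=o(\Wmn\setminus U)$; once these are laid out in the ordered form used in subsection \ref{abelian-asd-op}, the signs above drop out mechanically from one transposition.
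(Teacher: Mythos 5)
Your proof is correct and follows the same underlying strategy as the paper's: express $o(\cWmn)$ and $o(\widecheck W_\bm^\bk,W_\bk^\bn)$ in a common ordered decomposition of $\cWmn$ into pieces, and track the transposition signs using the paper's convention together with the numerical values $\ind(\mathcal D_U)=0$, $\ind(\mathcal D_V)=1$. The only presentational difference is that the paper works throughout with $o(\cWmn,U,U)$ and cancels both $U$'s at the very end, whereas you cancel $U$ earlier and explicitly note the resulting consistency requirement $\ind(\mathcal D_U)\cdot\ind(\mathcal D_{W_\bk^\bn})\equiv 0 \pmod 2$; that check is indeed necessary and your verification via formula (\ref{index}) is accurate. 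You also spell out the second identity, which the paper leaves to the reader.
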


\begin{proof}
	The claim about the diffeomorphisms follows from the fact that $\Wmn$ and the compositions $W_\bm^\bk\circ W_\bk^\bn$ and 
	$W_\bm^{\bk'}\circ W_{\bk'}^\bn$ are diffeomorphic to each other. The relationship between the homology orientations of $\Wmn$ and 
	$W_\bm^\bk\circ W_\bk^\bn$ can be verified as follows:
	\begin{align}
		o(\cWmn,U,U)&=o(\cWmn\backslash V,V,U,U)=o(\Wmn\backslash U,U,V,U)\nonumber\\
		&=o(\Wmn,V,U)=o(W_\bm^\bk,W_\bk^\bn,V,U)\nonumber\\
		&=o(W_\bm^\bk\backslash U,U,W_\bk^\bn,V,U)\nonumber\\
		&=(-1)^{\ind(\mathcal D_{W_\bk^\bn})}
		o(\widecheck W_\bm^\bk\backslash V,V,W_\bk^\bn,U,U)\nonumber \\
		&=(-1)^{\ind(\mathcal D_{W_\bk^\bn})}
		o(\widecheck W_\bm^\bk,W_\bk^\bn,U,U)\implies\nonumber \\
		o(\cWmn)&=(-1)^{\ind(\mathcal D_{W_\bk^\bn})}o(\widecheck W_\bm^\bk,W_\bk^\bn)\nonumber
	\end{align}	
	Here we use $\ind (\mathcal D_U)=0$ and $\ind (\mathcal D_V)=1$. The other identity in (\ref{hom-cWmn}) can be checked similarly. 
\end{proof}

The family of metrics on $(\Wmn \backslash U, \Zmn \backslash U)$, parametrized by $\tQmn$, and a fixed metric with a cylindrical end on $(V,C)$ determine a family of broken metrics on $(\cWmn,\cZmn)$ parametrized by $\tQmn$. However, we will write $\widebar Q_{\bm}^{\bn}$ for the parametrizing set of this family in order to distinguish between the families.  Assume that again $\bm=\bn+(3,1,\dots,1)$. Then an element of this family is a metric on: 
$$(\cWmn=\rr \times Y^L \cup \rr \times \partial_0 H_1^1 \cup \bar H_2 \cup \dots \cup \bar H_n,\cZmn=\rr\times \zeta \cup \rr\times (\partial_0 H_1^1\cap \zeta_\bm)\cup Z_2 \cup \dots \cup Z_n)$$ 
where the metric on $(\rr \times \partial_0 H_1^1,\rr\times (\partial_0 H_1^1\cap \zeta_\bm))$ is broken along $Q$. Replacing this broken metric with the standard product metric constructs another family of metrics on $(\cWmn,\cZmn)$, parametrized with the set $\widebar Q_{\bm}^{\bn}$. To emphasize the difference, we will denote the parametrizing set by $\widehat Q_{\bm}^{\bn}$. By construction, it is clear that one can interpolate between the families parametrized by $\widebar Q_{\bm}^{\bn}$ and $\widehat Q_{\bm}^{\bn}$; that is to say, there exists a family parametrized by $I \times \widebar Q_{\bm}^{\bn}$ such that $\{0\}\times \widebar Q_{\bm}^{\bn}$ agrees with $\widehat Q_{\bm}^{\bn}$ and $\{1\}\times \widebar Q_{\bm}^{\bn}$ is equal to $\widebar Q_{\bm}^{\bn}$. The orientation of $\tQmn$ determines an orientation of $\bQmn$ which can be used to equip $I \times \bQmn$ with the product orientation. Also, the codimension 1 faces of $I \times \bQmn$ are given as follows:

\begin{itemize}
	\item broken metrics along the cut $(Y_\bk,\zeta_\bk)$ where $\bm>\bk>\bn$ and $\bk=(0,k_2,\dots k_n)$; 
	this face can be identified with  $(I \times\widebar Q_\bm^\bk) \times G_{\bk}^{\bn}$
	\item broken metrics along the cut $(Y_\bk,\zeta_\bk)$ where $\bm>\bk>\bn$ and $\bk=(3,k_2,\dots k_n)$; 
	this face can be identified with $G_{\bm}^{\bk}\times (I \times \widebar Q_{\bk}^{\bn}) $
	\item the family of metrics $\widebar Q_\bm^\bn$
	\item the family of metrics $\widehat Q_{\bm}^{\bn}$
\end{itemize}

\begin{lemma} \label{Q-ori}
	For any triple-cube $(\bm,\bn)$, the fixed orientation of $I \times \widebar Q_{\bm}^{\bn}$ produces the following orientation of its faces:
	\begin{itemize}
		\item the boundary orientation on $(I \times\widebar Q_\bm^\bk) \times G_{\bk}^{\bn}$ differs from the product orientation by 
		$$(-1)^{\dim(K_\bm^\bk)\cdot\dim(G_\bk^\bn)+|\bn|_1+1+|\bk-\bn|_1}=(-1)^{\dim(I \times\widebar Q_\bm^\bk)\cdot\dim(G_\bk^\bn)+|\bn|_1}$$
		\item the boundary orientation on $G_{\bm}^{\bk}\times (I \times \widebar Q_{\bk}^{\bn})$ differs from the product orientation by 
		$$(-1)^{\dim(G_\bm^\bk)\cdot(\dim(K_\bk^\bn)+1)+|\bn|_1+1+|\bm-\bk|_1}=(-1)^{\dim(G_\bm^\bk)\cdot\dim(I \times\widebar Q_\bm^\bk)+|\bn|_1+1+|\bm-\bk|_1}$$
		\item the boundary orientation on $\widebar Q_\bm^\bn=\{1\}\times \widebar Q_\bm^\bn$ is the same as the fixed orientation of $\widebar Q_\bm^\bn$;
		\item the boundary orientation on $\widehat Q_{\bm}^{\bn}=\{0\}\times \widebar Q_\bm^\bn$ is the reverse of the fixed orientation of $\widebar Q_\bm^\bn$.
	\end{itemize}
\end{lemma}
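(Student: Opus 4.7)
The plan is to verify each claimed sign on a codimension one face of $I \times \widebar Q_{\bm}^{\bn}$ by a direct application of the outward-normal-first convention to the product orientation $dt \wedge \omega$, where $\omega$ denotes the fixed orientation of $\widebar Q_{\bm}^{\bn}$ inherited from $\tQmn$. The computation splits naturally into two kinds of faces: the two endpoint faces $\{0,1\} \times \widebar Q_{\bm}^{\bn}$, and the broken-metric faces arising from a cut along $(Y_{\bk}, \zeta_{\bk})$ for some $\bm > \bk > \bn$.

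The endpoint cases will be immediate. At $\{1\} \times \widebar Q_{\bm}^{\bn}$ the outward conormal is $+dt$, so interior multiplication by $\partial_t$ returns $\omega$ and the boundary orientation agrees with the fixed orientation of $\widebar Q_{\bm}^{\bn}$. At $\{0\} \times \widebar Q_{\bm}^{\bn} = \widehat Q_{\bm}^{\bn}$ the outward conormal is $-dt$, so we pick up a sign and the boundary orientation is reversed. These give the last two bullets, using that $\widehat Q_{\bm}^{\bn}$ and $\widebar Q_{\bm}^{\bn}$ share the same underlying parametrizing set and fixed orientation.

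For the first two bullets my plan is to first establish an analog of Lemma \ref{ori-K} for $\widebar Q_{\bm}^{\bn}$ itself, namely that on each face $A_{\bm}^{\bk} \times B_{\bk}^{\bn}$ of $\widebar Q_{\bm}^{\bn}$ the boundary orientation differs from the product orientation by $(-1)^{\dim(A_{\bm}^{\bk}) \cdot \dim(B_{\bk}^{\bn}) + |\bn|_1}$. This should follow from the compatibility statement preceding the lemma, which fixes the orientation of $\tQmn$ so that $Q_{\bm}^{\bn} = \tQmn \times \tilde I$ receives the product orientation as a codimension one face of $\Kmn$: a standard corner-orientation comparison at a codimension two face of $\Kmn$ where $Q_{\bm}^{\bn}$ meets a face of the form $A_{\bm}^{\bk} \times B_{\bk}^{\bn}$, combined with Lemma \ref{ori-K} and the anti-commutativity of interior multiplication by two different outward conormals, yields the claimed sign for faces of $\widebar Q_{\bm}^{\bn}$.

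Once this intermediate formula is in hand, prepending or inserting the $I$ factor only wedges a $dt$ into the orientation form. Reordering to match the product orientations on $(I \times \widebar Q_{\bm}^{\bk}) \times G_{\bk}^{\bn}$ and on $G_{\bm}^{\bk} \times (I \times \widebar Q_{\bk}^{\bn})$ introduces commutation factors whose exponents reduce modulo two to the extra $|\bk-\bn|_1$ and $1+|\bm-\bk|_1$ terms displayed in the lemma, using the dimensional identities $\dim(G_{\ast}^{\ast}) = |\ast-\ast|_1 - 1$ and $\dim(K_{\ast}^{\ast}) = \dim(I \times \widebar Q_{\ast}^{\ast}) + 1$; the two algebraic equalities stated in the lemma then follow by substitution. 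The main obstacle throughout is keeping the corner-orientation bookkeeping consistent across the several identifications between faces of $\Kmn$, $\widebar Q_{\bm}^{\bn}$, and $I \times \widebar Q_{\bm}^{\bn}$.
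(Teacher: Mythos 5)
Your overall strategy is the same as the paper's: reduce the first two bullets to a corner--orientation comparison inside $\Kmn$ at the codimension--two face where $Q_{\bm}^{\bn} = \tQmn \times \tilde I$ meets another codimension--one face, invoke Lemma~\ref{ori-K}, and then transport the sign to $I \times \bQmn$; the last two bullets are indeed immediate from the outward-normal-first convention. However, there is a genuine gap in your intermediate claim. With the fixed orientation of $\tQmn$ (and hence of $\bQmn$), the boundary orientation on the face $\widetilde Q_{\bm}^{\bk} \times G_{\bk}^{\bn}$ of $\tQmn$ does \emph{not} differ from the product orientation by $(-1)^{\dim(\widetilde Q_{\bm}^{\bk})\cdot\dim(G_{\bk}^{\bn})+|\bn|_1}$. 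Tracing the corner comparison carefully: the codimension--two face of $\Kmn$ where $Q_{\bm}^{\bn}=\tQmn\times\tilde I$ meets $K_{\bm}^{\bk}\times G_{\bk}^{\bn}$ is $Q_{\bm}^{\bk}\times G_{\bk}^{\bn}=(\widetilde Q_{\bm}^{\bk}\times\tilde I)\times G_{\bk}^{\bn}$, and to read off a sign on the face $\widetilde Q_{\bm}^{\bk}\times G_{\bk}^{\bn}$ of $\tQmn$ alone you must commute $\tilde I$ past $G_{\bk}^{\bn}$, which contributes $(-1)^{\dim G_{\bk}^{\bn}}$. The correct sign is therefore $(-1)^{\dim(\widetilde Q_{\bm}^{\bk})\cdot\dim(G_{\bk}^{\bn})+|\bn|_1+1+\dim(G_{\bk}^{\bn})}$, and likewise the face $G_{\bm}^{\bk}\times\widetilde Q_{\bk}^{\bn}$ picks up the extra $(-1)^{1+\dim(G_{\bm}^{\bk})}$. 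These extra factors are precisely where the ``$+1+|\bk-\bn|_1$'' and ``$+1+|\bm-\bk|_1$'' terms in the statement of the lemma come from; they cannot be folded into a clean $\dim(A)\cdot\dim(B)+|\bn|_1$ formula for $\bQmn$.

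Your commutation--factor step, as described, cannot make up this difference. For the first bullet, inserting $I$ gives $I\times(\widebar Q_{\bm}^{\bk}\times G_{\bk}^{\bn})=(I\times\widebar Q_{\bm}^{\bk})\times G_{\bk}^{\bn}$ with no further reordering of factors, so no commutation factor arises there at all, yet a correction of $(-1)^{|\bk-\bn|_1}$ (on top of your intermediate sign and the $-1$ from pulling the outward conormal past $dt$) is still required. The paper avoids the pitfall by never stating an orientation formula for $\bQmn$ in isolation: it records the corner comparison directly as a sign on $(\widetilde Q_{\bm}^{\bk}\times\tilde I)\times G_{\bk}^{\bn}$, viewed as a face of the codimension--one face $\tQmn\times\tilde I$ of $\Kmn$, so the $\tilde I$ factor never gets separated from $\tQmn$ mid-computation. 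To repair your argument, either adopt that route or replace your intermediate formula by the corrected one with the transposition sign.
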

\begin{proof}
	It suffices to prove the same claim when $I \times \bQmn$ in the statement of the lemma is replaced with $\tilde I \times \tQmn$. 
	The polyhedrons $\tQmn\times \tilde I $ and $K_\bm^\bk\times G_\bk^\bn$, with $\bk=(0,k_2,\dots k_n)$, are codimension 1 faces of $\Kmn$. 
	Intersection of these two polyhedrons is $(\widetilde Q_\bm^\bk \times \tilde I ) \times G_\bk^\bn$ which
	has codimension 1 with respect to $\tQmn	 \times \tilde I$ and $K_\bm^\bk\times G_\bk^\bn$. 
	The polyhedrons $\tQmn \times \tilde I$ and $K_\bm^\bk\times G_\bk^\bn$ inherit boundary orientations form $K_\bm^\bk$ which in turn induce two orientations on
	$(\widetilde Q_\bm^\bk \times \tilde I) \times G_\bk^\bn$. These two orientations differ by a sign.
	Another way to orient $(\widetilde Q_\bm^\bk \times \tilde I)\times G_\bk^\bn$ is to consider it as the product of $\widetilde Q_\bm^\bk \times\tilde I$ and $G_\bk^\bn$. 
	This orientation is the same as the boundary orientation by considering $(\widetilde Q_\bm^\bk \times \tilde I)\times G_\bk^\bn$ as a codimension 1 face of 
	$K_\bm^\bk\times G_\bk^\bn$ where the latter is equipped with the product orientation. 
	Therefore, Lemma \ref{ori-N} states that the product and the boundary orientations of $(\widetilde Q_\bm^\bk \times \tilde I) \times G_\bk^\bn$, 
	as a face of $\tQmn \times \tilde I$ differ by $(-1)^{\dim(K_\bm^\bk)\cdot\dim(G_\bk^\bn)+|\bn|_1+1}$. 
	This proves the claim about the orientation of 
	the faces in the first bullet. A similar argument verifies the claim for the faces in the second bullet. 
	The assertions about the orientations of the faces in the last two bullets are trivial.
\end{proof}

\subsection{Exact Triangles} \label{triangles}

Suppose a pair $(Y,\zeta)$ and a framed link $L$ is fixed as in the previous subsection. Then we can form the pairs $(Y_\bm,\zm)$ and the cobordisms of the pairs $(\Wmn,\Zmn)$. Use Lemma \ref{fix-hom-ori} to fix homology orientations for the cobordisms $\Wmn$. For each representation variety $\mathcal R(Y_{\bm})$, fix a Morse-Smale function $f_\bm$ such that $f_{\bm+3\bk}=f_{\bm}$ for any $\bk\in \zz^n$. For a cube, double-cube, or a triple-cube $(\bm,\bn)$, we constructed a family of metrics $(\bWmn,\bZmn)$ on the pair $(\Wmn,\Zmn)$. Since the parametrizing sets for these families are contractible, the homology orientation for $\Wmn$ determines an orientation of $\ind(\bWmn, \Gmn)$, $\ind(\bWmn, \Nmn)$ or $\ind(\bWmn, \Kmn)$ depending on whether $(\bm,\bn)$ is a cube, double-cube, or a triple-cube.  

Fix a perturbation $\emn$ for the family $(\bWmn,\bZmn)$ such that Hypothesis \ref{eta-hypo-1} holds. Thus the map $\feWmn$ is well-defined. To simplify our notation, we denote this cobordism map by $\fGmn$, $\fNmn$ or $\fKmn$ when $(\bm,\bn)$ is a cube, double-cube, or triple-cube, respectively. 
We also require that the induced perturbation for the families $I_{\bm}^\bn$, $(I_1)_{\bm}^\bn$, $(I_2)_{\bm}^\bn$ and $Q_{\bm}^{\bn}$, are equal to zero on $R$, $R_1$, $R_2$ and $U$, respectively. We also assume that $\emn$ is equal to $\eta_{\bm+3\bk}^{\bn+3\bk}$ for any $\bk \in \zz^n$.

Given a cube $(\bm,\bn)$, an application of (\ref{pair-signed-bdry-form-loc-sys}) with the aid of Lemmas \ref{fix-hom-ori} and \ref{G-ori} to the family of metrics parametrized by $\Gmn$ gives:

\begin{equation*}
	\sum_{\bm>\bk>\bn} (-1)^{|\bn|_1}\fGkn\circ \fGmk=
	d_\bn\circ \fGmn +(-1)^{|\bm-\bn|_1} \fGmn \circ d_{\bm}
\end{equation*}

where $d_{\bk}$ is the differential on the complex  $\tPFC(Y_\bk,\zeta_{\bk})$. If we set the convention that $\tilde f_{G_\bk^\bk}$ is equal to $(-1)^{|\bk|_1+1}d_{\bk}$, then the above identity can be rewritten as:
\begin{equation} \label{cube-diff}
	\sum_{\bm\geq \bk\geq \bn} \fGkn\circ \fGmk=0
\end{equation}
This relation states that the following is a chain complex:
\begin{equation*}
	\left(\Cmn := \bigoplus_{\bm\geq \bk\geq \bn}\tPFC(Y_\bk,{\zeta_\bk}) ,\dmn:= (\tilde f_{G_{\bf k}^{\bf k'}})_{\bm\geq \bk\geq \bk'\geq \bn}\right)
\end{equation*}

For $\bm=(m_1,\dots, m_{n-1}) \in \zz^{n-1}$ and $i\in \zz$, let ${\bf m}(i):=(i,m_1,\dots,m_{n-1}) \in \zz^{n}$ be given as the juxtaposition of $i$ and $\bm$. Given a cube $(\bm, \bn)$ in $\zz^{n-1}$ and $i\in \zz$, define $\gmni:\Cmni \to \Cmnim$ by the following matrix presentation:
$$\gmni:= (\tilde f_{G_{{\bf k}(i)}^{{\bf k'}(i-1)}})_{\bm\geq \bk\geq \bk'\geq \bn}$$
By (\ref{cube-diff}), $\gmni$ is an anti-chain map, namely, we have the following identity:
$$\dmnim\circ \gmni+\gmni\circ \dmni=0$$
In general, given an anti-chain map $g:(C,d) \to (C',d')$ we can form the mapping cone chain complex $\Cone(g)$ defined in he following way:
$$\Cone(g):=(C\oplus C'\hspace{2pt},\hspace{2pt}
\left( \begin{array}{cc}
	d&0\\
	g&d'
\end{array}
\right)
)$$
For the anti-chain map $\gmni$, the chain complex $\Cone(\gmni)$ is clearly the same as $(C_{{\bm}(i)}^{{\bn}(i-1)},d_{{\bm}(i)}^{{\bn}(i-1)})$. The heart of this paper lies in the following theorem:
\begin{theorem} \label{PFH-tri-1}
	The chain complexes $(\Cmni,\dmni)$ and $\Cone(\gmnim)$ have the same chain homotopy type. 
\end{theorem}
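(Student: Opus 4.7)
The plan is to construct explicit chain maps
\[ F\colon \Cmni \to \Cone(\gmnim), \qquad G\colon \Cone(\gmnim) \to \Cmni, \]
together with chain homotopies showing $G\circ F\simeq \mathrm{id}$ and $F\circ G\simeq \mathrm{id}$. All of the maps will be assembled from matrix entries $\tilde f_{G_{\bf a}^{\bf b}}$ coming from cube families and $\tilde f_{N_{\bf a}^{\bf b}}$ coming from double-cube families, while the homotopies will be assembled from the triple-cube families $\tilde f_{K_{\bf a}^{\bf b}}$.  Since $\Cone(\gmnim)=\Cmnim\oplus\Cmnimm$, the map $F$ has two components: $F_1\colon \Cmni\to\Cmnim$, taken to be the cube-family map $\gmni$ already in hand, and $F_2\colon \Cmni\to\Cmnimm$, whose matrix entries are the double-cube cobordism maps $\tilde f_{N_{\bk(i)}^{\bk'(i-2)}}$ for cubes $(\bk,\bk')$ in $\zz^{n-1}$. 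The map $G$ is constructed dually with $G_2=\gmnim$ and $G_1$ built from the double-cube families going from first coordinate $i-1$ down to $i-3$; note that first-coordinate $i-3$ is identified with first-coordinate $i$ by the $3$-periodicity $Y_{\bm+3\bk}=Y_\bm$, so $G_1$ lands back in $\Cmni$.

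First I would prove that $F$ and $G$ are chain maps. Applying the boundary identity (\ref{pair-signed-bdry-form-loc-sys}) to the double-cube family $\Nmn$ expresses $d\circ \tilde f_{\Nmn}\pm \tilde f_{\Nmn}\circ d$ as a signed sum over the codimension-one faces of $\Nmn$. These faces are of four types: compositions $\tilde f_{N_\bm^\bk}\circ \tilde f_{G_\bk^\bn}$, $\tilde f_{G_\bm^\bk}\circ \tilde f_{G_\bk^\bn}$, $\tilde f_{G_\bm^\bk}\circ \tilde f_{N_\bk^\bn}$, and a contribution $\tilde f_{I_\bm^\bn}$ from the face $I_\bm^\bn$ broken along $T$. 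The face $I_\bm^\bn$ parametrizes metrics broken along a separating copy of $S^3$ bounding $R\cong\overline{\cc P}{}^2\setminus D^4$ containing the embedded sphere $Z\cap R$, so Lemma \ref{special-fam} (and Remark \ref{special-fam-2} when $Z\cap R$ is a disc with a disc removed) forces this contribution to vanish. Combining over all cubes $(\bk,\bk')$ in $\zz^{n-1}$ and keeping track of the signs from Lemmas \ref{G-ori} and \ref{ori-N}, the remaining three families of terms reorganize into exactly the chain-map identities for $F$ and $G$, with the middle term $\tilde f_{G_\bm^\bk}\circ \tilde f_{G_\bk^\bn}$ (whose intermediate level has first coordinate one step between the endpoints) producing the off-diagonal mixing between the two summands of the cone.

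Next I would prove $G\circ F\simeq\mathrm{id}_{\Cmni}$ (and symmetrically $F\circ G\simeq\mathrm{id}$) by applying (\ref{pair-signed-bdry-form-loc-sys}) to the triple-cube family $\Kmn$. The codimension-one faces of $\Kmn$ split into: intermediate compositions of cube and double-cube maps (which reassemble into the composition $G\circ F$ and into $d\circ H+H\circ d$ for a chain homotopy $H$ assembled from $\tilde f_{\Kmn}$'s); the two faces $(I_1)_\bm^\bn,(I_2)_\bm^\bn$ broken along $T_1,T_2$, which again vanish by Lemma \ref{special-fam}; and the crucial face $Q_\bm^\bn$ broken along $Q=\partial U$. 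On $Q_\bm^\bn$ the moduli space factors through the cobordism pair $(U,B\cap U)$ of Example \ref{example-family-metrics-1}/Remark \ref{fam-met2}, and Lemma \ref{quasi-iso} (together with Remark \ref{fam-met2}) equates its contribution with $c$ times the cobordism map of $(V,C)$, where $c\in\tL$ is the element (\ref{constant-c}). Using the interpolating family $I\times\bQmn$ whose orientation is pinned down by Lemma \ref{Q-ori}, one exchanges the broken metric along $Q$ for the standard product metric (parametrized by $\widehat Q_\bm^\bn$); by Lemma \ref{hom-ori-checkWmn} this standard-metric contribution is precisely the identity summand inside $\mathrm{id}_{\Cmni}$ coming from the $3$-periodic identification $\cWmn\cong W_{\bm'}^{\bn}$. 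After rescaling $F_2$ (or equivalently $G_1$) by the invertible element $c$, the $Q_\bm^\bn$ term matches $\mathrm{id}_{\Cmni}-G\circ F$ up to a boundary $d\circ H+H\circ d$, which is the required chain homotopy.

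The hard part will be the bookkeeping of signs and homology orientations throughout. In particular one must verify that (i) the orientations of $\Gmn,\Nmn,\Kmn$ from Lemmas \ref{G-ori}–\ref{ori-K} are compatible with the product orientations on all codimension-one faces with the prescribed signs; (ii) Lemma \ref{hom-ori-checkWmn} correctly relates the homology orientation of $\cWmn$ to the composite orientations at the two ways of decomposing it, so that the $\widehat Q_\bm^\bn$-contribution really matches the identity on the $3$-periodic summand; and (iii) Remark \ref{bdry-terms} is applied consistently to convert $\tilde f_{(\bW_1,\bZ_1)\times(\bW_2,\bZ_2)}$ into the correct signed composition of the individual cobordism maps, including the twisting by $s$ in odd-dimensional factors. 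Once these sign computations are in place, the algebra of $F$, $G$, and the homotopies is forced by the face structure of the polyhedra $\Nmn$ and $\Kmn$.
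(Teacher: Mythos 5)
Your plan hits the key geometric ingredients of the paper's own argument: the double-cube families $\Nmn$ supply the off-diagonal component of the map into the cone, the triple-cube families $\Kmn$ supply the homotopies, Lemma \ref{special-fam} kills the $I$-faces, and the $Q$-face together with the interpolating family $I\times\bQmn$ and Lemma \ref{quasi-iso} produces an invertible contribution. So you are on the right track, and in fact this is the same approach as the paper.

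However, there are two places where your plan glosses over real difficulties, and the paper's route is cleaner precisely because it sidesteps them. First, the composition $G\circ F$ is \emph{not} chain-homotopic to the identity after merely dividing by $c$. The contribution of the $\widehat Q_{\bm(i)}^{\bn(i-3)}$-face on the diagonal term $\bm=\bn$ is $\pm c\cdot s$ (a diagonal map that also includes the grading-sign operator $s$ and an orientation-dependent sign $\pm$ that can vary from vertex to vertex), so rescaling $G_1$ by $c^{-1}$ does not normalize the composite to $\mathrm{id}$. If you try to absorb the full diagonal isomorphism into $G$ you then have to re-verify that the modified $G$ is still an anti-chain map, and the relation (\ref{com-s}) between $s$ and cobordism maps makes this delicate. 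Second, the claim ``and symmetrically $F\circ G\simeq\mathrm{id}$'' is not symmetric: $\phi_i\circ\psi_{i+3}$ comes out as a lower-triangular $2\times 2$ block matrix with nontrivial lower-left entry, and one needs a separate $2\times 2$ homotopy matrix to compare it to an isomorphism. The paper avoids both problems by proving the purely algebraic Lemma \ref{trianlge-detection-lemma}, which only requires $\psi_i\circ\phi_i$ and $\phi_i\circ\psi_{i+3}$ to be homotopic to \emph{some} isomorphisms $q_i$ and $Q$; invertibility of $q_i$ on the diagonal summands is exactly what the $\widehat Q$-face calculation gives, with no need to normalize to the identity. Your plan would still go through if you replaced ``homotopic to $\mathrm{id}$'' by ``homotopic to an isomorphism'' and supplied the $2\times 2$ homotopy for the second composite, but at that point you have essentially reproduced Lemma \ref{trianlge-detection-lemma}. (One small bookkeeping slip: the component of $G$ out of $\Cmnimm$ should be $\gmnidm$ composed with the $3$-periodic identification, not $\gmnim$.)
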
	
An immediate corollary of Theorem \ref{PFH-tri-1} is that the chain complex $(\Cmn,\dmn)$ is a new chain complex for the plane Floer homology of an appropriate pair:
\begin{corollary} \label{new-complex-1}
	Let $(\bm,\bn)$ be a cube and $\bp=2\bm-\bn$. Then the chain complexes $(\tPFC(Y_{\bp},\zeta_{\bp}),d_\bp)$ and $(\Cmn ,\dmn)$ have the same 
	chain homotopy type.
\end{corollary}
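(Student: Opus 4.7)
The plan is to prove the corollary by induction on $d := |\bm - \bn|_1$, with Theorem \ref{PFH-tri-1} furnishing the recursive step. Fix $d \geq 1$ and, after permuting coordinates, assume $m_1 = n_1 + 1$. Define $\tilde{\bm} := (m_2, \dots, m_n)$ and $\tilde{\bn} := (n_2, \dots, n_n)$, which form a $(d-1)$-cube in $\zz^{n-1}$ (collapsing to a single vertex when $d = 1$), and put $i := m_1 + 1$ so that $i - 1 = m_1$ and $i - 2 = n_1$. Theorem \ref{PFH-tri-1}, applied to $(\tilde{\bm}, \tilde{\bn})$ and this value of $i$, yields
$$
C_{\tilde{\bm}(i)}^{\tilde{\bn}(i)} \simeq \Cone\bigl(g_{\tilde{\bm}(i-1)}^{\tilde{\bn}(i-1)}\bigr).
$$
For $d = 1$ one interprets the theorem in its degenerate $0$-cube form; the construction in Section \ref{ex-tri} specializes without change to a single framed component and produces the surgery exact triangle (\ref{add-cross-iso}).

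Comparing the matrix formula for $g_{\tilde{\bm}(i-1)}^{\tilde{\bn}(i-1)}$ to the recursive definition of $\dmn$, the mapping cone above is literally equal, as a chain complex, to the $d$-cube complex $C_{\tilde{\bm}(i-1)}^{\tilde{\bn}(i-2)}$. Since $\tilde{\bm}(i-1) = \bm$ and $\tilde{\bn}(i-2) = \bn$, this identifies $\Cmn \simeq C_{\tilde{\bm}(i)}^{\tilde{\bn}(i)}$. Write $\bm' := \tilde{\bm}(i) = (m_1+1, m_2, \dots, m_n)$ and $\bn' := \tilde{\bn}(i) = (m_1+1, n_2, \dots, n_n)$; the pair $(\bm', \bn')$ is a $(d-1)$-cube, and a one-line check gives $2\bm' - \bn' = (m_1+1,\, 2\tilde{\bm} - \tilde{\bn}) = 2\bm - \bn = \bp$. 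If $d = 1$, then $\bm' = \bn' = \bp$ and $C_{\bm'}^{\bn'} = \tPFC(Y_\bp, \zeta_\bp)$ on the nose; if $d \geq 2$, the induction hypothesis applied to $(\bm', \bn')$ gives $C_{\bm'}^{\bn'} \simeq \tPFC(Y_\bp, \zeta_\bp)$. Either way, $\Cmn \simeq \tPFC(Y_\bp, \zeta_\bp)$, completing the induction.

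The only point requiring verification beyond invoking Theorem \ref{PFH-tri-1} is the tautological identification $\Cone(g_{\bm(i-1)}^{\bn(i-1)}) \cong C_{\bm(i-1)}^{\bn(i-2)}$ of chain complexes, which reads off directly from the matrix presentations of $\gmni$ and $\dmn$ once the orientation bookkeeping of Lemmas \ref{G-ori}--\ref{ori-K} is respected. All substantive analytic content already sits inside Theorem \ref{PFH-tri-1}; the corollary is then a formal recursive consequence, and the main thing to be careful about is the sign conventions in collapsing iterated cones into cube complexes.
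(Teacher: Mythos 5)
Your proof is correct and takes essentially the same approach as the paper: reduce one coordinate at a time via Theorem \ref{PFH-tri-1}, using the tautological identification $\Cone(g_{\bm(i-1)}^{\bn(i-1)}) = C_{\bm(i-1)}^{\bn(i-2)}$. You simply formalize the paper's ``by repeating this application'' as an explicit induction on $|\bm-\bn|_1$ without making the paper's harmless WLOG reduction to $\bm=(1,\dots,1)$, $\bn=(0,\dots,0)$.
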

\begin{proof}
	For the simplicity of the exposition, assume that ${\bp}=(2,\dots,2)$, $\bm=(1,\dots,1)$, $\bn=(0,\dots,0)$.
	 By Theorem \ref{PFH-tri-1}, $(\Cmn,\dmn)$ and $(C_{\bm'}^{\bn'},d_{\bm'}^{\bn'})$ have the same chain homotopy type when
	 $\bm'=(2,1,\dots,1)$, $\bn'=(2,0,\dots,0)$. 
	 By repeating this application of Theorem \ref{PFH-tri-1}, we can show that $(\tPFC(Y_\bp,\zeta_{\bp}),d_{\bp})$ and $(\Cmn,\dmn)$ have 
	 the same chain homotopy type.
\end{proof}

Consider the grading on $(\Cmn,\dmn)$ that is defined to be equal to $|{\bf k}|_1$ on the summand $\tPFC(Y_{\bk},\zeta_{\bk})$ of $\Cmn$. The differential $\dmn$ does not increase the grading. Therefore, we can use this filtration to produce a spectral sequence: 

\begin{corollary} \label{surgery-spectral-sequence}
	There is a spectral sequence with the first page $\bigoplus_{\bk\in\{0,1\}^n}\tPFH(Y_{\bf k},\zeta_{\bk})$ that abuts to $\tPFH(Y,\zeta)$. 
	The differential in the first page can be identified with $\bigoplus (-1)^{{\rm sgn}(\bk,\bk')}\tPFH(W_{\bf k}^{\bf k'},Z_{\bf k}^{\bf k'})$. 
	Here the sum is over all the edges of the cube 
	$\{0,1\}^n$ and ${\rm sgn}(\bk,\bk')$ is defined in (\ref{eta}). 
\end{corollary}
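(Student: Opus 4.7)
The plan is to invoke Corollary \ref{new-complex-1} together with the standard machinery of filtered chain complexes, with only minor attention required to identify $d_1$.

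First I would specialize Corollary \ref{new-complex-1} to the case $\bp=(2,\ldots,2)$, $\bm=(1,\ldots,1)$, $\bn=(0,\ldots,0)$. Because the Dehn-filling curve $\gamma_i^2$ is in the homology class of the meridian $\mu_i$, each Dehn filling with $m_i\equiv 2 \pmod 3$ reconstructs $Y$; likewise $\zeta_\bp=\zeta$ since $\zeta_{\bp,i}$ is non-empty only when $p_i\equiv 0\pmod 3$. Thus $(Y_\bp,\zeta_\bp)=(Y,\zeta)$, and Corollary \ref{new-complex-1} identifies the chain-homotopy type of $(\Cmn,\dmn)$ with $(\tPFC(Y,\zeta),d)$. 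In particular $H_*(\Cmn,\dmn)\cong\tPFH(Y,\zeta)$.

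Next I would endow $\Cmn=\bigoplus_{\bk\in\{0,1\}^n}\tPFC(Y_\bk,\zeta_\bk)$ with the decreasing filtration $F^p\Cmn:=\bigoplus_{|\bk|_1\geq p}\tPFC(Y_\bk,\zeta_\bk)$. The matrix entries of $\dmn$ only go from $\tPFC(Y_\bk,\zeta_\bk)$ to $\tPFC(Y_{\bk'},\zeta_{\bk'})$ with $\bk\geq \bk'$, hence $|\bk|_1\geq |\bk'|_1$, so $\dmn$ respects the filtration. The filtration is bounded ($0\leq p\leq n$), so the associated spectral sequence converges to $H_*(\Cmn,\dmn)=\tPFH(Y,\zeta)$.

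The $E_1$-page is the homology of the associated graded complex. On the filtration-degree $p$ part, $\mathrm{gr}^p\,\dmn$ is the direct sum over $\bk$ with $|\bk|_1=p$ of the diagonal matrix entry $\tilde f_{G_\bk^\bk}=(-1)^{|\bk|_1+1}d_\bk$, whose homology on each summand is $\tPFH(Y_\bk,\zeta_\bk)$ (signs are irrelevant for homology). This gives $E_1=\bigoplus_{\bk\in\{0,1\}^n}\tPFH(Y_\bk,\zeta_\bk)$. The induced differential $d_1$ on $E_1$ is the part of $\dmn$ dropping the filtration by exactly one, i.e.\ the sum over edges $(\bk,\bk')$ (with $|\bk-\bk'|_1=1$) of the entries $\tilde f_{G_\bk^{\bk'}}$. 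For an edge, $G_\bk^{\bk'}$ is a single point whose orientation is the sign $\sgn(\bk,\bk')$ prescribed by Lemma \ref{G-ori}; consequently $\tilde f_{G_\bk^{\bk'}}$ represents $\sgn(\bk,\bk')\cdot \tPFC(W_\bk^{\bk'},Z_\bk^{\bk'})$, which descends on $E_1$ to the claimed identification of $d_1$.

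There is essentially no obstacle: every step is an immediate application of Corollary \ref{new-complex-1} and the elementary convergence theorem for spectral sequences of bounded-filtered complexes. The only point demanding care is the sign in $d_1$, where one must keep track of the orientation convention from Lemma \ref{G-ori} together with the bookkeeping sign $(-1)^{|\bk|_1+1}$ built into the definition of $\tilde f_{G_\bk^\bk}$; matching these against the statement recovers the factor $(-1)^{\sgn(\bk,\bk')}$ advertised in the corollary.
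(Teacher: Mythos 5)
Your proposal takes essentially the same route as the paper (which is extremely terse: the paper simply observes that the grading $|\bk|_1$ gives a filtration and that Lemma~\ref{G-ori} produces the sign). You supply the details correctly: the identification $(Y_\bp,\zeta_\bp)=(Y,\zeta)$ for $\bp=(2,\dots,2)$, the role of Corollary~\ref{new-complex-1}, boundedness of the filtration for convergence, and the read-off of $E_1$ and $d_1$ from the associated graded and the edge terms, with the sign from Lemma~\ref{G-ori}.

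One small slip in the filtration bookkeeping: you declare the decreasing filtration $F^p\Cmn=\bigoplus_{|\bk|_1\geq p}\tPFC(Y_\bk,\zeta_\bk)$ and then assert that $\dmn$ respects it because matrix entries go from $\bk$ to $\bk'$ with $|\bk|_1\geq|\bk'|_1$. But this is precisely why the stated $F^p$ is \emph{not} preserved: an edge term sends a class in the $|\bk|_1=p$ summand (inside $F^p$) to the $|\bk'|_1=p-1$ summand, which lies outside $F^p$. Since $\dmn$ \emph{does not increase} $|\bk|_1$, the filtration that is actually preserved is the increasing one, $F_p\Cmn=\bigoplus_{|\bk|_1\leq p}\tPFC(Y_\bk,\zeta_\bk)$ (equivalently, the decreasing filtration indexed by $n-|\bk|_1$, which is the version that appears via $\deg_h$ in subsection~\ref{PKH}). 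With this correction, the rest of your argument goes through unchanged: $E_0$ is the associated graded with $d_0=\pm d_\bk$, $E_1=\bigoplus_\bk\tPFH(Y_\bk,\zeta_\bk)$, and $d_1$ is induced by the edge entries $\tilde f_{G_\bk^{\bk'}}=\sgn(\bk,\bk')\,\tPFC(W_\bk^{\bk'},Z_\bk^{\bk'})$, where the sign is the orientation of the point $G_\bk^{\bk'}$ given by Lemma~\ref{G-ori}.
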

\begin{proof}
	The definition of the filtration implies the first part. The second part is the result of Lemma  \ref{G-ori}.
\end{proof}	

The following algebraic lemma underlies our strategy of proving Theorem \ref{PFH-tri-1} (cf. \cite{OzSz:HF-branch, KM:Kh-unknot}):
\begin{lemma} \label{trianlge-detection-lemma}
	For $i\in \zz$, let $(C_i,d_i)$ be a chain complex and $g_i:C_i \to C_{i-1}$ be an anti-chain map.
	For each $i$, suppose there are $n_i:C_i \to C_{i-2}$ and $k_i:C_i \to C_{i-3}$ such that:
	\begin{equation} \label{null-htpy}
		g_{i-1}\circ g_i + d_{i-2}\circ n_{i}+n_{i}\circ d_{i}=0
	\end{equation}
	and
	\begin{equation} \label{q-iso}
		n_{i-1}\circ g_i+g_{i-2}\circ n_i+d_{i-3}\circ k_{i}+k_{i}\circ d_{i}=q_i
	\end{equation}
	where $q_i:C_i \to C_{i-3}$ is an isomorphism. Then $C_i$ and $\Cone(g_{i-1})$ have the same chain homotopy type. 
\end{lemma}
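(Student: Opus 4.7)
The plan is to build an explicit chain-level equivalence $F\colon C_i \to \Cone(g_{i-1})$ and a left homotopy inverse to it using the data $(n_i, k_i, q_i)$, and then to upgrade the left inverse to a two-sided equivalence via the long exact sequence of the mapping cone.

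First I would define the candidate equivalence $F\colon C_i \to \Cone(g_{i-1}) = C_{i-1}\oplus C_{i-2}$ by $F(x) = (g_i(x),\, n_i(x))$. Using that each $g_j$ is an anti-chain map together with (\ref{null-htpy}), a direct matrix computation yields $D_{\Cone}F + F d_i = 0$, so $F$ is an anti-chain map. Next, define $H\colon \Cone(g_{i-1})\to C_{i-3}$ by $H(x,y) = n_{i-1}(x) + g_{i-2}(y)$. The null-homotopy relation (\ref{null-htpy}) one index down, together with the anti-chain property of $g_{i-2}$, shows $H$ is also anti-chain. The composition is
\[
H\circ F \;=\; n_{i-1}g_i + g_{i-2}n_i \;=\; q_i - d_{i-3}k_i - k_i d_i,
\]
the second equality being (\ref{q-iso}). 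Thus $-k_i$ is a chain homotopy from $H\circ F$ to $q_i$. Since $q_i$ is a chain isomorphism, the map $\widetilde H := q_i^{-1}\circ H$ is a left chain-homotopy inverse to $F$.

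It remains to promote this to a full chain equivalence, i.e.\ to show that $F$ is also a quasi-isomorphism. The standard short exact sequence $0 \to C_{i-2} \to \Cone(g_{i-1}) \to C_{i-1} \to 0$ produces a long exact sequence in homology with connecting map induced by $g_{i-1}$. On the other hand, the relation (\ref{null-htpy}) implies that $(g_{i-1})_*\circ (g_i)_* = 0$, so $(g_i)_*$ factors through $\ker(g_{i-1})_*$ and yields a comparison map that fits into a ladder with the cone's long exact sequence. The existence of the one-sided inverse $\widetilde H$ forces $F_*$ to be injective; combined with the secondary relation (\ref{q-iso}) --- which provides, in homology, a splitting of the connecting homomorphism --- a Five Lemma chase along this ladder gives surjectivity of $F_*$, hence $F$ is a chain equivalence. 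An alternative route, which avoids the Five Lemma, is to construct an explicit right homotopy inverse $F'\colon \Cone(g_{i-1}) \to C_i$ directly by using the periodicity $q$: set $F'(x,y) = n_{i+2}\,q_{i+2}^{-1}(x) + g_{i+1}\,q_{i+1}^{-1}(y)$, and verify $F F' \simeq \mathrm{id}$ using (\ref{null-htpy}), (\ref{q-iso}), and the fact (derivable from these relations) that $q$ commutes with $g$ up to an explicit chain homotopy.

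The main obstacle is precisely the last step: extracting a two-sided homotopy inverse from the one-sided data. Either approach --- the Five Lemma in homology or the direct construction of $F'$ --- requires careful bookkeeping of signs and of the failure of $q_i$ to commute strictly with $g_i$ and $n_i$. In the Five Lemma route, one must set up the comparison ladder so that the hypotheses (\ref{null-htpy}) and (\ref{q-iso}) provide the commuting squares; in the explicit route, the key identity $g_{i-3}q_i \simeq q_{i-1}g_i$ (and its analogue for $n$) must be derived from (\ref{null-htpy}) and (\ref{q-iso}) before $F F' \simeq \mathrm{id}$ can be verified. Aside from this technical point, the rest of the argument is formal manipulation of the given homotopies.
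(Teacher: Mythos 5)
Your construction of $F = (g_i,n_i)$ and $H = n_{i-1}\oplus g_{i-2}$ matches the paper's $\phi_i$ and $\psi_i$ exactly, and your verification that $H\circ F$ is chain homotopic to $q_i$ via $k_i$ is correct and matches the paper. The gap is in the final step, and neither of your two proposed routes closes it.

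The Five Lemma / long-exact-sequence route would at best show that $F_*$ is an isomorphism on homology, i.e.\ that $F$ is a quasi-isomorphism. But the lemma asserts equality of \emph{chain homotopy type}, which is strictly stronger for complexes over a general ring: a quasi-isomorphism need not be a chain homotopy equivalence without extra hypotheses (projectivity of the terms, boundedness, etc.) that the abstract statement does not supply. So this route, even if the ladder chase goes through, proves the wrong thing.

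Your alternative route has a more basic defect: the proposed $F'(x,y) = n_{i+2}\,q_{i+2}^{-1}(x) + g_{i+1}\,q_{i+1}^{-1}(y)$ is not, on its face, an anti-chain map from $\Cone(g_{i-1})$ to $C_i$. The map $n_{i+2}$ is a homotopy, not a chain map; it satisfies $d_i n_{i+2} + n_{i+2}d_{i+2} = -g_{i+1}g_{i+2}\neq 0$, and precomposing with the chain isomorphism $q_{i+2}^{-1}$ does not repair this. You have correctly flagged that you would need commutation of $q$ with $g$ up to homotopy, but that is not established, and inverting $q$ at the level of maps makes the bookkeeping harder, not easier.

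The paper's route sidesteps both problems by never inverting $q$ in the construction. One simply composes $\phi_i$ with $\psi_{i+3}$. The map $\psi_{i+3}\colon \Cone(g_{i+2}) \to C_i$ \emph{is} an anti-chain map: the off-diagonal $g_{i+2}$ in the cone differential exactly absorbs the failure of $n_{i+2}$ to be a chain map, via (\ref{null-htpy}). A direct computation using the explicit homotopy
$K = \left(\begin{smallmatrix} k_{i+2}&n_{i+1}\\ 0&k_{i+1} \end{smallmatrix}\right)$
shows that $\phi_i\circ\psi_{i+3}$ is chain homotopic to a lower-triangular matrix with $q_{i+2}$ and $q_{i+1}$ on the diagonal, hence to an isomorphism. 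Since $\phi_i$ now has a left homotopy inverse up to isomorphism (your $\widetilde H$) and a right homotopy inverse up to isomorphism (from $\phi_i\psi_{i+3}$), the standard two-sided-inverse argument gives that $\phi_i$ is a chain homotopy equivalence.
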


\begin{proof}
	The following maps:
	\begin{equation} \label{chain-map-iso}
		\phi_i:=(g_i,n_i):C_i \to \Cone(g_{i-1})\hspace{2cm} \psi_i:=\left(
		\begin{array}{c}
			n_{i-1}\\
			g_{i-2}
		\end{array}\right):\Cone(g_{i-1}) \to C_{i-3}
	\end{equation}	
	are anti-chain maps. We also have:
	\begin{equation}\label{htpy-equiv-1}
		\psi_i\circ \phi_i \sim_h q_i\hspace{2cm}\phi_{i}\circ \psi_{i+3} \sim_h\left(
		\begin{array}{cc} 
			q_{i+2}&0\\
			n_{i}\circ n_{i+2}+k_{i+1}\circ g_{i+2}+g_{i-1} \circ k_{i+2}&q_{i+1}
		\end{array}
		\right) 
	\end{equation}
	where the homotopies are given by the following maps:
	\begin{equation}\label{htpies}
		k_i \hspace{2cm} \left(
		\begin{array}{cc}
			k_{i+2}&n_{i+1}\\
			0&k_{i+1}
		\end{array}
		\right)
	\end{equation}
	The homotopies in (\ref{htpy-equiv-1}) show that $\psi_i\circ \phi_i $ and $\phi_{i}\circ \psi_{i+3}$ are homotopic to isomorphisms. Consequently, 
	$\phi_i:C_i \to \Cone(g_{i-1})$ is a homotopy equivalence.
\end{proof}

\begin{proof}[Proof of Theorem \ref{PFH-tri-1}] 
	In order to utilize Lemma \ref{trianlge-detection-lemma}, we need to construct maps $\nmni: \Cmni \to \Cmnimm$ and $\kmni: \Cmni \to \Cmnimmm$
	such that the constructed maps satisfy the relations (\ref{null-htpy}) and (\ref{q-iso}). 
	For each $i\in \zz$, the pair $(\bm(i),\bn(i-2))$ determines a double-cube in $\zz^{n}$, and hence we can consider the 
	family of metrics parametrized by $N_{\bm(i)}^{\bn(i-2)}$ on the pair cobordism $(W_{\bm(i)}^{\bn(i-2)},Z_{\bm(i)}^{\bn(i-2)})$. 
	This family defines the map: 
	$$f_{N_{\bm(i)}^{\bn(i-2)}}:\tPFC(Y_{\bm(i)},\zeta_{\bm(i)}) \to \tPFC(Y_{\bn(i-2)},\zeta_{\bn(i-2)})$$ 
	Identity (\ref{pair-signed-bdry-form-loc-sys}), Lemma \ref{ori-N} and the characterization of codimension 1 faces of $N_{\bm(i)}^{\bn(i-2)}$ imply that:
	\begin{equation*}
		\tilde f_{I_{\bm(i)}^{\bn(i-2)}}+\sum_{\bm\geq \bk > \bn} (-1)^{|\bn|_1+i} \tilde f_{G^{\bn(i-2)}_{\bk(i-2)}} \circ \tilde f_{N^{\bk(i-2)}_{\bm(i)}}+
		\sum_{\bm\geq \bk \geq \bn} (-1)^{|\bn|_1+i}\tilde f_{G^{\bn(i-2)}_{\bk(i-1)}} \circ \tilde f_{G^{\bk(i-1)}_{\bm(i)}}+\hspace{1.5cm}
	\end{equation*}
	\begin{equation*}
		\hspace{1.5cm}+\sum_{\bm> \bk\geq \bn} (-1)^{|\bn|_1+i}\tilde f_{N^{\bn(i-2)}_{\bk(i)}} \circ \tilde f_{G^{\bk(i)}_{\bm(i)}}
		=d_{\bn(i-2)} \circ \tilde f_{N_{\bm(i)}^{\bn(i-2)}}-(-1)^{|\bm-\bn|_1+1}\tilde f^{N_{\bm(i)}^{\bn(i-2)}}_\eta\circ d_{\bm(i)}
	\end{equation*}
	According to Lemma \ref{special-fam}, the cobordism map $\tilde f_{I_{\bm(i)}^{\bn(i-2)}}$ vanishes. 
	We can use this and the convention that $\tilde f_{G_\bk^\bk}=(-1)^{|\bk|_1+1}d_{\bk}$ to simplify
	slightly:
		\begin{equation} \label{nmn}
		\sum_{\bm\geq \bk \geq \bn}  \tilde f_{G^{\bn(i-2)}_{\bk(i-2)}} \circ \tilde f_{N^{\bk(i-2)}_{\bm(i)}}+
		\sum_{\bm\geq \bk \geq \bn} \tilde f_{G^{\bn(i-2)}_{\bk(i-1)}} \circ \tilde f_{G^{\bk(i-1)}_{\bm(i)}}
		+\sum_{\bm\geq \bk\geq \bn} \tilde f_{N^{\bn(i-2)}_{\bk(i)}} \circ \tilde f_{G^{\bk(i)}_{\bm(i)}}
		=0
	\end{equation}
	If we define a map $\nmni: \Cmni \to \Cmnimm$ as:
	$$\nmni:= (\tilde f_{N_{\bf k(i)}^{\bf k'(i-2)}})_{\bm\geq \bk\geq \bk'\geq \bn}$$
	then \eqref{nmn} implies:
	\begin{equation} \label{null-htpy}
		\gmnim\circ \gmni + \dmnimm\circ \nmni+\nmni \circ \dmni=0
	\end{equation}	
	
	We can repeat the above construction for the family of metrics parametrized by $K_{\bm(i)}^{\bn(i-3)}$ and define the map:
	$$\tilde f_{K_{\bm(i)}^{\bn(i-3)}}:\tPFC(Y_{\bm(i)},\zeta_{\bm(i)}) \to \tPFC(Y_{\bn(i-3)},\zeta_{\bn(i-3)})$$ 	
	The description of the boundary of $K_{\bm(i)}^{\bn(i-3)}$ provides us with the following relation:
	\begin{equation*}
		f_{(I_1)_{\bm(i)}^{\bn(i-3)}} + f_{(I_2)_{\bm(i)}^{\bn(i-3)}}+f_{ Q_{\bm(i)}^{\bn(i-3)}}+
		(-1)^{|\bn|_1+i-3} (\sum_{\bm\geq \bk > \bn} f_{G^{\bn(i-3)}_{\bk(i-3)}} \circ f_{K^{\bk(i-3)}_{\bm(i)}}
		+\sum_{\bm\geq \bk \geq \bn}f_{G^{\bn(i-3)}_{\bk(i-2)}} \circ f_{N^{\bk(i-2)}_{\bm(i)}}+
	\end{equation*}
	\begin{equation*}
		+\sum_{\bm\geq \bk \geq \bn} f_{N^{\bn(i-3)}_{\bk(i-1)}} \circ f_{G^{\bk(i-1)}_{\bm(i)}}
		+\sum_{\bm> \bk \geq \bn} f_{K^{\bn(i-3)}_{\bk(i)}} \circ f_{G^{\bk(i)}_{\bm(i)}})
		=d_{\bn(i-3)} \circ f_{K_{\bm(i)}^{\bn(i-3)}}-(-1)^{|\bm-\bn|_1+2}f_{K_{\bm(i)}^{\bn(i-3)}}\circ d_{\bm(i)}
	\end{equation*}
	Again, the first two terms in the left hand side are zero by Lemma \ref{special-fam}. 
	Therefore, the above relation can be rewritten as follows:
	\begin{equation*}
		\sum_{\bm\geq \bk \geq \bn} 
		f_{G^{\bn(i-3)}_{\bk(i-3)}} \circ f_{K^{\bk(i-3)}_{\bm(i)}}
		+\sum_{\bm\geq \bk \geq \bn} f_{G^{\bn(i-3)}_{\bk(i-2)}} \circ f_{N^{\bk(i-2)}_{\bm(i)}}
		+\sum_{\bm\geq \bk \geq \bn} f_{N^{\bn(i-3)}_{\bk(i-1)}} \circ f_{G^{\bk(i-1)}_{\bm(i)}}
		+\sum_{\bm\geq \bk \geq \bn} f_{K^{\bn(i-3)}_{\bk(i)}} \circ f_{G^{\bk(i)}_{\bm(i)}}
	\end{equation*}
	\begin{equation} \label{null-hom}		
		\hspace{4cm}=(-1)^{|\bn|_1+i} f_{ Q_{\bm(i)}^{\bn(i-3)}}
	\end{equation}
	Define $\hkmni, \hqmni:\Cmni \to \Cmnimmm$ as follows:
	$$\hkmni:= (\tilde f_{K_{\bf k(i)}^{\bf k'(i-3)}})_{\bm\geq \bk\geq \bk'\geq \bn}\hspace{1cm}
	\hqmni:= ((-1)^{|\bk'|_1+i}\tilde f_{Q_{\bf k(i)}^{\bf k'(i-3)}})_{\bm\geq \bk\geq \bk'\geq \bn}$$
	then (\ref{null-hom}) gives rise to the following identity:
	$$\dmnimmm\circ \hkmni+\gmnidm\circ \nmni+\nmnim\circ \gmni+\hkmni\circ \dmni =\hqmni$$
	As the final step of the proof, we show that $\hqmni$ is homotopy equivalent to an isomorphism from $\Cmni$ to $\Cmnimmm$. Using Lemma \ref{quasi-iso} and 
	Remark \ref{bdry-terms}, it can be shown:
	\begin{equation} \label{rel-hat-ckeck}
		\tilde f_{Q_{\bm(i)}^{\bn(i-3)}}=(-1)^{|\bm-\bn|_1}c \cdot s \circ \tilde f_{\widebar Q_{\bm(i)}^{\bn(i-3)}}
	\end{equation}	
	where $\tilde f_{\widebar Q_{\bm(i)}^{\bn(i-3)}}$ is the map for the pair $(\widecheck W_{\bm(i)}^{\bn(i-3)},\widecheck Z_{\bm(i)}^{\bn(i-3)})$.
	Furthermore, the family of metrics $I \times \widebar Q_{\bm(i)}^{\bn(i-3)}$ defines a cobordism map and with the aid of (\ref{pair-signed-bdry-form-loc-sys}) 
	and Lemmas \ref{hom-ori-checkWmn} and \ref{Q-ori}, we can derive the following identity:
	\begin{equation*}
		\sum_{\bm\geq \bk\geq \bn} (-1)^{|\bn|_1+i+1+\ind(\mathcal D(W_{\bk(i-3)}^{\bn(i-3)})}f_{G_{\bk(i-3)}^{\bn(i-3)}}\circ f_{I \times \widebar Q_{\bm(i)}^{\bk(i-3)}}+
		(-1)^{|\bn|_1+|\bm-\bk|_1+i} f_{I \times  \widebar Q_{\bk(i)}^{\bn(i-3)}}\circ f_{G_{\bm(i)}^{\bk(i)}}
	\end{equation*}	
	\begin{equation*}
		=f_{\widehat Q_{\bm(i)}^{\bn(i-3)}}-f_{\widebar Q_{\bm(i)}^{\bn(i-3)}}
	\end{equation*}	
	Post-composing with $s$ and using (\ref{com-s}) let us to rewrite above identity as:
		\begin{equation*}
		\sum_{\bm\geq \bk\geq \bn} f_{G_{\bk(i-3)}^{\bn(i-3)}}\circ((-1)^{|\bm-\bk|_1} c \cdot s \circ f_{I \times \widebar Q_{\bm(i)}^{\bk(i-3)}})+
		((-1)^{|\bk-\bn|_1} c \cdot s\circ f_{I \times  \widebar Q_{\bk(i)}^{\bn(i-3)}})\circ f_{G_{\bm(i)}^{\bk(i)}}=
	\end{equation*}
	\begin{equation}\label{2-chain-htpy}
		\hspace{4cm}=(-1)^{|\bm|_1+i}c \cdot s\circ f_{\widehat Q_{\bm(i)}^{\bn(i-3)}}-(-1)^{|\bm|_1+i}c \cdot s\circ f_{\widebar Q_{\bm(i)}^{\bn(i-3)}}
	\end{equation}
	Define $\tkmni:\Cmni \to \Cmnimmm$ and $\qmni:\Cmni \to \Cmnimmm$ as follows:
	$$\tkmni:= ((-1)^{|\bk-\bk'|_1} c \cdot s \circ f_{I \times \widebar Q_{\bk(i)}^{\bk'(i-3)}})_{\bm\geq \bk\geq \bk'\geq \bn}$$
	$$\qmni:= ((-1)^{|\bk|_1+i}c \cdot s\circ f_{\widehat Q_{\bk(i)}^{\bk'(i-3)}})_{\bm\geq \bk\geq \bk'\geq \bn}$$
	Identity (\ref{2-chain-htpy}) gives rise to the following chain homotopy:
	$$\dmnimmm\circ \tkmni+\tkmni\circ \dmni=\qmni-\hqmni$$
	Finally, with $\kmni=\hkmni+\tkmni$ we have:
	$$\dmnimmm\circ \kmni+\gmnidm\circ \nmni+\nmnim\circ \gmni+\kmni\circ \dmni =\qmni$$
	which is the analogue of (\ref{q-iso}), if we can show that $\qmni$ is an isomorphism.
	If $\bm\neq \bn$, the family of metrics parametrized by $\widehat Q_{\bm(i)}^{\bn(i-3)}$ admits a faithful $\rr$-action. Therefore, in this case we can arrange the involved
	perturbation terms in the definition of $\widehat Q_{\bm(i)}^{\bn(i-3)}$ such that the dimension of the non-empty moduli spaces is at least one. 
	Thus if $\bm\neq\bn$, then with these perturbation terms $\tilde f_{\widehat Q_{\bm(i)}^{\bn(i-3)}}=0$. If $\bm=\bn$, the 
	family of metrics $\widehat Q_{\bm(i)}^{\bn(i-3)}$ consists of the product metric on 
	$(\widecheck W_{\bm(i)}^{\bn(i-3)},\widecheck Z_{\bm(i)}^{\bn(i-3)})\cong (I\times Y_{\bm},I\times \zeta_{\bm})$. However,
	the fixed homology orientation of this cobordism and the orientation of the point $\widehat Q_{\bm(i)}^{\bm(i-3)}$ might not be the trivial ones.
	Consequetly, $\tilde f_{\widehat Q_{\bm(i)}^{\bn(i-3)}}$ is equal to $\pm c\cdot s$.
	Therefore, $\qmni$ is indeed an isomorphism of the complexes $\Cmni$ and $\Cmnimmm$.
\end{proof}

\section{Classical Links} \label{classical-links}
A classical link is a link embedded in $S^3$ (or equivalently $\rr^3$). Let $K$ be an oriented classical link. Then we can form $\Sigma(K)$, the branched double cover of $S^3$ branched along $K$. Clearly, $\tPFH(\Sigma(K))$ is an invariant of $K$ that is functorial in the following sense. For classical links $K_0$ and $K_1$, let $S:K_0 \to K_1$ be a (not necessarily orientable) link cobordism embedded in $I \times S^3$ such that $S \cap (\{i\}\times S^3)=K_i$ for $i\in \{0,1\}$. Then $\Sigma(S)$, the branched double cover of $I \times S^3$ branched along $S$, is a cobordism from $\Sigma(K_0)$ to $\Sigma(K_1)$. Therefore, we can consider the cobordism map $\tPFH(\Sigma(S)):\tPFH(\Sigma(K_0)) \to \tPFH(\Sigma(K_1))$. In subsection \ref{PFH-dbl-cover}, we will explore the basic properties of this functor defined on the category of classical links and cobordisms between them. In particular, we shall explain how the exact triangles in subsection \ref{triangles} give rise to a generalization of unoriented skein exact triangle for this link invariant. We shall use these exact triangles in subsection \ref{PKH} to define {\it plane knot homology}, a stronger version of the  plane Floer homology of the branched double covers.

\subsection{Plane Floer Homology of Branched Double Covers} \label{PFH-dbl-cover}

Suppose $S:K_0 \to K_1$ is a cobordism of classical links, $\zeta_0\subset \Sigma(K_0)$ and $\zeta_1\subset \Sigma(K_1)$ are embedded 1-manifolds, and $Z:\zeta_0 \to \zeta_1$ is an embedded cobordism in $W:=\Sigma(S)$. Given a family of metrics $(\bW,\bZ)$ on $(W,Z)$, consider the cobordism map $\tfeWZ$. The following lemma shows how the degree of this map can be computed in terms of the topological invariants of $S$, $K_0$, and $K_1$:

\begin{lemma} \label{top-invts-bra-dble-1}
	The degree of the map $\tfeWZ: \tPFC(\Sigma(K_0),\zeta_0) \to \tPFC(\Sigma(K_1),\zeta_1)$ with respect to $\deg_p$ is given by:
		\begin{equation}\label{deg-form}
			\deg_p(\tfeWZ)=\chi(S)+\frac{1}{2}S\cdot S-(\sigma(K_1)-\sigma(K_0))+2\dim(G)
		\end{equation}
	In particular, the degree of the map $\tPFH(\Sigma(S),Z)$ is equal to:
	$$\chi(S)+\frac{1}{2}S.S-(\sigma(K_1)-\sigma(K_0)).$$
	Here $\sigma(K_i)$ denotes the signature of the link $K_i$ and the self-intersection $S \cdot S$ is computed with respect to the Seifert framing on $K_0$ and $K_1$.
\end{lemma}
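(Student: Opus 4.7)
The plan is to derive the formula directly from equation \eqref{ori-morphism-deg} together with two standard topological identities for the branched double cover $W=\Sigma(S)$.

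First, recall from \eqref{ori-morphism-deg} that
\begin{equation*}
  \deg_p(\tfeWZ) = 2\dim(G) - \sigma(W) - \chi(W).
\end{equation*}
So it is enough to prove the purely topological identity
\begin{equation*}
  -\chi(\Sigma(S)) - \sigma(\Sigma(S)) = \chi(S) + \tfrac{1}{2}S\cdot S - (\sigma(K_1) - \sigma(K_0)).
\end{equation*}

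The first ingredient is the Euler characteristic of a branched double cover. If $p:\widetilde X \to X$ is a double cover branched along a codimension-$2$ sub-manifold $B$, then $\widetilde X \setminus p^{-1}(B)$ is a genuine double cover of $X\setminus B$ and $p^{-1}(B)\cong B$, so $\chi(\widetilde X) = 2\chi(X) - \chi(B)$. Applied to $X = I\times S^3$ and $B = S$, and using $\chi(I\times S^3)=0$, this gives $\chi(\Sigma(S)) = -\chi(S)$.

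The second ingredient is the signature formula for the double branched cover of a link cobordism. I would invoke (or derive via the $G$-signature theorem, as in the Kauffman--Taylor / Gordon--Litherland approach) the identity
\begin{equation*}
  \sigma(\Sigma(S)) = \sigma(K_0) - \sigma(K_1) - \tfrac{1}{2}S\cdot S,
\end{equation*}
where $S\cdot S$ is the self-intersection computed relative to Seifert framings on $\partial S = K_0 \sqcup K_1$. The cleanest route is to cap off $(I\times S^3, S)$ on both ends with $(D^4, F_i)$ where $F_i$ is a Seifert surface for $K_i$ pushed into $D^4$; Novikov additivity then reduces the formula to the closed-surface case $\sigma(\Sigma(X,F)) = 2\sigma(X) - \tfrac{1}{2}[F]\cdot[F]$, combined with Viro's identification of $\sigma(\Sigma(D^4,F_i))$ with $-\sigma(K_i) - \tfrac{1}{2}F_i\cdot F_i$, after which the self-intersection contributions of $F_0$ and $F_1$ cancel against the Seifert-framed self-intersection of $S$.

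Plugging the two formulas into $-\chi(W)-\sigma(W)$ yields
\begin{equation*}
  -\chi(\Sigma(S))-\sigma(\Sigma(S)) = \chi(S) + \tfrac{1}{2}S\cdot S - (\sigma(K_1)-\sigma(K_0)),
\end{equation*}
and combining with \eqref{ori-morphism-deg} gives \eqref{deg-form}. The claim for $\tPFH(\Sigma(S),Z)$ is the case $\dim(G)=0$. The main thing to be careful about is the sign and normalization in the branched-cover signature formula; that step is the only nontrivial one, the rest is just bookkeeping.
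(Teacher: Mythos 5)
Your overall strategy is the same as the paper's: invoke equation \eqref{ori-morphism-deg}, compute $\chi(\Sigma(S))=-\chi(S)$ by the standard branched-cover Euler characteristic formula, and derive the signature of $\Sigma(S)$ by capping off with pushed-in Seifert surfaces, using Novikov additivity and the $G$-signature theorem. The paper does exactly this, so the two arguments are not genuinely different.

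However, there is a sign error that makes your concluding step fail. You claim the identity
\begin{equation*}
\sigma(\Sigma(S)) = \sigma(K_0) - \sigma(K_1) - \tfrac{1}{2}S\cdot S,
\end{equation*}
but in the paper's conventions (where $\sigma(K_i)$ is defined to equal the signature of the branched double cover of $D^4$ along a pushed-in Seifert surface for $K_i$) the correct formula is
\begin{equation*}
\sigma(\Sigma(S)) = \sigma(K_1) - \sigma(K_0) - \tfrac{1}{2}S\cdot S.
\end{equation*}
Your version cites Viro's identification $\sigma(\Sigma(D^4,F_i)) = -\sigma(K_i) - \tfrac{1}{2}F_i\cdot F_i$, which is the opposite sign convention from the one this paper adopts. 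That might be consistent internally, but it is not consistent with the target. Substituting your formula into $2\dim(G)-\sigma(W)-\chi(W)$ with $\chi(\Sigma(S))=-\chi(S)$ yields
\begin{equation*}
\deg_p(\tfeWZ) = \chi(S) + \tfrac{1}{2}S\cdot S + (\sigma(K_1)-\sigma(K_0)) + 2\dim(G),
\end{equation*}
with a plus sign in front of $\sigma(K_1)-\sigma(K_0)$, not the minus sign stated in \eqref{deg-form}. So the last line of your proof (``Plugging the two formulas ... yields ...'') is not an arithmetic consequence of the formulas you quoted. Either adopt the paper's convention for $\sigma(K_i)$ throughout, which flips the sign in your intermediate signature formula, or else track the convention change explicitly; as written, the proof derives a statement that disagrees with the lemma in sign.
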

Note that the self-intersection $S \cdot S$ is defined even when $S$ is not orientable. To that end, firstly fix a Seifert surface for the link $K_i$ and move this link on this surface to produce $\widetilde K_i$, a parallel copy of $K_i$. Then perturb $S$ to a cobordism $\widetilde S: \widetilde K_0 \to \widetilde K_1$ that is transversal to $S$. The self-intersection number $S \cdot S$ is given by the signed count of the intersection points of $S$ and $\widetilde S$. To determine the sign of each intersection point, fix an arbitrary orientation of $S$ in a neighborhood of the intersection point. This induces a local orientation of $\widetilde S$ and the sign of the intersection point is determined by comparing the sum of these local orientations on $S$, $\widetilde S$ and the orientation of the ambient space $I \times S^3$.
\begin{proof}
	For the cobordism $S:K_0\to K_1$, the Euler characteristic and the signature of  the 4-dimensional cobordism $\Sigma(S)$ are given by the following identities:
	$$\chi(\Sigma(S))=-\chi(S) \hspace{2cm} \sigma(\Sigma(S))=-\frac{1}{2}S.S+\sigma(K_1)-\sigma(K_0)$$
	The identity for the Euler characteristic is obvious. To obtain the other identity, let $S_i$ be a Seifert surface for $K_i$ and $\widetilde S_i$ is given by pushing $S_i$ 
	into a 4-dimensional ball that fills the sphere containing $K_i$. Then $\sigma(K)$ is equal to the signature of $\Sigma(\widetilde S_i)$, the branched double cover of
	the 4-ball, branched along $\widetilde S_i$. Gluing $\widetilde S_0$, $S$, and $\widetilde S_1$ (with the reversed orientation) determines a closed surface 
	$\widebar S$ in $S^4$. The $G$-signature theorem implies that:
	\begin{align}
		\sigma(\Sigma(\widebar S))&=\sigma(S^4)-\frac{1}{2}S\cdot S\nonumber \\
		&=-\frac{1}{2}S\cdot S\nonumber
	\end{align}
	 This identity and the additivity of the signature of 4-manifolds gives the desired identity for $\sigma(\Sigma(S)$. Using these topological formulas, the equality 
	 (\ref{ori-morphism-deg}) asserts that:
	$$\deg_p(\tfeWZ)=2\dim(G)-\sigma(\Sigma(S))-\chi(\Sigma(S)))=\chi(S)+\frac{1}{2}S.S-(\sigma(K_1)-\sigma(K_0))+2\dim(G)$$
\end{proof}

Following \cite{KM:q-grading}, a crossing $c$ for a link $K\subset S^3$ is an orientation preserving embedding $c:D^3 \to S^3$ such that $c(T)=c(D^3) \cap K$ where the 1-manifold $T\subset D^3$ is demonstrated in Figure \ref{crossing}. Let $T$ be oriented as in Figure \ref{crossing}. A crossing $c$ is {\it positive}, if the the restriction of $c$ to $T$ on both components is either orientation-preserving or orientation-reversing. If $c$ is orientation preserving on exactly one connected component, then $c$ is a {\it negative} crossing. Figure \ref{resolutions} shows how to {\it resolve} a crossing in order to produce the 0{\it -resolution} and the 1{\it -resolution}. The orientation of $K$ on the complement of the crossing $c$ can be extended to exactly one of these resolutions. This resolution is called the {\it oriented} resolution of $c$. If $c$ is a positive crossing, then the 0-resolution is the oriented resolution. Otherwise, the 1-resolution is the oriented one.

\begin{figure}
	\centering
	\begin{subfigure}[b]{0.3 \textwidth}
		\includegraphics[width=\textwidth]{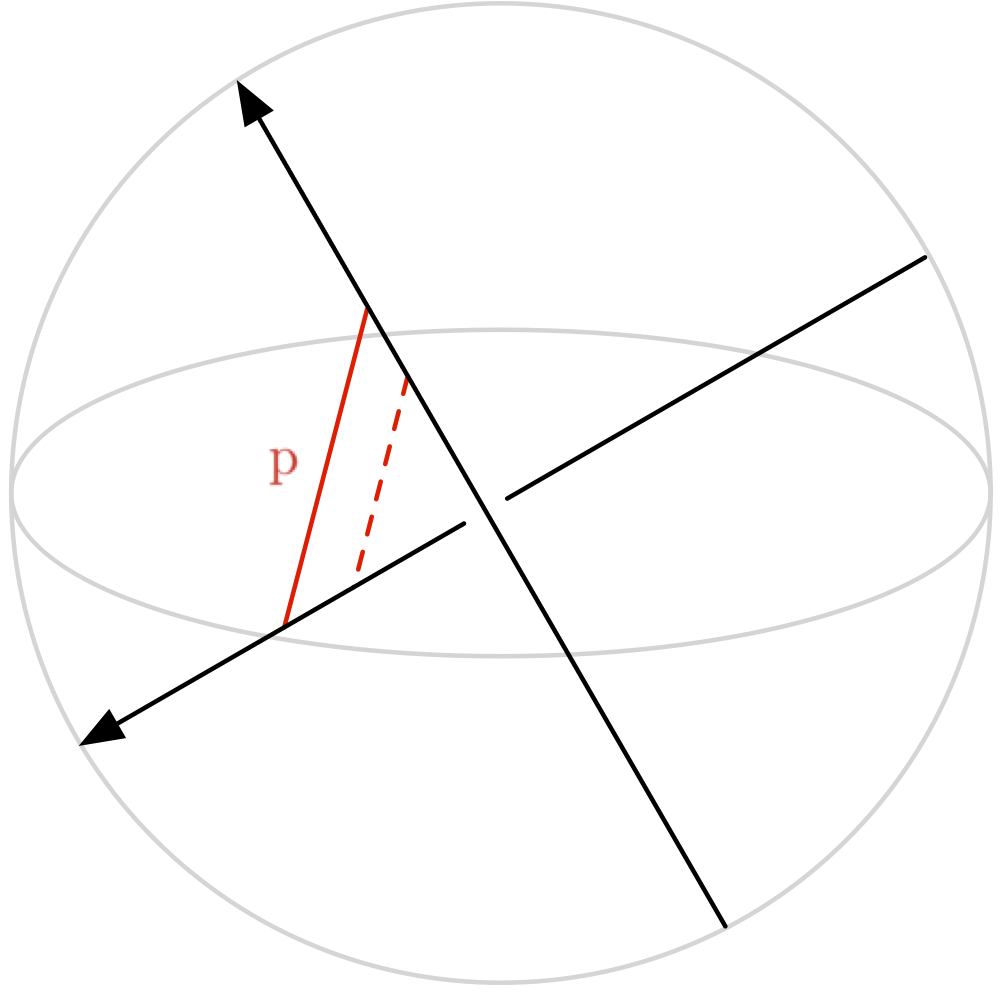}
		\caption{A crossing of a link is an embedded ball in $S^3$ that 
		its intersection with the link is equal to the union of the two black arcs.}
		\label{crossing}
	\end{subfigure}
	\hspace{7pt}
	\begin{subfigure}[b]{0.65 \textwidth}
		\includegraphics[width=\textwidth]{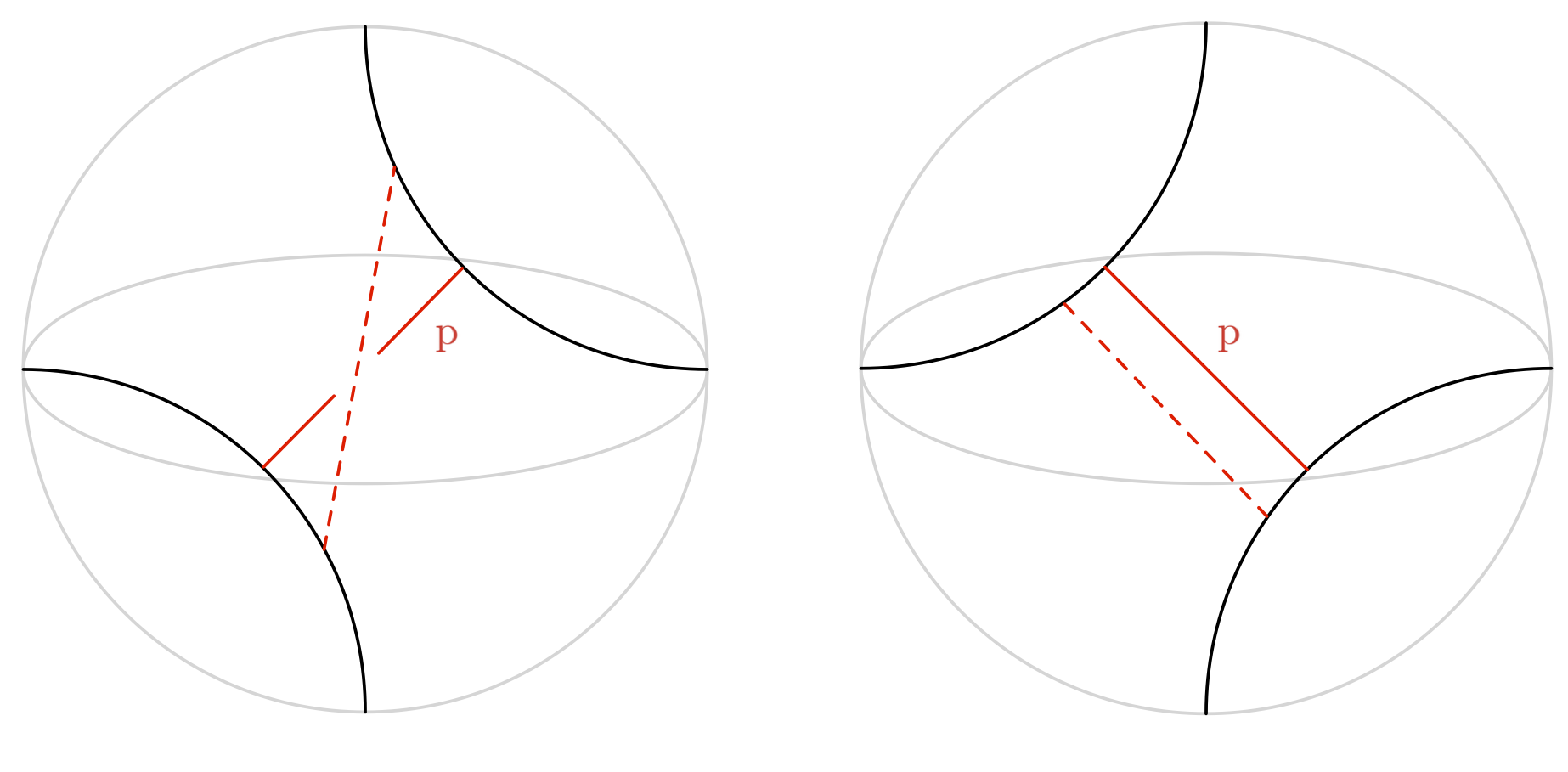}
		\caption{0-resolution (1-resolution) of the crossing in Figure \ref{crossing} is the link given by replacing the crossing with the left (right)
		picture.\\ \\}
		\label{resolutions}
	\end{subfigure}
	\caption{A crossing and its resolutions}
\end{figure}

For a crossing $c$, the branched double cover of $c(D^3)$ branched along $c(T)$ is a solid torus which is denoted by $\Sigma(c(T))$. Let $l_p$ be the lift of the path $p$ in Figure \ref{crossing}. Then $l_p$ is the core of $\Sigma(K)$. The lift of the dashed path provides a parallel copy of $l_p$ which in turn determines a framing for $l_p$. In fact, 0- and 1-surgery on $\sk$ along the framed knot $l_p$ produce 3-manifolds that are respectively branched double covers of 0- and 1-resolutions of $K$. The 2-resolution of a crossing is given by leaving the crossing unchanged. For a general $i \in \zz$, the $i$-resolution, denoted by $K_{(i)}$, is the same as the $j$-resolution where $1\leq j\leq 2$ and $i\equiv j$$\mod3$.

In the previous section, we define the cobordism $W_{(i)}^{(j)}:\Sigma(K)_{(i)} \to \Sigma(K)_{(j)} $ when $i,j \in \zz$ and $i\geq j$. This cobordism is equal to $\Sigma(S_{(i)}^{(j)})$ where $S_{(i)}^{(j)}:K_{(i)}\to K_{(j)}$ can be described as follows. The cobordism $S_{(i)}^{(i-1)}$ is the result of gluing a 1-handle along the endpoints of the path $p$ in $K_{(i)}$ with respect to the framing given by the endpoints of the dashed red path. In particular, the Euler characteristic of $S_{(i)}^{(i-1)}$ is equal to -1. The non-trivial connected component of  $S_{(i)}^{(i-1)}$, as a topological surface, is either a pair of pants or a twice punctured $\rr P^2$. The latter case happens if and only if $K_{(i)}$ and $K_{(i-1)}$ have the same number of connected components. In general, the cobordism $S_{(i)}^{(j)}$ is equal to the composition $S_{(i)}^{(i-1)} \circ S_{(i-1)}^{(i-2)}\circ \dots \circ S_{(j+1)}^{(j)}$. Let also ${\bar S}_{(i)}^{(j)}:K_{(i)} \to K_{(j)}$ be the reverse of the cobordism $S_{(i)}^{(j)}$. Then $S_{(i)}^{(i-2)}$, as a cobordism embedded in $I \times S^3$ is diffeomorphic to the connected sum of ${\bar S}_{(i+1)}^{(i)}$ and $S_0$, the standard embedding of $\rr P^2$ in $S^4$ with self intersection 2 (cf. \cite[Lemma 7.2]{KM:Kh-unknot}). The cobordism $S_{(i)}^{(i-3)}$ has a similar description. Suppose $\widecheck S_{(i)}^{(i-3)}:K_{(i)} \to K_{(i-3)}$ is the product cobordism. Then there is a 4-dimensional ball in $I \times S^3$ (respectively, $S^4$) that intersects $\widecheck S_{(i)}^{(i-3)}$ (respectively, $S_0$) in two 2-dimensional discs. Removing these 4-balls from $I \times S^3$ and $S^4$ produces two pairs of 4-manifolds and embedded surfaces such that each of them has a boundary component consists of the pair of $S^3$ and an embedded link with two connected components which are  unlinked and unknotted. Gluing these two pairs along the common boundary determines a 2-dimensional cobordism in $I \times S^3$ which is equal to $S_{(i)}^{(i-3)}$ (\cite[Lemma 7.4]{KM:Kh-unknot}).

Starting with a crossing for an oriented link $K$, the oriented resolution inherits an orientation form $K$. However, there is not a canonical way to fix an orientation for the other resolution. Therefore, in general there might be an ambiguity in the definition of the self-intersection numbers of the cobordism $S_{(i)}^{(j)}$. There is an important case for our purposes  that we can avoid this ambiguity. A link $L$ is an {\it unlink} with $n$ connected components if there is an embedding of a union of $n$ 2-discs in $S^3$ that fills the link $L$. Different orientations of an unlink produce the same Seifert framing. Therefore, if both of the resolutions $K_{(i)}$ and $K_{(j)}$ are unlink, then the self-intersection of the cobordism $S_{(i)}^{(j)}$ is well-defined. For example, if $K_{(1)}$ and $K_{(0)}$ are unlink, then possible self-intersections of the cobordism $S_{(1)}^{(0)}$ for this case can be described as follows (cf. \cite[Lemma 4.7]{KM:q-grading}). If the non-trivial component of this cobordism is a pair of pants, then the self-intersection is zero. Otherwise, this self-intersection is equal to 2 or -2. Note that if the crossing is positive (respectively, negative), then the cobordism $S_{(0)}^{(-1)}$ (resp $S_{(2)}^{(1)}$) has zero self-intersection. Another constraint is that the sum of the self-intersections of the cobordisms $S_{(2)}^{(1)}$, $S_{(1)}^{(0)}$, and $S_{(0)}^{(-1)}$ is equal to 2 because of the special form of the cobordism $S_{(2)}^{(-1)}$, described in the previous paragraph. As a consequence, if $K$, $K_{(0)}$ and $K_{(1)}$ are all unlink, then out of these three cobordisms, the self-intersection of one of them is 2 and the remaining two cobordisms have zero self-intersection.

Next, let $N$ be a set of $n$ crossings for $K$. Given $\bm=(m_1,\dots,m_n)\in \zz^n$, the link $K_\bm$ is the result of replacing the $i^{\rm th}$ crossing with the $m_i$-resolution. As before, consider the path $p_i$ connecting the two strands in the $i^{\rm th}$ crossing. Inverse image of this path in $\Sigma(K)$ is a framed knot and the union of these knots for the crossings in $N$ determines a framed link in $\Sigma(K)$. It is still true that $\Sigma(K_\bm)$ is the same as the 3-manifold $\Sigma(K)_\bm$, given by surgery on $\Sigma(K)$ along this framed link and corresponding to $\bm \in \zz^n$. For a pair $(\bm,\bn)$ with $\bm\geq \bn$, the cobordism $\Wmn:\Sigma(K)_\bm \to \Sigma(K)_\bn$ is the branched double cover of a cobordism $\Smn: K_\bm \to K_\bn$. In the case $|\bm-\bn|_1=1$, the topological properties of this cobordism is explained above. In general, $\Smn$ can be decomposed as the composition of $|\bm-\bn|_1$ such simpler cobordisms. In particular, the Euler characteristic of $\Smn$ is equal to $-|\bm-\bn|_1$. If $(\bm,\bn)$ is a triple-cube, then $\cWmn$ is also branched double cover of a cobordism that is denoted by $\cSmn$. For example, if $\bm=\bm_0(2)$ and $\bn=\bn_0(-1)$, then $\cSmn$ is the composition of the product cobordism from $K_{\bm_0(2)}$ to $K_{\bm_0(-1)}$ and the cobordism $S_{\bm_0(-1)}^{\bn_0(-1)}$.

In the case that $K_\bm$ and $K_\bn$ are both unlink, the self-intersection of the cobordism $\Smn$ is a well-defined even integer  number. As in \cite{KM:q-grading}, we will write $\sigma(\bm,\bn)$ for this self-intersection number and relax the condition $\bm\geq \bn$ by requiring that:
\begin{equation}\label{additivity}
	\sigma(\bm,\bn)=\sigma(\bm,\bk)+\sigma(\bk,\bn)
\end{equation}	
when $\bm,\bk,\bn \in \zz^n$ and $K_\bm$, $K_\bk$, and $K_\bn$ are unlink. As a consequence of (\ref{additivity}), we have:
\begin{align}\label{3-per}
	\sigma(\bm+3\bk,\bn)&=\sigma(\bm+3\bk,\bm)+\sigma(\bm,\bn)\nonumber\\
	&=\sigma(\bm,\bn)+2|\bk|_1
\end{align}

Since the set of crossings $N$ determines a framed link with $n$ connected components, we have the following corollary of Theorem \ref{PFH-tri-1}:
\begin{theorem} \label{crossing-set-complex}
	For the set of crossings $N$, there exists the object $(\bigoplus_{{\bf m}\in \{0,1\}^n}\tPFC(\Sigma(K_{{\bf m}}),\zeta_\bm),\widetilde d_p)$
	 of the category $\mathfrak C(\tilde \Lambda)$
	that has the same chain homotopy type as $\tPFC(\Sigma(K))$. 
	The differential $\widetilde d_p$ can be decomposed as $\bigoplus_{(\bm,\bn) \in \{0,1\}^n} d_{\bf m}^{\bf n}$ where 
	$d_{\bf m}^{\bf n}:\tPFC(\Sigma(K_{\bf m}),\zeta_\bm) \to \tPFC(\Sigma(K_{\bf n}),\zeta_\bn)$ is a map induced by a family of metrics of dimension 
	$|{\bf m}-{\bf n}|_1-1$ on 
	$\Sigma(S_{\bf m}^{\bf n})$. This map is nonzero only when $\bm\geq \bn$.
\end{theorem}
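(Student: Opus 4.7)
The plan is to derive Theorem \ref{crossing-set-complex} as an essentially formal consequence of Corollary \ref{new-complex-1} applied to the specific framed link in $\Sigma(K)$ built from the crossings. First I would set up the geometric input: as explained in the paragraph preceding the theorem, each crossing $c_i$ in $N$ lifts to a framed knot in $\Sigma(K)$ (the inverse image of the arc $p_i$, framed by the lift of the dashed arc), and the resulting framed link $L = \bigcup_{i=1}^n L_i \subset \Sigma(K)$ has the property that $\Sigma(K)_{\bm}$ coincides with $\Sigma(K_{\bm})$ for every $\bm\in\zz^n$, and the cobordism $W_\bm^\bn$ coincides with $\Sigma(S_\bm^\bn)$ for every $\bm\geq \bn$. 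I take the pair $(Y,\zeta) = (\Sigma(K),\emptyset)$ as the starting pair for Section~\ref{ex-tri} and take $L$ as the framed link.

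Next, I would apply Corollary \ref{new-complex-1} with the choices $\bp = (2,\ldots,2)$, $\bm = (1,\ldots,1)$, $\bn = (0,\ldots,0)$, so that $\bp = 2\bm - \bn$. Since $2 \not\equiv 0 \pmod 3$, we have $\zeta_\bp = \emptyset$, and $Y_\bp = \Sigma(K_\bp)$; but the $2$-resolution leaves each crossing unchanged, so $K_\bp = K$ and $Y_\bp = \Sigma(K)$. Hence the left-hand side of the corollary is literally $\tPFC(\Sigma(K))$. The right-hand side $(C_\bm^\bn,d_\bm^\bn)$ is, by construction,
\[
  C_\bm^\bn \;=\; \bigoplus_{\bm\geq \bk\geq \bn}\tPFC(Y_\bk,\zeta_\bk)
  \;=\; \bigoplus_{\bk\in\{0,1\}^n}\tPFC(\Sigma(K_\bk),\zeta_\bk),
\]
where $\zeta_\bk$ is (for $\bk\in\{0,1\}^n$) the union of those core circles $\zeta_{\bk,i}$ for which $k_i=0$, matching the notation of the theorem. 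Thus the underlying graded group and the chain homotopy equivalence to $\tPFC(\Sigma(K))$ are immediate.

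Finally I would unpack the differential: by the construction in Section~\ref{triangles}, $d_\bm^\bn$ is the block matrix $(\tilde f_{G_\bk^{\bk'}})_{\bm\geq\bk\geq\bk'\geq\bn}$, so setting $d_\bk^{\bk'} := \tilde f_{G_\bk^{\bk'}}$ gives the required decomposition $\tilde d_p = \bigoplus d_\bk^{\bk'}$, which is nonzero only when $\bk\geq\bk'$. By definition of the family $(\bW_\bk^{\bk'},\bZ_\bk^{\bk'})$ parametrised by $G_\bk^{\bk'}$, each $\tilde f_{G_\bk^{\bk'}}$ is induced by a family of metrics on $W_\bk^{\bk'} = \Sigma(S_\bk^{\bk'})$ of dimension $|\bk - \bk'|_1 - 1$, as required.

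There is essentially no hard step here, since all the analytic and algebraic work is absorbed into Theorem~\ref{PFH-tri-1} and Corollary~\ref{new-complex-1}; the only thing to be careful about is the translation from the abstract surgery data $(Y,L)$ to the branched cover setting, namely verifying that the framed link produced from the crossings realises each $Y_\bm$ as $\Sigma(K_\bm)$ and each $W_\bm^\bn$ as $\Sigma(S_\bm^\bn)$. This is the standard correspondence between resolving a crossing on the base and performing $0$- or $1$-surgery on the lifted arc upstairs, and I would record it by the usual local model of the branched double cover over the tangle $T$ in Figure~\ref{crossing}.
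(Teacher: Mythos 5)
Your proposal is correct and follows exactly the route the paper intends: the paper states this theorem as an immediate corollary of Theorem \ref{PFH-tri-1} via the framed link produced by the crossings, and your argument fills in precisely the translation the paper leaves tacit (choosing $\bp=(2,\dots,2)$, $\bm=(1,\dots,1)$, $\bn=(0,\dots,0)$ so that $Y_\bp=\Sigma(K)$, $\zeta_\bp=\emptyset$, and invoking Corollary \ref{new-complex-1}). No gap; the identification $Y_\bk=\Sigma(K_\bk)$, $W_\bk^{\bk'}=\Sigma(S_\bk^{\bk'})$ and the read-off of the differential from $(\tilde f_{G_\bk^{\bk'}})$ is exactly what the construction of section \ref{3-man-link} gives.
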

Recall that to define the maps $\dmn$, we need to fix an appropriate metric on $\Sigma(K)$, Morse-Smale functions on $\mathcal R(\Sigma(K_\bm))$, perturbations of the ASD equation and homology orientations of the cobordism $\Sigma(\Smn)$. The set of all these data together is called {\it auxiliary choices} to form the chain complex $(\bigoplus_{{\bf m}\in \{0,1\}^n}\tPFC(\Sigma(K_{{\bf m}}),\zeta_\bm),\widetilde d_p)$.

\subsection{Plane Knot Homology} \label{PKH}
A {\it filtered chain complex} $(C,d)$ is a chain complex with a filtration of the module $C$:
$$C\supseteq \dots \supseteq \mathcal F_{i-1} C\supseteq \mathcal F_i C \supseteq \mathcal F_{i+1} C \supseteq \dots \supseteq \{0\}$$
such that $d$ maps each {\it filtration level} $ \mathcal F_i C$ to itself. Suppose there exists $r \in \rr$ such that each of the chain complexes $(\mathcal F_i C,d)$ is graded with a set of the form $\{r\}+\zz$ and the differential $d$ increases this degree by 1. If these gradings are compatible with respect to the filtration, then we say that $(C,d)$ is a {\it filtered $\zz$-graded chain complex}. For our purposes here, we can assume that $r$ is equal to $0$ or $\frac{1}{2}$. If $(C,d)$ and $(C',d')$ are two filtered chain complexes, then a map $f:(C,d) \to (C',d')$ of degree $(a,b) \in \zz \times \rr$ is a module homomorphism of degree $b$ that sends $\mathcal F_i C$ to $\mathcal F_{i+a} C'$. Note that a map of degree $(a,b)$ has degree $(a',b)$ for any $a'\leq a$. 

Let $\cgf$ be the category whose objects are filtered $\zz$-graded chain complexes. A morphism in this category is represented by either a chain map or an anti-chain map of degree $(0,0)$. Two such maps represent the same morphism if there is a chain homotopy between them that is of degree $(-1,-1)$. The isomorphism class of an objet of $\cgf$ is called its {\it filtered chain homotopy type}. Clearly, there is also a forgetful functor from $\cgf$ to $\mc$.

The goal of this subsection is to assign an object of the category $\cgfl$ to a link $K$, equipped with appropriate additional data. We shall show that the the filtered chain homotopy type of this chain complex is independent of the additional data and hence is a link invariant. In order to define this filtered chain complex, we need to work with a variant of the notion of pseudo-diagrams introduced in \cite{KM:q-grading}:

\begin{definition}
	A {\it pseudo-diagram} for a link $K$ is a set of crossings $N$ such that $K_{\bm}$ is an unlink for each $\bm \in \{0,1\}^{|N|}$. 
\end{definition}

A planar diagram $D$ for a link $K$ determines a set of crossings $N_D$ which is clearly a pseudo-diagram. This justifies the terminology of \cite{KM:q-grading} for pseudo-diagrams. 

\begin{definition}\label{pl-pseudo-diagram}
	A {\it planar pseudo-diagram} $N$ is a pseudo-diagram that is given by a planar diagram $D$ in one of the following ways:
	\vspace{-10 pt}
	\begin{itemize}
		\item $N$ is equal to $N_D$, the set of crossings of $D$;
		\item $N$ is given by removing an element of $N_D$;
		\item $N$ is given by removing two consecutive under-crossings or two consecutive over-crossings in $N_D$.
	\end{itemize}
\end{definition}

If adding a crossing to the pseudo-diagram $N$ produces another pseudo-diagram $N'$, then we say $N'$ is related to $N$ by adding a crossing. Alternatively, $N$ is related to $N'$ by dropping a crossing. We also say that the pseudo-diagrams $N$ and $N'$ of the links $K$ and $K'$ are related by a pseudo-diagram isotopy, if there is an isotopy $\varphi_t: S^3 \to S^3$ such that $\varphi_0=id$ and $\varphi_1$ maps $K$ (respectively, $N$) to $K'$ (respectively, $N'$). Note that $K$ and $K'$ represents the same classical link and hence $N$ and $N'$ can be regarded as two pseudo-diagrams for a classical link. The following observation is stated in \cite{KM:q-grading}:

\begin{lemma} \label{moves}
	Any two planar pseudo-diagrams can be related to each other by a series of pseudo-diagram isotopies, adding and dropping crossings.
\end{lemma}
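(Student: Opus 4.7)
The plan is to reduce this to Reidemeister's theorem and check each move individually. First, I would observe that any planar pseudo-diagram $N$ arising from a planar diagram $D$ (via one of the three clauses of Definition \ref{pl-pseudo-diagram}) can be related to the full pseudo-diagram $N_D$ by at most two add-a-crossing moves: the intermediate pseudo-diagrams are again planar pseudo-diagrams of $D$, since $N_D$ (case~1), $N_D$ minus one crossing (case~2), and $N_D$ minus two consecutive over/under-crossings (case~3) are all allowed forms. Consequently, it suffices to show that for any two planar diagrams $D_1$ and $D_2$ of the same classical link, the full pseudo-diagrams $N_{D_1}$ and $N_{D_2}$ are connected by the allowed moves.

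By the classical Reidemeister theorem, $D_1$ and $D_2$ are related by a finite sequence of planar isotopies and moves $R1$, $R2$, $R3$, so I only need to handle each such step. A planar isotopy between $D$ and $D'$ lifts to an ambient isotopy of $S^3$ taking $N_D$ to $N_{D'}$, which is precisely a pseudo-diagram isotopy in the sense of the excerpt.

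For $R1$, the ambient isotopy that introduces the loop carries $N_{D_1}$ to $N_{D_2}\setminus\{c\}$, where $c$ is the new crossing; this is a valid planar pseudo-diagram of $D_2$ by case~2, and one add-a-crossing step produces $N_{D_2}$. For $R2$, the two new crossings $c_1,c_2$ lie on two strands that cross each other twice, so they are consecutive over-crossings on one strand (and consecutive under-crossings on the other); the ambient isotopy takes $N_{D_1}$ to $N_{D_2}\setminus\{c_1,c_2\}$, which is a planar pseudo-diagram by case~3, and then two successive additions (passing through $N_{D_2}\setminus\{c_2\}$, which is valid by case~2) yield $N_{D_2}$.

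The main obstacle is $R3$, where three crossings change simultaneously. Label the three strands of the $R3$ triangle $\alpha,\beta,\gamma$ so that $\alpha$ passes consistently over (or consistently under) both $\beta$ and $\gamma$; in the standard $R3$ picture this can always be arranged, possibly after first reducing to the $R3$ configuration by planar isotopy. Then the two crossings $c_{\alpha\beta}$ and $c_{\alpha\gamma}$ are consecutive over-crossings (respectively under-crossings) on $\alpha$ both before and after the move. I would first drop these two from $N_{D_1}$ (allowed by case~3), then realize the $R3$ move itself as a pseudo-diagram isotopy that carries this reduced set to the analogous reduced set in $D_2$, and finally add back the two new consecutive over/under-crossings on $\alpha$. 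The bookkeeping required at each step — checking that each intermediate pseudo-diagram falls under one of the three cases of Definition \ref{pl-pseudo-diagram} — is automatic as long as one only modifies the two crossings on the distinguished strand $\alpha$. This completes the reduction, and with it the proof.
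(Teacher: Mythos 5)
Your Reidemeister-based reduction is the natural strategy and the argument is sound. Note, however, that the paper itself offers no proof of this lemma --- it is stated with a citation to Kronheimer--Mrowka's paper on Rasmussen's invariant --- so there is no text-internal argument to compare against; I am assessing the proposal on its own merits.

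Two points in your proof deserve to be made explicit rather than asserted. First, your use of ``case~2'' or ``case~3'' of Definition~\ref{pl-pseudo-diagram} to certify the intermediate sets implicitly relies on the fact that removing one crossing, or two consecutive under/over-crossings, from the full crossing set $N_D$ of a planar diagram always produces a pseudo-diagram; the definition presupposes this rather than granting it, and the reason is that every $\{0,1\}$-resolution of the reduced set leaves a diagram whose only remaining crossings are the removed ones, and in those configurations (one crossing, or a clasp with one strand going over or under at both) the diagram visibly unknots. Second, the whole R3 step rests on the claim that one of the three strands passes over both others (or under both others) at its two crossings. You state this without justification, but it is exactly the point that needs an argument: a legitimate R3 move slides one strand past the crossing of the other two, and for this slide to be realized by an ambient isotopy the moving strand must stay disjoint from the other two throughout, which forces it to lie entirely above or entirely below both of them in the R3 region; the cyclic over/under pattern is precisely the configuration on which no R3 acts. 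This same observation also justifies the implicit step you gloss over, namely that the R3 isotopy can be chosen to move only $\alpha$ while fixing the ball of $c_{\beta\gamma}$ (and every other crossing ball) pointwise, so that after dropping the two $\alpha$-crossings the isotopy carries the reduced crossing set to itself, as a pseudo-diagram isotopy must. With these supplements spelled out, the proof is complete.
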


\begin{remark}
	In the upcoming definition of planar knot homology, part of the additional data is the choice of a planar pseudo-diagram. 
	However, the construction can be applied for an arbitrary pseudo-diagram. It is not clear to the author whether the choice of planar pseudo-diagrams is essential to 
	the definition of plane knot homology or an arbitrary pseudo-diagram determines a filtered chain complex with the same filtered chain homotopy type. 
	A purely topological question that probably should be addressed first is the following. Is there any pseudo-diagram that cannot be related to a planar
	diagrams by adding or dropping crossings?
\end{remark}

Fix a pseudo-diagram $N$ with $n$ crossings for a link $K$. The set of crossings of $N$ can be used to form the resolutions $K_{\bm}$ and the cobordisms $\Smn$. The following topological lemma studies the self-intersection of the cobordisms $\Smn$ in the case of pseudo-diagrams:

\begin{lemma} \label{self-int-cob}
	Suppose $(\bm,\bn)$ is a cube in $\{0,1\}^n$ and $\Smn$ is the cobordism associated with a pseudo-diagram $N$.
	Then the self-intersection $\Smn\cdot\Smn$ is not greater than $2|\bm-\bn|_1$. If the equality holds, then for $\bm \geq \bk \geq \bn$ the number of connected 
	components of $K_\bk$ is independent of $\bk$.~
\end{lemma}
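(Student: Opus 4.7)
The plan is to decompose $\Smn$ along a monotone path in the cube into elementary cobordisms, apply additivity of self-intersection under composition, and then invoke the topological classification of elementary cobordisms between unlinks already recorded in the preceding discussion.

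First I would choose a monotone sequence $\bm=\bk_0>\bk_1>\cdots>\bk_l=\bn$ with $|\bk_i-\bk_{i+1}|_1=1$ and $l=|\bm-\bn|_1$, so that $\Smn=S_{\bk_0}^{\bk_1}\circ S_{\bk_1}^{\bk_2}\circ \cdots\circ S_{\bk_{l-1}}^{\bk_l}$. Because $N$ is a pseudo-diagram, every intermediate $K_{\bk_i}$ is an unlink, so the Seifert framing of each $K_{\bk_i}$ is well-defined and the self-intersection of every elementary cobordism makes sense. The next step is to observe that these self-intersections add: a parallel copy $\widetilde{S}_{\bk_i}^{\bk_{i+1}}$ is obtained from Seifert surfaces for $K_{\bk_i}$ and $K_{\bk_{i+1}}$, and since Seifert framings on $K_{\bk_i}$ agree when viewed from both sides, the parallel copies can be matched across the intermediate boundary to yield a global parallel copy of $\Smn$ whose intersections with $\Smn$ are precisely the disjoint union of the intersections inside each elementary piece. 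Thus
\begin{equation*}
\Smn\cdot \Smn \;=\; \sum_{i=0}^{l-1} S_{\bk_i}^{\bk_{i+1}}\cdot S_{\bk_i}^{\bk_{i+1}}.
\end{equation*}

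Now I appeal to the topological description of elementary cobordisms between unlinks that is recorded in the paragraph preceding Lemma~\ref{top-invts-bra-dble-1}: each $S_{\bk_i}^{\bk_{i+1}}$ has Euler characteristic $-1$, its nontrivial component is either a pair of pants or a twice-punctured $\mathbb{RP}^2$, and its self-intersection is $0$ in the first case and $\pm 2$ in the second. Moreover the pair-of-pants case is exactly the case in which the number of connected components of $K_{\bk_i}$ and $K_{\bk_{i+1}}$ differ by one, while the twice-punctured $\mathbb{RP}^2$ case is exactly when these two unlinks have the same number of components. In particular each summand is bounded above by $2$, which immediately gives $\Smn\cdot \Smn\le 2|\bm-\bn|_1$.

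For the equality statement, suppose $\Smn\cdot\Smn=2|\bm-\bn|_1$. Then every summand in the additivity formula must equal $+2$, so each elementary cobordism $S_{\bk_i}^{\bk_{i+1}}$ has nontrivial component a twice-punctured $\mathbb{RP}^2$, and in particular $K_{\bk_i}$ and $K_{\bk_{i+1}}$ have the same number of connected components. Given any $\bk$ with $\bm\ge\bk\ge\bn$, I simply insert $\bk$ into a monotone path from $\bm$ to $\bn$ (which is always possible) and apply the same analysis to this path; the component count is then constant along the initial segment $\bm\to\cdots\to\bk$, forcing $|K_{\bk}|=|K_{\bm}|$. Since $\bk$ was arbitrary, the number of connected components of $K_\bk$ is independent of $\bk$, as claimed. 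The only genuine content is the additivity of self-intersection across a glued boundary of unlinks, which is the main step to verify carefully; everything else is bookkeeping on top of the elementary classification already established.
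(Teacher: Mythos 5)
Your proof is correct and takes essentially the same approach as the paper: the paper's own proof treats only the case $|\bm-\bn|_1=1$ explicitly via \cite[Lemma 4.7]{KM:q-grading}, and then states that ``this special case can be used to verify the general case,'' which is exactly the additivity-along-a-monotone-path argument you spell out. The additivity step you verify carefully is in fact already recorded in the paper as equation (\ref{additivity}), so you could have cited it directly; the rest of your argument (inserting an arbitrary $\bk$ into a monotone path to deduce constancy of the component count) is the correct and expected way to extract the second conclusion.
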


\begin{proof}
	Firstly let $|\bm-\bn|_1=1$. As it is shown in \cite[Lemma 4.7]{KM:q-grading}, the set of possible values for the self-intersection of $\Smn$ is equal to $\{-2,0,2\}$.
	If this value is equal to 2, then $\Smn$ is a a union of a product cobordism and a twice punctured $\rr P^2$ (as a cobordism from the unknot to itself).  In particular,
	$K_\bm$ and $K_\bn$ have the same number of the connected components. This special case can be used to verify the general case.
\end{proof}

The pseudo-diagram $N$ can be used to define an integer-valued function on the cube $\{0,1\}^n$:
$$h:\{0,1\}^n \to \zz$$
$$h(\bm):=-|{\bm}|_1+\frac{1}{2}\sigma({\bm},{\bo})$$
where $\bo$ is the vertex of the cube $\{0,1\}^n$ such that $K_{\bo}$ is the oriented resolution of $K$. Extend this function in the obvious way to the elements $\bm \in \zz^n$ such that $K_\bm$ is an unlink. The following corollary is an immediate consequence of Lemma \ref{self-int-cob}:
\begin{corollary} \label{h-filtered}
	Let $\bm,\bn \in \{0,1\}^n$ and $\bm\geq \bn$. Then $h(\bn)\geq h(\bm)$.
\end{corollary}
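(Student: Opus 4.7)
The plan is to reduce the inequality $h(\bn) \geq h(\bm)$ directly to the bound on self-intersection given in Lemma~\ref{self-int-cob}, using the additivity property (\ref{additivity}) of $\sigma$. There is essentially no obstacle; the statement is a one-line algebraic consequence of the two inputs.

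First I would rewrite the difference $h(\bn)-h(\bm)$ using the definition:
\[
h(\bn)-h(\bm) = \bigl(|\bm|_1 - |\bn|_1\bigr) + \tfrac{1}{2}\bigl(\sigma(\bn,\bo)-\sigma(\bm,\bo)\bigr).
\]
Since $\bm\geq \bn$, we have $|\bm|_1-|\bn|_1 = |\bm-\bn|_1$. By the additivity relation (\ref{additivity}) applied to the triple $(\bm,\bn,\bo)$ (all of whose resolutions are unlinks because $N$ is a pseudo-diagram),
\[
\sigma(\bm,\bo) = \sigma(\bm,\bn) + \sigma(\bn,\bo),
\]
so $\sigma(\bn,\bo)-\sigma(\bm,\bo) = -\sigma(\bm,\bn)$. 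Hence
\[
h(\bn)-h(\bm) = |\bm-\bn|_1 - \tfrac{1}{2}\sigma(\bm,\bn).
\]

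Now I would invoke Lemma~\ref{self-int-cob}: since $(\bm,\bn)$ is a cube in $\{0,1\}^n$ and $\sigma(\bm,\bn) = \Smn\cdot \Smn$ by definition, we have $\sigma(\bm,\bn) \leq 2|\bm-\bn|_1$. Substituting into the expression above gives
\[
h(\bn)-h(\bm) \;\geq\; |\bm-\bn|_1 - \tfrac{1}{2}\cdot 2|\bm-\bn|_1 \;=\; 0,
\]
which is exactly the desired inequality. This completes the argument.
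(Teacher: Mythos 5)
Your proof is correct and is exactly the argument the paper has in mind: the paper simply remarks that the corollary is an ``immediate consequence'' of Lemma~\ref{self-int-cob}, and your computation, combining the additivity relation~(\ref{additivity}) with the bound $\Smn\cdot\Smn\leq 2|\bm-\bn|_1$, supplies the details.
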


Suppose that auxiliary choices for $N$ is fixed such that we can appeal to Theorem \ref{crossing-set-complex} to construct the chain complex:
$$\PKC(N):=\oplus_{\bm \in \{0,1\}^n}\PFC(\Sigma(K_\bm))$$
with the differential $d_p$. Here we use $\Lambda$ as the coefficient ring. In particular, there is no need to work with local coefficient systems. Furthermore, we require that the Morse-Smale function on $f_\bm$ is chosen such that the differential on the complex $\tPFC(\Sigma(K_\bm))$ vanishes. This is obviously possible because the representation variety $\mathcal R(\Sigma(K_\bm))$ is a torus. The chain complex $(\PKC(N),d_p)$ has the same chain homotopy type as $\PFC(\Sigma(K))$. The map $h$ can be used to define a grading on the chain complex $\PKC(N)$ in the following way:
$$\deg_h|_{\PFC(\Sigma(K_\bm))}=h(\bm)+n_-(N).$$ 
where $n_-(N)$ denotes the number of negative crossings. We will also write $n_+(N)$ for the number of positive crossings of $N$.
The grading $\deg_h$ is called the {\it homological grading}. Corollary \ref{h-filtered} states that $\deg_h$ defines a filtration on $\PKC(N)$ in which the $i^{\rm th}$ level of the filtration is generated by those elements that $\deg_h\geq i$. We also define another grading on $(\PKC(N),d_p)$ in the following way:
$$\delta|_{\PFC(\Sigma(K_{\bm}))}:=-\frac{1}{2}\deg_p-\frac{1}{2}|{\bm}|_1-\frac{1}{4}\sigma({\bm},{\bo})+\frac{1}{2}n_+(N)$$
This grading is called the {\it $\delta$-grading}. Since $\sigma(\bm,\bo)$ is always an even integer, $\deg_\delta$ is either an integer or a half-integer.  In fact, the same argument as in \cite[Lemma 4.4]{KM:q-grading} shows that parity of $2\deg_\delta$ is equal to $b_0(K)-1$. In particular, if $K$ is a knot, then the $\delta$-grading is always an integer number. 

The definition of the homological grading and the $\delta$-grading can be extended to $\PFC(\Sigma(K_{\bm}))$ when $K_\bm$ is an unlink. Recall that if $\bm' \equiv\bm$ mod 3, then $\PFC(\Sigma(K_{\bm'}))=\PFC(\Sigma(K_{\bm}))$. The following lemma shows how the gradings on these two vector spaces compare to each other:
\begin{lemma}\label{period-deg}
	Suppose $\bm'=\bm+3\bk$ for $\bm,\bk\in \zz^n$. Then:
	$$\deg_h|_{\PFC(\Sigma(K_{\bm'}))}=\deg_{h}|_{\PFC(\Sigma(K_{\bm}))}-2\bk$$
	$$\deg_\delta|_{\PFC(\Sigma(K_{\bm'}))}=\deg_{\delta}|_{\PFC(\Sigma(K_{\bm}))}-2\bk$$
\end{lemma}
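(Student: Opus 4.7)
The plan is to prove both identities by direct computation from the definitions, using the $3$-periodicity relation \eqref{3-per} for $\sigma$. Note that $\deg_h$ and $\delta$ on $\PFC(\Sigma(K_\bm))$ are built from three ingredients: (i) a quantity depending on the Morse index of the generator through $\deg_p$, which is intrinsic to the 3-manifold $\Sigma(K_\bm)$; (ii) a combinatorial piece $|\bm|_1$; and (iii) a topological piece $\sigma(\bm,\bo)$ depending on the cobordisms $S_\bm^\bo$. Under the substitution $\bm \mapsto \bm' = \bm + 3\bk$, the underlying 3-manifold $\Sigma(K_{\bm'})$ is canonically identified with $\Sigma(K_\bm)$, so $\deg_p$ does not change; it is only the pieces (ii) and (iii) that move, and both can be read off directly.

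First I would treat $\deg_h$. By definition,
\[
\deg_h\big|_{\PFC(\Sigma(K_{\bm'}))} = h(\bm') + n_-(N) = -|\bm'|_1 + \tfrac{1}{2}\sigma(\bm',\bo) + n_-(N).
\]
Using $|\bm'|_1 = |\bm|_1 + 3|\bk|_1$ and the identity $\sigma(\bm+3\bk,\bo) = \sigma(\bm,\bo) + 2|\bk|_1$ from \eqref{3-per}, this becomes
\[
-|\bm|_1 - 3|\bk|_1 + \tfrac{1}{2}\sigma(\bm,\bo) + |\bk|_1 + n_-(N) = \deg_h\big|_{\PFC(\Sigma(K_\bm))} - 2|\bk|_1,
\]
which is the claimed shift (interpreting the ``$-2\bk$'' in the statement as $-2|\bk|_1$).

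For the $\delta$-grading I would do exactly the same kind of substitution. Because $\PFC(\Sigma(K_{\bm'})) = \PFC(\Sigma(K_\bm))$ as chain complexes, the value of $\deg_p$ on a generator is unchanged; only $|\bm|_1$ and $\sigma(\bm,\bo)$ shift. From
\[
\delta\big|_{\PFC(\Sigma(K_{\bm'}))} = -\tfrac{1}{2}\deg_p - \tfrac{1}{2}|\bm'|_1 - \tfrac{1}{4}\sigma(\bm',\bo) + \tfrac{1}{2}n_+(N),
\]
substituting $|\bm'|_1 = |\bm|_1 + 3|\bk|_1$ and $\sigma(\bm',\bo)=\sigma(\bm,\bo)+2|\bk|_1$ yields an extra term $-\tfrac{3}{2}|\bk|_1 - \tfrac{1}{2}|\bk|_1 = -2|\bk|_1$ on top of $\delta\big|_{\PFC(\Sigma(K_\bm))}$.

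There is no real obstacle here; the content of the lemma is simply that the $3$-periodic identification $\Sigma(K_{\bm+3\bk})=\Sigma(K_\bm)$ does not respect the gradings defined via $N$, and the defect is computed by the elementary arithmetic above. The only subtle input is \eqref{3-per}, which was established as a consequence of additivity \eqref{additivity} of the self-intersection function $\sigma$, and needs to be invoked in both computations.
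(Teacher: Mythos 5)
Your proof is correct and is exactly the computation the paper intends: the paper's own proof is just the remark that "these claims can be easily verified using (\ref{3-per})," and you have carried out that verification, correctly substituting $|\bm'|_1 = |\bm|_1 + 3|\bk|_1$ and $\sigma(\bm',\bo) = \sigma(\bm,\bo) + 2|\bk|_1$ into the defining formulas for $\deg_h$ and $\deg_\delta$. You also correctly observe that $\deg_p$ is unchanged under the identification $\PFC(\Sigma(K_{\bm'})) = \PFC(\Sigma(K_\bm))$, and that the ``$-2\bk$'' in the statement should be read as $-2|\bk|_1$.
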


\begin{proof}
	These claims can be easily verified using (\ref{3-per}).
\end{proof}

\begin{lemma} \label{delta-deg-family}
	For $\bm\geq \bn$, suppose that $K_\bm$ and $K_\bn$ are unlink. Let $f:\PFC(\Sigma(K_\bm)) \to \PFC(\Sigma(K_\bn))$ be a map induced by a family of metrics on
	$\Sigma(\Smn)$ (respectively, $\Sigma(\cSmn)$). If $d$ denotes the dimension of the parametrizing set for the family of metrics, 
	then the degree of $f$ with respect to the $\delta$-grading is equal to $|\bm-\bn|_1-d$ (respectively, $|\bm-\bn|_1-d-1$).
\end{lemma}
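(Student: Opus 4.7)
The plan is to reduce the statement to a bookkeeping exercise: first re-express the $\delta$-degree shift in terms of the $\deg_p$-shift, then apply Lemma \ref{top-invts-bra-dble-1} to compute $\deg_p(f)$ from topological invariants of the branched cobordism.

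Starting from the definition
\[
\delta|_{\PFC(\Sigma(K_{\bk}))} = -\tfrac{1}{2}\deg_p - \tfrac{1}{2}|\bk|_1 - \tfrac{1}{4}\sigma(\bk,\bo) + \tfrac{1}{2}n_+(N),
\]
I would first note that for any map $f$ of $\deg_p$-degree $\Delta$ sending $\PFC(\Sigma(K_\bm))$ to $\PFC(\Sigma(K_\bn))$ we have
\[
\deg_\delta(f) = -\tfrac{1}{2}\Delta - \tfrac{1}{2}(|\bn|_1 - |\bm|_1) - \tfrac{1}{4}(\sigma(\bn,\bo) - \sigma(\bm,\bo)).
\]
Using the additivity relation (\ref{additivity}), $\sigma(\bn,\bo) - \sigma(\bm,\bo) = -\sigma(\bm,\bn)$, so this collapses to
\[
\deg_\delta(f) = -\tfrac{1}{2}\Delta + \tfrac{1}{2}|\bm-\bn|_1 + \tfrac{1}{4}\sigma(\bm,\bn).
\]

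Next I would apply Lemma \ref{top-invts-bra-dble-1}. In the $\Sigma(\Smn)$ case, $K_\bm$ and $K_\bn$ are unlinks (so their signatures vanish), $\chi(\Smn) = -|\bm-\bn|_1$ by the decomposition of $\Smn$ into $|\bm-\bn|_1$ elementary cobordisms, and $\Smn \cdot \Smn = \sigma(\bm,\bn)$ by definition. Therefore
\[
\Delta = -|\bm-\bn|_1 + \tfrac{1}{2}\sigma(\bm,\bn) + 2d,
\]
and substituting into the formula above cancels the $\sigma(\bm,\bn)$-terms cleanly and leaves $\deg_\delta(f) = |\bm-\bn|_1 - d$.

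For the $\Sigma(\cSmn)$ case, the main point is to identify $\cSmn$ with $S_{\bm'}^\bn$, where $\bm'$ is obtained from $\bm$ by subtracting $3$ in the unique coordinate in which $\bm$ and $\bn$ differ by $3$; this uses the diffeomorphism $\cWmn \cong W_{\bm'}^\bn$ stated in Section~\ref{3-man-link} and lifts to the branched covers because $K_{\bm'} = K_\bm$ (since $\bm' \equiv \bm \pmod 3$). This gives $\chi(\cSmn) = -(|\bm-\bn|_1 - 3)$, and by (\ref{3-per}), $\cSmn \cdot \cSmn = \sigma(\bm',\bn) = \sigma(\bm,\bn) - 2$. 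Plugging into Lemma \ref{top-invts-bra-dble-1} yields $\Delta = -|\bm-\bn|_1 + 2 + \tfrac{1}{2}\sigma(\bm,\bn) + 2d$, and substituting produces $\deg_\delta(f) = |\bm-\bn|_1 - d - 1$.

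The only non-routine step is the $\cSmn$ case, where one must be careful that the topological identification $\cWmn \cong W_{\bm'}^\bn$ is realized by a branched cover of a link cobordism between the same pair of unlinks, so that the self-intersection is computable via (\ref{3-per}); everything else is arithmetic.
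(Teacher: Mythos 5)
Your proof is correct and follows the same route as the paper: apply Lemma \ref{top-invts-bra-dble-1} to compute $\deg_p(f)$ using $\chi(\Smn)=-|\bm-\bn|_1$, $\Smn\cdot\Smn=\sigma(\bm,\bn)$, and the vanishing of the unlink signatures, then substitute into the definition of $\deg_\delta$ together with the additivity relation (\ref{additivity}). The paper dismisses the $\Sigma(\cSmn)$ case as ``a similar computation''; your write-up usefully fills in the missing details there — the identification $\cSmn\cong S_{\bm'}^\bn$ from Section \ref{3-man-link}, the resulting Euler characteristic $\chi(\cSmn)=-(|\bm-\bn|_1-3)$, and the use of (\ref{3-per}) to get $\cSmn\cdot\cSmn=\sigma(\bm,\bn)-2$ — all of which check out.
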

\begin{proof}
	Let $f$ be induced by a family of metrics on $\Sigma(\Smn)$. According to Lemma \ref{top-invts-bra-dble-1}:
	$$\deg_p(f)=-|\bm-\bn|_1+\frac{1}{2}\Smn\cdot\Smn+2d$$
	As a result:
	$$\deg_\delta(f)=-\frac{1}{2}(-|\bm-\bn|_1+\frac{1}{2}\Smn\cdot\Smn+2d)-\frac{1}{2}(|\bn|_1-|\bm|_1)
		-\frac{1}{4}(\sigma(\bn,\bo)-\sigma(\bm,\bo))=|\bm-\bn|_1-d$$
	A similar computation verifies the case that $f$ is induced by a family of metrics on $\Sigma(\cSmn)$.
\end{proof}

The differential of the chain complex $(\PKC(N),d_p)$ is constructed via the cobordism maps associated with the family of metrics $\Gmn$ of dimension $|\bm-\bn|_1-1$ on the cobordisms $\Sigma(\Smn)$. Therefore, Lemma \ref{delta-deg-family} asserts that $d_p$ has degree 1 with respect to the $\delta$-grading. In summary, the homological and $\delta$-gradings let us regard $(\PKC(N),d_p)$ as an element of $\cgfl$. 

\begin{proposition}
	Suppose that a planar pseudo-diagram $N$ of a link $K$ is fixed. Then the filtered chain homotopy type of $(\PKC(N),d_p)$ does not depend on the auxiliary choices.
\end{proposition}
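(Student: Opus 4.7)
The plan is to connect any two sets of auxiliary choices by a one‑parameter family and to show that the resulting interpolation map is a morphism in $\cgfl$ admitting a two‑sided homotopy inverse. Fix two sets of auxiliary data, giving chain complexes $(\PKC(N),d_p)$ and $(\PKC(N),d_p')$. First I would choose, for each $\bm\in\{0,1\}^n$, a homotopy of Morse--Smale functions $f_\bm^t$ between the two chosen Morse functions on $\mathcal R(\Sigma(K_\bm))$, and for each pair $(\bm,\bn)$ with $\bm\geq\bn$ a one‑parameter family of families of metrics on $\Sigma(\Smn)$ interpolating the two choices of $\bWmn$. Using Proposition \ref{fam-mod-tra-1} (in the relative form of Remark \ref{ext-per-1}) I would construct perturbation data for these interpolating families so that Hypothesis \ref{eta-hypo-1} is satisfied inductively on $|\bm-\bn|_1$.

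The cobordism maps from these interpolating families assemble into a map $\phi:(\PKC(N),d_p)\to(\PKC(N),d_p')$ whose matrix entries $\phi_\bm^\bn:\PFC(\Sigma(K_\bm))\to\PFC(\Sigma(K_\bn))$ come from a family of metrics on $\Sigma(\Smn)$ of dimension $|\bm-\bn|_1$ (one higher than the differential $\dmn$ because of the extra interpolation parameter). The usual boundary‑counting argument for $1$‑dimensional moduli spaces, exactly as in Lemma \ref{boundary-1}, shows that $\phi$ is a chain map. To check that $\phi$ lives in $\cgfl$, observe that $\phi_\bm^\bn$ is nonzero only for $\bm\geq\bn$, and Corollary \ref{h-filtered} then gives $h(\bn)\geq h(\bm)$, so $\phi$ preserves the homological filtration. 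For the $\delta$‑grading, Lemma \ref{delta-deg-family} applied to a family of dimension $d=|\bm-\bn|_1$ yields $\deg_\delta(\phi_\bm^\bn)=|\bm-\bn|_1-|\bm-\bn|_1=0$, so $\phi$ is of total degree $(0,0)$.

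Next I would construct an inverse $\psi:(\PKC(N),d_p')\to(\PKC(N),d_p)$ by the reversed interpolation, and a chain homotopy $H$ between $\psi\circ\phi$ and the identity. The homotopy is produced by a two‑parameter family, parametrized on an extra interval that records the concatenation of the two interpolations; its cobordism matrix entries $H_\bm^\bn$ come from families of dimension $|\bm-\bn|_1+1$. Again by Lemma \ref{delta-deg-family} these have $\delta$‑degree $-1$, and the diagonal blocks $H_\bm^\bm$ are the standard Morse continuation homotopies on $\mathcal R(\Sigma(K_\bm))$. Since $\bm\geq\bn$ on nonzero entries, $H$ preserves the homological filtration (in fact strictly increases $\deg_h$ off the diagonal), so $H$ is a morphism of degree $(-1,-1)$. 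The boundary analysis of the $2$‑dimensional families, identical in form to Lemma \ref{ends-bar-M-1}, yields $\psi\circ\phi-\mathrm{id}=d_p H+H d_p$, and symmetrically for $\phi\circ\psi$.

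The main technical obstacle I anticipate is an inductive transversality argument: one must simultaneously arrange admissible perturbations on all faces of all the interpolating and double‑interpolating families so that the boundary identifications on codimension one strata respect the cube structure and, in particular, so that Lemma \ref{special-fam} and Lemma \ref{quasi-iso} continue to apply on the faces corresponding to the excised neighborhoods $R_i$ and $U$. This is handled exactly as in the proof of Theorem \ref{PFH-tri-1}, working up the skeleton of the parametrizing polyhedron and invoking Remark \ref{ext-per-1} to extend the perturbation without destroying transversality already achieved. Once these perturbations are in place, the degree counts above are automatic, and the existence of $\phi$, $\psi$, and $H$ of the required $\cgfl$‑degrees shows that the two filtered $\zz$‑graded chain complexes have the same filtered chain homotopy type.
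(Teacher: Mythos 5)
Your proposal matches the paper's proof essentially step for step: you interpolate the metric and perturbation data, build a chain map $\phi$ (the paper's $\jmn$) from families of metrics $\Jmn$ of dimension $|\bm-\bn|_1$, derive the $(0,0)$-degree from Corollary \ref{h-filtered} and Lemma \ref{delta-deg-family}, and use the standard continuation argument (via families one dimension higher, the paper's $I\times\Jmn$) to show the map is a filtered chain homotopy equivalence with homotopy of degree $(0,-1)$. One small remark: the anticipated difficulty with Lemma \ref{special-fam} and Lemma \ref{quasi-iso} does not arise here, because the interpolating families $\Jmn$ only attach handles over cubes $(\bm,\bn)$ and never involve stretching along the neighborhoods $R_i$ or $U$, which appear only for double- and triple-cubes in the proof of Theorem \ref{PFH-tri-1}.
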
 
\begin{proof}
	Suppose that $(C_0,d_0)$ and $(C_1,d_1)$ are two filtered chain complexes that are constructed with the aid of the pseudo-diagram $N$ 
	and two different auxiliary choices. We will write $g_0$, $(f_\bm)_0$ and $\eta_0$ respectively for the metric on $\Sigma(K)$, 
	the Morse-Smale function on $\mathcal R(\Sigma(K_\bm))$, and the perturbation term. Note that we do not need to fix a homology orientation 
	for $\Sigma(\Smn)$ because we are working with the coefficient ring $\Lambda$. The relevant auxiliary choices for $(C_1,d_1)$ are also denoted by 
	$g_1$, $(f_\bm)_1$, and $\eta_1$. 
	Because the metrics $g_0$ and $g_1$ are compatible with the pseudo-diagram $N$, they agree in the branched double covers of the crossings of $N$. 
	Fix a smooth family of metrics $\{g_t\}_{t\in I}$, connecting the merits $g_0$ and $g_1$ on $\Sigma(K)$ 
	such that each metric $g_t$ is the standard metric in the branched double covers of crossings of $N$. 
	From now on, in order to distinguish the 3-manifolds and the cobordisms that are involved in the construction of the two 
	chain complexes, we use the notations $(K_{\bm})_0$, $(K_{\bm})_1$, and so on. 
	
	Let $(\Smn)_0^1$ be another copy of $\Smn$ that is regarded as a cobordism from 
	$(K_\bm)_0$ to $(K_\bm)_1$. As it was explained in subsection \ref{3-man-link}, $g_i$, for $i\in\{0,1\}$, determines a family of metrics 
	of dimension $|\bm-\bn|_1-1$ on $\Sigma(\Smn)_i$ when $(\bm,\bn)$ is a cube, double-cube, or a triple-cube. In the case that $(\bm,\bn)$ is a cube, 
	the parametrizing sets are denoted by $(\Gmn)_0$ and $(\Gmn)_1$. These families are constructed by decomposing $\Sigma(\Smn)$ to a union of a product
	manifold $\rr \times Y^L$ and $|\bm-\bn|_1$ handles. Here $Y^L$ is the branched double cover of the complement of the set of crossings. 
	On each of these pieces we fixed a metric, and different ways to glue them produce the families of metrics. 
	For $(\Gmn)_i$, we fixed the product metric on $\rr \times Y^L$ 
	that is given by the restriction of $g_i$ to the complement of the set of crossings. 
	Alternatively, we can use the metric on $\rr \times Y^L$ that is determined by $\{g_{t}\}$ 
	to construct another family of metrics on $\Sigma(\Smn)$. This family can be compactified to a family of metrics parametrized with a 
	polyhedron $\Jmn$ which has dimension $|\bm-\bn|_1$. Note that $\dim(\Jmn)=\dim(\Gmn)+1$ because the new metric on $\rr\times Y^L$ lacks the the translational 
	symmetry. The codimension 1 faces of $\Jmn$ are equal to:
	$$J_{\bm}^{\bk}\times (G_\bk^\bn)_1\hspace{1cm} (G_{\bm}^{\bk})_0\times J_\bk^\bn$$
	where $\bm>\bk>\bn$. 
	
	We can also arrange for a perturbation $\eta_0^1$ for the families of metrics $\Jmn$ such that the cobordism maps are well-defined and the perturbation term on 
	each codimension 1 face is determined by $\eta_0$, $\eta_1$, and $\eta_0^1$ on a smaller polyhedron. 
	With a similar argument as in the proof of Theorem \ref{PFH-tri-1}, the cobordism maps associated with the family of metrics $\Jmn$ can 
	be used to construct a chain map 
	$\jmn:(C_0,d_0) \to (C_1,d_1)$. Corollary \ref{h-filtered} and Lemma \ref{delta-deg-family} show that this chain map has degree $(0,0)$.
	
	The chain map $\jmn$ is analogous to the continuation maps in Floer theories and a standard argument shows that its 
	chain homotopy type does not depend on the choice of the connecting family of the metrics $\{g_t\}_{t \in I}$ and the perturbation $\eta_0^1$. 
	This argument involves defining a chain homotopy with the aid of cobordism maps associated with the product family of metrics $I \times \Jmn$. Another application
	of Corollary \ref{h-filtered} and Lemma \ref{delta-deg-family} shows that the chain homotopy has degree $(0,-1)$ (and hence degree $(-1,-1)$). Since the filtered 
	homotopy type of the chain map $\jmn$ is independent of the choices of metrics and perturbation, it defines a filtered chain homotopy equivalence.
\end{proof}	
	
Next, we want to show that if $N$ and $N'$ are two planar pseudo-diagrams for $K$, then the filtered chain homotopy types of the corresponding complexes are the same.  This is clear if $N$ and $N'$ are related to each other by a pseudo-diagram isotopy. Suppose that $N'$ is related to $N$ by adding a crossing. Order the set of crossings $N'$ such that the crossing in $N'\backslash N$ is the first one. The fact that $N$ and $N'$ are both pseudo-diagram implies that the resolution $Y_{\bm(i)}$ is an unlink for $\bm \in \{0,1\}^{n}$ and $i\in \zz$. With appropriate auxiliary data, the chain complexes $(\PKC(N),d_p)$ and $(\PKC(N'),d_p)$ are respectively equal to $(C_{\bm(2)}^{\bn(2)},d_{\bm(2)}^{\bn(2)})$ and $(C_{\bm(1)}^{\bn(0)},d_{\bm(1)}^{\bn(0)})$ where $\bm=(1,\dots,1)$ and $\bn=(0,\dots,0)$. In particular, $(\PKC(N),d_p)$ inherits two other gradings form the homological and the $\delta$-gradings associated with the pseudo-diagram $N'$. These two gradings are different from the original homological and $\delta$-gradings on $(\PKC(N),d_p)$ and we denote them by $\deg_{h'}$ and $\deg_{\delta'}$.
	
\begin{proposition}\label{rel-two-gradings-pro}
	The above gradings on $(\PKC(N),d_p)$ are related to each other in the following way:
	\begin{equation}\label{rel-two-gradings}
		\deg_{\delta'}=\deg_{\delta}-1\hspace{2cm} \deg_{h'}=\deg_{h}-1
	\end{equation}
\end{proposition}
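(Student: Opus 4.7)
The plan is to verify \eqref{rel-two-gradings} by a direct arithmetic comparison on each summand $\PFC(\Sigma(K_{\bk})) = \PFC(\Sigma(K_{\bk(2)}))$ for $\bk \in \{0,1\}^n$, between the defining formulas for $(\deg_h, \deg_\delta)$ in terms of $N$ and for $(\deg_{h'}, \deg_{\delta'})$ in terms of $N'$. Passing from $N$ to $N'$ modifies three ingredients in the grading formulas: the coordinate sum, since $|\bk(2)|_1 = |\bk|_1 + 2$; the signed crossing counts $n_{\pm}$, which each shift by $1$ depending on the sign of the added crossing; and the self-intersection term $\sigma_N(\bk, \bo_N)$, which is replaced by $\sigma_{N'}(\bk(2), \bo_{N'})$. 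Note that all of $K_{\bk(0)}$, $K_{\bk(1)}$, $K_{\bk(2)}=K_\bk$ are unlinks by the pseudo-diagram hypothesis, so all self-intersections in sight are well-defined.

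The only genuinely topological step is the comparison of these two self-intersection numbers. Let $\epsilon \in \{0,1\}$ be such that $\bo_{N'} = \bo_N(\epsilon)$, so that $\epsilon = 0$ if the added crossing is positive and $\epsilon = 1$ if it is negative. Using additivity (\ref{additivity}), I would split
$$\sigma_{N'}(\bk(2), \bo_{N'}) = \sigma_{N'}(\bk(2), \bk(\epsilon)) + \sigma_{N'}(\bk(\epsilon), \bo_N(\epsilon)).$$
The cobordism realizing the second summand only touches the crossings of $N$, so it is canonically identified with the one defining $\sigma_N(\bk, \bo_N)$ and contributes exactly that quantity. The first summand is realized by a cobordism supported in a ball neighborhood of the added crossing, so it is computable from the single-crossing analysis recalled in subsection \ref{PFH-dbl-cover}: the sum-to-$2$ identity $\sigma(S_{(2)}^{(1)}) + \sigma(S_{(1)}^{(0)}) + \sigma(S_{(0)}^{(-1)}) = 2$ together with the vanishing of $\sigma(S_{(0)}^{(-1)})$ for a positive crossing (respectively $\sigma(S_{(2)}^{(1)})$ for a negative one) yields $\sigma_{N'}(\bk(2), \bk(\epsilon)) = 2$ in the positive case and $\sigma_{N'}(\bk(2), \bk(\epsilon)) = 0$ in the negative case.

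Substituting these values into
\begin{align*}
	\deg_{h'} &= -|\bk(2)|_1 + \tfrac{1}{2}\sigma_{N'}(\bk(2), \bo_{N'}) + n_-(N'),\\
	\deg_{\delta'} &= -\tfrac{1}{2}\deg_p - \tfrac{1}{2}|\bk(2)|_1 - \tfrac{1}{4}\sigma_{N'}(\bk(2), \bo_{N'}) + \tfrac{1}{2}n_+(N'),
\end{align*}
and using $(n_+(N'), n_-(N')) = (n_+(N) + 1, n_-(N))$ in the positive case, respectively $(n_+(N), n_-(N) + 1)$ in the negative case, the $-2$ from $|\bk(2)|_1 - |\bk|_1$ will combine with the contributions $+\tfrac{1}{2}\cdot 2 + 0$ and $\tfrac{1}{2}\cdot 1 + \tfrac{1}{2}\cdot 0$ (positive case), respectively with $\tfrac{1}{2}\cdot 0 + 1$ and $-\tfrac{1}{4}\cdot 0 + 0$ (negative case), to produce in both cases the uniform shifts $\deg_{h'} = \deg_h - 1$ and $\deg_{\delta'} = \deg_\delta - 1$. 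There is no real obstacle beyond this bookkeeping; the only non-combinatorial ingredient, namely the single-crossing self-intersection computation, is already assembled in subsection \ref{PFH-dbl-cover} and in \cite{KM:q-grading}.
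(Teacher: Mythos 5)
Your argument is correct and follows essentially the same route as the paper: a two-case analysis (positive vs.\ negative added crossing) feeding directly into the grading formulas. The paper simply asserts the key identities $\sigma(\bk(2),\bo')=\sigma(\bk,\bo)+2$ (positive case) and $\sigma(\bk(2),\bo')=\sigma(\bk,\bo)$ (negative case) in \eqref{pos-case}--\eqref{neg-case}, whereas you actually derive them via the additivity relation \eqref{additivity} and the single-crossing self-intersection analysis recalled in subsection \ref{PFH-dbl-cover}; the final bookkeeping you sketch is loosely phrased (the stray ``$\tfrac{1}{2}\cdot 1 + \tfrac{1}{2}\cdot 0$'' etc.\ do not parse cleanly against the $-\tfrac{1}{4}\sigma$ and $-\tfrac{1}{2}|\bm|_1$ coefficients), but a direct substitution confirms the shift by $-1$ in both gradings and in both cases.
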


Given an object $(C,d)$ of $\cgf$, the filtered chain complex $(C[a,b],d)$ denotes the same chain complex where the homological and the $\delta$-grading are shifted down by $a$ and $b$ respectively. For example, this means that $\mathcal F^i C[a,b]=\mathcal F^{i+a} C$. Therefore, this proposition states that $C_{\bm(2)}^{\bn(2)}$ is equal to $\PKC(N)[1,1]$.   
\begin{proof}[Proof of Proposition \ref{rel-two-gradings-pro}]
	Let $\bo'$ be the oriented resolution of the set of crossings $N$. If the first crossing of $N'$ is positive, then $\bo'=\bo(0)$ and:
	\begin{equation} \label{pos-case}
		n_+(N')=n_+(N)+1 \hspace{1cm} n_-(N')=n_-(N)\hspace{1cm}\sigma(\bk(2),\bo')=\sigma(\bk,\bo)+2
	\end{equation}	
	for any $\bk\in \{0,1\}^n$.
	A straightforward computation using (\ref{pos-case}) proves the claim in this case. 
	If the first crossing is negative, then the fact that $\bo'=\bo(1)$ and the 
	following identities verifiy (\ref{rel-two-gradings}):
	\begin{equation} \label{neg-case} 
		n_+(N')=n_+(N) \hspace{1cm} n_-(N')=n_-(N)+1\hspace{1cm}\sigma(\bk(2),\bo')=\sigma(\bk,\bo)
	\end{equation}	
\end{proof}

Theorem \ref{PFH-tri-1} sates that the chain complexes $(C_{\bm(1)}^{\bn(0)},d_{\bm(1)}^{\bn(0)})$ and $(C_{\bm(2)}^{\bn(2)},d_{\bm(2)}^{\bn(2)})$ are chain homotopy equivalent. As it can be seen from the proof of Lemma \ref{trianlge-detection-lemma}, this chain homotopy equivalence is given by the following map:
$$\Phi:(C_{\bm(2)}^{\bn(2)},d_{\bm(2)}^{\bn(2)}) \to (C_{\bm(1)}^{\bn(0)},d_{\bm(1)}^{\bn(0)})=\Cone(g_{\bm(1)}^{\bn(1)})$$
$$\Phi:=(g_{\bm(2)}^{\bn(2)},n_{\bm(2)}^{\bn(2)})$$
In fact, if we define:
$$\Psi_i:\Cone(g_{\bm(i-1)}^{\bn(i-1)}) \to (C_{\bm(i-3)}^{\bn(i-3)},d_{\bm(i-3)}^{\bn(i-3)})$$
\begin{equation*}
	\Psi_i:=\left(
	\begin{array}{c}
		n_{\bm(i-1)}^{\bn(i-1)}\\
		g_{\bm(i-2)}^{\bn(i-2)}
	\end{array}
	\right)
\end{equation*}	
then $\Psi_{2}\circ \Phi$ is chain homotopic to the isomorphism $q_{\bm(2)}^{\bn(2)}$ with the chain homotopy being $k_{\bm(2)}^{\bn(2)}$. Moreover, $\Phi\circ \Psi_{5}$ is chain homotopic to the isomorphism:
\begin{equation}\label{htpy-equiv}
	Q:=\left(
	\begin{array}{cc} 
		q_{\bm(4)}^{\bn(4)}&0\\
		n_{\bm(2)}^{\bn(2)}\circ n_{\bm(4)}^{\bn(4)}+k_{\bm(3)}^{\bn(3)}\circ g_{\bm(4)}^{\bn(4)}+g_{\bm(1)}^{\bn(1)} \circ k_{\bm(4)}^{\bn(4)}&
		q_{\bm(3)}^{\bn(3)}
	\end{array}
	\right) 
\end{equation}
with the chain homotopy:
\begin{equation}\label{htpies}
	K:=
	\left(\begin{array}{cc}
		k_{\bm(4)}^{\bn(4)}&n_{\bm(3)}^{\bn(3)}\\
		0&k_{\bm(3)}^{\bn(3)}
	\end{array}
	\right).
\end{equation}

According to Lemmas \ref{period-deg} and \ref{rel-two-gradings-pro}, the above chain maps and chain homotopies induce the following diagram: 

\begin{equation} \label{add-cross-iso}
	\xymatrix{
		\PKC(N')[2,2]\ar[rr]^{Q}\ar[dr]^{\Psi_5}&&\PKC(N')\ar[dr]^{\Psi_{2}}\ar@{=>}[d]^{k_{\bm(2)}^{\bn(2)}}&\\
		&\PKC(N)[1,1]\ar[ur]^{\Phi}\ar[rr]_{q_{\bm(2)}^{\bn(2)}}\ar@{=>}[u]^{K}&&\PKC(N)[-1,-1]
	}
\end{equation}

Next, we need to study how the maps in diagram (\ref{add-cross-iso}) behave with respect to the homological and the $\delta$-gradings. Firstly we start with the easier case of the $\delta$-grading. By Lemma \ref{delta-deg-family}, the degrees of the maps $\Phi$, $\Psi_i$, $k_{\bm(i)}^{\bn(i)}$, and $K$ with respect to the $\delta$-grading are equal to 1. The same lemma can be used to check that the $\delta$-degrees of the maps $q_{\bm(i)}^{\bn(i)}$ and $Q_i$ are equal to 2. In order to treat the case of the homological grading, we need to investigate further the behavior of the cobordism map $\tilde f_{\Gmn}$ when $h(\bm)=h(\bn)$:

\begin{lemma} \label{border-case-slef-int}
	If $(\bm,\bn)$ defines a cube in $\{0,1\}^n$, with $\Smn\cdot\Smn=2|\bm-\bn|_1$, then the map $\tilde f_{\Gmn}$ vanishes. 
\end{lemma}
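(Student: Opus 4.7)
The plan is to prove $\tilde f_{\Gmn}=0$ by exhibiting a $\spinc$-conjugation symmetry on an embedded $\overline{\cc P}^2$ summand of $\Sigma(\Smn)$ that pairs up contributions with opposite signs in the local coefficient system $\Gamma_{\Zmn}$, mirroring the argument of Lemma \ref{special-fam}.

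First I would pin down the topology. The hypothesis $\Smn\cdot\Smn=2|\bm-\bn|_1$, combined with Lemma \ref{self-int-cob} and the additivity of self-intersection along any full decomposition $\Smn=S_{\bk_0}^{\bk_1}\circ\dots\circ S_{\bk_{r-1}}^{\bk_r}$ with $\bm=\bk_0>\dots>\bk_r=\bn$ and $|\bk_{j-1}-\bk_j|_1=1$, forces each elementary factor $S_{\bk_{j-1}}^{\bk_j}$ to have self-intersection exactly $+2$. Hence each elementary factor is the disjoint union of a product cobordism and a twice-punctured $\rr P^2$ with self-intersection $+2$, and $\Sigma(\Smn)$ contains disjoint embedded $(-1)$-spheres $S_1,\dots,S_{|\bm-\bn|_1}$, each sitting inside a regular neighborhood $R_j\cong\overline{\cc P}^2\setminus D^4$. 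Exactly as in the setup of $\Nmn$ in subsection \ref{3-man-link}, the co-core disc of the corresponding $2$-handle is a component of $\Zmn$ that meets $S_j$ transversely in a single point.

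Next I would fix one such sphere $S=S_1$ inside $R=R_1$ and arrange the admissible perturbation $\emn$ to be supported away from $R$ and the metric on $R$ to be the standard metric (which is invariant under the complex conjugation of $\overline{\cc P}^2$); both arrangements are compatible with Hypothesis \ref{eta-hypo-1} since transversality is controlled by data on the cylindrical ends, and the same choice can be propagated through all metrics of $\bWmn$ because $R$ is disjoint from the cutting submanifolds used to build the family. Complex conjugation on $\overline{\cc P}^2$ then induces a fiberwise partial involution on the configuration space of $\bWmn$ that sends a $\spinc$ structure $\s$ to $\bar\s$ agreeing with $\s$ outside $R$ and satisfying $c_1(\bar\s)[S]=-c_1(\s)[S]$. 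Since $c_1(\s)[S]$ is an odd integer, $\s\neq\bar\s$; the two structures have equal energy ($c_1(\s)^2=c_1(\bar\s)^2$), and by Remark \ref{tensor-line-bndle} the induced map $M_\emn(\bWmn,\s)\to M_\emn(\bWmn,\bar\s)$ is an orientation-preserving bijection.

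Finally I would compute the effect on the local coefficient system. Because $\s$ and $\bar\s$ differ only on $R$, we have $c_1(\s)[\Zmn]-c_1(\bar\s)[\Zmn]=2\,c_1(\s)|_R\cdot[\Zmn\cap R]=2(2k+1)$, using that $\Zmn\cap R$ is a disc meeting $S$ once and $c_1(\s)|_R=(2k+1)\,PD([S])$. The difference $\iota(\underline A_\s)-\iota(\underline A_{\bar\s})=-\tfrac{1}{2}(c_1(\s)-c_1(\bar\s))[\Zmn]=-(2k+1)$ is odd, so $\Gamma_\Zmn(\underline A_\s)=-\Gamma_\Zmn(\underline A_{\bar\s})$ just as in Lemma \ref{special-fam}. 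The paired contributions to each matrix entry of $\tilde f_{\Gmn}$ therefore cancel, and the map vanishes. The main obstacle will be the careful bookkeeping of orientations and the local coefficient system under the partial conjugation, particularly checking that $\Zmn\cap S$ is a single transverse intersection point in every fiber and that the perturbation choice can be made conjugation-symmetric while preserving all conditions of Hypothesis \ref{eta-hypo-1}; both reduce to the local model of a $(-1)$-framed $2$-handle attachment that was already used in the definition of the family $\Nmn$.
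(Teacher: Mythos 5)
Your core idea --- locating an embedded $(-1)$-sphere $S$ in a $\overline{\cc P}^2$ summand of $\Sigma(\Smn)$ and pairing $\spinc$ structures $\s\leftrightarrow\bar\s$ so that contributions cancel --- is indeed the mechanism behind Lemma~\ref{special-fam}, which the paper's proof also invokes. But the paper takes an \emph{indirect} route (a $2$-dimensional family of metrics $[0,\infty]\times\Gmn$ that interpolates to a metric broken along $\partial R$, then a chain-homotopy bookkeeping), whereas you propose a \emph{direct} cancellation on the moduli spaces over $\Gmn$ itself. That is a genuinely different strategy, and there are concrete gaps in it.

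First, you are arguing in the wrong coefficient system. In subsection~\ref{PKH} the complex $\PKC(N)$ is formed over $\Lambda$, which has characteristic $2$, and the paper explicitly notes that ``there is no need to work with local coefficient systems.'' Your cancellation comes from $\Gamma_\Zmn(\underline A_\s)=-\Gamma_\Zmn(\underline A_{\bar\s})$, a sign flip in a local coefficient system that simply is not present here. Over $\Lambda$, the correct mechanism is parity: $\s$ and $\bar\s$ have the same energy, so the paired moduli points contribute $u^e+u^e=0$ in characteristic $2$ (this is also how one must read the paper's appeal to Lemma~\ref{special-fam} and Proposition~\ref{mor-b^+} in this subsection). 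As written, your argument establishes vanishing of a map that is not the one the lemma is about.

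Second, and more seriously, you never use the hypothesis that $N$ is a \emph{planar} pseudo-diagram (Definition~\ref{pl-pseudo-diagram}). The paper's proof leans on it twice: it forces $|\bm-\bn|_1\leq 2$ (via Lemma~\ref{self-int-cob} and the explicit case analysis of how $N$ differs from $N_D$), and it guarantees that ``there are two disjoint discs $D_1$ and $D_2$ in $Z$ that fill two longitudes in $T_1$ and $T_2$.'' Those discs are precisely what produce a copy of $\overline{\cc P}^2\setminus D^4$ whose interior is disjoint from the other handles and from the cutting $3$-manifolds $Y_\bk$, so that stretching along its boundary makes sense inside the family. You assert that ``$R$ is disjoint from the cutting submanifolds used to build the family,'' but that is exactly the point requiring proof, and it depends on the existence and disjointness of the discs $D_j$ in $Z$, which in turn is not automatic: a cap of the $(-1)$-sphere for the $j$-th handle a priori lives in a level $3$-manifold and may run through the Dehn-filling solid tori used for the other crossings. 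This is where the planar hypothesis does actual work, and your blanket claim of disjoint embedded spheres with good neighborhoods for every $j$ and every pseudo-diagram is unjustified.

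Finally, the direct-pairing route also skips the step where the paper pays for the cancellation. Your pairing is Remark~\ref{tensor-line-bndle}'s line-bundle tensoring, which is stated for a \emph{single} metric; extending it coherently over the whole parametrizing polyhedron $\Gmn$ (including its corners, the broken strata, and the secondary moduli spaces $\tmeGab$ over the open fattening), while simultaneously holding the metric on $R$ fixed and keeping the perturbation $\emn$ admissible in the sense of Hypothesis~\ref{eta-hypo-1}, is not automatic. The paper sidesteps all of this: it introduces the auxiliary family $[0,\infty]\times\Gmn$, observes that at the $\{\infty\}$ face the $R$-factor is isolated with a \emph{fixed} metric (where Lemma~\ref{special-fam} or its mod-$2$ analogue applies cleanly), notes that the two remaining side faces factor through the already-handled $|\bm-\bn|_1=1$ case, and then uses identity~(\ref{pair-signed-bdry-form-loc-sys}) together with $d_\bm=d_\bn=0$ to transport the vanishing back to $\{0\}\times\Gmn$. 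If you try to make your direct pairing rigorous you will essentially be forced to build the same auxiliary family, so the cleaner move is to quote the paper's deformation argument rather than replace it.
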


\begin{proof}
	Let $D$ be a diagram such that $N$ is related to $N_D$ as in Definition \ref{pl-pseudo-diagram}. 
	For $\bk \in \{0,1\}^{|N|}$, the resolution $K_\bk$ of the set of crossings of $N$ can be 
	also regarded as a resolution $K_{\bk'}$ of the set of crossings $N_D$ with $\bk' \in \{0,1,2\}^{|N_D|}$. In particular, we can consider the projection of $K_{\bk'}$ 
	to $\rr^2 \times \{0\}\subset \rr^3$. Note that this planar projection has exactly $|N_D \backslash N|$ crossings.
	
	According to Lemma \ref{self-int-cob}, for $\bm\geq \bk \geq \bn$, the resolution $K_\bk$ has the same number of connected components as $K_\bm$ 
	(or equivalently $K_\bn$). 
	Therefore, if $N$ is $N_D$, then $\bm=\bn$. The map $\fGmn$ in this case vanishes because  the differential of the complex $\PFC(K_\bm)$ is zero.
	Now let $N$ be given by removing a crossing from $N_D$. An examination of different possible cases shows
	that $|\bm-\bn|_1 \leq 1$. Furthermore, the cobordism $\Smn$ is the union of a product cobordism between unlinks and 
	a twice punched $\rr P^2$ with self-intersection 2.
	Therefore, $\Wmn$, the branched double cover of $\Smn$ is diffeomorphic to $(I \times \#^l{S^2 \times S^1})\# \widebar{\cc P}^2$ . 
	Proposition \ref{mor-b^+} shows that $\PFH(\Wmn)$ (and hence $\fGmn$) is zero.
 
 	Next let $N_D \backslash N$ consists of two consecutive under-crossings or over-crossings. An examination of different possibilities shows that 
	$|\bm-\bn|_1 \leq 2$. If $|\bm-\bn|_1 \leq 1$, then the same argument as in the above cases verifies
	that $\fGmn=0$. If $|\bm-\bn|_1=2$, then there is an integer number $l$ such that $\Wmn$ is diffeomorphic to 
	$(\rr \times \#^l S^1 \times S^2) \# \widebar{\cc P}^2\# \widebar{\cc P}^2$. 
	Recall that $\Wmn$
	can be decomposed to two handles $\bar H_1$ and $\bar H_2$ and $\rr \times Z$ where $Z$ 
	is a 3-manifold whose boundary is the union of two tori $T_1$ and $T_2$.
	The family of metrics $\bWmn$ parametrized by the closed interval $\Gmn$ is given by gluing translations of $H_1$ and $H_2$ to $\rr \times Z$. 
	It is easy to check that
	in all possible cases there are two disjoint discs $D_1$ and $D_2$ in $Z$ that fill two longitudes in $T_1$ and $T_2$, respectively. 
	Furthermore, gluing $\rr \times N(D_i)$ to 
	$H_i$ produces $\tilde H_i$ whose boundary is diffeomorphic to $\rr \times S^2$. Here $N(D_i)$ is a regular neighborhood of the disc $D_i$. Removing $N(D_1)$
	and $N(D_2)$ from $Z$ produces $\tilde Z$, a three manifold whose boundary consists of two copies of the 2-dimensional sphere. 
	
	 \begin{figure}
	\centering
	\begin{subfigure}[b]{0.30 \textwidth}
		\includegraphics[width=\textwidth]{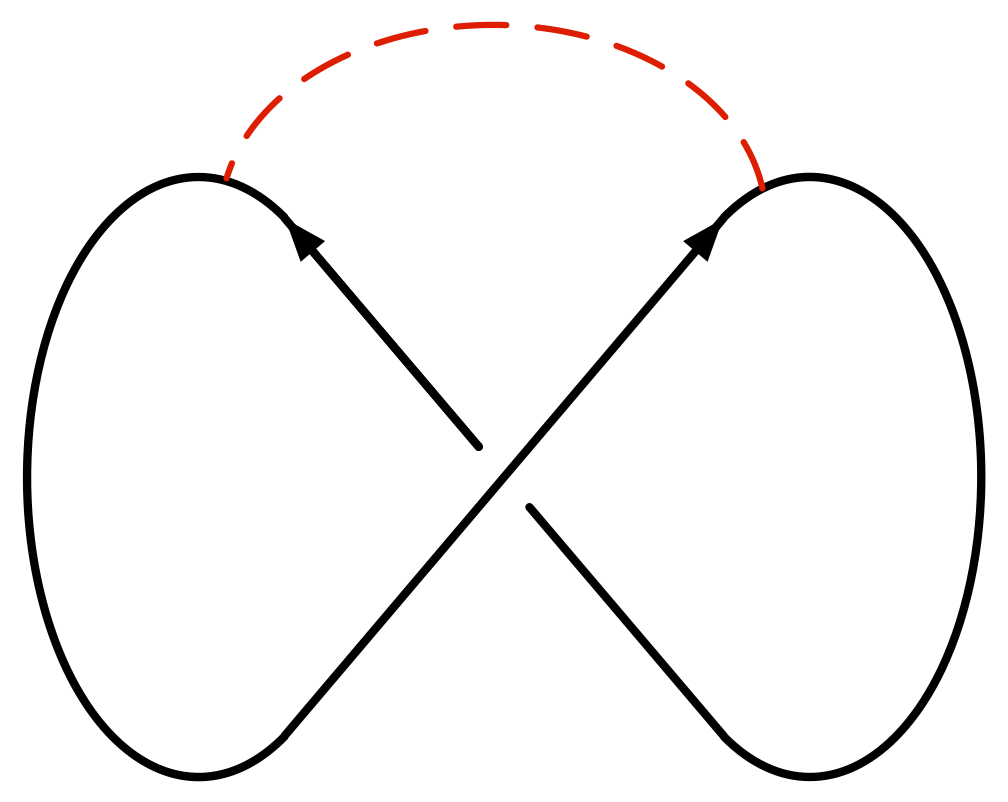}
		\caption{This diagram and a surgery along the dotted path define the 1- and 0-resolution of a pseudo-diagram with only one crossing. 
		The cobordism between these two resolutions has self-intersection 2.}
		\label{unknot-pseudodiagram-1}
	\end{subfigure}
	\begin{subfigure}[b]{0.20 \textwidth}
	\includegraphics[width=\textwidth]{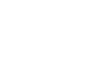}
	\end{subfigure}
	\begin{subfigure}[b]{0.40 \textwidth}
		\includegraphics[width=\textwidth]{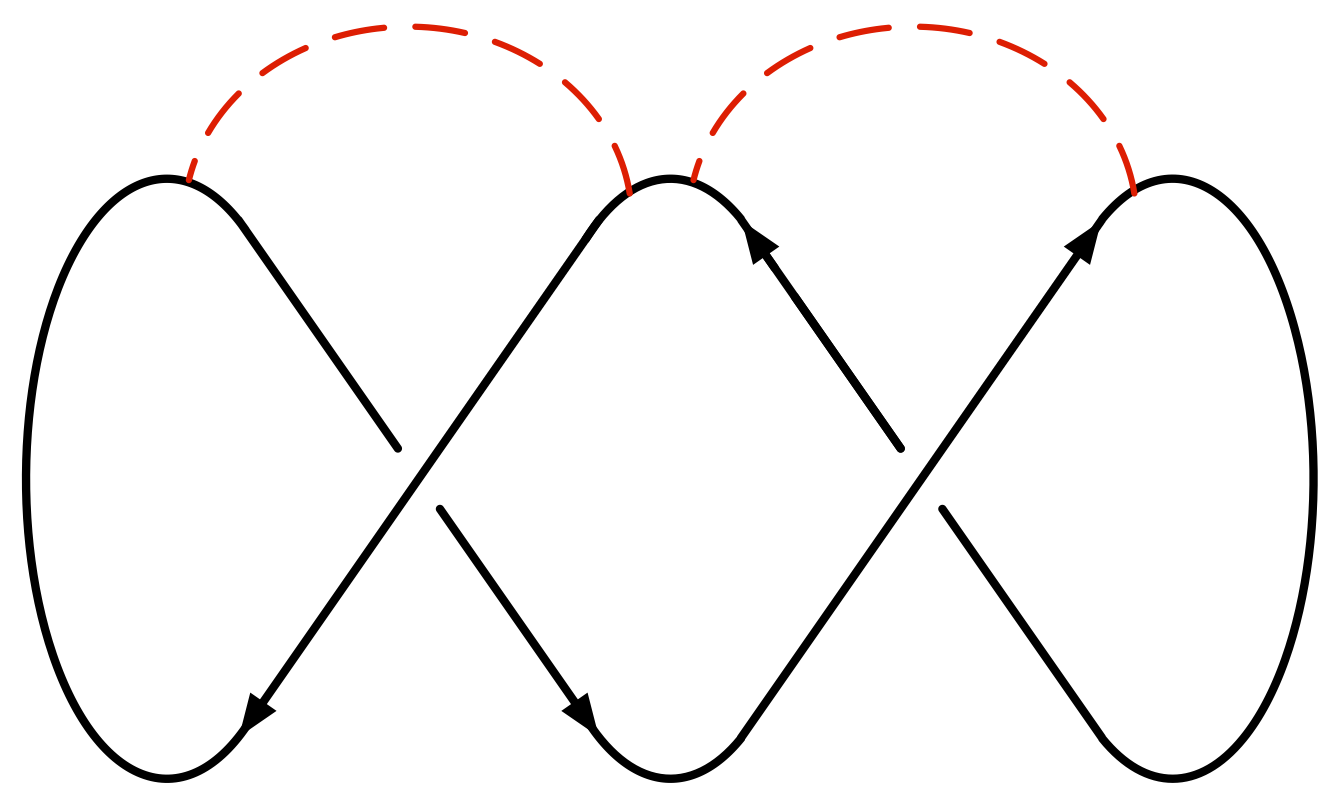}
		\caption{This diagram and surgery along either/both of the dotted paths define vertices of the only face of a pseudo-diagram with two crossings.
		There are four cobordisms associated with the edges of this face and all of these cobordisms have self-intersection 2.}
		\label{unknot-pseudodiagram-2}
	\end{subfigure}
	\caption{}
	\end{figure}

	The family of metrics $\bWmn$  can be alternatively constructed by gluing translations of $\tilde H_1$ and $\tilde H_2$ to $\rr \times \tilde Z$. The space $\tilde H_1$ 
	includes a 2-sphere that its self-intersection is equal to -1. Therefore, we can construct 
	a family of metrics on $\tilde H_1$ parametrized by $[0,\infty]$ given by stretching this 2-sphere.
	Gluing translation of this family of metrics on $\tilde H_1$ to $\rr \times \tilde Z$ and $\tilde H_2$ produces a 2-dimensional family of metrics 
	that is parametrized by $[0,\infty] \times \Gmn$. The four faces of $[0,\infty] \times \Gmn$ parametrizes the following  families:
	\begin{itemize}
		\item The face $\{0\}\times \Gmn$ parametrizes the family of metrics $\bWmn$.
		\item The face $\{\infty\}\times \Gmn$ parametrizes a family of broken metrics. One of the factors of this family is given by a fixed metric on a copy of 
		$\widebar {\cc P}^2\backslash D^4$. Therefore, Lemma \ref{special-fam} implies that the corresponding cobordism map is zero.
		\item There are $\bk,\bk'\in \{0,1\}^n$ such that $\bk\neq \bk'$ and $\bm>\bk,\bk'>\bn$. 
		The two remaining faces of $[0,\infty] \times \Gmn$ parametrizes two families of broken metrics of the form 
		$\bWkn \times \mathfrak V_\bk$ and $\ \mathfrak V_{\bk'}\times \mathfrak W_\bm^{\bk'}$ where $\mathfrak V_\bk$ and $\mathfrak V_\bk$
		are families of metrics on the cobordisms $W_\bm^\bk$ and $W_{\bk'}^\bn$, respectively. Therefore, the cobordism maps for these two faces of 
		$[0,\infty] \times \Gmn$ are equal to $\tilde f_{G_\bk^\bn} \circ \tilde f^\eta_{\mathfrak V_\bk}$ and $\tilde f^\eta_{\mathfrak V_{\bk'}} \circ \tilde f_{G_\bm^{\bk'}}$.
		Since the cobordism maps $\tilde f_{G_\bk^\bn}$ and $\tilde f_{G_\bm^{\bk'}}$ vanish,
		the cobordism maps for these faces are also zero.
	\end{itemize}
	Extend the perturbation term of the family of metrics $\Gmn$ to a perturbation for $[0,\infty] \times \Gmn$. 
	The identity (\ref{pair-signed-bdry-form-loc-sys}) and the above realization of the faces of the family of metrics parametrized by $[0,\infty] \times \Gmn$ 
	show that the induced cobordism map satisfies the following relation:
	$$\tilde f_{[0,\infty] \times \Gmn} \circ d_\bm+d_{\bn}\circ \tilde f_{[0,\infty] \times \Gmn}=\fGmn$$
	Vanishing of the differentials $d_\bm$ and $d_\bn$ proves the claim that $\fGmn=0$.
\end{proof}

Lemma \ref{border-case-slef-int} and the proof of Corollary \ref{h-filtered} imply that the cobordism maps $\fGmn$ increases the $h$-grading by at least 1. Given $\bm,\bn\in \{0,1\}^n$, the self-intersection of the cobordism $S_{\bm(i)}^{\bn(i-2)}$ can be bounded in the following way:
\begin{align}
	S_{\bm(i)}^{\bn(i-2)}\cdot S_{\bm(i)}^{\bn(i-2)}&=S_{\bm(i)}^{\bn(i)}\cdot S_{\bm(i)}^{\bn(i)}+S_{\bn(i)}^{\bn(i-2)}\cdot S_{\bn(i)}^{\bn(i-2)}\nonumber\\
	&\leq 2|\bm-\bn|_1+2\nonumber\\
	&\leq2|\bm(i)-\bn(i-2)|_1-2\nonumber
\end{align}
This shows that the cobordism map $\tilde f_{N_{\bm(i)}^{\bn(i-2)}}$ increases the homological grading by at least 1. Therefore, the maps $\Phi$ and $\Psi_1$ have degree 1 with respect with the homological grading associated with the pseudo-diagram $N$.  A similar argument can be used to show that $\tilde f_{K_{\bm(i)}^{\bn(i-3)}}$ increases the homological grading by at least 2, and hence all the maps involved in the definition of $k_{\bm(i)}^{\bn(i)}$, $K$, (respectively, $q_{\bm(i)}^{\bn(i)}$ and $Q$) increase the homological grading by at least 1 (respectively, 2). In summary, all the maps in Diagram (\ref{add-cross-iso}) have the right degree to show that $\Phi$ determines a filtered chain homotopy equivalence from $\PKC(N)$ to $\PKC(N')$. The following theorem summarizes our analyze of the homological and $\delta$-gradings of the maps in Diagram (\ref{add-cross-iso}):

\begin{theorem} \label{knot-invt}
	Suppose $N$ is a planar pseudo-diagram for a link $K$. Then the filtered chain homotopy type of $(\PKC(N),d_p)$ depends only on $K$.
\end{theorem}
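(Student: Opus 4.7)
The plan is to combine Lemma \ref{moves} with the machinery already assembled just before the theorem statement. By Lemma \ref{moves}, any two planar pseudo-diagrams for $K$ can be connected by a sequence of pseudo-diagram isotopies together with moves that add or drop a single crossing. A pseudo-diagram isotopy simply identifies the underlying data used to build $\PKC(N)$, so it induces an equality of filtered chain complexes. Thus I only need to establish that if $N'$ is obtained from $N$ by adding one crossing (the inverse move follows by symmetry), then $\PKC(N)$ and $\PKC(N')$ are equivalent in $\cgfl$.

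After ordering the crossings of $N'$ so that the new crossing is the first, I would identify $\PKC(N')$ with $C_{\bm(1)}^{\bn(0)}$ and $\PKC(N)$ (viewed through the gradings of $N'$) with $C_{\bm(2)}^{\bn(2)}$, where $\bm=(1,\dots,1)$ and $\bn=(0,\dots,0)$; these identifications use Lemma \ref{period-deg} and Proposition \ref{rel-two-gradings-pro}, which give the shift $\PKC(N)[1,1]\cong C_{\bm(2)}^{\bn(2)}$. Theorem \ref{PFH-tri-1} already produces a chain homotopy equivalence $\Phi=(g_{\bm(2)}^{\bn(2)},n_{\bm(2)}^{\bn(2)})$ together with its partial inverse $\Psi_2$, the two-sided chain homotopies $k_{\bm(2)}^{\bn(2)}$ and $K$, and the isomorphisms $q_{\bm(2)}^{\bn(2)}$ and $Q$ fitting into Diagram \eqref{add-cross-iso}. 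Hence it remains only to check that every map in this diagram has the correct bi-degree, so that $\Phi$ defines an isomorphism in the category $\cgfl$.

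The $\delta$-grading check is routine: Lemma \ref{delta-deg-family} shows that a cobordism map coming from a $d$-dimensional family on $\Sigma(\Smn)$ (respectively $\Sigma(\cSmn)$) has $\delta$-degree $|\bm-\bn|_1-d$ (respectively $|\bm-\bn|_1-d-1$). A direct computation for each building block $\tilde f_{G}$, $\tilde f_{N}$, $\tilde f_{K}$, $\tilde f_{Q}$ appearing in $\Phi$, $\Psi_i$, $k_{\bm(i)}^{\bn(i)}$, $K$, $q_{\bm(i)}^{\bn(i)}$, and $Q$ then yields the desired $\delta$-degrees (namely $0$ for $\Phi$ and $\Psi_i$ after the shift by $[1,1]$, $-1$ for the chain homotopies, and $-2$ for the isomorphisms). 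The homological-grading check is the main obstacle, and here the sharpened bound of Lemma \ref{border-case-slef-int} is essential: without it, the naive bound $\Smn\cdot\Smn\leq 2|\bm-\bn|_1$ of Lemma \ref{self-int-cob} only controls $\deg_h$ up to equality, and in the equality case the cobordism map could conceivably contribute a term that breaks the filtration property. Lemma \ref{border-case-slef-int} says this boundary-case map vanishes, so $\tilde f_{\Gmn}$ strictly increases $\deg_h$ whenever it is nonzero. I would then upgrade this to the statement that $\tilde f_{\Nmn}$ increases $\deg_h$ by at least $1$ and $\tilde f_{\Kmn}$ by at least $2$, using the decomposition $S_{\bm(i)}^{\bn(i-2)}=S_{\bm(i)}^{\bn(i)}\circ S_{\bn(i)}^{\bn(i-2)}$ and its triple-cube analogue together with Lemma \ref{self-int-cob} applied to each factor.

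With these degree bounds verified, $\Phi$ is a filtered chain map of bi-degree $(0,0)$, $\Psi_2$ is a filtered chain map of bi-degree $(0,0)$ into $\PKC(N)[-1,-1]$, and the chain homotopies $k_{\bm(2)}^{\bn(2)}$ and $K$ witness the required identities $\Psi_2\circ\Phi\sim_h q_{\bm(2)}^{\bn(2)}$ and $\Phi\circ\Psi_5\sim_h Q$ in the filtered category. Since $q_{\bm(2)}^{\bn(2)}$ and $Q$ are isomorphisms after the appropriate grading shifts (by the last part of the proof of Theorem \ref{PFH-tri-1}), $\Phi$ is a filtered chain homotopy equivalence $\PKC(N)[1,1]\simeq \PKC(N')$, that is, an isomorphism in $\cgfl$ between $\PKC(N)$ and $\PKC(N')$ after accounting for the grading conventions determined by which pseudo-diagram one uses. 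Combining this with the invariance under auxiliary data and under pseudo-diagram isotopies, the filtered chain homotopy type of $\PKC(N)$ depends only on $K$, completing the proof.
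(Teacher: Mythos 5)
Your proposal follows the paper's own argument in all essentials: reduce via Lemma \ref{moves} to the case of adding or dropping a crossing, invoke the mapping-cone equivalence from Theorem \ref{PFH-tri-1} together with Proposition \ref{rel-two-gradings-pro} and Lemma \ref{period-deg} to set up Diagram (\ref{add-cross-iso}), then verify the bi-degree constraints via Lemma \ref{delta-deg-family} for the $\delta$-grading and Lemma \ref{border-case-slef-int} (plus the self-intersection bounds from Lemma \ref{self-int-cob} and the $\pm 2$ contribution of the $\rr P^2$ summand) for the homological filtration. One small arithmetic slip: when you account for the shifts $[1,1]$ and $[-1,-1]$, the isomorphisms $q_{\bm(2)}^{\bn(2)}$ and $Q$ have $\delta$-degree $0$ relative to the unshifted gradings on $\PKC(N)$ and $\PKC(N')$ (not $-2$); the building-block maps have $\delta'$-degree $2$, and the two grading shifts of $1$ cancel it, which is exactly what makes them eligible to be filtered isomorphisms.
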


A filtered $\zz$-graded chain complex $(C,d)$ determines a spectral sequence $\{(E_r,d_r)\}$ (cf., for example, \cite[Chapter XI]{MacLane:Hom}). Moreover, $f:(C,d) \to (C',d')$, a morphism of the category $\cgf$, induces a morphism $f_i:E_r \to E_r'$ of the corresponding spectral sequences. If $g:(C,d) \to (C',d')$ is another morphism in $\cgf$ that represents the same element as $f$, then for $i\geq 2$, $f_i=g_i$ (cf. \cite[Chapter XI, Proposition 3.5]{MacLane:Hom}). Therefore, for $i \geq 2$, there is a functor $\mathcal E_r$ form $\cgf$ to $\mathcal C_g(R)$, the category of bi-graded chain complexes over the ring $R$, that maps a filtered $\zz$-graded chain complex to the $i^{\rm th}$ page of the associated spectral sequence. Note that a morphism in $\mathcal C_g(R)$ is a chain map rather than a chain map up to chain homotopy. 

Motivated by Theorem \ref{knot-invt}, we will write $(\PKC(K),d_p)$ for the isomorphism class of $(\PKC(N),d_p)$ for any choice of a planar pseudo-diagram. The bi-graded chain complex $\mathcal E_r((\PKC(K),d_p))$ is also denoted by $(\PKE_r(K),d_r)$:
\begin{corollary} \label{spe-seq}
	For each $r\geq 2$, the isomorphism class of $(\PKE_r(K),d_r)$, as a bi-graded chain complex, is an invariant of $K$.
\end{corollary}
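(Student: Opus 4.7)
The plan is to deduce Corollary \ref{spe-seq} directly from Theorem \ref{knot-invt} by invoking the functoriality of the spectral sequence construction on the category $\cgfl$. The key input, already recalled just before the statement, is that for $r \geq 2$ the assignment $(C,d) \mapsto (E_r, d_r)$ extends to a functor $\mathcal E_r: \cgfl \to \mathcal C_g(\Lambda)$: a filtered chain map of degree $(0,0)$ induces a chain map of bi-graded complexes on the $r^{\text{th}}$ page, and two such maps that are filtered chain homotopic (in the sense that they differ by $h\circ d - d'\circ h$ for some $h$ of degree $(-1,-1)$) induce \emph{equal} maps on $E_r$ for $r\geq 2$.

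Given this, the proof of Corollary \ref{spe-seq} is essentially a one-line application. First I would fix a planar pseudo-diagram $N$ of $K$ together with auxiliary data, and recall from Theorem \ref{knot-invt} that the filtered chain homotopy type of $(\PKC(N),d_p) \in \cgfl$ depends only on $K$. Applying $\mathcal E_r$ to an isomorphism $(\PKC(N),d_p) \simeq (\PKC(N'),d_p)$ in $\cgfl$, given by a filtered chain map $\Phi$ of degree $(0,0)$ with filtered chain homotopy inverse $\Psi$, yields a pair of mutually inverse chain maps $\mathcal E_r(\Phi)$ and $\mathcal E_r(\Psi)$ between $(\PKE_r(N), d_r)$ and $(\PKE_r(N'), d_r)$. (Here $\mathcal E_r(\Psi\circ \Phi)$ equals $\mathcal E_r(\id) = \id$ because $\Psi\circ\Phi$ is filtered chain homotopic to the identity, so the two induced maps on $E_r$ coincide.) Hence $(\PKE_r(N), d_r)$ and $(\PKE_r(N'), d_r)$ are isomorphic as bi-graded chain complexes, and similarly independence of the auxiliary data follows from applying $\mathcal E_r$ to the continuation equivalence constructed in the proof of Theorem \ref{knot-invt}.

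There is no genuine obstacle: the only thing one must be careful about is keeping track of the conventions. The filtered chain complex $(\PKC(N), d_p)$ has two gradings, the homological grading $\deg_h$ (with respect to which the filtration is decreasing and $d_p$ has degree $\geq 0$) and the $\delta$-grading (with respect to which $d_p$ has degree $1$), and the resulting spectral sequence inherits a bi-grading $(i, \delta)$ where $i$ records the filtration level and $\delta$ is preserved by all pages. One checks that $d_r$ has bi-degree $(r, 1)$, so $(\PKE_r(K), d_r)$ is indeed a bi-graded chain complex, and $\mathcal E_r(\Phi)$, being induced by a map of bi-degree $(0,0)$, is a morphism in $\mathcal C_g(\Lambda)$ preserving both gradings. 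The proof then concludes by defining $(\PKE_r(K), d_r) := \mathcal E_r((\PKC(K), d_p))$, whose isomorphism class in $\mathcal C_g(\Lambda)$ is manifestly an invariant of $K$.
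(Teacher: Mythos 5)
Your proposal is correct and takes essentially the same route as the paper: the corollary is obtained by applying the functor $\mathcal E_r:\cgf \to \mathcal C_g(\tL)$ (whose existence and well-definedness on morphisms, for $r\geq 2$, rest on \cite[Chapter XI, Proposition 3.5]{MacLane:Hom}) to the filtered chain homotopy type guaranteed by Theorem \ref{knot-invt}. Your added remark that $d_r$ has bi-degree $(r,1)$ and that $\mathcal E_r(\Phi)$ preserves the $\delta$-grading is a harmless and correct elaboration of what the paper leaves implicit.
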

In the next section we shall explore the relationship between $\PKE_2(K)$ and Khovanov cohomology of the mirror image, $\widebar{K}$.


\section{Branched Double Covers and Odd Khovanov Homology} \label{odd-kh}
Suppose $D$ is a planar diagram for a link $K$ and $(\PKC(N_D),d_p)$ is the corresponding representative of $\PKC(K)$. The first page of the associated spectral sequence is equal to:
\begin{equation} \label{twisted-chain-com}
	(\bigoplus_{\bm\in \{0,1\}^{|N_D|}}\PFH(\Sigma(K_\bm)),\bigoplus_{(\bm,\bn)\in \mathcal E_c}
	\PFH(\Sigma(\Smn))
\end{equation}		
where $\mathcal E_c$ denotes the set of the edges in the cube $\{0,1\}^{|N_D|}$. This chain complex inherits the following gradings:
\begin{align} 
	\deg_h&=-|\bm|_1+n_-(D)\label{diagram-grading-h}\\
	\deg_\delta&=-\frac{1}{2}\deg_p-\frac{1}{2}|{\bm}|_1+\frac{1}{2}n_+(D) \label{diagram-grading-delta}
\end{align}
Note that the term $\sigma(\bm,\bn)$ does not contribute in the above formulas, because $D$ is a planar diagram and the self-intersection of the cobordism $\Smn$, for $(\bm,\bn)\in \{0,1\}^{|N_D|}$, is zero. Corollary \ref{spe-seq} asserts that the homology of this bi-graded complex is a link invariant. Our main goal in this section is to show that this link invariant, as a bi-graded module, is isomorphic to Khovanov homology of $\widebar K$ with coefficients in $\Lambda$. In fact, we shall prove a generalization of this claim when the coefficient ring is replaced with $\tLz$. 

Choose compatible homology orientations for the cobordisms $\Wmn=\Sigma(\Smn)$ as in Lemma \ref{fix-hom-ori}. The complex (\ref{twisted-chain-com}) can be lifted to the following chain complex with coefficients in $\tLz$:
\begin{equation} \label{odd-chain-com}
	(C_o(D),d_o):=(\bigoplus_{\bm\in \{0,1\}^n}\tPFH(\Sigma(K_\bm)),\bigoplus_{(\bm,\bn)\in \mathcal E_c}{\sgn(\bm,\bn)}\tPFH(\Sigma(\Smn))
\end{equation}
Recall that $\sgn(\bm,\bn)$, for $(\bm,\bn) \in \mathcal E_c$, is given by Formula (\ref{eta}) and is equal to the orientation of the 0-dimensional parametrizing set $\Gmn$. It is straightforward to check that $d_o$ defines a differential on $C_o(D)$. The same formulas as in (\ref{diagram-grading-h}) and (\ref{diagram-grading-delta}) define a bi-grading on $C_o(D)$ which are also called the homological and $\delta$-gradings.
 
In subsection \ref{odd-khov-rev} it is shown that the bi-graded chain complex (\ref{odd-chain-com}) is isomorphic to the odd Khovanov homology of the mirror image $\widebar K$ with coefficients in $\tLz$. This interpretation of odd Khovanov homology is used to construct a spectral sequence from odd Khovanov homology that converges to $\tPFH(\Sigma(K))$. This spectral sequence is a lift of the spectral sequence in Corollary \ref{spe-seq}. The subsection \ref{unlinks-cob-maps} is devoted to make some computations related to the plane Floer homology of the unlinks. These computations provide the main input for subsection \ref{odd-khov-rev}.

\subsection{Plane Floer Homology of Unlinks} \label{unlinks-cob-maps}
In this subsection, we study plane Floer homology of unlinks and maps associated with the elementary cobordisms. We take up this task firstly for a fixed link $U_n$. Let $U_n$ be the $n$-component link in $\rr^2\times \{0\} \subset \rr^3$ whose $i^{\rm th}$ component is a circle of radius $\frac{1}{4}$ centered around the point $(0,i)$. The link $U_n$ is obviously an unlink. The branched double cover $\Sigma(U_n)$ is diffeomorphic to $\#^{n-1}S^2 \times S^1$. This 3-manifold has a unique torsion $\spinc$ structure and its first Betti number is equal to $n-1$. Fix a metric on $\Sigma(U_n)$ and use the spin connection to fix an identification of  $\mathcal R(\Sigma(U_n))$ and $J(\Sigma(U_n))$. For each $1\leq i\leq n-1$, the path $p_i$ in Figure \ref{unlink} that connects the $i^{\rm th}$ connected component of $U_n$ to the $n^{\rm th}$ one can be lifted to an oriented loop $\gamma_i$ in the branched double cover. 
The set $\{\gamma_1,\dots,\gamma_{n-1}\}$ forms a basis of $H_1(\Sigma(U_n),\rr)$. Using the dual basis for $H^1(\Sigma(U_n),\rr)$, we have the following identification:
$$\mathcal R(\Sigma(U_n))=S_1 \times \dots \times S_{n-1}:=\{(z_1,\dots,z_{n-1})\in \cc^{n-1}\mid |z_i|=1\}$$
where $S_i$ is in correspondence with the loop $\gamma_i$. 

For $\tau=(\tau_1,\dots,\tau_{n-1}) \in \{-1,1\}^{n-1}$, let $T_\tau \subset \mathcal R(\Sigma(U_n))$ be the subspace: 
$$T_1\times \dots \times T_{n-1}\subseteq S_1\times \dots \times S_{n-1}$$ 
where $T_i=S_i$ if $\tau_i=1$, and otherwise $T_i=\{-1\}$. The normal bundle of this sub-manifold of $\mathcal R(\Sigma(U_n))$ is isomorphic to $l_1 \oplus \dots \oplus l_{n-1}$ where $l_i=\{0\}$ if $T_i=S_i$ and $l_i=\underline \rr$ if $T_i=\{-1\}$. In the latter case, orient $l_i$ with the standard orientation of the circle $S_i$. The product orientation defines a co-orientation for $T_\tau$. The poincar\'e duals of these co-oriented tori determine a basis for the cohomology of $\mathcal R(\Sigma(U_n))$ and hence a basis for $\tPFH(U_n)$.

\begin{figure}
	\centering
		\includegraphics[width=.6\textwidth]{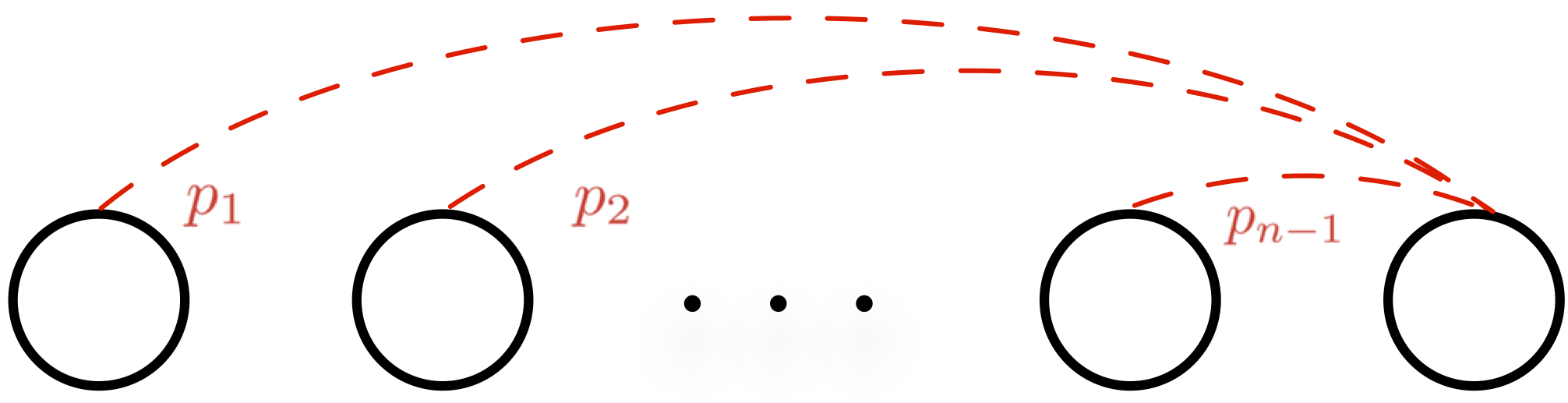}
		\caption{The unlink $U_n$}
		\label{unlink}
\end{figure}

This basis can be used to define an isomorphism $L_n:\tPFH(\Sigma(U_n)) \to \Lambda^*(\widetilde H^0(U_n))$, where $\widetilde H^0(U_n)$ is the reduced cohomology of $U_n$. For $1\leq i\leq n-1$, let $v_i\in\tPFH \widetilde H^0(U_n)$ be the element that assigns 1 to the $i^{\rm th}$ component of $U_n$, $-1$ to the $n^{\rm th}$ component, and 0 to the other components. The linear map $L_n$, by definition, sends the generator of $\tPFH(U_n)$, represented by $T_\tau$, to: 
$$(-1)^{k(n-1)} v_{i_1}\wedge \dots \wedge v_{i_k}\in \Lambda^*(\widetilde H^0(U_n))$$ 
where $i_1 <i_2<\dots <i_k$, and $l\in \{i_1,\dots,i_n\}$ if and only if $\tau_l=-1$.

\begin{remark}
	Equip $\mathcal R(\Sigma(U_n))$ with the standard product metric and consider the Morse-Smale function that maps $(z_1,\dots,z_{n-1})$ to $\sum (z_i+\bar z_i)$. 
	The set of the critical points of this function is equal to $\{-1,1\}^{n-1}$ and the closure of the unstable manifold of the critical point $\tau$ is equal to $T_\tau$. 
	Therefore, the basis of $\tPFH(U_n)$, mentioned above, consists of the cycles naturally 
	associated with the generators of the Morse complex of $\mathcal R(\Sigma(U_n))$.
\end{remark}

\begin{proposition} \label{ele-cob-maps}
	Suppose $C_n$ and $P_n$ are respectively the cap and the split cobordisms from $U_n$ to $U_{n+1}$ (Figure \ref{U_n-to-U_n+1}). 
	Then an appropriate homology orientations for these 
	cobordisms can be fixed such that the corresponding cobordism maps are equal to:
	\begin{equation} \label{cob-map-C-S}
		\tPFH(\Sigma(C_n))(v_{i_1}\wedge\dots\wedge v_{i_k})=v_{i_1}\wedge\dots\wedge v_{i_k}\hspace{.5cm}
		\tPFH(\Sigma(P_n))(v_{i_1}\wedge\dots\wedge v_{i_k})=v_n\wedge v_{i_1}\wedge\dots\wedge v_{i_k}
	\end{equation}	
	Similarly, the cup cobordism $\widebar C_n:U_{n+1}\to U_{n}$ and the merge cobordism $\widebar P_n:U_{n+1}\to U_{n}$ (Figure \ref{U_n+1-to-U_n}) can be
	 given homology orientations such that:
	$$\hspace{.7cm}\tPFH(\Sigma(\widebar C_n))(v_{i_1}\wedge\dots\wedge v_{i_k})=0\hspace{2cm} 
	\tPFH(\Sigma(\widebar P_n))(v_{i_1}\wedge\dots\wedge v_{i_k})=v_{i_1}\wedge\dots\wedge v_{i_k}$$
	\begin{equation} \label{cob-map-bC-M}
		\tPFH(\Sigma(\widebar C_n))(v_n \wedge v_{i_1}\wedge\dots\wedge v_{i_k})=v_{i_1}\wedge\dots	\wedge v_{i_k}\hspace{.5cm}
		\tPFH(\Sigma(\widebar P_n))(v_n \wedge v_{i_1}\wedge\dots\wedge v_{i_k})=0
	\end{equation}
	where $1\leq i_l\leq n-1$.
\end{proposition}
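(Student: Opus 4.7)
The plan is to reduce each computation to a pull-up--push-down map on Jacobian tori via Proposition \ref{mor-b^+}, then to identify the resulting map on $\Lambda^*\widetilde H^0(U_n)$ using $L_n$.

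First I would analyze the topology of the branched double covers. The cap $C_n$ adds a disjoint unknotted disk, so $\Sigma(C_n)$ is a $1$-handle cobordism producing a new $S^2\times S^1$ summand (verified e.g.\ by $\chi(\Sigma(C_n))=-\chi(C_n)=-1$). For the split $P_n$, the saddle arc has both endpoints on the branch set, so its lift $\tilde\alpha\subset\Sigma(U_n)$ is a null-homologous circle -- it bounds the branched double cover of a small disk neighborhood, which is again a disk -- and the band determines the $0$-framing on $\tilde\alpha$; hence $\Sigma(P_n)$ is the $0$-framed $2$-handle cobordism along $\tilde\alpha$, realizing $\Sigma(U_{n+1})$ as $\Sigma(U_n)\#(S^2\times S^1)$. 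The cobordisms $\Sigma(\bar C_n)$ and $\Sigma(\bar P_n)$ are the orientation-reverses of these. A direct handle computation confirms $b^+=0$ in all four cases, since $1$-handles contribute no $H^2$ and the $2$-handle has self-intersection equal to its framing $0$.

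Next I would verify that, in each case, exactly one $\spin^c$ structure on the cobordism restricts to the (unique) torsion $\spin^c$ structures on both ends, with $c_1(\mathfrak s)\cdot c_1(\mathfrak s)=0$. For $\Sigma(P_n)$ one computes $H^2(W)\cong\mathbb Z\oplus\mathbb Z^{n-1}$, where the extra $\mathbb Z$ (generated by the Poincar\'e dual of the cocore) restricts to $Y_1$ as the dual of the belt circle of the $2$-handle -- a generator of the new $\mathbb Z$ summand of $H^2(Y_1)$ -- so that the kernel of $H^2(W)\to H^2(\partial W)$ vanishes; the $1$-handle case is easier. Consequently the sum in Proposition \ref{mor-b^+}(ii) collapses to a single unweighted pull-up--push-down term $D(W;Y_0,Y_1)(\mathfrak t_0,\mathfrak t_1)$.

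The third step is to compute this pull-up--push-down map, using the identifications $\mathcal R(\Sigma(U_n))=J(\Sigma(U_n))$ and $H^*(J(\Sigma(U_n));\tLz)=\Lambda^*H^1(\Sigma(U_n);\tLz)$ where the basis $\{v_i\}$ is dual to $\{\gamma_i\}$. For $C_n$, the inclusions $H_1(Y_0)\hookrightarrow H_1(W)$ and $H_1(Y_1)\xrightarrow{\cong}H_1(W)$ dualize to a projection $r_0:J(W)\twoheadrightarrow J(Y_0)$ and an isomorphism $r_1$, so the pull-up--push-down is pullback along a torus projection and sends $v_{i_1}\wedge\cdots\wedge v_{i_k}$ to itself. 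For $P_n$ the roles are reversed: $r_0$ is an isomorphism and $r_1:J(W)\hookrightarrow J(Y_1)$ is a codimension-one subtorus inclusion, whose pushforward on cohomology wedges with the Poincar\'e dual $v_n$ of the subtorus. For $\bar C_n,\bar P_n$ the maps $r_0,r_1$ swap, and pushforward along the torus projection becomes integration along the $S^1$ fiber, which sends $v_n\wedge w\mapsto w$ and kills $w$ when $v_n$ is absent -- exactly the stated formulas.

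Finally, the homology orientations of the four cobordisms are chosen to absorb the residual signs; the factor $(-1)^{k(n-1)}$ built into $L_n$ is precisely what is needed to reconcile the orientation of $T_\tau$ with the sign produced by the pull-up--push-down convention. The main obstacle is the $\spin^c$-theoretic step for $\Sigma(P_n)$: verifying uniqueness of the relevant $\spin^c$ structure and $c_1^2=0$ is what guarantees that the a priori infinite sum in Proposition \ref{mor-b^+}(ii) reduces to the single unweighted pull-up--push-down term matching the exterior-algebra wedge.
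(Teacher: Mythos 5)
Your proposal takes essentially the same approach as the paper for $C_n$ and $P_n$: identify the topology of the branched double cover (boundary connected sum of a trivial product cobordism with $(D^3\times S^1)\setminus D^4$ or $(D^2\times S^2)\setminus D^4$), check $b^+=0$ and uniqueness of the relevant $\spinc$ structure, and then read off the pull-up--push-down map from Proposition \ref{mor-b^+}(ii). Your observations that the kernel of $H^2(W)\to H^2(\partial W)$ vanishes and that $c_1^2=0$ for the unique contributing $\spinc$ structure are exactly what is needed for the weighted sum to collapse to a single unweighted term, and your identification of $r_0,r_1$ with a torus projection and an inclusion, respectively, reproduces the paper's diagrams (\ref{diag-C_n}) and (\ref{diag-S_n}).

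Where you diverge from the paper is in the treatment of $\widebar C_n$ and $\widebar P_n$. You propose computing all four pull-up--push-down maps directly. The paper instead fixes the homology orientations for $\widebar C_n$ and $\widebar P_n$ by requiring that the composed orientations on the product cobordisms $C_n\circ\widebar P_n$ and $P_n\circ\widebar C_n$ are trivial, and then deduces (\ref{cob-map-bC-M}) from: (a) the compositions $\tPFH(\Sigma(\widebar P_n))\circ\tPFH(\Sigma(C_n))$ and $\tPFH(\Sigma(\widebar C_n))\circ\tPFH(\Sigma(P_n))$ are the identity, and (b) the other two compositions vanish, the first because $b^+(\Sigma(P_n\circ\widebar P_n))>0$ and the second because $\Sigma(C_n\circ\widebar C_n)$ has an $S^3\times S^1$ connected summand whose $J$-circle is killed by both restriction maps. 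Both routes are valid. Your direct computation avoids having to analyze the two vanishing compositions, which is a modest simplification. The paper's composition argument, on the other hand, produces homology orientations for $\widebar C_n$ and $\widebar P_n$ that are \emph{by construction} compatible under composition with those of $C_n$ and $P_n$; that coherence is leveraged in the subsequent treatment of general unlink cobordisms and in Lemma \ref{typeX-even}, so the paper's choice is not merely stylistic. One small caution: your phrasing describes both $\widebar C_n$ and $\widebar P_n$ as governed by ``integration along the $S^1$ fiber'', but only $\widebar C_n$ is a Gysin pushforward; for $\widebar P_n$ the roles swap the other way ($r_0$ becomes the inclusion and $r_1$ the identity), so the map is pullback along the codimension-one subtorus inclusion, which kills $v_n\wedge w$ and fixes $w$ -- the formula stated for $\widebar P_n$ is not the one you wrote in that sentence, though your general principle of swapping $r_0,r_1$ gives the right answer once applied case by case.
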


 \begin{figure}[t]
	\centering
	\begin{subfigure}[b]{0.35 \textwidth}
		\includegraphics[width=\textwidth]{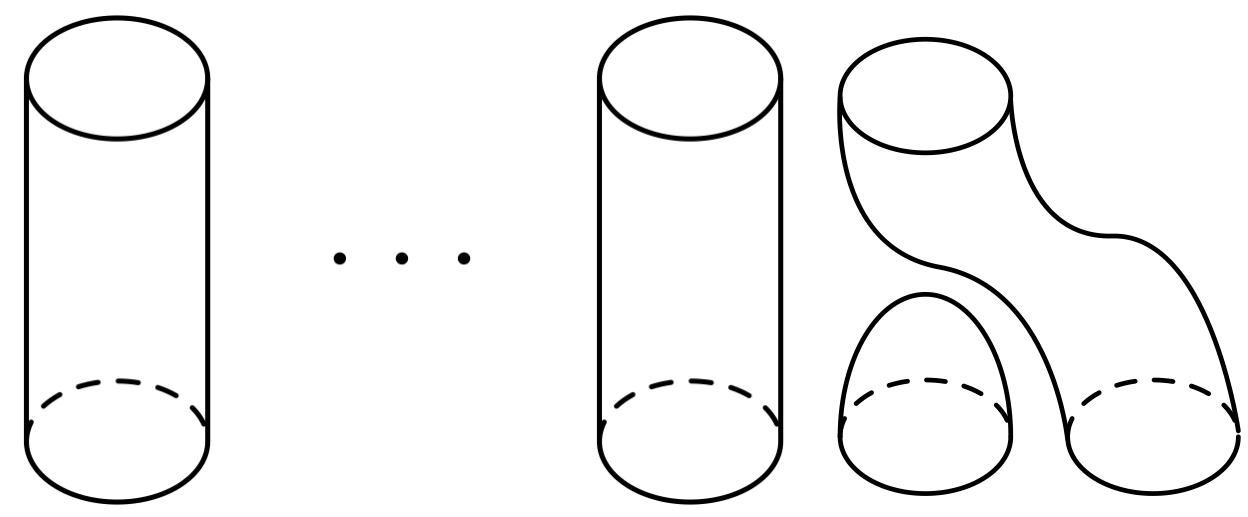}
		\caption{Cap cobordism $C_n$}
		\label{cap-cob}
	\end{subfigure}
	\begin{subfigure}[b]{0.10 \textwidth}
	\includegraphics[width=\textwidth]{empty}
	\end{subfigure}
	\hspace{7pt}
	\begin{subfigure}[b]{0.35 \textwidth}
		\includegraphics[width=\textwidth]{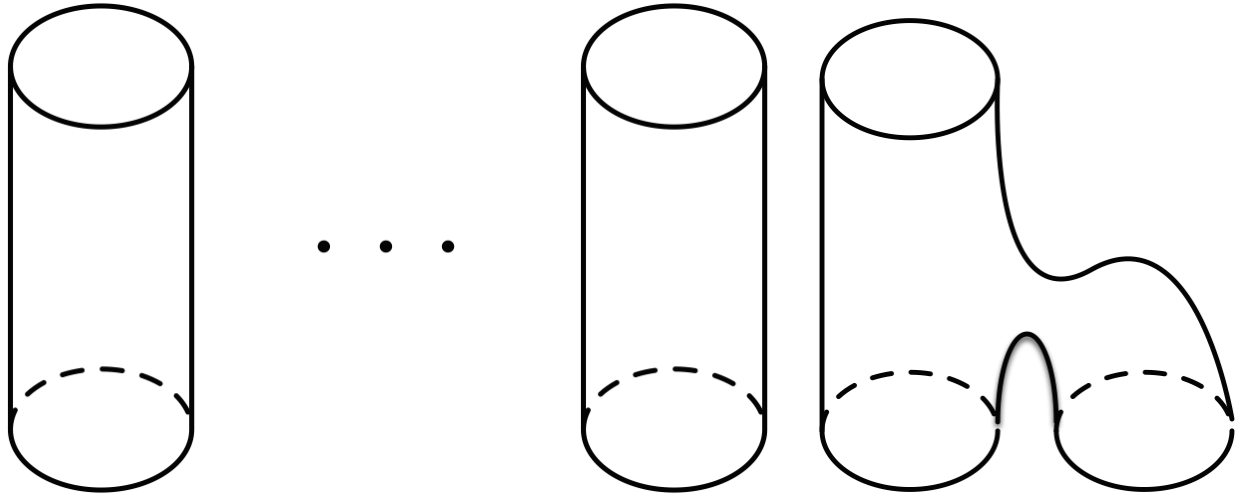}
		\caption{Split cobordism $P_n$}
		\label{split-cob}
	\end{subfigure}
	\caption{Elementary Cobordisms from $U_n$ to $U_{n+1}$}
	\label{U_n-to-U_n+1}
\end{figure}

 \begin{figure}[b]
	\centering
	\begin{subfigure}[b]{0.35 \textwidth}
		\includegraphics[width=\textwidth]{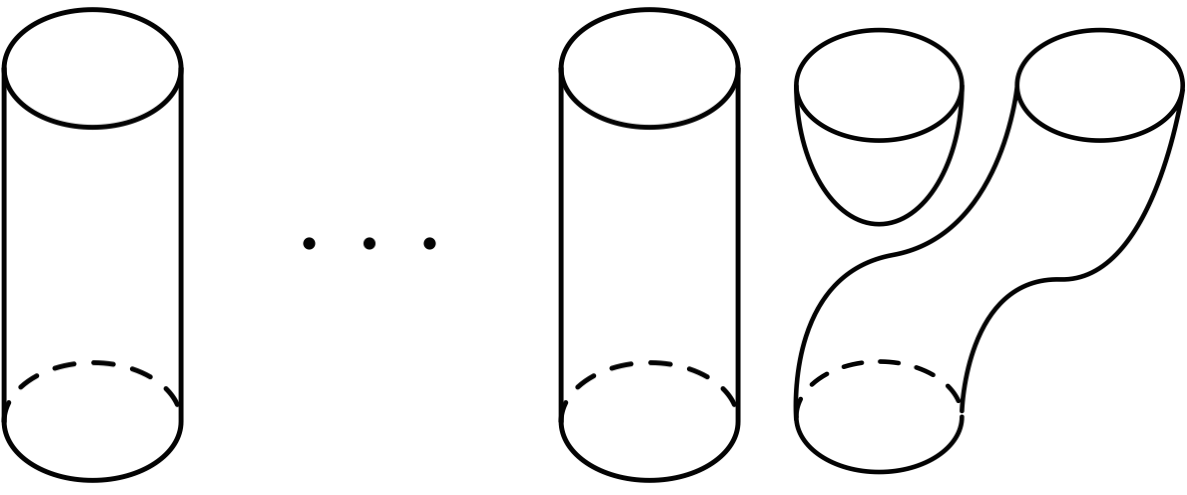}
		\caption{Cup cobordism $\widebar C_n$}
		\label{cup-cob}
	\end{subfigure}
	\begin{subfigure}[b]{0.10 \textwidth}
	\includegraphics[width=\textwidth]{empty}
	\end{subfigure}
	\hspace{7pt}
	\begin{subfigure}[b]{0.35 \textwidth}
		\includegraphics[width=\textwidth]{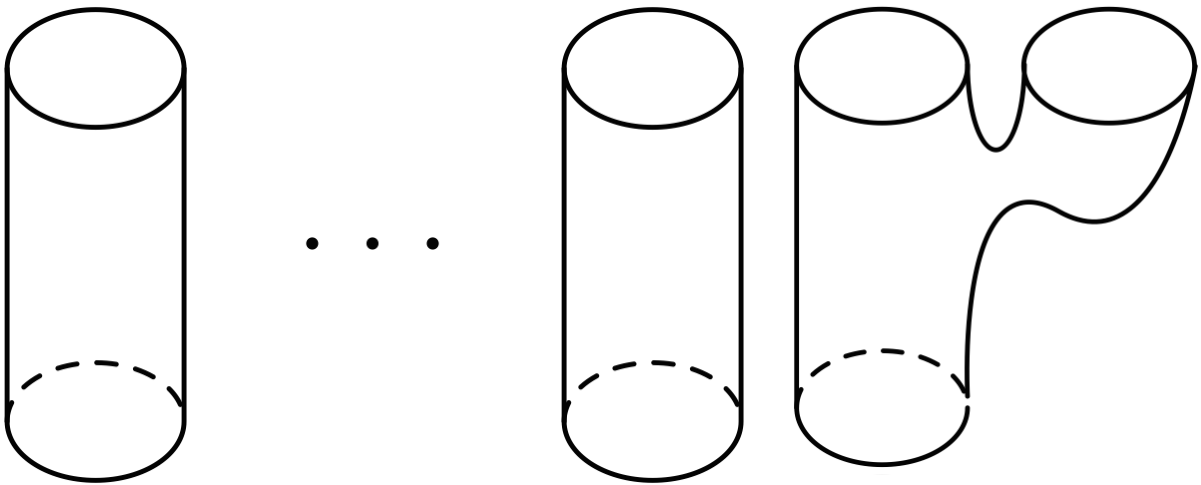}
		\caption{Merge cobordism $\widebar P_n$}
		\label{merge-cob}
	\end{subfigure}
	\caption{Elementary Cobordisms from $U_{n+1}$ to $U_n$}
	\label{U_n+1-to-U_n}
\end{figure}

\begin{proof}
	The cobordism $\Sigma(C_n)$ is diffeomorphic to: 
	$$((D^3 \times S^1)\backslash D^4)\natural (I\times (\#^{n-1}S^2\times S^1)):\#^{n-1} S^2\times S^1 \to \#^n S^2\times S^1 $$  
	
	Here $(D^3\times S^1)\backslash D^4$ is regarded as a cobordism from $S^3$ to $S^1 \times S^2$. Given two cobordisms $W:Y_0 \to Y_1$ and $W':Y_0' \to Y_1'$,
	$W \natural W'$ is a cobordism from the connected sum $Y_0 \# Y_0'$ to the connected sum $Y_1 \# Y_1'$. This cobordism is constructed by removing
	a neighborhood of paths in $W$ and $W'$ which connect the incoming and the outgoing ends and then gluing the resulted manifolds along the boundary of the 
	regular neighborhoods of the paths.
	Therefore, $b^+(\Sigma(C_n))=0$, $b^1(\Sigma(C_n))=n$, 
	and there is exactly one $\spinc$ structure that restricts to the torsion $\spinc$ structure on the boundary. This $\spinc$ structure is the lift of the spin structure on 
	$\Sigma(C_n)$. 
	Because $b^+(\Sigma(C_n))=0$, we can use the second case of Proposition \ref{mor-b^+} to compute the cobordism map associated with $\Sigma(C_n)$. 
	The corresponding pull-up-push-down diagram is given by:
	\begin{equation} \label{diag-C_n} 
		\xymatrix{
			&S_1 \times S_2 \times \dots \times S_{n-1}\times S_n\ar[dl]_{j_n} \ar[dr]^{id}&\\
			S_1 \times S_2 \times \dots \times S_{n-1} &&S_1 \times S_2 \times \dots S_{n-1}\times S_n\\
			}
	\end{equation}
	where the map $j_n$ sends the point $(z_1,\dots ,z_{n-1},z_n)$ to $(z_1,\dots ,z_{n-1})$. The spaces $M(\Sigma(C_n))$ and $\mathcal R(U_{n+1})$ 
	are the same and we orient them with the same orientations. Use these orientations of  $M(\Sigma(C_n))$ and $\mathcal R(U_{n+1})$ to fix a homology 
	orientation for $\Sigma(C_n)$.  Our conventions from subsection \ref{ori-loc} implies that:
	$$\tPFH(\Sigma(C_n))({\rm PD}[T_{\tau}])=(-1)^k {\rm PD}[T_{\tau}\times S_n]$$
	where $k$ is the codimension of $T_\tau$. Therefore, $\tPFH(\Sigma(C_n))$ has the form in (\ref{cob-map-C-S}).
	
	In the case of the split cobordism $P_n: U_{n}\to U_{n+1}$, the branched double cover $\Sigma(P_n)$ is diffeomorphic to:
	$$((D^2\times S^2)\backslash D^4) \natural (I\times (\#^{n-1} S^2 \times S^1)):\#^{n-1} S^2\times S^1 \to \#^n S^2\times S^1$$
	Here  $(D^2\times S^2)\backslash D^4$ is regarded as a cobordism from $S^3$ to $S^1 \times S^2$. 
	The diagram of the moduli space and the restriction maps is isomorphic to:
	\begin{equation} \label{diag-S_n}
		\xymatrix{
			&S_1 \times S_2 \times \dots \times S_{n-1}\ar[dl]_{id} \ar[dr]^{i_n}&\\
			S_1 \times S_2 \times \dots \times S_{n-1} &&S_1 \times S_2 \times \dots S_{n-1}\times S_n\\
			}
	\end{equation}
	where the map $i_n$ maps $(z_1,\dots,z_{n-1})$ to $(z_1,\dots,z_{n-1},1)$. Fix an arbitrary orientation of $M(\Sigma(P_n))$ and let 
	$\mathcal R(U_{n+1})=M(\Sigma(P_n)) \times S_n$ have the product orientation. Again, define a homology orientation of $\Sigma(P_n)$ using these orientations.
	Then we have:
	$$\tPFH(\Sigma(P_n))({\rm PD}[T_{\tau}])=(-1)^n {\rm PD}[T_{\tau}]$$
	which finishes the proof of (\ref{cob-map-C-S}).
	
	Note that $C_n \circ \widebar P_n$ and $P_n \circ \widebar C_n$ are product cobordisms. Fix homology orientations for $\widebar P_n$ and $\widebar C_n$
	such that the composed orientations on the product cobordisms $C_n \circ \widebar P_n$ and $P_n \circ \widebar C_n$ are trivial. 
	Therefore, the following compositions are identity:
	$$\tPFH(\Sigma(\widebar P_n)) \circ \tPFH(\Sigma(C_n)) \hspace{2cm} \tPFH(\Sigma(\widebar C_n)) \circ \tPFH(\Sigma(P_n))$$
	On the other hand, an application of Proposition \ref{mor-b^+} shows that the following cobordism maps vanish:
	$$\tPFH(\Sigma(\widebar P_n)) \circ \tPFH(\Sigma(P_n)) \hspace{2cm} \tPFH(\Sigma(\widebar C_n)) \circ \tPFH(\Sigma(C_n))$$
	This is true because $b^+(\Sigma(P_n \circ \widebar P_n))>0$ and $\Sigma(C_n \circ \widebar C_n)\cong (I \times \#^{n-1} S^2 \times S^1)\# S^3 \times S^1$. 
	Identity (\ref{cob-map-bC-M}) can be verified with the characterization of the above compositions.
\end{proof}

From now on we use the term unlink in the following more restrictive sense:
\begin{definition}
	An unlink is a pair $(\mU,\mathcal D)$ of a link $\mU$ with $n$ connected components and a Seifert surface $\mathcal D$ that consists of $n$ discs. 
	An {\it unlink isotopy} $\mathcal J$ from $(\mU,\mathcal D)$ to $(\mU',\mathcal D')$ is an isotopy of surfaces (with boundary) that starts from 
	$\mathcal D$ and ends at $\mathcal D'$. 
\end{definition}

The link $U_n$ comes with an obvious disc filling $D_n$ that is embedded in the plane $\rr^2 \times \{0\} \subset \rr^3$, and $(U_n,D_n)$ is our standard example of unlink with $n$ components. Note that restriction to the boundary of an unlink isotopy $\mathcal J:(\mU,\mathcal D) \to (\mU',\mathcal D')$ induces an isotopy $J:\mU \to \mU'$. If $\mathcal  J$ and $\mathcal  J'$ are two unlink isotopies, then a homotopy between $\mathcal  J$ and $\mathcal  J'$ is a smooth family of unlink isotopies $\mathcal J_t:(\mU,\mathcal D) \to (\mU',\mathcal D')$ such that $\mathcal J_0=\mathcal J$ and $\mathcal J_1=\mathcal J'$. The unlink isotopy $\mathcal J$ induces a bijection from the set of connected components of $\mU$ to that of $\mU'$ which does not change by homotopies of unlink isotopies. In fact, $\mathcal J$, up to homotopy, is determined by this bijection. This is a consequence of the fact that the fundamental group of configuration space of $n$ (labeled) points in $S^3$ is trivial. The cobordism $\Sigma(J)$ is diffeomorphic to $I \times \Sigma(\mU')$ and hence it comes with the product homology orientation. In the following, we use this homology orientation for the branched double cover of an unlink isotopy. Also, we will drop the disc filling $\mathcal D$ from our notation for an unlink unless there is a chance of confusion.

\begin{proposition} \label{cob-map-iso}
	Let $\mathcal J:U_n \to U_n$ be an unlink isotopy that induces the permutation $\sigma$ of the connected components of $U_n$. That is to say, $\mathcal J$ connects 
	the $i^{th}$ connected component of the incoming unlink to the $\sigma(i)^{th}$ connected component of the outgoing end. The permutation $\sigma$
	induces an automorphism of $H^0(U_n)$ and hence an automorphism of $\Lambda^*(\widetilde H^0(U_n)$. Once $\tPFH(\Sigma(U_n))$ is identified with 
	$\Lambda^*(\widetilde H^0(U_n)$ as above, this automorphism is equal to $\tPFH(\Sigma(J))$.
\end{proposition}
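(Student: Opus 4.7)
The plan proceeds in three steps. First, since $\mathcal J$ is an isotopy, the 4-manifold $\Sigma(\mathcal J)$ is diffeomorphic to the cylinder $I \times \Sigma(U_n)$, so $b^+(\Sigma(\mathcal J))=0$ and there is a unique $\spinc$ structure extending the torsion structure on the boundary. Proposition \ref{mor-b^+}(ii) then applies, and under the identifications of the two boundary components with a fixed model of $\Sigma(U_n)$, the two restriction maps $J(\Sigma(\mathcal J)) \to J(\Sigma(U_n))$ in the pull-up-push-down diagram become, respectively, the identity and the isomorphism $(\varphi_\sigma)_*$ induced by any self-diffeomorphism $\varphi_\sigma$ of $\Sigma(U_n)$ realizing $\sigma$. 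The orientations agree because the product homology orientation is in use on $\Sigma(\mathcal J)$. Hence $\tPFH(\Sigma(J)) = (\varphi_\sigma)_*$ on $H^*(\mathcal R(\Sigma(U_n));\tLz)$, and depends only on the homotopy class of $\mathcal J$, consistent with the fact that this class is determined by $\sigma$.

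Second, I would reduce to the case of a single transposition. Both assignments
\begin{equation*}
\sigma \mapsto \tPFH(\Sigma(J_\sigma)), \qquad \sigma \mapsto \Lambda^*(\sigma_*) \text{ on } \Lambda^*(\widetilde H^0(U_n))
\end{equation*}
are group homomorphisms $S_n \to \mathrm{Aut}(\tPFH(\Sigma(U_n)))$: the former by Proposition \ref{functoriality} together with the fact that the composition of unlink isotopies corresponds to the product of permutations and that product homology orientations compose correctly; the latter by construction. It therefore suffices to verify the formula on a generating set, for which I would take the adjacent transpositions $(i\ i{+}1)$.

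Third, for $\sigma = (i\ i{+}1)$ I would realize $\varphi_\sigma$ by a $\pi$-rotation of a round ball containing the $i^{\text{th}}$ and $(i{+}1)^{\text{th}}$ strands of $U_n$ and compute its action on the basis $\{\gamma_1, \ldots, \gamma_{n-1}\}$ of $H_1(\Sigma(U_n);\zz)$. Setting $\gamma_n := 0$ and $v_n := 0$, the pairing $\langle v_j, \gamma_k\rangle$ takes the values $1$, $-1$, $0$ exactly as described in the setup of $L_n$, so the induced action on $H^1$ is dual to the permutation action on $\widetilde H^0(U_n)$. Extending through the wedge product then gives the stated formula on $\Lambda^*(\widetilde H^0(U_n))$.

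The main obstacle will be the sign bookkeeping: the factor $(-1)^{k(n-1)}$ in the definition of $L_n$, together with the reordering of the co-oriented normal bundles of the tori $T_\tau$ under $\varphi_\sigma$, must conspire to produce the unadorned permutation action on $\Lambda^*(\widetilde H^0(U_n))$. I would handle this by first matching on the codimension-one generators $v_i$ (which correspond to tori $T_\tau$ with $\tau$ having a single $-1$), where the sign check is a direct computation, and then invoking the compatibility of $L_n$ with the wedge product, i.e.\ the fact that intersection of Poincar\'e duals on $\mathcal R(\Sigma(U_n))$ corresponds to the wedge product on $\Lambda^*(\widetilde H^0(U_n))$ under $L_n$, to propagate the match to all of $\Lambda^*(\widetilde H^0(U_n))$.
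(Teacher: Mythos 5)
Your proposal is correct and follows the same essential route as the paper: observe $\Sigma(\mathcal J)$ is a product cylinder with $b^+=0$, invoke Proposition \ref{mor-b^+}(ii) to reduce to a pull-up-push-down computation on the Jacobian tori, and then identify the relevant restriction map with the automorphism of $J(\Sigma(U_n))$ induced by the deck transformation realizing $\sigma$. The one organizational difference is your second step: you reduce to adjacent transpositions using that both $\sigma\mapsto\tPFH(\Sigma(J_\sigma))$ and $\sigma\mapsto\Lambda^*(\sigma_*)$ are homomorphisms of $S_n$, whereas the paper simply writes down the explicit formula for $r_\sigma$ in full generality (splitting into the cases $\sigma(n)=n$ and $\sigma(n)\neq n$; the latter picks up the factors $z_{\sigma(n)}^{-1}$ because the basis $\{\gamma_1,\dots,\gamma_{n-1}\}$ singles out the $n$-th component as the basepoint). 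Your reduction buys you simpler model computations — for $(i\ i{+}1)$ with $i\leq n-2$ the formula is just a coordinate swap — at the cost of needing to verify that the product homology orientations compose multiplicatively under concatenation of cylinders (true, and implicit in the paper's setup). You also correctly flag the sign bookkeeping with the $(-1)^{k(n-1)}$ factor in $L_n$ as the delicate point; your plan to check on the codimension-one classes $v_i$ and propagate via multiplicativity of $L_n$ with respect to cup/wedge product is sound (and the sign $(-1)^{k(n-1)}$ is indeed multiplicative in $k$), though the paper passes over this step without comment.
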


\begin{proof}
	The restriction maps from $M(\Sigma(J))$ to the representation varieties of the ends are diffeomorphisms of tori of dimension $n-1$. 
	However, these maps are not equal to each other. An appropriate parametrization of $M(\Sigma(J))$ 
	gives rise to the following restriction maps:
	\begin{equation} 
		\xymatrix{
			&S_1 \times S_2 \times \dots \times S_{n-1}\ar[dl]_{r_\sigma} \ar[dr]^{id}&\\
			S_1 \times S_2 \times \dots \times S_{n-1} &&S_1 \times S_2 \times \dots S_{n-1}\times S_{n-1}\\
			}
	\end{equation}
	where $r_\sigma(z_1,\dots,z_{n-1})$ is equal to:
	\begin{equation*}
		\left\{
		\begin{array}{ll}
			(z_{\sigma(1)},\dots,z_{\sigma(n-1)})& \text{\rm if } \sigma(n)=n\\
			(z_{\sigma(1)}z_i^{-1},\dots,z_{\sigma(j-1)}z_i^{-1},z_i^{-1},z_{\sigma(j+1)}z_i^{-1},\dots,z_{\sigma(n-1)}z_i^{-1})& \text{\rm if } i:=\sigma(n)\leq n-1, j:=
			\sigma^{-1}(n)
		\end{array}\right.
	\end{equation*}	
	Therefore, the map $\tPFH(\Sigma(J))$ is equal to $(r_\sigma)^*$. It is also true that the induced map by $\sigma$ on $\Lambda^*(\widetilde H^0(U_n)$ is the same
	as $(r_\sigma)^*$.
\end{proof}

\begin{corollary}
	If $(\mU,\mathcal D)$ is an arbitrary unlink, then $\tPFH(\Sigma(\mU))$ is canonically isomorphic to $\Lambda^*(\widetilde H^0(\mU)$.
\end{corollary}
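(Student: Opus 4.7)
The plan is to transfer the identification $L_n:\tPFH(\Sigma(U_n))\to \Lambda^*(\widetilde H^0(U_n))$ from the standard unlink to an arbitrary unlink by choosing an unlink isotopy, and then to use Proposition \ref{cob-map-iso} to check that the resulting identification does not depend on the choice.

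First, given an unlink $(\mU,\mathcal D)$ with $n$ components, I would choose an unlink isotopy $\mathcal J:(\mU,\mathcal D)\to(U_n,D_n)$; such an $\mathcal J$ exists because any $n$ unlinked discs in $S^3$ are ambient isotopic to the standard $n$ discs in a plane. The boundary of $\mathcal J$ is an isotopy $J:\mU\to U_n$ that induces a distinguished bijection between the sets of connected components of $\mU$ and of $U_n$, and hence an isomorphism $\sigma_{\mathcal J}^{*}:\Lambda^{*}(\widetilde H^0(U_n))\to \Lambda^{*}(\widetilde H^0(\mU))$. The isotopy $\mathcal J$ also gives a cobordism of pairs $\Sigma(J):\Sigma(\mU)\to\Sigma(U_n)$ which is diffeomorphic to a product cylinder; equipped with its product homology orientation, the cobordism map $\tPFH(\Sigma(J))$ is an isomorphism of $\tLz$-modules. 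I then define
\[
L_{\mU,\mathcal J}:=\sigma_{\mathcal J}^{*}\circ L_n\circ \tPFH(\Sigma(J)):\tPFH(\Sigma(\mU))\longrightarrow\Lambda^{*}(\widetilde H^0(\mU)).
\]

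The remaining and main step is to show that $L_{\mU,\mathcal J}$ is independent of $\mathcal J$. Given a second unlink isotopy $\mathcal J':(\mU,\mathcal D)\to(U_n,D_n)$, the composition $\mathcal K:=\mathcal J'\circ \mathcal J^{-1}$ is an unlink self-isotopy of $(U_n,D_n)$ inducing some permutation $\sigma$ of the components of $U_n$. Functoriality of $\tPFH$ with respect to composition of cobordisms (Proposition \ref{functoriality}, applied to the product homology orientations) gives $\tPFH(\Sigma(J'))=\tPFH(\Sigma(K))\circ \tPFH(\Sigma(J))$, and Proposition \ref{cob-map-iso} identifies $L_n\circ\tPFH(\Sigma(K))\circ L_n^{-1}$ with the automorphism of $\Lambda^{*}(\widetilde H^0(U_n))$ induced by $\sigma$. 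On the other hand, by construction $\sigma_{\mathcal J'}=\sigma\circ\sigma_{\mathcal J}$ as bijections on components, so $\sigma_{\mathcal J'}^{*}=\sigma_{\mathcal J}^{*}\circ\sigma^{*}$ on exterior algebras, and the two factors of $\sigma$ cancel to give $L_{\mU,\mathcal J}=L_{\mU,\mathcal J'}$. I would also note that the space of unlink isotopies between $(\mU,\mathcal D)$ and $(U_n,D_n)$ is connected up to this permutation ambiguity, so no further checks are needed; in particular different homotopic choices of $\mathcal J$ give the same cobordism map by the standard invariance of $\tPFH$ under homotopies of cobordisms.

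The main obstacle I anticipate is the careful bookkeeping of homology orientations: one must make sure that the product homology orientation on $\Sigma(J)$ composes correctly with itself under $J'\circ J^{-1}$ so that it agrees with the homology orientation implicit in Proposition \ref{cob-map-iso}. Once this bookkeeping is done, the invariance computation above is essentially a formal consequence of the functoriality of $\tPFH$ and Proposition \ref{cob-map-iso}, and the resulting canonical isomorphism $L_{\mU}:\tPFH(\Sigma(\mU))\cong \Lambda^{*}(\widetilde H^0(\mU))$ is the desired statement.
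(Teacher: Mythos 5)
Your proposal is correct and follows essentially the same route as the paper: choose an unlink isotopy $\mathcal J$ to the standard $(U_n,D_n)$, transport $L_n$ through $\tPFH(\Sigma(J))$ and the component bijection, and invoke Proposition \ref{cob-map-iso} to see that the resulting isomorphism is independent of $\mathcal J$. The paper's proof is terser but identical in substance; your extra step spelling out the cancellation via $\mathcal K=\mathcal J'\circ\mathcal J^{-1}$ and the functoriality of $\tPFH$ is exactly what Proposition \ref{cob-map-iso} is implicitly being used for.
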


\begin{proof}
	Let $\mU$ have $n$ components and fix an unlink isotopy $\mathcal J: (\mU,\mathcal D) \to (U_n,D_n)$. Since $J$ is the product cobordism, $\tPFH(\Sigma(J))$
	is an isomorphism. Furthermore, $\mathcal J$ defines a bijection between the connected components of $\mU$ and $U_n$. Therefore, it induces an isomorphism
	$i_{\mathcal J}: \Lambda^*(\widetilde H^0(\mU) \to \Lambda^*(\widetilde H^0(U_n)$. 
	The map $i_{\mathcal J}^{-1} \circ L_n \circ \PFH(\Sigma(J)):\tPFH(\Sigma(\mU)) \to \Lambda^*(\widetilde H^0(\mU)$ is an isomorphism. 
	Proposition \ref{cob-map-iso} shows that this isomorphism is independent of the choice of $\mathcal J$.
\end{proof}

Now that the plane Floer homology of an unlink is characterized more canonically, we can improve the statement of Proposition \ref{ele-cob-maps}. Let $c$ be a crossing of an unlink $\mU$ with $n$ components such that the two strands belong to the same component of $\mU$ and the intersection of the disc filling $\mathcal D$ with $c$ has the form in Figure \ref{filling-1}. Then replacing the crossing with Figure \ref{filling-2} introduces another unlink $\mU'$ with $n+1$ components and a disc filling $\mathcal D'$. The connected components of $\mU'$ that contains the strands in Figure \ref{filling-2} are denoted by $K_1$ and $K_2$. 

 \begin{figure}
	\centering
	\begin{subfigure}[b]{0.35 \textwidth}
		\includegraphics[width=\textwidth]{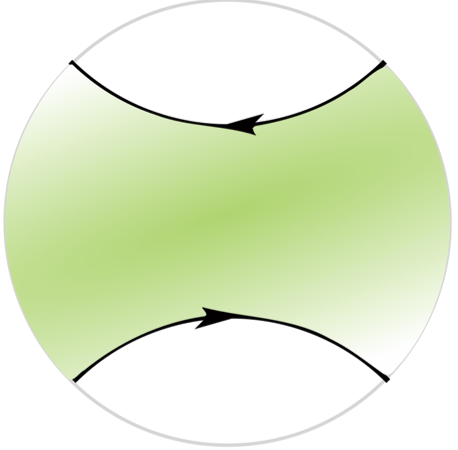}
		\caption{}
		\label{filling-1}
	\end{subfigure}
	\begin{subfigure}[b]{0.10 \textwidth}
	\includegraphics[width=\textwidth]{empty}
	\end{subfigure}
	\hspace{7pt}
	\begin{subfigure}[b]{0.35 \textwidth}
		\includegraphics[width=\textwidth]{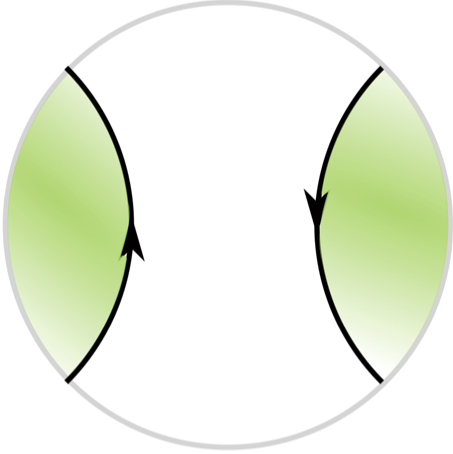}
		\caption{}
		\label{filling-2}
	\end{subfigure}
	\caption{}
	\label{U_n+1-to-U_n}
\end{figure}

As in subsection \ref{PFH-dbl-cover}, we can define a cobordism $\mathcal P :\mU \to \mU'$ which is a split cobordism in this case. Fix unlink isotopies $\mathcal J:\mU \to U_n$ and $\mathcal J':\mU' \to U_{n+1}$ such that $P_n \circ  J$ and $J' \circ \mathcal P$ are isotopic to each other with an isotopy that fixes the ends. Define the homology orientation of $\Sigma(\mathcal P)$ such that the composed orientations of $\Sigma(\mathcal P)$ and $\Sigma(\mathcal J')$ is equal to that of $\Sigma(\mathcal J)$ and $\Sigma(P_n)$. Proposition \ref{ele-cob-maps} shows that with the chosen homology orientation, the map $\tPFH(\Sigma(\mathcal P))$ acts in the following way:
\begin{equation} \label{gen-split-cob}
	F_{\mathcal P}: w \in \Lambda^*(\widetilde H^0(\mU)) \to v\wedge w \in \Lambda^*(\widetilde H^0(\mU'))
\end{equation}
where $v$, as an element of $\widetilde H^0(\mU)$, is non-zero only on the connected components $K_1$, $K_2$. It evaluates to 1 on one of these components and -1 on the other one. There are two possibilities for $v$ and both of them are feasible by different choices of $\mathcal J$ and $\mathcal J'$. In fact, if we change $\mathcal J'$ by composing it with an unlink isotopy that permutes the $n^{\rm th}$ and the $(n+1)^{\rm st}$ components, then $v$ turns into $-v$. However, if we fix one of the two possibilities for $v$, then all different choices of $\mathcal J$ and $\mathcal J'$ define the same cobordism map $F_{\mathcal P}$. Consequently, they determine the same homology orientation on $\mathcal P$.

Next, let $\mU$ be an unlink with $n+1$ connected components. Suppose $c$ is a crossing such that the two strands belong to the different connected components and the intersection of the disc filling $\mathcal D$ with the crossing is as in Figure \ref{filling-2}. As in the previous paragraph, we can construct an unlink $\mU'$ and a merge cobordism $\mathcal {\widebar P}: \mU \to \mU'$. Choose unlink isotopies $\mathcal J: \mU \to U_{n+1}$, $\mathcal J':\mU'\to U_n$, with $\mathcal {\widebar P}\circ J'$ and $J\circ \widebar P_n $ being isotopic to each other, and use these to define a homology orientation on $\Sigma(\mathcal {\widebar P})$ as in the previous case. Each connected component of $\mU$ is connected to one component of $\mU'$ via $\mathcal {\widebar P}$ and $j: H^0(\mU) \to H^0(\mU')$ is the map that sends the Poincar\'e dual of the fundamental class of a component of $\mU$ to the Poincar\'e dual of the fundamental class of the corresponding component of $\mU'$. This map induces:
\begin{equation} \label{gen-merge-cob}
	F_{\mathcal {\widebar P}}: w \in \Lambda^*(\widetilde H^0(\mU)) \to j_*(w) \in \Lambda^*(\widetilde H^0(\mU'))
\end{equation}
and by Proposition \ref{ele-cob-maps}, $\tPFH(\Sigma(\mathcal {\widebar P}))$ is equal to this map. 

Let $(\mU,\mathcal D)$, $(\mU',\mathcal D')$ be unlinks such that $(\mU, \mathcal D)$ is given by dropping one of the connected components of $(\mU',\mathcal D')$. We can construct a cap cobordism $\mathcal C: \mU \to \mU'$ that is the union of two parts. The first part is $I \times \mU$ and the second part is the pushing of the extra component of $\mathcal D'$ from $\{1\}\times S^3$ into $I \times S^3$. We can proceed as above to define a homology orientation for $\Sigma(\mathcal C)$. Let $i:H^0(\mU) \to H^0(\mU')$ be the map that extends an element of $H^0(\mU)$ by zero on the extra component of $\mU'$. Then the cobordism map associated with $\mathcal C$ is equal to:
\begin{equation} \label{gen-cap-cob}
	F_{\mathcal C}: w \in \Lambda^*(\widetilde H^0(\mU)) \to i_*(w) \in \Lambda^*(\widetilde H^0(\mU'))
\end{equation}

By reversing the role of the incoming and the outgoing ends in the previous paragraph, we can also construct a cup cobordism $\widebar {\mathcal C}$ from an unlink $\mU$ with $n+1$ components to an unlink $\mU'$ with $n$ components. By following the same strategy to define a homology orientation for $\Sigma(\widebar {\mathcal C})$, we will have the following cobordism map:
\begin{equation} \label{gen-cup-cob}
	F_{\widebar {\mathcal C}}: w \in \Lambda^*(\widetilde H^0(\mU)) \to \iota_K(w) \in \Lambda^*(\widetilde H^0(\mU'))
\end{equation}
where $\iota_{K}$ is contraction with respect to the element of $H_0(\mU)$ that is represented by the extra component. Note that in the case of merge, cup, and cap cobordisms the chosen homology orientations are independent of the unlink isotopes. Because in these cases the cobordism maps are independent of these choices. 

\subsection{Odd Khovanov Homology Revisited} \label{odd-khov-rev}

In this subsection, we firstly review the definition of reduced odd Khovanov homology. The reduced odd Khovanov homology of the mirror image with coefficients in $\tLz$ interacts better with plane Floer homology. Therefore, we focus on this version of odd Khovanov homology. The main input in the original definition of odd Khovanov homology is the notion of {\it decorated link diagrams}. In the context of this paper, oriented link diagrams can be used as a means to fix homology orientations for the cobordisms in the cube of resolutions:

\begin{definition}
	A {\it decorated link diagram} $D$ for a link $K$ is a planar diagram such that at each crossing of $D$, 
	one of the two possible orientations for the line segment $q$ in Figure \ref{dec-crossing} is fixed. 
	The resolutions of a decorated planar diagram are also equipped with extra decorations:
	if the line segment $q$ is oriented as Figure \ref{dec-crossing}, then orient the line segment connecting the two strands as in Figure \ref{dec-resolutions}. 
	Otherwise these orientations for the resolutions are reversed. 
\end{definition}

\begin{figure}
	\centering
	\begin{subfigure}[b]{0.3 \textwidth}
		\includegraphics[width=\textwidth]{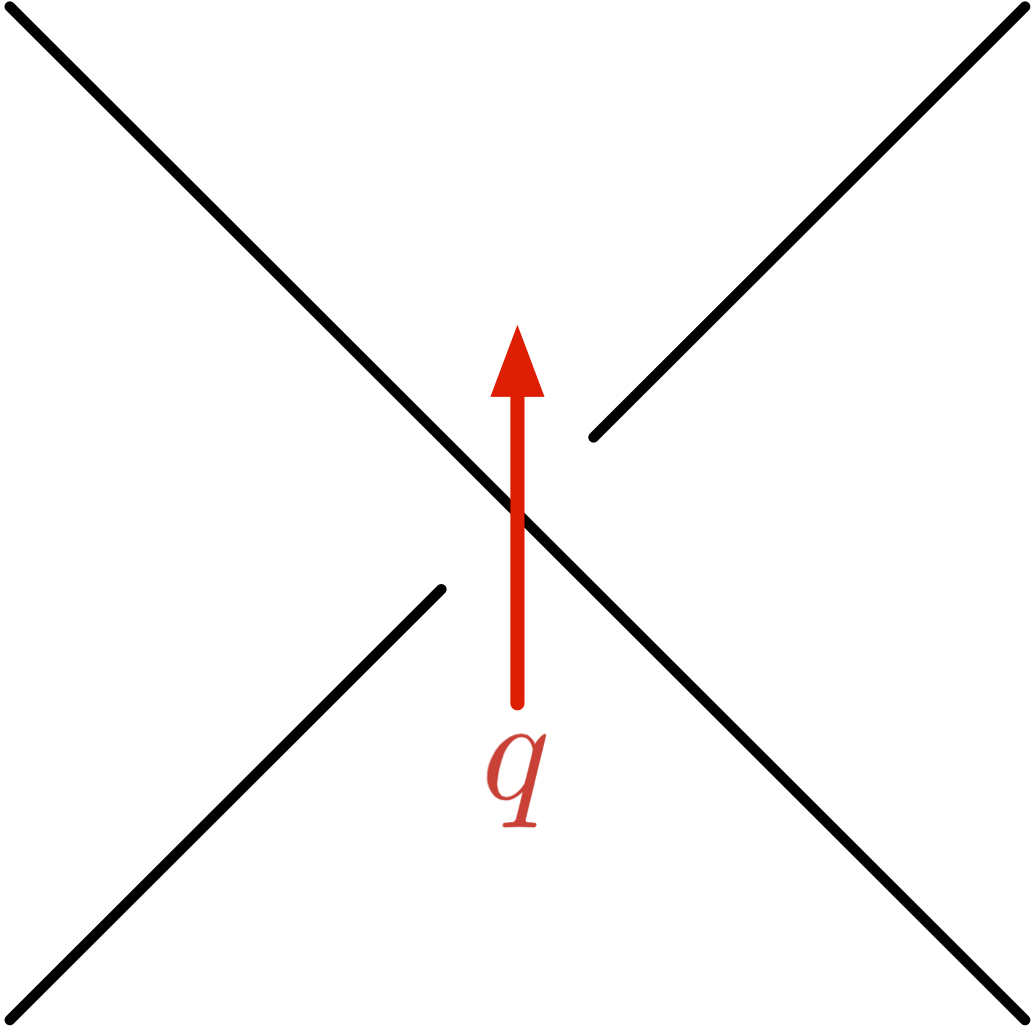}
		\caption{}
		\label{dec-crossing}
	\end{subfigure}
	\hspace{7pt}
	\begin{subfigure}[b]{0.65 \textwidth}
		\includegraphics[width=\textwidth]{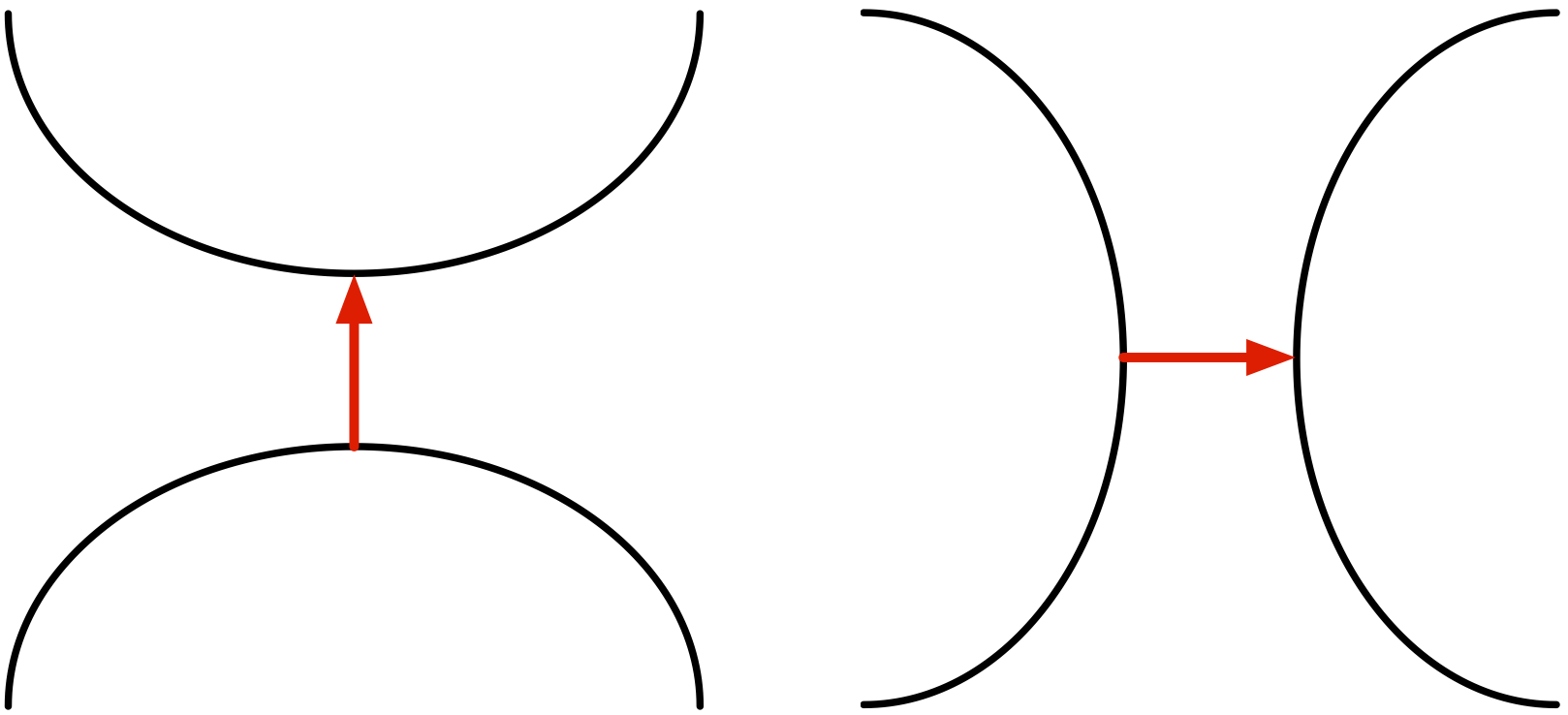}
		\caption{}
		\label{dec-resolutions}
	\end{subfigure}
	\caption{A decorated crossing and its decorated resolutions}
\end{figure}

Fix a decorated diagram $D$ for a link $K$. Any resolution $K_\bm$ of $K$, associated with $D$, is an unlink embedded in $\rr^2 \times \{0\}$ of $\rr^3$. A disc filling of $K_\bm$ can be fixed in the following way. The most inner circles in $K_\bm$ bound planar discs, i.e., discs that live in the plane $\rr^2 \times \{0\}$. Fix these discs as the filling of the corresponding components. Next, remove the filled circles and consider the discs that fill the most inner circles among the remaining ones. We can push the interior of such discs into $\rr^2 \times \rr^+$ such that they lie above the discs constructed in the first stage and hence they avoid intersecting with each other. Iteration of this construction determines a disc filling for $K_\bm$.

For an edge $(\bm,\bn)$ of the cube $\{0,1\}^{|N_D|}$, consider $\Smn: K_\bm \to K_\bn$ which is a pair of pants cobordism. A straightforward examination of the different possible cases shows that $\Smn$, with respect to the isotopy class of the fixed disc fillings, is either $\mathcal P$ or $\mathcal {\widebar P}$ constructed in subsection \ref{unlinks-cob-maps}. Homology orientations for the merge cobordisms are already fixed in the previous subsection. If $\Smn$ is a split cobordism, the ambiguity of the homology orientation of $\Sigma(\Smn)$ can be fixed by ordering the two components of the outgoing end that intersects the crossing. Denote these two components by $A_1$ and $A_2$. The decorated resolution of the crossing provides us with such an ordering. Suppose that the decoration of the line segment between $A_1$ and $A_2$ points from $A_1$ to $A_2$. We fix the homology orientation such that the cobordism map $\tPFH(\Sigma(\Smn))$ is equal to:
$$F_{\mathcal S}: w \in \Lambda^*(\widetilde H^0(K_\bm)) \to v\wedge w \in \Lambda^*(\widetilde H^0(K_\bn))$$
where $v\in \widetilde H^0(K_{\bn})$ evaluates to 1 (respectively, -1) on the representative of the component $A_1$ (respectively, $A_2$). 

In \cite{OzRaSz:Odd-Kh}, the first step to construct the reduced odd Khovanov chain complex of $\widebar K$ is to assign a module to each resolution $K_\bm$. It can be easily checked that this module is the same as $\tPFH(\Sigma(K_\bm))= \Lambda^*(\widetilde H^0(K_\bm))$. The next step in the definition of odd Khovanov homology is to define a map from $\Lambda^*(\widetilde H^0(K_\bm))$ to $\Lambda^*(\widetilde H^0(K_\bn))$ for each edge $(\bm,\bn)$ of the cube. It is also straightforward to check that this map is the same as $\tPFH(\Sigma(\Smn))$, using the fixed homology orientations. 

Suppose $(\bm,\bn)$ is a face of the cube $\{0,1\}^{|N_D|}$ and $\bk$, $\bk'$ are the two vertices that $\bm>\bk,\bk'>\bn$. Homology orientations of the cobordisms $S_\bm^\bk$, $S_\bm^{\bk'}$, $S_\bk^\bn$ and $S_{\bk'}^\bn$ are already fixed. This face of the cube is called {\it commutative} if the composed homology orientations of $S_\bm^\bk$ and $S_\bk^\bn$ is equal to the composed homology orientations of $S_\bm^{\bk'}$ and $S_{\bk'}^\bn$. Otherwise this face is called {\it non-commutative}. If the composition $\tPFH(\Sigma(S_\bk^\bn))\circ \tPFH(\Sigma(S_\bm^\bk))$ (or equivalently $\tPFH(\Sigma(S_{\bk'}^\bn))\circ \tPFH(\Sigma(S_\bm^{\bk'}))$) is non-zero then plane Floer homology can be used to detect whether $(\bm,\bn)$ is commutative or not. To be more specific, if:
$$\tPFH(\Sigma(S_\bk^\bn))\circ \tPFH(\Sigma(S_\bm^\bk))=\tPFH(\Sigma(S_{\bk'}^\bn))\circ \tPFH(\Sigma(S_\bm^{\bk'}))$$
then $(\bm,\bn)$ is commutative and it is non-commutative if:
$$\tPFH(\Sigma(S_\bk^\bn))\circ \tPFH(\Sigma(S_\bm^\bk))=-\tPFH(\Sigma(S_{\bk'}^\bn))\circ \tPFH(\Sigma(S_\bm^{\bk'})).$$

Any face $(\bm,\bn)$ of the cube $\{0,1\}^{|N_D|}$ is determined by the resolution of the vertex $K_\bm$ and the corresponding decorations of the two crossings associated with the face. A type $X$ face (respectively, type $Y$ face) is a face that its decorated resolution $K_\bm$ is equal to the union of the arrangement in Figure \ref{type-X} (respectively, Figure \ref{type-Y}) and several (possibly zero) other components that do not intersect the crossings of the face. Type $X$ and type $Y$ crossings are the only types of the faces such that $\tPFH(\Sigma(S_\bk^\bn))\circ \tPFH(\Sigma(S_\bm^\bk))$ vanishes.

 \begin{figure}
	\centering
	\begin{subfigure}[b]{0.25 \textwidth}
		\includegraphics[width=\textwidth]{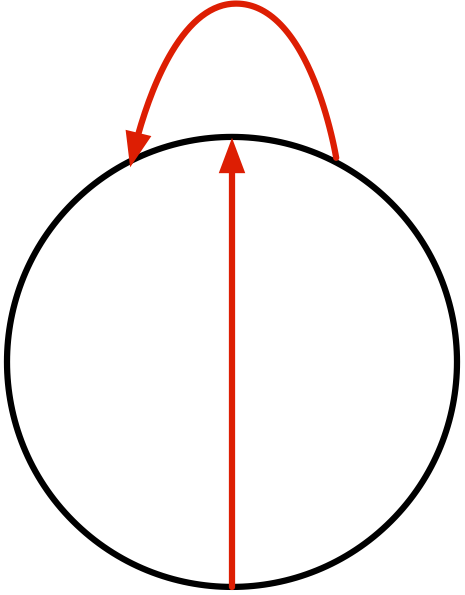}
		\caption{A type $X$ face}
		\label{type-X}
	\end{subfigure}
	\begin{subfigure}[b]{0.30 \textwidth}
	\includegraphics[width=\textwidth]{empty}
	\end{subfigure}
	\begin{subfigure}[b]{0.25 \textwidth}
		\includegraphics[width=\textwidth]{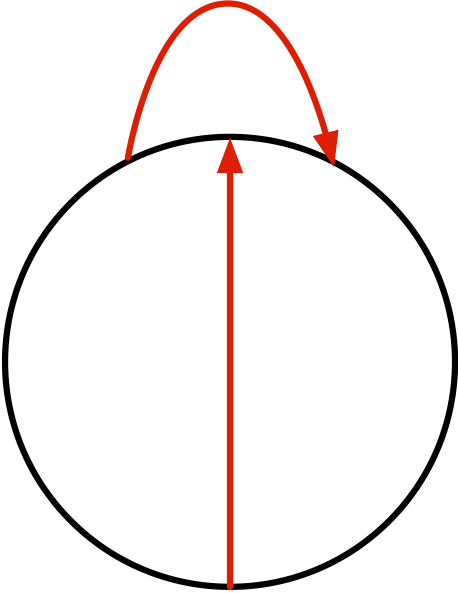}
		\caption{A type $Y$ face}
		\label{type-Y}
	\end{subfigure}
	\caption{}
\end{figure}

\begin{lemma} \label{typeX-even}
	Out of type $X$ and type $Y$ faces, faces of one type are commutative and faces of the other type are non-commutative.
\end{lemma}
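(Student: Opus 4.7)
The plan is to reduce the question to a purely local computation of composed homology orientations. The first step is the observation that whether a face $(\bm,\bn)$ is commutative or non-commutative depends only on the composition of the homology orientations of the four edge-cobordisms $S_{\bm}^{\bk}$, $S_{\bk}^{\bn}$, $S_{\bm}^{\bk'}$, $S_{\bk'}^{\bn}$, and these homology orientations were fixed in the previous subsection using only the decorations at the two crossings of the face. Consequently, commutativity is determined by the local decorated resolution near the two crossings, so for each of type $X$ and type $Y$ it suffices to work with one universal model diagram containing exactly one such face.

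In each universal model, I would compute the composite homology orientation of $\Sigma(W_{\bm}^{\bn})$ in two ways: once via the composition along $\bk$ and once via the composition along $\bk'$. The sign discrepancy (if any) is the commutativity sign we are after. Since the maps $\tPFH(\Sigma(S_{\bk}^{\bn}))\circ\tPFH(\Sigma(S_{\bm}^{\bk}))$ vanish identically for these face types (this is precisely what isolates types $X$ and $Y$, via Proposition \ref{mor-b^+} applied to the composite cobordism, which has $b^+>0$ after a merge undoes a split on the same pair of components), one cannot read off commutativity from the cobordism maps themselves. Instead, I would track the homology orientations directly. Concretely, for each of the two composition paths, express the homology orientation of the edge cobordism as in the recipe following \eqref{gen-split-cob}–\eqref{gen-merge-cob}: choose compatible unlink isotopies $\mathcal J_{\bm}$, $\mathcal J_{\bk}$, $\mathcal J_{\bk'}$, $\mathcal J_{\bn}$ to standard unlinks $U_*$, use the standard homology orientations on $\Sigma(P_n)$ and $\Sigma(\bar P_n)$ from Proposition \ref{ele-cob-maps}, and then compose. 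Because the two composition paths reach $U_{|K_{\bn}|}$ via different intermediate permutations of the components, the resulting composed orientations of $\Sigma(W_{\bm}^{\bn})$ are related to one another by the sign of an explicit permutation that can be read off the decorated local picture of the face.

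The key computational input is the sign behaviour under interchanging the order of a merge and a split on disjoint pairs of components, which follows from the composition rule for homology orientations derived in Subsection \ref{ab-asd-eq} together with the ordering convention built into the decorated split cobordism $\mathcal P$. I would carry this out in the model for type $X$ (two arcs split and then remerge into the same pair, so the intermediate unlinks $K_{\bk}$ and $K_{\bk'}$ correspond under a trivial permutation of the extra component) and in the model for type $Y$ (two arcs split and remerge with the extra components interchanged under the isotopy between the two composition paths). Comparing, the two composed orientations agree for exactly one of the two types and differ by $-1$ for the other. This gives the desired dichotomy without ever asserting which type is which; the naming is a convention and the lemma only claims the two types behave oppositely.

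The main obstacle in executing this plan is bookkeeping: one must keep careful track of the orderings and of the signs arising from the convention $(e_1\wedge e_2^*)\otimes(f_1\wedge f_2^*)\mapsto (-1)^{\dim(E_2)\cdot\dim(F_1)}(e_1\wedge f_1)\wedge(e_2^*\wedge f_2^*)$ for tensoring line bundles, as well as of the signs coming from the convention in Lemma \ref{fix-hom-ori} that fixes the composition of homology orientations. A clean way to organise this bookkeeping is to work with a fixed once-and-for-all identification of each moduli space $M(\Sigma(S_{\bm}^{\bn}))$ with a standard torus $J(\Sigma(S_{\bm}^{\bn}))$ as in Remark \ref{res-recast-1}, reducing the verification to the behaviour of an elementary braiding of $S^1$-factors, which contributes exactly one sign and pinpoints the difference between type $X$ and type $Y$.
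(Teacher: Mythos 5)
Your proposal identifies the right target (reduce to a local model, then compute the sign discrepancy of the two composed homology orientations) but it leaves the hardest step unjustified and the computation unexecuted, so as written it is a plan rather than a proof.

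The crux is your first sentence: you assert that because each \emph{edge} homology orientation is locally determined by the decoration, commutativity of a face is therefore a local matter. That implication does not follow automatically. Commutativity asks whether two \emph{compositions} of homology orientations---which pass through the gluing of index bundles over the composite cobordism $\Sigma(\Smn)$---agree, and that cobordism is not local: it contains all the extra closed components of $K_{\bm}$ and the product regions over them. Showing that those extra pieces do not affect the sign is precisely the content of the paper's argument. The paper makes it precise by gluing cup cobordisms $S,S',S_1'',S_2''$ that cap off the extra components and then walking through the chain of compositions in its equation~(\ref{decom-1}). The mechanism that makes this work is subtle: since $\tPFH$ of the face composition vanishes, one cannot read off the sign from the face map itself; but after precomposing and postcomposing by $S_{\bm}^{\bk'}$ (resp.\ $S_{\bm}^{\bk}$) and the cups, the intermediate maps \emph{are} nonzero, and equality of nonzero cobordism maps forces equality of homology orientations. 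You correctly observe that the direct maps vanish, but you then propose to ``track the homology orientations directly,'' which is exactly the hard bookkeeping the paper's argument is designed to avoid; you never carry it out.

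A secondary point: your ``key computational input'' --- the sign from interchanging a merge and a split on \emph{disjoint} pairs of components --- is not the relevant local move here. In a type~$X$ or type~$Y$ face, the two crossings both touch the single distinguished component of $K_{\bm}$ (this is what makes the face maps vanish in the first place), so the split and merge are intertwined rather than disjoint. Finally, for the model dichotomy itself the paper has a cleaner argument than a direct sign computation: the model diagrams $L$ (type~$X$) and $L'$ (type~$Y$) induce homology orientations that differ on exactly one of the four edge cobordisms, namely $S^{10}_{11}$, so one face is commutative if and only if the other is not --- no explicit torus parametrization needed. Your route could in principle recover this, but the paper's observation does it in one line.
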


\begin{proof}
	Suppose that $L$ is the decorated planar diagram of a link consisting of two crossings such that its $(1,1)$ resolution has only one component and it has 
	the form of Figure \ref{type-X} . Therefore, the only face of this planar diagram has type $X$. Label these crossings by requiring that the first crossing is represented by 
	the vertical red line in Figure \ref{type-X} and the second crossing is the other one. Let $L'$
	be another decorated diagram for the same link that is given by Figure \ref{type-Y}. The diagram $L'$ has only one face that is of type $Y$.
	We can use these two decorated diagrams to define two homology orientations for each of the cobordisms $S^{10}_{11}$, $S^{01}_{11}$, $S^{00}_{10}$ and 
	$S^{00}_{01}$. Out of these cobordisms, only the two homology orientations of the cobordism $S^{10}_{11}$ differ from each other. 
	Therefore, one of $L$ and $L'$ has a commutative face and the other one has a non-commutative face. 
	In the following, we assume that $L$ is the decorated diagram with a commutative face. The proof in the other case is similar.
	
	 Let $(\bm,\bn)$ be a face of a decorated diagram $K$ with type $X$. Therefore $K_\bm$ has a component that intersects the two crossings determined by
	  this face. This component has the form in Figure \ref{type-X} and we fix the labeling such that:
	 $$\bm=(1,1,0\dots,0)\hspace{2cm}\bn=(0,0,0\dots,0)$$
	 and if:
	 $$\bk=(1,0,0\dots,0)\hspace{2cm}\bk'=(0,1,0\dots,0)$$
	 then $K_{\bk}$ is given by the 1-resolution of the first crossing and the 0-resolution of the second crossing in Figure \ref{type-X}.
	Suppose that $S: K_\bm \to L_{11}$, $S': K_\bn \to L_{00}$, $S_1'':K_\bk \to L_{10}$ and $S_2'':K_{\bk'} \to L_{01}$ are composition 
	of cup cobordisms that fill the components which do not intersect the crossings. Furthermore, we have:
	\begin{equation} \label{decom-1}
		S_{\bm}^{\bk'}\circ  S_{\bk'}^\bn\circ  S' = S_{\bm}^{\bk'}\circ S_2''\circ S_{01}^{00}= S\circ S_{11}^{01} \circ S_{01}^{00}=
		 S\circ S_{11}^{10} \circ S_{10}^{00}= S_{\bm}^\bk\circ S_1''\circ S_{10}^{00}=  S_{\bm}^\bk\circ  S_{\bk}^\bn\circ S'
	\end{equation}	
	
	We use the homology orientation of the cup cobordisms to define homology orientations for $\Sigma(S)$, $\Sigma(S')$, $\Sigma(S_1'')$ and $\Sigma(S_2'')$. 
	The cobordism compositions in (\ref{decom-1}) define 6 homology orientations on the same cobordism. By assumption the homology orientation determined by the 
	compositions $ S\circ S_{11}^{10} \circ S_{10}^{00}$ and $ S\circ S_{11}^{01} \circ S_{01}^{00}$ are the same. One can also easily check that the non-zero maps
	$\tPFH(\Sigma(S')) \circ \tPFH(\Sigma(S_{\bk'}^\bn))$ and $\tPFH(\Sigma(S_{01}^{00}))\circ \tPFH(\Sigma(S_2''))$ are equal to each other. Thus 
	$S_{\bm}^{\bk'} \circ  S_{\bk'}^\bn\circ S'$ and $S_{\bm}^{\bk'} \circ S_2''\circ S_{01}^{00}$ determine the same homology orientations. A similar argument 
	shows that the remaining pair of compositions that appear as consecutive terms in (\ref{decom-1}) determine the same homology orientations. In particular,
	the homology orientations of the compositions $ S_{\bm}^{\bk'} \circ  S_{\bk'}^\bn\circ S'$ and $ S_{\bm}^\bk \circ  S_{\bk}^\bn\circ S'$ are the same. This proves
	the claim that
	the face $(\bm,\bn)$ is even.
\end{proof}

The following definition is a reformulation of Definition 1.1. in \cite{OzRaSz:Odd-Kh}:
\begin{definition}
	A type $X$ edge assignment for $D$ is a map $\sigma$ from the set of edge of the cube $\{0,1\}^n$ to $\{1,-1\}$ such 
	that for any face $(\bm,\bn)$: 
	\begin{equation} \label{edge-assign-rel}
		\sigma(\bm,\bk)\sigma(\bk,\bn)\tPFH(\Sigma(S_\bk^\bn))\circ \tPFH(\Sigma(S_\bm^\bk))+
		\sigma(\bm,\bk')\sigma(\bk',\bn)\tPFH(\Sigma(S_{\bk'}^\bn))\circ \tPFH(\Sigma(S_\bm^{\bk'}))=0.
	\end{equation}
	Furthermore, we require that if $(\bm,\bn)$ is a face of type $X$, then:
	\begin{equation*}
		\sigma(\bm,\bk)\cdot \sigma(\bk,\bn)=\sigma(\bm,\bk')\cdot \sigma(\bk',\bn)
	\end{equation*}	
	and if it is a face of type $Y$, then:
	\begin{equation*}
		\sigma(\bm,\bk)\cdot \sigma(\bk,\bn)=-\sigma(\bm,\bk')\cdot \sigma(\bk',\bn).
	\end{equation*}	
	In the definition, if we reverse the role of type $X$ and type $Y$ faces, then the resulting edge assignment has type $Y$.
\end{definition}

Given a type $X$ or a type $Y$ edge assignment $\sigma$, the reduced odd Khovanov chain complex of $\widebar K$ associated with the decorated link diagram $D$ is equal to: 
\begin{equation} \label{odd-chain-com-2}
	(\bigoplus_{\bm\in \{0,1\}^n}\tPFH(\Sigma(K_\bm)),\bigoplus_{(\bm,\bn)\in \mathcal E_c}\sigma(\bm,\bn)\tPFH(\Sigma(\Smn))
\end{equation}	
Relation (\ref{edge-assign-rel}) implies that (\ref{odd-chain-com-2}) is in fact a chain complex. The chain complex  (\ref{odd-chain-com-2}) is equipped with two gradings in \cite{OzRaSz:Odd-Kh} which are called the homological and quantum gradings. If we will write $h_o$ and $q_o$ for these gradings, then these gradings are equal to:
\begin{align} \label{diagram-grading}
	\deg_{h_o}&=(|\widebar D|-|\bm|_1)-n_-(\widebar D)\\
	\deg_{q_o}&=(1+\deg_p)+(|\widebar D|-|{\bm}|_1)+n_+(\widebar D)-2n_-(\widebar D)
\end{align}
Here $\widebar D$ is the digram for $\widebar K$ that is induced by $D$. This term appears in our formulas because the chain complex (\ref{odd-chain-com-2}) competes the odd Khovanov homology of $\widebar K$ using the diagram $\widebar D$.

\begin{theorem}
	There is an isomorphism between the chain complexes (\ref{odd-chain-com}) and (\ref{odd-chain-com-2}). 
	Furthermore, this isomorphism relates the bi-gradings in the following way:
	$$\deg_{h_o}=\deg_{h},\hspace{2cm}\deg_{q_o}=2(\deg_{h}-\deg_{\delta})+1$$
\end{theorem}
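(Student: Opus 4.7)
The proof has two essentially independent parts: matching the two differentials (which amounts to identifying a compatible system of signs) and verifying the two grading identities (which is a purely formal calculation). Both chain complexes live on the same underlying module $\bigoplus_{\bm\in\{0,1\}^n}\tPFH(\Sigma(K_\bm))$, and on each edge both differentials are a sign times the very same elementary cobordism map of subsection~\ref{unlinks-cob-maps}, so the essential task is to identify the signs after an appropriate diagonal gauge transformation.

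\textbf{Step 1: reduction to an edge-sign comparison.} The complex (\ref{odd-chain-com-2}) is built from the decorated-diagram homology orientations $o_D$ of subsection~\ref{odd-khov-rev}, together with a type~$X$ or type~$Y$ edge assignment $\sigma$; the complex (\ref{odd-chain-com}) is built from the homology orientations $o$ of Lemma~\ref{fix-hom-ori} with the sign function $\sgn$ from (\ref{eta}). By Lemma~\ref{fix-hom-ori} there is a vertex function $\mathfrak z\colon\{0,1\}^n\to\{\pm1\}$ with $o_D=\mathfrak z\cdot o$, and the diagonal map $\phi_\bm:=\mathfrak z(\bm)\cdot\mathrm{id}$ translates everything into the $o_D$ conventions. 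Under $\phi$, the differential of (\ref{odd-chain-com}) takes the form $\sgn(\bm,\bn)\tPFH_{o_D}(\Sigma(\Smn))$, so the remaining task is to show that $\sgn$ and $\sigma$ define isomorphic differentials via a further diagonal gauge.

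\textbf{Step 2: verifying the edge assignment.} A direct computation from (\ref{eta}) shows that for any face $(\bm,\bn)$ flipping coordinates $i_0<j_0$ one has $\sgn(\bm,\bk)\sgn(\bk,\bn)=-\sgn(\bm,\bk')\sgn(\bk',\bn)$, so $\sgn$ and $\sigma$ always agree up to a coboundary on those faces where both underlying PFH compositions are nonzero and commute (the ``generic'' faces). The remaining faces are exactly the type~$X$ and type~$Y$ faces of subsection~\ref{odd-khov-rev}, on which the PFH compositions vanish by the special form of the corresponding unlink cobordisms in Proposition~\ref{ele-cob-maps} and the formulas (\ref{gen-split-cob})--(\ref{gen-cup-cob}). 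Consequently, modifying $\sgn$ on edges lying only in type~$X$ or type~$Y$ faces does not alter the chain complex structure, and one may choose a modification $\sgn'$ that satisfies the additional face constraints of either a type~$X$ or a type~$Y$ edge assignment while still defining the same differential on nose.

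\textbf{Step 3: concluding the isomorphism.} With $\sgn'$ now a bona fide type~$X$ (or type~$Y$) edge assignment, one invokes the standard uniqueness statement for odd Khovanov chain complexes: two type~$X$ edge assignments differ by a $1$-coboundary on the $1$-skeleton of the cube, hence give chain complexes isomorphic by a vertex-diagonal gauge. Composing this diagonal gauge with the earlier $\mathfrak z$ yields the required isomorphism of chain complexes between (\ref{odd-chain-com}) and (\ref{odd-chain-com-2}). The main obstacle is step~2 -- isolating the signs on type~$X$/$Y$ edges and arguing that they genuinely do not interact with the differential -- which requires careful bookkeeping of which faces a given modified edge belongs to and a vanishing argument to avoid disturbing generic faces.

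\textbf{Step 4: the grading identities.} These are purely algebraic. Substituting (\ref{diagram-grading-h}) and (\ref{diagram-grading-delta}) and using $n_+(D)+n_-(D)=n=|\bar D|$ together with $n_\pm(\bar D)=n_\mp(D)$ gives
\[
\deg_{h_o}=(|\bar D|-|\bm|_1)-n_-(\bar D)=n-|\bm|_1-n_+(D)=-|\bm|_1+n_-(D)=\deg_h,
\]
and a similarly direct substitution verifies $2(\deg_h-\deg_\delta)+1=\deg_{q_o}$. No geometry is used in this step.
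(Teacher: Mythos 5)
Your Step 4 (the grading computation) is correct and essentially matches the paper, but Steps 1--3 contain a genuine gap that the paper's own argument is carefully designed to avoid.

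The serious flaw is in Step 1. You invoke Lemma~\ref{fix-hom-ori} to produce a vertex function $\mathfrak z$ with $o_D=\mathfrak z\cdot o$, but Lemma~\ref{fix-hom-ori} only compares two systems of homology orientations \emph{both} of which satisfy the compositionality (additivity) hypothesis $o(\bm,\bn)=o(\bm,\bk)\circ o(\bk,\bn)$. The decorated-diagram orientations $o_D=o_{N_D}$ emphatically do \emph{not} satisfy this: by Lemma~\ref{typeX-even}, exactly one of type~$X$ or type~$Y$ faces is non-commutative with respect to $o_{N_D}$, which means $o_{N_D}(\bk,\bn)\circ o_{N_D}(\bm,\bk)\neq o_{N_D}(\bk',\bn)\circ o_{N_D}(\bm,\bk')$ on those faces. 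This non-compositionality is precisely why edge assignments are needed in the first place. So the conclusion $o_D=\mathfrak z\cdot o$ is simply false, and the reduction you carry out in Step 1 collapses. Step 2 has a secondary problem: you propose to modify $\sgn$ ``on edges lying only in type~$X$ or type~$Y$ faces,'' but an edge of a type~$X$ face is typically also an edge of several generic faces, so modifying the sign there \emph{does} alter the differential elsewhere; you never construct the claimed $\sgn'$ nor show it is well-defined. Step 3 then invokes uniqueness of type~$X$ edge assignments up to coboundary, but since $\sgn$ is not a type~$X$ edge assignment (your own computation shows it anti-commutes on \emph{every} face, not just the right subset), this uniqueness statement does not apply until $\sgn'$ has actually been produced.

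The paper resolves this much more directly. Rather than gauging $o_D$ to a compositional system, one \emph{defines} a new candidate system $o'(\bm,\bn):=\sigma(\bm,\bn)\,\sgn(\bm,\bn)\,o_{N_D}(\bm,\bn)$ and checks that it \emph{is} compositional: on type~$X$ faces (taking them to be non-commutative and $\sigma$ type~$X$), the three factors contribute signs $(+1)\cdot(-1)\cdot(-1)=+1$; on type~$Y$ faces they contribute $(-1)\cdot(-1)\cdot(+1)=+1$; and on remaining faces the constraint (\ref{edge-assign-rel}), combined with the universal anti-commutation of $\sgn$, again yields $+1$. Once $o'$ is known to be compositional, Lemma~\ref{fix-hom-ori} legitimately applies, and the differential $\sgn\cdot\tPFH_{o'}=\sgn\cdot\sigma\cdot\sgn\cdot\tPFH_{o_{N_D}}=\sigma\cdot\tPFH_{o_{N_D}}$ is \emph{literally equal} to that of (\ref{odd-chain-com-2}), so the identity map is the desired isomorphism. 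The conceptual point you are missing is that the edge assignment $\sigma$ and the sign function $\sgn$ are not being compared to one another, but are being combined with $o_{N_D}$ to \emph{produce} the compositional system.
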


\begin{proof}
	Note that the definition of the chain complex (\ref{odd-chain-com}) depends on $D$ and a choice of homology orientations on the cobordisms $\Sigma(\Smn)$ 
	that satisfies the assumption of Lemma \ref{fix-hom-ori}. On the other hand, the chain complex (\ref{odd-chain-com-2}) depends on $D$ and a type $X$ or type $Y$
	edge assignment. In particular, the cobordism maps $\tPFH(\Sigma(\Smn))$, involved in the definition, are defined by homology orientation $o_{N_D}(\bm,\bn)$
	 determined by $D$. It is shown in \cite{OzRaSz:Odd-Kh} that for any decorated diagram, type $X$ and type $Y$ edge assignments exist and 
	 the corresponding chain complexes for a fixed diagram are isomorphic.
	
	Now suppose type $X$ faces are non-commutative and a type $X$ edge assignment $\sigma$ of the diagram is fixed. For an edge $(\bm,\bn)$ of the cube 
	$\{0,1\}^n$ define:
	$$o(\bm,\bn):=\sigma(\bm,\bn)\sgn(\bm,\bn)o_{N_D}(\bm,\bn) \in o(\Sigma(\Smn))$$
	Suppose $(\bm,\bn)$ is a face of the cube $\{0,1\}^n$ and $\bk$ and $\bk'$ are the vertices that $\bm>\bk,\bk'>\bn$. Then we have:
	\begin{equation} \label{com-hom}
		o(\bk,\bn)\circ o(\bm,\bk)=o(\bk',\bn)\circ o(\bm,\bk').
	\end{equation}	
	In the case of a type $X$ face it holds by definition. The case of type $Y$ face is a consequence of Lemma \ref{typeX-even}. 
	For the remaining faces it can be derived from (\ref{edge-assign-rel}). Relation (\ref{com-hom}) implies that $o$ can be extended to a homology orientation for
	the cobordisms $\Sigma(\Smn)$ for all cubes $(\bm,\bn)$ in $\zz^n$ such that:
	$$o(\bk,\bn)\circ o(\bm,\bk)=o(\bm,\bn).$$
	Therefore, we can use this homology orientation to define the chain complex (\ref{odd-chain-com}). With this choice of homology orientation the identity map
	defines an isomorphism of the chain complexes. In the case that type $X$ faces are commutative, the same argument can be used with the aid of a type $Y$
	edge assignment. The claim about the relations between gradings is also an immediate consequence of the definition.
\end{proof}

\begin{corollary}
	Given a link $K$, there is a spectral sequence that its first page is equal to $Kh_o(\widebar K) \otimes \tL$ and 
	it converges to
	$\tPFH(\Sigma(K))$.
\end{corollary}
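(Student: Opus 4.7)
The plan is to lift the filtered complex $\PKC(D)$ of subsection \ref{PKH} to a filtered $\zz$-graded chain complex $\tPKC(D)$ over $\tL$, and then read off the desired spectral sequence from its homological filtration.

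First I would fix compatible homology orientations for the cobordisms $\Sigma(\Smn)$, $\bm\geq\bn$, as supplied by Lemma \ref{fix-hom-ori}, and form
$$\tPKC(D) := \bigoplus_{\bm\in\{0,1\}^{|N_D|}} \tPFC(\Sigma(K_\bm)),\qquad \td := \bigoplus_{\bm\geq\bn} \tilde f_{\Gmn},$$
equipped with the homological and $\delta$-gradings \eqref{diagram-grading-h}--\eqref{diagram-grading-delta} from the unoriented setting. The machinery of subsection \ref{triangles} was already set up with $\tL$-coefficients and homology orientations in mind: Theorem \ref{PFH-tri-1} and its consequence Corollary \ref{new-complex-1}, together with the sign conventions collected in Lemmas \ref{G-ori}, \ref{ori-N}, \ref{ori-K}, \ref{hom-ori-checkWmn}, and \ref{Q-ori} and the signed boundary identity \eqref{pair-signed-bdry-form-loc-sys}, directly show that $\tPKC(D)$ is chain homotopy equivalent in $\mctL$ to $\tPFC(\Sigma(K))$. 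Consequently the spectral sequence associated to the homological filtration on $\tPKC(D)$ abuts to $\tPFH(\Sigma(K))$.

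Next I would identify the low pages. By construction, the associated graded of the filtration is the bi-graded chain complex
$$\Bigl(\bigoplus_{\bm\in\{0,1\}^{|N_D|}} \tPFH(\Sigma(K_\bm)),\ \bigoplus_{(\bm,\bn)\in\mathcal E_c} \sgn(\bm,\bn)\,\tPFH(\Sigma(\Smn))\Bigr),$$
which is literally the complex $(C_o(D),d_o)$ of \eqref{odd-chain-com}. The theorem established immediately before this corollary identifies $(C_o(D),d_o)$ with the reduced odd Khovanov chain complex of $\widebar K$ with coefficients in $\tL$, so taking homology produces $Kh_o(\widebar K)\otimes\tL$ as the first page in the sense of the corollary, matching the abutment computed in the previous paragraph.

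The main obstacle, modest as it is, is the bookkeeping step of verifying that the filtered chain homotopy type of $\tPKC(D)$ is independent of the auxiliary data (metric, Morse--Smale functions, perturbation, and the compatible homology orientations of Lemma \ref{fix-hom-ori}) and is invariant under the planar pseudo-diagram moves of Lemma \ref{moves}. The argument follows the continuation-map scheme used to prove Theorem \ref{knot-invt}, the only new ingredient being that the signed boundary formula \eqref{pair-signed-bdry-form-loc-sys} must be tracked through every step, with the sign corrections of Remark \ref{bdry-terms} inserted whenever a face of the parametrizing polyhedron is identified with a product. Since all of the relevant sign identities are already assembled in Section \ref{ex-tri}, this invariance step reduces to a direct check and places no further obstruction on the construction.
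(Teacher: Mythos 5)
Your proposal reproduces the last two steps of the argument correctly (identifying the associated graded with $(C_o(D),d_o)$ and hence, via the preceding theorem, with the reduced odd Khovanov complex of $\widebar K$ over $\tL$), but the first step has a genuine gap that is in fact the heart of the paper's proof.

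You claim that Theorem \ref{PFH-tri-1} and Corollary \ref{new-complex-1} ``directly show'' that $\tPKC(D)=\bigoplus_{\bm\in\{0,1\}^{n}}\tPFC(\Sigma(K_\bm))$ is chain homotopy equivalent over $\tL$ to $\tPFC(\Sigma(K))$. They do not, because those results produce the complex $\Cmn=\bigoplus_{\bm\geq\bk\geq\bn}\tPFC(Y_\bk,\zeta_\bk)$, whose summands carry the local coefficient systems $\Gamma_{\zeta_\bk}$. With the convention of subsection \ref{3-man-link} (where $\zeta_{\bk,i}$ is nonempty iff $k_i\equiv 0\bmod 3$), the 1-manifold $\zeta_\bk$ is nonempty at every vertex $\bk\in\{0,1\}^n$ except $\bk=(1,\dots,1)$, so $\tPFC(Y_\bk,\zeta_\bk)$ is \emph{not} the same object as $\tPFC(\Sigma(K_\bk))$. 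Over the characteristic-two field $\Lambda$ this distinction evaporates (this is exactly why subsection \ref{PKH} could afford to drop the $\zeta_\bk$'s), but over $\tL$ it does not, and nothing in your proposal bridges it.

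What the paper actually does is flip the periodic convention so that $\zeta_{\bk,i}$ is nonempty iff $k_i\equiv 2\bmod 3$. This kills all the $\zeta_\bk$'s at the cube vertices $\bk\in\{0,1\}^n$, giving the plain groups $\tPFC(\Sigma(K_\bk))$ that you want, but it forces the abutment of the resulting version of Corollary \ref{new-complex-1} to be $\tPFC(\Sigma(K),\zeta)$, where $\zeta$ is the union of the lifts of the crossing arcs $p_i$. One then needs the separate topological fact that $[\zeta]=0$ in $H_1(\Sigma(K);\zz/2\zz)$, so that Lemma \ref{mod-2-dep-loc-coef} identifies $\tPFH(\Sigma(K),\zeta)$ with $\tPFH(\Sigma(K))$. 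That fact is proved by a transfer argument: $[\zeta]$ corresponds to $[\gamma]\in H_1(S^3,K)$ with $\gamma=\bigcup_i p_i$, and $\partial[\gamma]=0$ in $H_0(K)$ precisely because $D$ is a planar diagram of $K$. This global cancellation, which depends on $D$ actually being a diagram and not an arbitrary pseudo-diagram, is the key input your argument is missing. Without it (or some other proof that the relevant local systems are trivial over $\tL$), the claimed chain homotopy equivalence in the first paragraph of your proposal does not follow from the cited results.

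As a secondary remark, the paragraph you flag as the ``main obstacle'' --- invariance under auxiliary data and pseudo-diagram moves --- is not actually required for this corollary, which only asserts the existence of a spectral sequence with the stated $E_2$ page and abutment; it does not assert naturality or diagram-independence.
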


\begin{proof}
	In section 2, starting from a pair $(Y,\zeta)$ and a framed link $L$ with $n$ components, we constructed a pair $(Y_\bm,\zeta_\bm)$ for each $\bm \in \zz^n$.
	The 1-manifold $\zeta_\bm$ is the union $\cup_{ i}\zeta_{\bm,i}$ and $\zeta_{\bm,i}$ 
	is non-empty if and only if $\bm_i \equiv 0$ mod 3, and in this case $\zeta_{\bm,i}$
	is the core of the tours $D_{\bm,i}$. In the definition of $\zeta_{\bm}$, we could replace the condition $\bm_i \equiv 0$ mod 3 with  $\bm_i \equiv 2$ mod 3.
	Then the definition of $\Zmn$ can be modified accordingly, and another version of Theorem \ref{PFH-tri-1} can be proved. This version can be 
	used to prove a variation of Theorem \ref{crossing-set-complex}. In particular, this variation states that a planar diagram $D$ determines a chain complex 
	$(\oplus_{\bm \in \{0,1\}^n}\tPFC(\Sigma(K_\bm)),d_p)$ which has the same chain homotopy type as $\tPFC(\Sigma(K),\zeta)$. 
	The differential $d_p$ is given by the maps $\dmn:\tPFC(\Sigma(K_\bm)) \to \tPFC(\Sigma(K_\bn))$ where $\dmn$ is the cobordism map for the family of metrics 
	$\bWmn$ parametrized by $\Gmn$ (constructed in subsection \ref{3-man-link}). Furthermore, the 1-manifold $\zeta$ is the branched double cover of 
	$\gamma= \cup_i p_i$ where $p_i$ is the path that connects the two strands in the $i^{\rm th}$ crossing of $D$ (Fig. \ref{crossing}). 
	
	 Let $[\zeta]$ denote the $\zz/2\zz$-homology class of the 1-manifold $\zeta$.
	The $\zz/2\zz$ action on $\Sigma(K)$ induces a $\zz/2\zz$ action $\tau$ on the set of simplicial complexes $C_*^{simp}(\Sigma(K))$. Therefore, we
	have a map: 
	$$i:H_*((1+\tau)C_*^{simp}(\Sigma(K));\zz/2\zz) \to H_*(\Sigma(K);\zz/2\zz)$$ 
	and $[\zeta]$ lies in the image of $i$. The homology group
	$H_*((1+\tau)C_*^{simp}(\Sigma(K));\zz/2\zz)$ is isomorphic to $H_*(S^3,K)$, and under this isomorphism $[\zeta]$ corresponds to $[\gamma]\in H_1(S^3,K)$.
	The boundary of $[\gamma]$ represents the trivial class in $H_0(K)$, because the set of crossings is induced by a diagram of the link $K$. 
	Therefore, $[\zeta]$ vanishes and by Lemma \ref{mod-2-dep-loc-coef},  $\tPFH(\Sigma(K),\zeta)=\tPFH(\Sigma(K))$. 
	Consequently, the chain complex $(\oplus_{\bm \in \{0,1\}^n}\tPFC(\Sigma(K_\bm)),d_p)$ gives rise to a spectral sequence whose first page is the complex 
	(\ref{odd-chain-com}) and it abuts to $\tPFH(\Sigma(K))$.
\end{proof}
\newpage


\bibliography{references}
\bibliographystyle{hplain}

\end{document}